\def\blfootnote{\xdef\@thefnmark{}\@footnotetext}
\newcommand\ccnote{
    \blfootnote{\copyright\,\, Andr\'e~Guerra, Bogdan~Rai\cb{t}\u{a}, and Matthew~Schrecker}
    \blfootnote{\ccLogo\, \ccAttribution\,\, Licensed under a \href{https://creativecommons.org/licenses/by/4.0/}{Creative Commons Attribution License (CC-BY)}.}
}
\numberwithin{equation}{section}
\renewcommand{\le}{\leqslant}
\renewcommand{\leq}{\leqslant}
\renewcommand{\geq}{\geqslant}
\renewcommand{\mathbb}{\varmathbb}
\newtheorem{theorem}{Theorem}[section]
\newtheorem{lemma}[theorem]{Lemma}
\newtheorem{corollary}[theorem]{Corollary}
\newtheorem{proposition}[theorem]{Proposition}
\newtheorem{definition}[theorem]{Definition}
\newtheorem{remark}[theorem]{Remark}
\newcommand*\bigcdot{\mathpalette\bigcdot@{.5}}
\newcommand*\bigcdot@[2]{\mathbin{\vcenter{\hbox{\scalebox{#2}{$\m@th#1\bullet$}}}}}
\theoremstyle{plain}
\newtheorem{bigthm}{Theorem}
\newtheorem{bigprop}[bigthm]{Proposition}
\newtheorem{conjecture}[theorem]{Conjecture}
\newtheorem{op}[theorem]{Open Problem}
\newtheorem{question}[theorem]{Question}
\let\expandafter\oldproof\csname\string\proof\endcsname
\let\oldendproof\endproof
\renewenvironment{proof}[1][\proofname]{%
  \oldproof[\upshape \bfseries #1]%
}{\oldendproof}
\def\XXint#1#2#3{{\setbox0=\hbox{$#1{#2#3}{\int}$ }
		\vcenter{\hbox{$#2#3$ }}\kern-.6\wd0}}
\newcommand{\di}{\operatorname{div}}
\newcommand{\Div}{\operatorname{Div}}
\newcommand{\Curl}{\operatorname{Curl}}
\newcommand{\dif }{\operatorname{d}\!}
\newcommand{\tr}{\operatorname{Tr}}
\newcommand{\R}{\mathbb{R}}
\newcommand{\B}{\mathbb{B}}
\newcommand{\C}{\mathbb{C}}
\newcommand{\T}{\mathbb{T}}
\let\d\relax
\newcommand{\d}{\partial}
\newcommand{\locc}{\operatorname{loc}}
\renewcommand{\geq}{\geqslant}
\newcommand{\lebe}{\operatorname{L}}
\newcommand{\hold}{\operatorname{C}}
\newcommand{\curl}{\operatorname{curl}}
\renewcommand{\leq}{\leqslant}
\newcommand{\sym}{\operatorname{sym}}
\newcommand{\supp}{\operatorname{supp}}
\newcommand{\lin}{\operatorname{Lin}}
\newcommand{\cala}{\mathcal{A}}
\newcommand{\de}{\delta}
\newcommand{\eps}{\varepsilon}
\newcommand{\al}{\alpha}
\newcommand{\la}{\lambda}
\newcommand{\p}{\partial}
\newcommand{\V}{\mathbb{V}}
\newcommand{\W}{\mathbb{W}}
\newcommand{\A}{\mathcal{A}}
\newcommand{\D}{\textup{D}}
\newcommand{\e}{\varepsilon}
\newcommand{\beq}{\begin{equation}}
\newcommand{\eeq}{\end{equation}}
\newcommand{\beqs}{\begin{equation*}}
\newcommand{\eeqs}{\end{equation*}}
\newcommand{\beqa}{\begin{equation}\begin{aligned}}
\newcommand{\eeqa}{\end{aligned}\end{equation}}
\newcommand{\beqas}{\begin{equation*}\begin{aligned}}
\newcommand{\eeqas}{\end{aligned}\end{equation*}}
\address{Andr\'e~Guerra, Institute for Theoretical Studies, ETH Z\"urich, CLV, Clausiusstrasse 47, 8006 Z\"urich, Switzerland}
\email{andre.guerra@eth-its.ethz.ch}
\address{Bogdan~Rai\cb{t}\u{a}, Department of Mathematics and Statistics, Georgetown University, 3700 O St NW, 20057, Washington DC, USA; Department of Mathematics, Alexandru-Ioan Cuza University,  Blvd. Carol I, no. 11,  Ia\cb{s}i 700506,
Romania} 
\email{br607@georgetown.edu}
\address{Matthew Schrecker, Department of Mathematical Sciences, University of Bath, Claverton Down, Bath BA2 7AY, UK}
\email{mris21@bath.ac.uk}
\begin{document}

\thispagestyle{empty}

\begin{minipage}{0.28\textwidth}
\begin{figure}[H]
%\centering
\includegraphics[width=2.5cm,height=2.5cm,left]{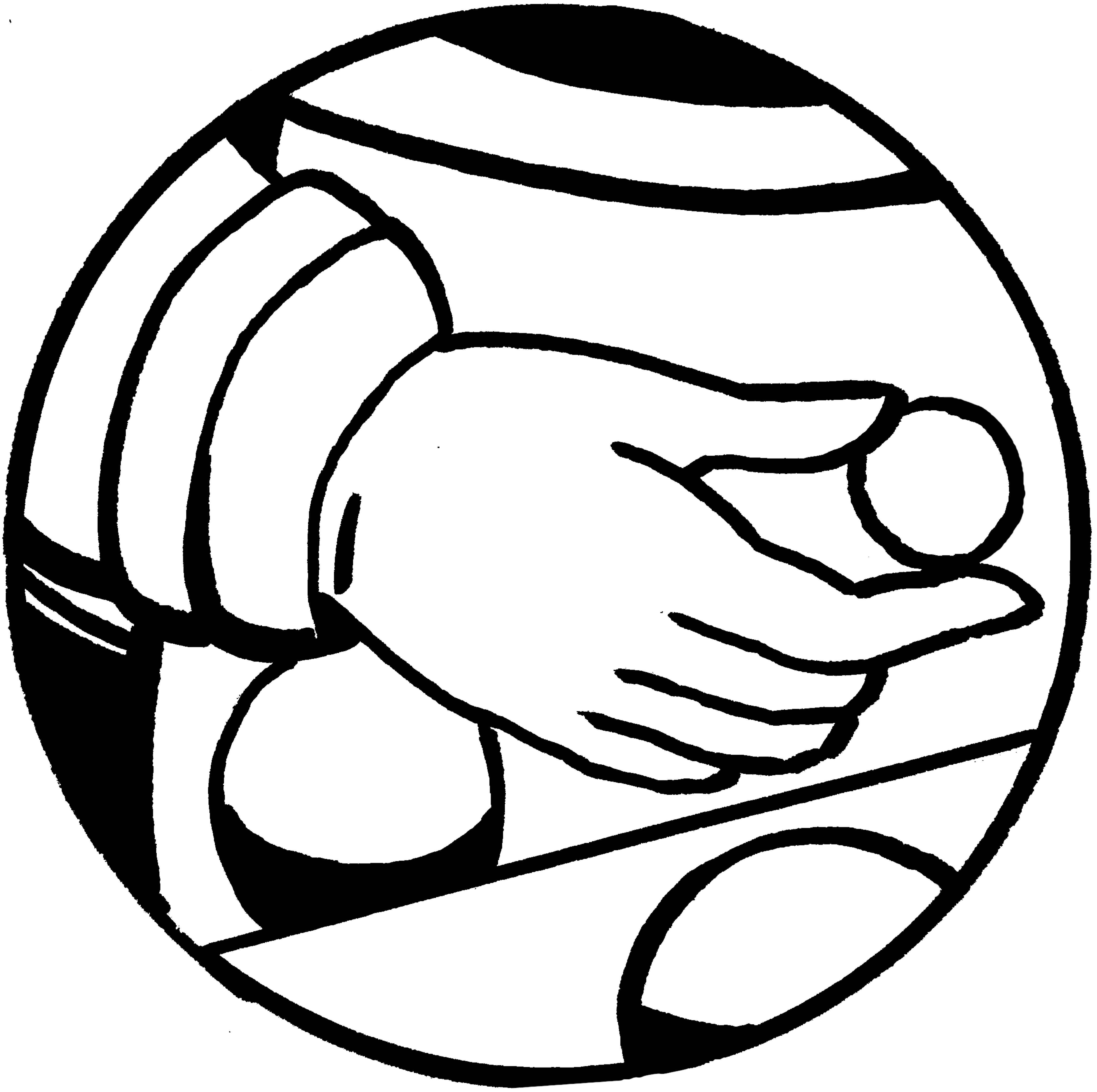}
\end{figure}
\end{minipage}
\begin{minipage}{0.7\textwidth} 
\begin{flushright}
%% The following metadata, in particular
%% the Paper No. and the DOI will be inserted by the journal
Ars Inveniendi Analytica (2024), Paper No. 1, 56 pp.
\\
DOI 10.15781/7187-xq59
\\
ISSN: 2769-8505
\end{flushright}
\end{minipage}

\ccnote

\vspace{1cm}

%%      -------------------------------------------------------------------------------
%%      -------------------------- TITLE ----------------------------
%%      -------------------------------------------------------------------------------
%% Authors, please put here the full title of the article

\begin{center}
\begin{huge}
\textit{Compensation phenomena for concentration effects via }

\textit{nonlinear elliptic estimates}

\end{huge}
\end{center}

\vspace{1cm}

%%      -------------------------------------------------------------------------------
%%      -------------------------- AUTHORS AND AFFILIATIONS ----------------------------
%%      -------------------------------------------------------------------------------
%% Authors, please put here your full names and affiliations

\begin{minipage}[t]{.28\textwidth}
\begin{center}
{\large{\bf{Andr\'e Guerra}}} \\
\vskip0.15cm
\footnotesize{ETH Z\"urich}
\end{center}
\end{minipage}
\hfill
\noindent
\begin{minipage}[t]{.28\textwidth}
\begin{center}
{\large{\bf{Bogdan Rai\cb{t}\u{a}}}} \\
\vskip0.15cm
\footnotesize{Georgetown University}
\end{center}
\end{minipage}
\hfill
\noindent
\begin{minipage}[t]{.28\textwidth}
\begin{center}
{\large{\bf{Matthew Schrecker}}} \\
\vskip0.15cm
\footnotesize{University of Bath} 
\end{center}
\end{minipage}

\vspace{1cm}

%%% Please replace "James Mustard" below 
%%% with the name of the managing editor for your submission.
%%% If you are unsure about their identity
%%% please ask an editor-in-chief about.

\begin{center}
\noindent \em{Communicated by Guido De Philippis}
\end{center}
\vspace{1cm}

%%      -------------------------------------------------------------------------------
%%      -------------------------- BEGIN ABSTRACT ----------------------------
%%      -------------------------------------------------------------------------------
%% Authors, please put here the ABSTRACT and KEYBOARDS

\noindent \textbf{Abstract.} \textit{We study compensation phenomena for fields satisfying both a pointwise and a linear differential constraint. This 
    effect takes the form of nonlinear elliptic estimates, where constraining the values of the field to lie in a cone compensates for the lack of ellipticity of the differential operator. We give a series of new examples of this phenomenon
    for a geometric class of cones and operators such as the divergence or the curl.
    One of our main findings is that the maximal gain of integrability is tied to both the differential operator and the cone, contradicting in particular a recent conjecture from  \cite{Arroyo-Rabasa2021a}. This 
    extends the recent theory of compensated integrability due to D.\ Serre. In particular, we find a new family of integrands that are $\Div$-quasiconcave  under convex constraints.}
\vskip0.3cm

\noindent \textbf{Keywords.} Functional inequalities, compensation effects, nonlinear estimates, concentration phenomena, uniform integrability, higher integrability. 
\vspace{0.5cm}

%%      -------------------------------------------------------------------------------
%%      -------------------------- BEGIN ARTICLE ----------------------------
%%      -------------------------------------------------------------------------------
%% Authors, copy the body of your paper here

\tableofcontents
\section{Introduction}

\subsection{Nonlinear elliptic estimates}\label{sec:results}

The interactions between nonlinearities, uniform estimates and weak convergence lie at the heart of many problems in the modern theory of nonlinear PDE. Famously, the Sobolev embedding theorem allows one to deduce uniform higher integrability for a sequence of functions whose gradients are bounded in an appropriate space, often leading to uniform bounds and weak (or strong) convergence for solutions of nonlinear PDE. In many cases, however, the full gradient may not be controlled, but instead one only has bounds on a weaker, non-elliptic operator, such as the divergence or the curl, for which general Sobolev inequalities fail. In this paper, we study higher integrability under the assumptions of boundedness of such a weaker operator together with a compensating pointwise constraint.

To be more precise, the main theme of this paper is nonlinear estimates of the form
\begin{align}\label{eq:main_inequ}
\|v\|_{L^q(\B^n)}\leq C\|\A v\|_{L^p(\B^n)}\quad\text{for }v\in C_c^\infty(\B^n,\mathcal K),
\end{align}
where $v\colon \R^n \to \V$ satisfies a pointwise constraint $$v\in \mathcal K, \qquad \mathcal K\subseteq \mathbb V \text{ is a closed cone},$$ and $\mathcal{A}$ is a linear, constant coefficient differential operator $\A v\colon \R^n\to \W$ for some  finite-dimensional vector spaces $\mathbb V$ and $\mathbb W$.
We emphasize that, in \eqref{eq:main_inequ}, the nonlinearity is in the constraint set $\mathcal K$. Here and throughout $\B^n$ denotes the open unit ball. If \eqref{eq:main_inequ} holds, we expect that
\begin{equation}
    \label{eq:nowavecone}
    \mathcal K \cap \Lambda_{\mathcal A} = \{0\}, \qquad \text{where } \Lambda_{\cala}\equiv \bigcup_{\xi \in \mathbb R^{n}\backslash\{0\}}\ker \cala(\xi)
\end{equation}
and $\A(\xi)$ denotes the principal symbol of $\A$. The set $\Lambda_\A$ is known as the \textit{wave cone} of $\A$, and it describes the set of directions along which the operator is not elliptic. Since \eqref{eq:main_inequ} requires \eqref{eq:nowavecone}, which is a type of ellipticity condition of $\A$ with respect to $\mathcal K$, we refer to \eqref{eq:main_inequ} as a \textit{nonlinear elliptic estimate} for $\A$.

The simplest instance of \eqref{eq:main_inequ} is when $\mathcal K$ is  a \textit{linear} space and, in this situation, \eqref{eq:nowavecone} yields genuine ellipticity \cite[Lemma 2.7]{Muller1999a}. It goes back to the work of Calder\'on and Zygmund that, if $1<p<\infty$ and $q\leq\frac{np}{n-\ell p}$, where $\ell$ is the order of $\A$, then \eqref{eq:nowavecone} implies \eqref{eq:main_inequ}. The case $p=1$, where the Calder\'on--Zygmund theory breaks down, is surprisingly subtle, having been settled only much more recently by Van Schaftingen \cite{VanSchaftingen2011}, after the pioneering work of Bourgain and Brezis \cite{Bourgain2007}. Besides ellipticity, the estimate holds if in addition $\A$ satisfies an algebraic condition known as \textit{cancellation}.
In addition, also when $p=1$, similar estimates were recently  obtained under the assumption that $\mathcal K$ is a perturbation of a linear space \cite{Arroyo-Rabasa2021a}.

The methods developed to study \eqref{eq:main_inequ} in the case where $\mathcal K$ is linear (or close to linear) make fundamental use of the Fourier transform, which interacts poorly with pointwise constraints. Thus, in order to study the genuinely nonlinear case we are interested in here, completely different tools are needed.

To the best of our knowledge the only known instance of \eqref{eq:main_inequ} when $\mathcal K$ is genuinely nonlinear is due to Serre \cite{Serre2018a,Serre2019}.
He considered 
$\cala=\Div$ and $\mathcal K$ 
the set $\textup{Sym}^+_n$ of symmetric semi-positive definite matrices and
showed that
\begin{align}\label{eq:serre}
    \|(\det A)^{\frac 1 n}\|_{L^{\frac{n}{n-1}}(\R^n)}\leq C\|\Div A\|_{L^1(\R^n)}\quad\text{for } A\in C_c^\infty(\R^n,\mathrm{Sym}^+_n).
\end{align}
In particular, for closed cones contained in the interior of $\textup{Sym}^+_n$  we recover \eqref{eq:main_inequ} with $p=1$ and $q=\frac{n}{n-1}$. Subsequently, De Rosa--Tione showed in \cite{DeRosa2019a} that the exponent $\frac{n}{n-1}$ corresponds to the \textit{maximal gain of integrability}, 
in the sense that we cannot take  $q>\frac{n}{n-1}$, even when $p=\infty$, see already \eqref{eq:qmax} below. We also refer the reader to \cite{Serre2018a,Serre2019,Serre2021,Serre2019a} for applications of \eqref{eq:serre} to hyperbolic problems and to \cite{Bate2019} for a related result when $n=2$.

In this paper we succeed in finding new examples of nonlinear elliptic estimates \eqref{eq:main_inequ} for non-elliptic operators, such as $\Div$ or $\Curl$.
The first of our main results is the following: 
\begin{bigthm}\label{thm:det2dintro}
Let $\mathcal K\equiv \{A\in \R^{2\times 2}: a_{11},a_{22}\geq 0\}$ and let $A_1,A_2$ be the rows of $A\in \R^{2\times 2}$. Then
$$\int_{\R^2}\det A \dif x \leq \|\di A_1\|_{L^1(\R^2)}\|\di A_2\|_{L^1(\R^2)}\quad 
\text{ for all } A\in C^\infty_c(\R^2,\mathcal K).$$
In particular, writing $Q_2^+(K)=\{A\in \R^{2\times 2}: \|A\|^2\leq K \det A\}$, we have
\begin{equation*}
\|A \|_{L^2(\R^2)}^2 \leq C\|\di A_1\|_{L^1(\R^2)}\|\di A_2\|_{L^1(\R^2)}\quad \text{ for all } A\in C_c^\infty(\R^2,\mathcal K\cap Q_2^+(K)).
\end{equation*}
\end{bigthm}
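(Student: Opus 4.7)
The plan is to derive a pointwise identity writing $\det A$ as a null-Lagrangian term plus a remainder built from suitable antiderivative potentials of $a_{11}, a_{22}, f_1, f_2$; the positivity of $a_{11}, a_{22}$ and the cancellation $\int f_i\,dx=0$ (which follows from compact support) then give sharp $L^\infty$ bounds on these potentials.

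Set $f_i := \di A_i$ and define
\begin{equation*}
P(x_1, x_2) := \int_{-\infty}^{x_2} a_{11}(x_1, s)\,ds, \qquad Q(x_1, x_2) := \int_{-\infty}^{x_1} a_{22}(t, x_2)\,dt,
\end{equation*}
with $\Phi_1, \Phi_2$ defined by the same formulae with $f_i$ in place of $a_{ii}$. By positivity $P, Q \geq 0$, and one checks that $\partial_{x_2} P = a_{11}$, $\partial_{x_1} Q = a_{22}$, and $a_{12} = \Phi_1 - \partial_1 P$, $a_{21} = \Phi_2 - \partial_2 Q$. To sharpen bounds, symmetrize: let $\tilde P := P - \alpha_1(x_1)/2$ with $\alpha_1(x_1) := \int_\R a_{11}(x_1, s)\,ds$, and define $\tilde Q, \tilde\Phi_1, \tilde\Phi_2$ analogously; the relations $a_{12} = \tilde\Phi_1 - \partial_1 \tilde P$, $a_{21} = \tilde\Phi_2 - \partial_2 \tilde Q$ are preserved. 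Plugging into $\det A = a_{11}a_{22}-a_{12}a_{21}$ and expanding gives the pointwise identity
\begin{equation*}
\det A = -\bigl(\partial_1 \tilde P\,\partial_2 \tilde Q - \partial_2 \tilde P\,\partial_1 \tilde Q\bigr) - \tilde\Phi_1\tilde\Phi_2 + \tilde\Phi_1 \partial_2 \tilde Q + \tilde\Phi_2 \partial_1 \tilde P.
\end{equation*}
The first parenthesized term is the Jacobian of $(\tilde P, \tilde Q)\colon \R^2 \to \R^2$, which is a null Lagrangian: it equals $\partial_1(\tilde P\,\partial_2 \tilde Q) - \partial_2(\tilde P\,\partial_1 \tilde Q)$ and integrates to zero, because $\tilde P$ is compactly supported in $x_1$ (inherited from $A_1$) and $\tilde Q$ vanishes as $x_2 \to \pm\infty$ (since $a_{22}$ is compactly supported in $x_2$). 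Two integrations by parts, using $\partial_{x_2}\tilde\Phi_1 = f_1$ and $\partial_{x_1}\tilde\Phi_2 = f_2$ with vanishing boundary terms, then produce the key identity
\begin{equation*}
\int_{\R^2} \det A\,dx = -\int \tilde\Phi_1 \tilde\Phi_2\,dx - \int f_1 \tilde Q\,dx - \int f_2 \tilde P\,dx.
\end{equation*}

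Each of the three remainders admits a sharp $L^\infty \times L^1$ estimate. Since $\int_{\R^2} f_2\,dx = 0$ by the divergence theorem, the marginal $h_2(x_2) := \int f_2(\cdot, x_2)\,dx_1$ has $\int_\R h_2\,dx_2 = 0$, so its non-negative primitive $\alpha_2(x_2) = \int_{-\infty}^{x_2} h_2$, which equals $\int a_{22}(\cdot, x_2)\,dt$, satisfies $\|\alpha_2\|_\infty \leq \tfrac{1}{2}\|h_2\|_1 \leq \tfrac{1}{2}\|f_2\|_1$. Combined with the monotonicity bound $0 \leq Q \leq \alpha_2$, this yields $\|\tilde Q\|_\infty \leq \tfrac{1}{4}\|f_2\|_1$, and symmetrically $\|\tilde P\|_\infty \leq \tfrac{1}{4}\|f_1\|_1$; the same symmetrization gives $|\tilde\Phi_i|$ bounded by half the $L^1$-norm of $f_i$ in the transverse variable. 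Inserting these bounds and using Fubini, each of the three integrals is $\leq \tfrac{1}{4}\|f_1\|_1 \|f_2\|_1$, so $\int \det A \leq \tfrac{3}{4}\|f_1\|_1 \|f_2\|_1 \leq \|f_1\|_1 \|f_2\|_1$, as claimed. The second inequality in the statement then follows at once from the pointwise bound $\|A\|^2 \leq K\det A$ for $A \in Q_2^+(K)$.

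The main obstacle is algebraic: finding potentials so that (i) the dominant contribution to $\det A$ becomes a Jacobian (null Lagrangian) integrating to zero, and (ii) the remainder decomposes into three pieces each admitting a sharp $L^\infty \times L^1$ bound. The pointwise positivity of $a_{11}, a_{22}$ is essential to make $P, Q$ monotone in a coordinate and hence uniformly bounded, and the symmetrization — which uses $\int f_i\,dx = 0$ — halves each bound and brings the final constant below $1$.
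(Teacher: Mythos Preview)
Your proof is correct and takes a genuinely different route from the paper's. The paper works in conformal/anti-conformal coordinates and runs a direct Gagliardo--Nirenberg slicing argument: it writes $a_{11}$ and $a_{22}$ via the fundamental theorem of calculus as an integral of $|\di A_i|$ plus a derivative remainder, multiplies the two pointwise inequalities (this is where positivity of the diagonal entries is used), integrates, and observes that two cross terms vanish while a third reconstructs the off-diagonal part of $\det A$. Your approach is instead potential-theoretic: you introduce primitives $P,Q,\Phi_1,\Phi_2$, derive a pointwise identity for $\det A$ in which the leading term is the Jacobian of $(\tilde P,\tilde Q)$, recognise this as a null Lagrangian, and then bound the three remainder integrals by sharp $L^\infty\times L^1$ estimates on the symmetrized potentials (the symmetrization being the key trick that halves each bound). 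Both arguments rest on the same two ingredients---the sign constraint on $a_{11},a_{22}$ making certain primitives monotone, and a null-Lagrangian cancellation---but your organisation isolates the null-Lagrangian structure more transparently and in fact yields the better constant $3/4$ rather than the paper's $1$. The paper's presentation, on the other hand, makes the symmetry under rotation by $\pi/2$ manifest, which is how it extends the estimate to the full family of eight cones $\mathcal K_i^\pm$ in Theorem~\ref{thm:slicing}.
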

Noting that $\|\di A_1\|_{L^1}\|\di A_2\|_{L^1}\leq \|\Div A\|_{L^1}^2$, Theorem \ref{thm:det2dintro} should be compared with  Serre's inequality \eqref{eq:serre} above. 
In particular, Theorem \ref{thm:det2dintro} shows that, in the two dimensional case, \eqref{eq:serre} holds for a larger cone which is not necessarily contained in the space of symmetric matrices.
The cone in Theorem \ref{thm:det2dintro} is in some sense optimal, as the above estimates do not hold in the absence of one-sided constraints. However, for $p>1$, such one-sided constraints are not necessary, and we have
\begin{equation}
    \label{eq:strongdiv}
\|A \|_{L^{p^*}(\R^2)} \leq C\|\Div  A\|_{L^p(\R^2)}\quad \text{ for all } A\in C_c^\infty(\R^2, Q_2^+(K))
\end{equation}
if and only if
\begin{equation*}
    \label{eq:prangestrongdiv}
    1<p<\frac{2K}{2K-1},
\end{equation*}
see Proposition \ref{prop:strongdiv} below. Moreover, not even the estimate
$$\|A\|_{L^{\frac{2K}{2K-1}}(\mathbb B^2)}\leq C \|\Div A \|_{L^\infty(\mathbb B^2)} \quad \text{ for all } A\in C_c^\infty(\mathbb B^2, Q_2^+(K))$$
holds.
Note that if $K>1$ then $\frac{2K}{2K-1}<2$ so \eqref{eq:strongdiv} does not hold in the range $1<p<n = 2$. 

The above shows that, while pointwise constraints may allow elliptic-type estimates for non-elliptic operators, the gain of integrability fails in general to be as good as in the linear elliptic theory: there is a number $q_{\max}=q_{\max}(\A,\mathcal K)$ such that 
$$q>q_{\max} \implies \text{there is no } C \text{ such that } \|v\|_{L^q(\B^n)}  \leq C \|\A v\|_{L^\infty(\B^n)} \text{ for } v\in C^\infty_c(\B^n,\mathcal K).$$
We emphasize that $q_{\max}$ depends on both $\mathcal A$ and $\mathcal K$ and we refer to it as the \textit{maximal gain of integrability} induced by $\A$ on $\mathcal K$. Thus
\begin{equation}
    \label{eq:qmax}
q_{\max}(\Div, \textup{Sym}^+_n) = \frac{n}{n-1}, \qquad q_{\max} (\Div, Q_2^+(K))   = \frac{2K}{2K-1}.
\end{equation}

After this paper was completed, we learned that De Philippis--Pigati had independently proved Theorem \ref{thm:det2dintro} in order to establish a Michael--Simon inequality for anisotropic varifolds \cite{DePhilippis2022}. Moreover, in the very interesting paper \cite{Colombo2022}, Colombo--Tione have shown that Theorem \ref{thm:det2dintro} also yields regularity of certain very weak solutions of the $p$-Laplace equation.

Our next result shows that, in higher dimensions, there are stronger estimates for the divergence in smaller convex cones. To be precise, we consider the G\r{a}rding cones
$$\Gamma_k=\{A\in \textup{Sym}_n\,\colon \,F_j(A)>0\text{ for all }j=1,\ldots,k\},$$
where $F_j(A)$ is the following normalization of the $j$-th elementary symmetric polynomial on the vector of eigenvalues $\lambda(A)$ of $A$:
\begin{equation}
    \label{eq:defFk}
F_j(A)= \frac{1}{{n \choose j}}\sum_{i_1<\cdots<i_j}\lambda_{i_1}(A)\cdots\lambda_{i_j}(A).
\end{equation}
The G\r arding cones, introduced in the seminal work \cite{Garding1959}, play an important role in the theory of fully nonlinear elliptic equations \cite{Caffarelli1985,Wang2009}, of quasiconvex functions \cite{Faraco2008} and of curvature flows in differential geometry, for example in Huisken--Sinestrari \cite{Huisken1999} (see also \cite{Gerhardt1990,Urbas1990}). Besides the G\r{a}rding cones, we will consider their dual convex cones
$$\Gamma_k^*=\{A\in\textup{Sym}_n\,\colon\,\langle A,B\rangle\geq 0\text{ for all }B\in\Gamma_k\}.$$
For instance, we have
\begin{gather*}
\Gamma_n=\Gamma_n^*=\textup{Sym}^+_n\qquad \text{and } \qquad
\Gamma_1=\{A\in \textup{Sym}_n\colon \tr(A)\geq 0\},\quad  \Gamma_1^*=\{t I_n \colon t\geq 0\}.
\end{gather*}
The dual cones have been exploited by Krylov \cite{Krylov1987} (cf.~\cite{Ivochkina2012}) in the study of Bellman equations and by Kuo--Trudinger \cite{Kuo2007} to derive Alexandrov--Bakelman--Pucci type estimates.
We refer the reader to Section~\ref{sec:cones} below for further properties of these cones. 

In the next statement and throughout we write $p^*=\frac{np}{n-p}$ for the Sobolev exponent of $p$.

\begin{bigthm}\label{thm:maindiv}
          Let  $n\geq2$, $k\in\{2,\ldots,n\}$, $p^*>\frac{k}{k-1}$ (or $p^*\geq\frac{n}{n-1}$ if $k=n$), and let $\mathcal K$ be a closed, convex cone contained in $\textup{int}\,\Gamma_k^*\cup\{0\}$. Then there exists $C=C(\mathcal K)>0$ such that
     $$\|A\|_{L^{\frac{k}{k-1}}(\B^n)}\leq C\|\Div A\|_{L^p(\B^n)}\quad 
     \text{for all } A\in C_c^\infty(\B^n, \mathcal K).$$
     Moreover, the exponent on the left hand side is the best possible, since for each $\e>0$ there is a cone $\mathcal K^\e \subset \mathrm{int}\, \Gamma_k^*\cup\{0\}$ such that the following estimate fails for any $C>0$:
      $$\|A\|_{L^{\frac{k}{k-1}+\e}(\B^n)}\leq C\|\Div A\|_{L^\infty(\B^n)}\quad \text{for all } A\in C^\infty_c(\B^n,\mathcal K^\e).$$
      In addition, for $k<n$, the 
        restriction to the subcritical range $p^*>\frac{k}{k-1}$ is necessary, 
      as the analogue of \eqref{eq:serre} with the appropriate nonlinear functional replacing $\det$ does not hold. 
\end{bigthm}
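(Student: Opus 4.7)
The proof splits into three parts matching the three claims. For the main inequality in the case $k=n$, where $\Gamma_n^*=\textup{Sym}^+_n$, the conclusion reduces to Serre's inequality \eqref{eq:serre}: since $\mathcal K\subset\textup{int}\,\textup{Sym}^+_n\cup\{0\}$ is a closed convex cone, the $1$-homogeneity of $\det^{1/n}$ and compactness on the unit sphere give $|A|\leq c_{\mathcal K}(\det A)^{1/n}$ on $\mathcal K$, so \eqref{eq:serre} settles the case $p=1$ and H\"older on $\B^n$ extends it to all $p$ with $p^*\geq n/(n-1)$.

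For $2\leq k<n$ the natural target is the \emph{subcritical} Sobolev-type bound $\|A\|_{L^{p^*}(\B^n)}\leq C\|\Div A\|_{L^p(\B^n)}$ for $A\in C_c^\infty(\B^n,\mathcal K)$, which then yields the $L^{k/(k-1)}$-bound by H\"older since $p^*>k/(k-1)$. I would attempt this via duality with $k$-concave scalar potentials: for $\phi\in C_c^\infty(\B^n)$ with $-\nabla^2\phi\in\Gamma_k$, symmetry of $A$ and integration by parts yield
\[
\int\langle A,-\nabla^2\phi\rangle\,\mathrm{d}x=\int\Div A\cdot\nabla\phi\,\mathrm{d}x,
\]
with the left-hand side pointwise nonnegative since $A\in\Gamma_k^*$. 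Combining this with the one-sided comparison $|A|\lesssim\langle A,B\rangle$ available on $\mathcal K$ by compactness inside $\textup{int}\,\Gamma_k^*\cup\{0\}$ for $B$ in a fixed small cone within $\Gamma_k$, and with the Sobolev embedding $\|\nabla\phi\|_{L^{p'}}\lesssim\|\nabla^2\phi\|_{L^{(p^*)'}}$, should deliver the required $L^{p^*}$-estimate.

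For the sharpness of the exponent $k/(k-1)$, for each $\epsilon>0$ I would construct a narrow closed convex cone $\mathcal K^\epsilon\subset\textup{int}\,\Gamma_k^*\cup\{0\}$ aligned close to $\partial\Gamma_k^*$, together with a $\delta$-family of concentrated test fields $A_\delta\in C_c^\infty(\B^n,\mathcal K^\epsilon)$. Rescaling gives $\|A_\delta\|_{L^q}\sim\delta^{n/q}$ and $\|\Div A_\delta\|_{L^\infty}\sim\delta^{-1}$, so any valid estimate forces $q\leq k/(k-1)$; $\mathcal K^\epsilon$ is taken narrow enough that the pointwise constraint does not improve this scaling. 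For the necessity of strict subcriticality when $k<n$, I would show that the critical-scale analogue of \eqref{eq:serre}---namely $\|A\|_{L^{k/(k-1)}}\leq C\|\Div A\|_{L^{p_c}}$ with $p_c$ defined by $p_c^*=k/(k-1)$---fails, by constructing rescaled, oscillatory fields in $\mathcal K$ which saturate the scaling but blow up the ratio; the extra degrees of freedom in $\Gamma_k^*$ for $k<n$ (relative to the strict cone $\Gamma_1^*=\R_+ I$) permit such configurations while remaining compatible with the pointwise constraint.

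The main obstacle is the Sobolev-type bound for $2\leq k<n$. Serre's original inequality on $\Gamma_n^*=\textup{Sym}^+_n$ is underpinned by the Piola identity for cofactor matrices---algebraic structure special to $k=n$---which has no clean counterpart on the dual G\r{a}rding cone when $k<n$. The duality route above must compensate with a careful selection of test potentials, control on the set of admissible Hessians $\{-\nabla^2\phi:\phi\in C_c^\infty(\B^n),\,-\nabla^2\phi\in\Gamma_k\}$ (a proper subset of $L^{(p^*)'}(\B^n,\Gamma_k)$), and a quantitative form of cone positivity for $\mathcal K$. Moreover the argument is intrinsically subcritical, consistent with the failure of the critical-scale inequality indicated in the last part of the statement.
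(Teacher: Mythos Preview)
Your approach to the main inequality for $2\leq k<n$ contains a fundamental error: the intermediate target
\[
\|A\|_{L^{p^*}(\B^n)}\leq C\|\Div A\|_{L^p(\B^n)}
\]
is \emph{false} in general. Since $p^*>\frac{k}{k-1}$, this would be an estimate at exponent $\frac{k}{k-1}+\e$ for some $\e>0$, and the very ``moreover'' clause of the theorem (which you are also asked to prove) asserts that such an estimate fails for a suitable cone $\mathcal K^\e\subset\textup{int}\,\Gamma_k^*\cup\{0\}$. Concretely, the paper exhibits a \emph{divergence-free} field $A_\e\in C^\infty(\B^n\setminus\{0\},\mathcal K^\e)$ with $A_\e\in L^{k/(k-1)}\setminus L^{k/(k-1)+\e}$; for such fields the right-hand side vanishes while the left does not. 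So you cannot pass through $L^{p^*}$ and then descend by H\"older.

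What the paper does instead is aim directly at the exponent $\frac{k}{k-1}$, and the test function is not arbitrary but produced by solving a $k$-Hessian equation. One sets $f=\rho_k^*(A)^{1/(k-1)}$, solves $\rho_k(\D^2\psi)=f$ on a larger ball with zero boundary, and uses the sharp duality pairing $\rho_k^*(A)\rho_k(\D^2\psi)\leq\frac{1}{n}\langle A,\D^2\psi\rangle$ (not merely $|A|\lesssim\langle A,B\rangle$). After one integration by parts the right side is $\|\Div A\|_{L^p}\|\D\psi\|_{L^{p'}}$, and the crucial analytic input is the Trudinger--Wang interior gradient bound $\|\D\psi\|_{L^{p'}}\lesssim\|\psi\|_{L^1}\lesssim\|f\|_{L^k}$, valid precisely when $p'<\frac{nk}{n-k}$, i.e.\ $p^*>\frac{k}{k-1}$. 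Your sketch invokes only the linear Sobolev embedding $\|\nabla\phi\|_{L^{p'}}\lesssim\|\nabla^2\phi\|_{L^{(p^*)'}}$ and gives no mechanism for generating a Hessian in $\Gamma_k$ that is adapted to $A$; the $k$-Hessian equation is exactly that mechanism, and its nonlinear a priori estimates are what single out $\frac{k}{k-1}$ rather than $p^*$.

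Your optimality argument is also off. A pure rescaling $A_\delta(x)=\delta^{-\alpha}A(x/\delta)$ gives
\[
\frac{\|A_\delta\|_{L^q}}{\|\Div A_\delta\|_{L^\infty}}\sim \delta^{\,n/q+1}\to 0
\]
for every $q$, so no exponent is excluded. The paper's counterexamples are not scaling families: for the sharpness of $\frac{k}{k-1}$ one takes $A_\e=\nabla F_k(\D^2 u_\e)$ with $u_\e$ a radial power, which is automatically divergence-free by the null-Lagrangian structure of $F_k$ and has the precise integrability $L^{k/(k-1)}\setminus L^{k/(k-1)+\e}$. For the failure at the critical $p^*=\frac{k}{k-1}$ one uses $\nabla F_k(\D^2 w_k^\e)$ with $w_k^\e$ solving a $k$-Hessian equation with approximate-Dirac right-hand side, multiplied by a $\log\log$ cutoff; the point is that the gain-of-regularity that makes $k=n$ critical (the Monge--Amp\`ere Lipschitz bound) breaks down for $k<n$. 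Your ``oscillatory fields saturating the scaling'' does not touch either construction.
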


Theorem \ref{thm:maindiv} provides a \textit{discrete interpolation} between  Serre's inequality, when $k=n$, and the classical Sobolev embedding $W^{1,p}_0(\B^n)\hookrightarrow L^\infty(\B^n)$ for $p>n$, when $k=1$. Indeed, as $\Gamma_1^*$ is a half-line through the identity, fields constrained to it can be identified with scalar fields, and the divergence can be identified with the gradient. As $k$ decreases, the cones $\Gamma_k^*$ also decrease, and we approach the scalar case, getting higher and higher integrability. We make this point more precise: if we interpolate the exponent $k$ of the cone $\Gamma_k$, we obtain the parameter $\theta$ solving
$$
\frac{1}{k}=\frac{1-\theta}{n}+\frac{\theta}{1}.
$$
Suppose now that we use the same $\theta$ to interpolate the (critical) exponents $q(k)=\tfrac{k}{k-1}$ on the LHS, and  $p(k)=\tfrac{nk}{nk-n+k}$ on the RHS. One can easily check that
$$
\frac{1}{q(k)}=\frac{1-\theta}{q(n)}+\frac{\theta}{q(1)}, \quad\frac{1}{p(k)}=\frac{1-\theta}{p(n)}+\frac{\theta}{p(1)}.
$$
Therefore the same fraction $\theta$ that we used to ``interpolate'' between $\Gamma_1$ and $\Gamma_n$ leads precisely to the interpolation of the exponents on each side of the inequality.

{Moreover, it should be noted that the estimate of Theorem B degenerates as the cone $\mathcal{K}$ expands towards the boundary of $\Gamma_k^*$; the closer the cone is to the boundary, the larger the constant $C$. Similarly, in the non-inequality of the theorem, the smaller $\eps$ is, the larger the corresponding $\mathcal K^\e$ will be; cf.\ Propositions \ref{prop:Div} and \ref{prop:optimal_div}.}

In contrast to Theorem \ref{thm:det2dintro}, we cannot obtain global estimates since we are confined to subcritical exponents $\tfrac{k}{k-1}<p^*$ if $k<n$. This can be seen from the rescaling
 $$\|A\|_{L^{\frac{k}{k-1}}(r\B^n)}\leq Cr^{n(\frac{k-1}{k}-\frac{1}{p^*})}\|\Div A\|_{L^p(r\B^n)}\quad 
     \text{for all } A\in C_c^\infty(r\B^n, \mathcal K),$$
     where clearly the constant blows up as $r\rightarrow\infty$. This is not particular to our nonlinear estimates, it is a feature solely due to homogeneity and the failure of the end-point estimate (see Proposition \ref{prop:endpointk<n}).

Through a nonlinear duality argument, Theorem \ref{thm:maindiv} yields elliptic estimates for the curl:

\begin{bigthm}\label{thm:maincurl}
     Let  $n\geq2$, $k\in\{2,\ldots,n\}$, $p^*>k$ (or $p^* =n$ if $k=n$) and let $\mathcal K$ be a closed, convex cone contained in $\textup{int}\,\Gamma_k\cup\{0\}$. Then there exists $C=C(\mathcal K)>0$ such that
     $$\|A\|_{L^{k}(\B^n)}\leq C\|\Curl A\|_{L^p(\B^n)} \quad \text{for all }A\in C_c^\infty(\B^n,\mathcal K),$$
     and in fact we have the additional improvement
     $$\|A\|_{L^{k}\log L(\B^n)}\leq C\|\Curl A\|_{L^p(\B^n)} \quad \text{for all }A\in C_c^\infty(\B^n,\mathcal K).$$
     Moreover, the exponent on the left hand side is the best possible, since for each $\e>0$ there is a cone $\mathcal K^\e \subset \mathrm{int}\, \Gamma_k\cup\{0\}$ such that the following estimate fails:
      $$\|A\|_{L^{k+\e}(\B^n)}\leq C\|\Curl A\|_{L^\infty(\B^n)}\quad \text{for all } A\in C^\infty_c(\B^n,\mathcal K^\e).$$
\end{bigthm}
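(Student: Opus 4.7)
My plan is to deduce this curl estimate from the divergence estimate of Theorem \ref{thm:maindiv} via a nonlinear duality argument, exploiting the polar duality $\Gamma_k\leftrightarrow\Gamma_k^*$ together with the Hodge-type adjointness of $\Div$ and $\Curl$. The starting point is a duality reduction: since $\Gamma_k^*\subset\textup{Sym}_n^+\subset\Gamma_k$ and every $A\in\mathcal K\subset\mathrm{int}\,\Gamma_1$ pairs positively and uniformly with the identity matrix (because the trace is strictly positive on a strict subcone of $\mathrm{int}\,\Gamma_1$), one can find a strict convex subcone $\tilde{\mathcal K}\subset\mathrm{int}\,\Gamma_k^*\cup\{0\}$ and a constant $c>0$ such that
\[
c\|A\|_{L^k(\B^n)}\leq\sup\Bigl\{\int_{\B^n}\langle A,B\rangle\,\dif x:\ B\in C_c^\infty(\B^n,\tilde{\mathcal K}),\ \|B\|_{L^{k'}(\B^n)}\leq 1\Bigr\},
\]
where $k'=k/(k-1)$. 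This reduces the task to bounding $\int\langle A,B\rangle\,\dif x$ by $C\|\Curl A\|_{L^p}\|B\|_{L^{k'}}$ for each admissible test $B$.

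To estimate the pairing I would use a Helmholtz decomposition $A=\nabla u+A_F$ with $F=\Curl A$, $\Div A_F=0$, and $\Curl A_F=F$. The field $A_F$ is a first-order Riesz potential of $F$, so the Sobolev embedding combined with $p^*\geq k$ yields $\|A_F\|_{L^k(\B^n)}\leq C\|F\|_{L^p}$. Integration by parts gives
\[
\int\langle A,B\rangle\,\dif x=\int\langle A_F,B\rangle\,\dif x-\int u\cdot\Div B\,\dif x.
\]
The first term is immediately bounded by $C\|F\|_{L^p}\|B\|_{L^{k'}}$ via H\"older. For the second term one invokes Theorem \ref{thm:maindiv} applied to $B\in\tilde{\mathcal K}$, which relates $\|B\|_{L^{k'}}$ to $\|\Div B\|_{L^q}$ with $q^*>k'$ chosen compatible with the Sobolev estimates on $u$. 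The $L^k\log L$ refinement is obtained by running the same duality in Orlicz scales, pairing $A$ against the Zygmund class $L^{k'}(\log L)^{-1}$ and appealing to a logarithmic improvement of Theorem \ref{thm:maindiv} at the critical Sobolev endpoint.

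The hard part is making the bound on $\int u\cdot\Div B$ genuinely non-circular: the naive Sobolev estimate $\|\nabla u\|_{L^k}\leq \|A\|_{L^k}+\|A_F\|_{L^k}$ reintroduces $\|A\|_{L^k}$ on the right-hand side. Overcoming this requires using the cone constraints on \emph{both} $A$ and $B$ essentially, for example by choosing $B$ adaptively so that $\Div B$ is nearly orthogonal to gradient vector fields — a property that appears naturally for Newton-tensor type constructions in $\Gamma_k^*$ coming from the theory of $k$-Hessian equations, which carry structural divergence identities. For the sharpness clause I would invert the duality: an $L^{k+\eps}$ curl estimate on a cone $\mathcal K^\eps\subset\mathrm{int}\,\Gamma_k\cup\{0\}$ would translate, via the same pairing, into an $L^{k/(k-1)-\eps'}$ divergence estimate on its polar cone, contradicting the sharpness clause of Theorem \ref{thm:maindiv}; accordingly, the witness cone $\mathcal K^\eps$ is taken to be the polar of the sharp-example cone from Theorem \ref{thm:maindiv}, with the saturating fields obtained by Hodge-dualizing the construction used there.
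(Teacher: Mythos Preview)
Your high-level intuition---that the curl estimate on $\Gamma_k$ should follow by nonlinear duality from the divergence estimate on $\Gamma_k^*$---is exactly the paper's strategy. However, your implementation has a genuine gap that you yourself identify and do not close. The linear duality reduction followed by a Helmholtz decomposition $A=\nabla u+A_F$ is circular: bounding $\int u\cdot\Div B$ requires control on $u$ in some $L^q$, which by Sobolev reduces to $\|\nabla u\|_{L^{q_*}}\leq\|A\|_{L^{q_*}}+\|A_F\|_{L^{q_*}}$, reintroducing the quantity you are trying to estimate. Your proposed fix---choosing $B$ ``adaptively'' via Newton-tensor constructions so that $\Div B$ is nearly orthogonal to gradients---is too vague to constitute a proof; in particular, you do not exhibit any such $B$ or any mechanism that actually kills the gradient part.

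The paper avoids this entirely by \emph{not} testing against an arbitrary $B\in\Gamma_k^*$. Instead it uses the specific nonlinear map $A\mapsto B\equiv\tfrac1k\nabla F_k(A)=\rho_k^{k-1}(A)\nabla\rho_k(A)$, which by Lemma~\ref{lemma:attainment} lands in $\Gamma_k^*$ and satisfies the exact identity $\rho_k^*(B)^{k/(k-1)}=c_n\,\rho_k(A)^k$. The crucial ingredient you are missing is the quantitative null Lagrangian estimate of Corollary~\ref{cor:divrhok*}:
\[
\bigl|\Div\bigl(\rho_k^{k-1}(A)\nabla\rho_k(A)\bigr)\bigr|\leq C\,|\Curl A|\,|A|^{k-2}.
\]
This is what replaces your Helmholtz step and breaks the circularity: applying Proposition~\ref{prop:Div} to $B$ and then H\"older gives
\[
\int\rho_k(A)^k\lesssim\|\Div B\|_{L^r}^{k/(k-1)}\lesssim\|\Curl A\|_{L^p}^{k/(k-1)}\|A\|_{L^k}^{k(k-2)/(k-1)},
\]
and the power $\tfrac{k(k-2)}{k-1}<k$ on the right can be absorbed. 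This is Proposition~\ref{prop:curlmain}.

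For the $L^k\log L$ improvement, your Orlicz-duality sketch relies on a ``logarithmic improvement of Theorem~\ref{thm:maindiv} at the critical endpoint'' which is not established anywhere. The paper instead uses that $F_k$ is $\Curl$-quasiaffine (a null Lagrangian) and invokes the general $L\log L$ bound for quasiaffine integrands, Corollary~\ref{cor:GRS}. For sharpness, your idea of dualizing the optimal example from Theorem~\ref{thm:maindiv} is morally correct---indeed the paper's examples in Propositions~\ref{prop:optimal_div} and~\ref{prop:Curl_optimal} are related precisely by $\nabla F_k$---but the paper gives the direct radial construction $\D^2\phi_\e$ (Lemma~\ref{lem:radial}) rather than proving an abstract transfer principle.
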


We emphasize the nonlinear duality between Theorems \ref{thm:maindiv} and \ref{thm:maincurl}: on the cone $\Gamma_k$ (respectively $\Gamma_k^*$) with operator $\Curl$ (respectively $\Div$), the maximal gain of integrability is $k$ (respectively $k'=\frac{k}{k-1}$). As in \eqref{eq:strongdiv}, an interesting feature of Theorems~\ref{thm:maindiv} and \ref{thm:maincurl} is that the exponents $\frac{k}{k-1}$ and $k$ on the left-hand side, which are optimal, depend on the cone of interest and are \emph{strictly} smaller than the Sobolev exponent $p^*=\frac{np}{n-p}$ for all $p$ in the ranges given, just as in \eqref{eq:qmax}.

The next theorem is a higher order variant of Theorem \ref{thm:maindiv}, and in this case we are able to obtain inequalities in a \textit{full range of exponents}, including the critical case. We let $p^{**}=\frac{np}{n-2p}$ denote the second Sobolev exponent and, by convention, when $p\geq\frac{n}{2}$, we let $p^{**}\equiv\infty$. Below constants $C=C(\mathcal K)$ depend on closed convex cones $\mathcal K$.

\begin{bigthm}\label{thm:div^2}
     Let  $n\geq2$, $k\in\{2,\ldots,n\}$, and let $\mathcal K$ be a closed convex cone contained in $\mathrm{int}\,\Gamma_k^*\cup\{0\}$. Then, whenever $p>1$ and $1\leq q\leq\min\{p^{**},\frac{k}{k-1}\}$,
     \begin{equation}
         \label{eq:estimatediv^2}
             \|A\|_{L^{q}(\B^n)}\leq C\|\Div^2A\|_{L^p(\B^n)}\quad\text{for all }A\in C_c^\infty(\B^n,\mathcal K).
      \end{equation}
If $k> \frac{n}{2}$, so that $\frac{k}{k-1}<1^{**}$, we also have $$
\|A\|_{L^{\frac{k}{k-1}}(\B^n)}\leq C\|\Div^2A\|_{L^1(\B^n)}\quad\text{for all }A\in C_c^\infty(\B^n,\mathcal K).
$$
     Moreover, the exponent $\frac{k}{k-1}$ on the left hand side is the best possible, since for each $\e>0$ there is a cone $\mathcal K^\e \subset \mathrm{int}\, \Gamma_k\cup\{0\}$ such that the following estimate fails:
      $$\|A\|_{L^{\frac{k}{k-1}+\e}(\B^n)}\leq C\|\Div^2 A\|_{L^\infty(\B^n)}\quad \text{for all } A\in C^\infty_c(\B^n,\mathcal K^\e).$$
\end{bigthm}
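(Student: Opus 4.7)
The plan is to reduce the second-order estimate to the first-order divergence estimate of Theorem \ref{thm:maindiv} via classical elliptic regularity. Given $A \in C_c^\infty(\B^n, \mathcal K)$, set $f = \Div^2 A$ and introduce the scalar potential $\phi$ solving $-\Delta \phi = f$ on $\R^n$, with $f$ extended by zero outside $\B^n$. Calder\'on--Zygmund theory and Sobolev embedding yield $\|\phi\|_{L^{p^{**}}(\B^n)} \leq C \|f\|_{L^p(\B^n)}$ for $p > 1$, with the convention $p^{**} = \infty$ when $p \geq n/2$. A short computation shows that $B := A + \phi I_n$ is $\Div^2$-free, so control of $A$ splits into control of $\phi I_n$ (handled by the elliptic bound) and control of the homogeneous residual $B$ via the cone constraint.

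For the Sobolev range $q \leq p^{**}$, the control of $B$ rests on the nonlinear ellipticity $\mathcal K \cap \Lambda_{\Div^2} = \{0\}$ of $\Div^2$ on $\mathrm{int}\,\Gamma_k^* \cup \{0\}$, combined with the geometric fact that $I_n$ lies in the interior of $\Gamma_k^*$, so that the correction $\phi I_n$ is compatible with the cone. For the cone-dominated range $q = k/(k-1) \leq p^{**}$, one instead applies Theorem \ref{thm:maindiv} directly to $A$, having first established a Poincar\'e--Hardy estimate $\|\Div A\|_{L^{\tilde p}(\B^n)} \leq C \|\Div^2 A\|_{L^p(\B^n)}$ via the compact support of $\Div A$ together with the zero-integral condition $\int_{\B^n} \Div A \dif x = 0$.

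The endpoint case $p = 1$ with $k > n/2$ bypasses the failure of Calder\'on--Zygmund at $L^1$ by adapting Serre's compensated integrability argument \eqref{eq:serre} to the second-order setting, leveraging the convexity of $\mathcal K$ and the positive-definiteness of elements in $\Gamma_k^* \subset \mathrm{Sym}^+_n$. Sharpness of the exponent $k/(k-1)$ is inherited from Theorem \ref{thm:maindiv}: starting from the extremal cones $\mathcal K^\e$ and fields $A^\e$ saturating the first-order inequality, an antiderivative produces matrix fields with $\|\Div^2 A^\e\|_{L^\infty}$ uniformly bounded but $\|A^\e\|_{L^{k/(k-1)+\e}}$ unbounded. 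The main obstacle I anticipate is the first step, namely controlling the $\Div^2$-free residual $B$ without additional pointwise information beyond the cone constraint on $A$; this forces a quantitative nonlinear-ellipticity argument that genuinely exploits the structure of $\mathrm{int}\,\Gamma_k^*$ rather than a purely linear Helmholtz-type decomposition.
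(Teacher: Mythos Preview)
Your proposal contains genuine gaps and does not match the paper's method.

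\textbf{The decomposition fails.} Writing $B=A+\phi I_n$ with $\Div^2 B=0$ is correct algebraically, but you then need to bound $B$ in $L^q$. Since $\phi$ has no sign, $B$ need not lie in $\mathcal K$ (nor even in $\Gamma_k^*$), so no cone-based estimate is available for it. Without the pointwise constraint, $\Div^2 B=0$ gives no integrability information whatsoever: the kernel of $\Div^2$ is enormous and contains fields of arbitrarily bad integrability. You flag this as ``the main obstacle'' but offer no mechanism to overcome it; there is none along these lines.

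\textbf{The Poincar\'e--Hardy step is false.} You claim $\|\Div A\|_{L^{\tilde p}(\B^n)}\leq C\|\Div^2 A\|_{L^p(\B^n)}$ from compact support and zero mean. But $v=\Div A$ is a vector field whose \emph{scalar} divergence is controlled; the operator $\di$ on vector fields is not elliptic, so no Sobolev-type gain follows. Concretely, any compactly supported divergence-free vector field $v$ satisfies $\di v=0$ and $\int v=0$ yet can have arbitrarily large $L^{\tilde p}$ norm. Hence you cannot feed Theorem~\ref{thm:maindiv} this way.

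\textbf{What the paper does instead.} The paper avoids decomposition entirely and argues by duality with a $k$-Hessian test function. One sets $f=\rho_k^*(A)^{q-1}$, solves the Dirichlet problem $\rho_k(\D^2\psi)=f$, $\psi|_{\partial\B^n}=0$, for a $k$-admissible $\psi$, and uses the pairing inequality $\rho_k(\D^2\psi)\,\rho_k^*(A)\leq \tfrac1n\langle A,\D^2\psi\rangle$ together with two integrations by parts to obtain
\[
\int_{\B^n}\rho_k^*(A)^q \lesssim \int_{\B^n}\langle A,\D^2\psi\rangle = \int_{\B^n}\psi\,\Div^2 A \leq \|\psi\|_{L^{p'}}\|\Div^2 A\|_{L^p}.
\]
The $L^{p'}$ bound on $\psi$ comes from the Trudinger--Wang estimates for $k$-Hessian equations (Lemma~\ref{lemma:Lpest}), which is precisely where the exponent restriction $q\leq\min\{p^{**},\tfrac{k}{k-1}\}$ enters. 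The passage to $|A|$ from $\rho_k^*(A)$ uses that $\mathcal K$ sits uniformly inside $\Gamma_k^*$. Sharpness follows directly from the $\Div$-free fields of Proposition~\ref{prop:optimal_div}, which are automatically $\Div^2$-free; no antiderivative is needed.
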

In particular, when $n=k=2$ and $p=1$, Theorem \ref{thm:div^2} disproves \cite[Conjecture 1.9]{Arroyo-Rabasa2021a}, which predicts that \eqref{eq:estimatediv^2} holds whenever $q<1^{**}=\infty$. A result similar to the one in Theorem~\ref{thm:div^2}, featuring the less common operator $\A=\Div^2-\Delta\tr$, is presented in Subsection~\ref{c}.

A rather simple but nonetheless instructive example is obtained by considering the operator $\A=(\Div, \Curl \circ \cdot^\top)$. Although its restriction to symmetric matrices is elliptic, it is not difficult to verify that $\A$ is not elliptic when acting on the full space $\R^{n\times n}$. We have:

\begin{bigthm}\label{thm:degen}
    Let  $\mathcal K$ be a closed convex cone contained in $\{A\in \R^{n\times n}:\tr(A)> 0\}\cup\{0\}$. For $1\leq p <n,$ there exists $C=C(p,\mathcal K)>0$ such that
    $$ 
\|A\|_{L^{p^*}(\R^n)}\leq C\left(\|\Div A\|_{L^p(\R^n)}+\|\Curl (A^\top) \|_{L^p(\R^n)}\right) \quad \text{ for all } A\in C^\infty_c(\R^n,\mathcal K).
$$
\end{bigthm}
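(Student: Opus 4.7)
The plan is to reduce the matrix-valued inequality to a scalar Sobolev inequality applied to the function $\tr A$. Two ingredients will do the job: a geometric comparison on the cone showing that $|A|\lesssim \tr A$, and an algebraic identity showing that $\nabla\tr A$ is pointwise controlled by $\Div A$ and $\Curl(A^\top)$.

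\textbf{Step 1 (cone geometry).} Since $\mathcal K$ is a closed cone and $\mathcal K\setminus\{0\}$ is contained in the open halfspace $\{\tr A>0\}$, compactness of the unit sphere in $\R^{n\times n}$ and $0$-homogeneity of the trace give a constant $\delta=\delta(\mathcal K)>0$ with
$$\tr A\geq \delta|A|\qquad\text{for every }A\in\mathcal K.$$
In particular $(\tr A)^{p^*}\geq \delta^{p^*}|A|^{p^*}$ on $\mathcal K$, so it suffices to bound $\|\tr A\|_{L^{p^*}(\R^n)}$.

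\textbf{Step 2 (key identity).} With the standard row-wise conventions used in the paper, $(\Div A)_k=\sum_i\partial_i A_{ki}$ and $(\Curl A^\top)_{i,jk}=\partial_j A_{ki}-\partial_k A_{ji}$. Specialising $j=i$ and summing on $i$ produces
$$\sum_{i}(\Curl A^\top)_{i,ik}=\sum_{i}\partial_i A_{ki}-\partial_k\sum_i A_{ii}=(\Div A)_k-\partial_k\tr A.$$
Rearranging,
$$\partial_k\tr A=(\Div A)_k-\sum_i(\Curl A^\top)_{i,ik},$$
so at each point $x\in\R^n$ we have the pointwise bound
$$|\nabla \tr A(x)|\leq |\Div A(x)|+\sqrt{n}\,|\Curl A^\top(x)|.$$

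\textbf{Step 3 (Sobolev embedding).} The function $\tr A\in\Cc^\infty(\R^n)$ is a scalar and compactly supported, so for every $1\leq p<n$ the Gagliardo--Nirenberg--Sobolev inequality (including the end-point $p=1$) gives
$$\|\tr A\|_{L^{p^*}(\R^n)}\leq C(n,p)\|\nabla\tr A\|_{L^p(\R^n)}\leq C\bigl(\|\Div A\|_{L^p(\R^n)}+\|\Curl A^\top\|_{L^p(\R^n)}\bigr).$$
Combined with the cone comparison of Step~1, this yields the claim with constant $C\delta^{-1}$.

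The proof is essentially immediate once one notices Step~2; this is really the only non-routine point. The intuition is that, although $\cala=(\Div,\Curl\circ\cdot^\top)$ fails to be elliptic precisely along directions $\xi\otimes v$ with $\xi\perp v$, those directions all satisfy $\tr(\xi\otimes v)=\xi\cdot v=0$, so the cone condition $\tr A>0$ is exactly what rules out the wave cone; the identity in Step~2 is the quantitative manifestation of this algebraic fact. No subtlety arises at $p=1$ since the standard scalar Sobolev inequality already covers it.
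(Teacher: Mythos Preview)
Your proof is correct and follows essentially the same approach as the paper: both arguments hinge on the pointwise identity $\D\tr A=\Div A - L\,\Curl(A^\top)$ (your Step~2 is the paper's equation \eqref{eq:L2coordinates}), then apply the scalar Gagliardo--Nirenberg inequality to $\tr A$, and finally use the cone condition $|A|\lesssim \tr A$ to pass from $\tr A$ to $A$. The only cosmetic difference is the ordering of steps and the explicit compactness argument you give for the cone constant in Step~1, which the paper leaves implicit.
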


Thus, in Theorem \ref{thm:degen}, contrary to Theorems \ref{thm:maindiv}, \ref{thm:maincurl} and \ref{thm:div^2}, we obtain an arbitrarily high gain of integrability, just as in the usual elliptic case.

Altogether, the previous results contradict the naive expectation that the inequality \eqref{eq:main_inequ} behaves like a Sobolev inequality, i.e.\ one cannot simply take $q=p^{*\ell}=\frac{np}{n-\ell p}$, where $\ell$ is the order of $\A$. Our examples, together with those of \cite{Arroyo-Rabasa2021a,DeRosa2019a,Serre2018a}, suggest that the phenomenon that we are witnessing can be described as follows:

\begin{conjecture}\label{conj:big}
Let $\A$ be an $\ell$-th order operator with constant coefficients and $\mathcal K^o\subset \V$ be an open convex cone such that $\mathcal K^o\cap\Lambda_\A=\emptyset$. 
There exists a critical exponent $$q_{\max}\equiv q_{\max}(\A,\mathcal K^o)\in[1,\infty]$$ such that, for all closed convex subcones $\mathcal{K}\subset\mathcal K^o\cup\{0\}$ and $1\leq p<n$, we have
$$q\leq q_{\max},\, q<p^{*\ell} \quad \implies \quad
 \|v\|_{L^{q}(\B^n)}\leq C\|\A v\|_{L^p(\B^n)}\quad\text{for all }v\in C_c^\infty(\B^n,\mathcal K),$$
 where $C=C(p,q,\A,\mathcal K)$.
Moreover, the inequality is optimal in the sense that, for any $\e>0$, there is a closed convex cone $\mathcal K^\e\subset \mathcal K^o\cup\{0\}$ such that the following estimate fails:
$$ \|v\|_{L^{q_{\max}+\e}(\B^n)}\leq C\|\A v\|_{L^\infty(\B^n)}\quad\text{for all }v\in C_c^\infty(\B^n,\mathcal K^\e).$$
\end{conjecture}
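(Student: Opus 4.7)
The plan is to break the conjecture into three sub-tasks: (i) supply a definition of $q_{\max}$ for which the optimality clause is essentially tautological; (ii) verify that this $q_{\max}$ is non-trivial, by exhibiting explicit concentration sequences realising the failure; and (iii) prove the inequality itself in the conjectured range.

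A natural candidate is
\begin{equation*}
q_{\max}(\A,\mathcal{K}^o) := \sup\bigl\{\,q\geq 1 : \forall \text{ closed convex } \mathcal{K}\subset\mathcal{K}^o\cup\{0\},\ \exists C \text{ s.t. } \|v\|_{L^q(\B^n)}\leq C\|\A v\|_{L^\infty(\B^n)}\ \forall v\in C^\infty_c(\B^n,\mathcal{K})\,\bigr\}.
\end{equation*}
With this definition the optimality statement follows, provided one shows the supremum is not attained in the appropriate semicontinuous sense in $\mathcal{K}$; I would establish this by a Hahn--Banach/closure argument exploiting the convexity assumption on $\mathcal{K}$. The concrete values of $q_{\max}$ already identified in the paper ($\tfrac{n}{n-1}$, $\tfrac{k}{k-1}$, $k$, $\tfrac{2K}{2K-1}$) strongly suggest that $q_{\max}$ is controlled by the rate at which the symbol $\A(\xi)\lambda$ degenerates as $\lambda\to\partial\mathcal{K}^o$ and $\xi$ runs over the sphere: the pointwise constraint compensates exactly for the non-ellipticity, and the gain is dictated by the order of this compensation.

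For the main estimate I would follow a compensation-and-bootstrap scheme inspired by the proofs of Theorems~\ref{thm:maindiv}--\ref{thm:div^2}. The three sub-steps are: (a)~produce a nonnegative integrand $G_{\A,\mathcal{K}}\colon\V\to[0,\infty]$, homogeneous of degree $q_{\max}$, finite on $\mathcal{K}^o\cup\{0\}$, and $\A$-quasiconcave, yielding the critical $L^1$-end-point inequality $\int_{\R^n}G_{\A,\mathcal{K}}(v)\,\dif x\leq C\|\A v\|_{L^1(\R^n)}^{q_{\max}/\ell}$; (b)~use the strict containment $\mathcal{K}\subset\mathcal{K}^o\cup\{0\}$ and homogeneity to derive the pointwise bound $|v|^{q_{\max}}\leq CG_{\A,\mathcal{K}}(v)$ on $\mathcal{K}$; and (c)~upgrade the resulting $L^{q_{\max}}$ estimate to $1<p<n$ by interpolation with the classical Sobolev embedding $W^{\ell,p}\hookrightarrow L^{p^{*\ell}}$, which is available after inverting $\A$ microlocally away from $\Lambda_\A$ via a Gaffney-type argument. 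Step~(c) produces precisely the restriction $q<p^{*\ell}$ appearing in the statement.

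The principal obstacle is step~(a): the very existence of the compensated integrand $G_{\A,\mathcal{K}}$. In the paper this is done by hand in each case --- via $\det$ for $\mathrm{Sym}_n^+$, via $F_k^{1/k}$ on the G\r{a}rding cones, and via row-wise products for Theorem~\ref{thm:det2dintro} --- and each construction relies crucially on the algebraic shape of both $\A$ and $\mathcal{K}^o$. A proof of the conjecture in full generality would therefore require a structural theory of $\A$-quasiconcave integrands on a cone, intrinsically identifying $q_{\max}$ from the interaction between the symbol bundle $\{\A(\xi):\xi\in\mathbb{S}^{n-1}\}$ and the boundary geometry of $\mathcal{K}^o$ near $\Lambda_\A$. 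I expect this to be the decisive difficulty; even guessing the correct algebraic invariant playing the role of $k$ outside the G\r{a}rding family appears, at present, to be open.
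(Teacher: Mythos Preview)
The statement you are attempting to prove is Conjecture~\ref{conj:big}: it is explicitly presented in the paper as an open problem, and the paper does \emph{not} contain a proof of it. What the paper does is verify the conjecture, partially or completely, for the specific pairs $(\A,\mathcal K)$ listed in Table~\ref{table}, each time by an ad hoc argument tailored to the algebraic structure at hand. So there is no ``paper's own proof'' to compare your proposal against; your proposal is, in effect, a research programme toward the general statement, and you correctly flag step~(a) --- the existence of a suitable $\A$-quasiconcave integrand $G_{\A,\mathcal K}$ --- as the decisive open difficulty.

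That said, even granting step~(a), two concrete points in your scheme do not survive contact with the evidence in the paper. First, your step~(a) posits an $L^1$ endpoint inequality $\int G(v)\leq C\|\A v\|_{L^1}^{q_{\max}/\ell}$; but Proposition~\ref{prop:endpointk<n} shows that for $(\A,\mathcal K)=(\Div,\Gamma_k^*)$ with $k<n$ the critical estimate at $p=\big(\tfrac{k}{k-1}\big)_*$ already fails, so the strongest endpoint you can hope for is strictly subcritical in general. Second, your step~(c) proposes to pass from the endpoint to intermediate $p$ by interpolation; the paper explicitly warns in Remark~\ref{rmk:interpol} that the constraint $v\in\mathcal K$ makes the relevant function spaces \emph{nonlinear}, so classical interpolation is unavailable and this is precisely why the full range~\eqref{eq:fullrange} remains unproved even in the model case $(\Div,\Gamma_k^*)$. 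Your Gaffney-type microlocal inversion ``away from $\Lambda_\A$'' does not help here, because the whole point of the problem is that the field can concentrate along directions arbitrarily close to $\Lambda_\A$.

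In summary: your outline is a reasonable articulation of what a proof would have to accomplish, and is broadly aligned with the paper's philosophy, but steps~(a) and~(c) are both genuinely open, and the $L^1$ endpoint assumption in~(a) is already known to be false in examples.
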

 For the pairs $(\A,\mathcal{K})$ discussed so far, the results above verify partially or completely Conjecture \ref{conj:big} in several cases, see already Table \ref{table} below.

We expect that a theory developed around our conjecture will prove useful as a weak convergence method for nonlinear PDE \cite{Evans1990}. 
We are partly motivated by Tartar's framework in continuum mechanics \cite{Tartar1979,Tartar1983}, whereby one couples nonlinear pointwise constraints, represented by $\mathcal K$ and corresponding to \textit{constitutive relations}, with the linear PDE constraint given by $\A$ and corresponding to \textit{balance relations}. 
The classical theorem of Vitali asserts that strong $L^q$-convergence is equivalent to \textit{lack of oscillations}, that is convergence in measure, and \textit{lack of $L^q$-concentrations}, that is $q$-uniform integrability. 
Our estimates concern the latter, a direction which has perhaps been less studied in modern PDE.

The maximal gain of integrability is particularly relevant for ruling out nontrivial concentration effects: while for $q<q_{\max}$ we have that $q$-uniform integrability follows from $L^{q_{\max}}$-boundedness, an improvement to $q_{\max}$-uniform integrability requires additional work. For instance, the endpoint uniform integrability is achieved in Theorem \ref{thm:maincurl} by obtaining an extra logarithm of integrability. Alternatively, one can impose in addition that the fields $v_j$ are $\A$-free and derive $L^{q_{\max}+\delta}$ bounds from reverse H\"older inequalities, see Corollaries \ref{cor:divUI}, \ref{cor:curlUI} and \ref{cor:divL2UI}.

\subsection{The case of exact constraints: quasiconcavity and optimality}\label{sec:qcintro}
In  the examples discussed so far, the convex cone under consideration has the structural property
\begin{equation}
    \label{eq:deffunction}
\mathrm{int}\,\mathcal K =\{G> 0\}, \quad G\colon \mathcal K \to [0,+\infty) \text{ is concave and positively 1-homogeneous},
\end{equation}
cf.\ Table \ref{table}.  For example, for $A\in \Gamma_k$ and  $B\in \Gamma_k^*$, we define $\rho_k(A)=F_k(A)^{\frac{1}{k}}$ and
$$\rho_k^*(B)=\inf\Big\{\frac{1}{n}\langle A,B\rangle:A\in\Gamma_k,\:\rho_k(A)=1\Big\},$$
and then we have $$\textup{int}\,\Gamma_k=\{\rho_k>0\} \qquad \text{ and } \qquad \textup{int}\,\Gamma_k^*=\{\rho_k^*>0\}.$$ Moreover, $\rho_k$ and $\rho_k^*$ are respectively concave on $\Gamma_k$ and $\Gamma_k^*$, see Section \ref{sec:symmetric} for further details.

Given a pair $(\A,\mathcal K)$ for which \eqref{eq:nowavecone} and \eqref{eq:deffunction} hold, a natural question is  whether there is a number $\alpha(\A,\mathcal K)>1$ such that $G^\alpha$ is $\Lambda_{\A}$-concave in $\mathcal K$, i.e.
$$G^\alpha \textup{ is concave on the segment } [A,B]\subset \mathcal K, \textup{ whenever } B-A\in \Lambda_{\A}.$$
Surprisingly, if we let $\alpha_{\max}$ be the largest exponent with the above property, for the examples above we have that
\begin{equation}
    \label{eq:a=q}
\alpha_{\max}(\A,\mathcal K)=q_{\max}(\A,\mathcal K).
\end{equation}

It has been known since the work of Tartar \cite{Tartar1979} that $\Lambda_{\A}$-concavity is necessary for $\cala$-quasiconcavity, which in general is a strictly stronger property \cite{Ball1981,Grabovsky2018,Sverak1992a,Tartar1979}. Nonetheless, for the  examples above we  manage to prove that $G^{\alpha_{\max}}$ is $\A$-quasiconcave when restricted to fields in $\mathcal K$, that is, identifying the torus $\mathbb T^n$  with $[0,1]^n$, we have
\begin{equation}
    \label{eq:quasiconcavity}
\int_{\mathbb T^n} G(v)^{\alpha_{\max}}\dif x \leq G\Big(\int_{\mathbb T^n} v \dif x\Big)^{\alpha_{\max}} \quad \text{for all } v\in C^\infty(\T^n,\mathcal K) \text{ with }\A v=0.
\end{equation}
By definition of $\alpha_{\max}$, this inequality does not hold for any exponent $\alpha>\alpha_{\max}$. Inequality \eqref{eq:quasiconcavity} provides a higher integrability estimate, \textit{with a sharp constant}, for $\A$-free fields constrained to $\mathcal K$. 
 Quasiconvexity and quasiconcavity are essential notions in the vectorial Calculus of Variations, intimately related to the weak semicontinuity of multiple integrals \cite{Acerbi1984,Fonseca1999,Meyers1965,Morrey1952}. To date, the interaction of quasiconvexity with pointwise constraints is not well understood, but see the contributions in \cite{Astala2022,Conti2015,DeRosa2019,Koumatos2016,Skipper2021}.

To be more specific, let us start with the case $(\A, \mathcal K)=(\Div, \Gamma_k^*)$, where \eqref{eq:deffunction} holds for $G=\rho_k^*$. When $k=n$, $\rho_n^*=\det^{1/n}$ and Serre \cite{Serre2018a} obtained the quasiconcavity inequality
\begin{align}\label{eq:serre_qc}
    \int_{\mathbb T^n}(\det A)^{\frac{1}{n-1}}\dif x\leq \det\left(\int_{\T^n}A\dif x\right)^{\frac{1}{n-1}}\quad \text{for }A\in C^\infty(\T^n,\mathrm{Sym}^+_n) \text{ with }\Div A=0.
\end{align}
We succeed in extending this inequality to all natural numbers $k\leq n$, obtaining new examples of restricted $\Div$-quasiconcave integrands: 
\begin{bigthm}\label{thm:main_qc}
     Let $2\leq k\leq n$. We have that
$$
\int_{\mathbb T^n}\rho_k^*(A)^{\frac{k}{k-1}} \dif x\leq \rho_k^*\left(\int_{\mathbb T^n} A\dif x\right)^{{k}/{(k-1)}}\quad \text{for }A\in C^\infty(\T^n,\Gamma_k^*)\text{ with }\Div A=0.
$$
The exponent $\frac{k}{k-1}$ is sharp, as $(\rho_k^*)^\alpha$ is not $\Lambda_{\Div}$-concave if $\alpha > \frac{k}{k-1}$.
\end{bigthm}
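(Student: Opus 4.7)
The plan is to recast the desired quasiconcavity as a sharp dual inequality and then construct an explicit competitor by solving a $k$-Hessian equation on $\T^n$. By $L^k$--$L^{k/(k-1)}$ duality it suffices to prove that
$$\int_{\T^n}\phi(x)\,\rho_k^*(A(x))\,\dif x\;\leq\;\rho_k^*(\bar A),\qquad \bar A:=\int_{\T^n} A\,\dif x,$$
for every non-negative $\phi\in C^\infty(\T^n)$ with $\|\phi\|_{L^k(\T^n)}=1$. The defining infimum of $\rho_k^*$ yields the pointwise bound $\rho_k^*(A(x))\,\rho_k(\tilde C(x))\leq \tfrac{1}{n}\langle\tilde C(x),A(x)\rangle$ for any $\tilde C(x)\in\Gamma_k$. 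I will take $\tilde C$ in the special form $\tilde C=C_0+D^2\psi$, with $C_0\in\Gamma_k$ constant and $\psi\in C^\infty(\T^n)$ periodic, because two integrations by parts using $\Div A=0$ then collapse $\int_{\T^n}\langle\tilde C,A\rangle$ to $\langle C_0,\bar A\rangle$.

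To close the estimate with the sharp constant, I choose $C_0$ as a minimizer in the definition of $\rho_k^*(\bar A)$, so that $\rho_k(C_0)=1$ and $\tfrac{1}{n}\langle C_0,\bar A\rangle=\rho_k^*(\bar A)$; I then need a periodic $\psi$ for which $\tilde C=C_0+D^2\psi$ is $\Gamma_k$-valued and $\rho_k(\tilde C)=\phi$, equivalently the $k$-Hessian equation
$$F_k(C_0+D^2\psi)=\phi^k\quad\text{on }\T^n.$$
The compatibility condition $\int_{\T^n}\phi^k\,\dif x=F_k(C_0)=1$ is automatic, because $F_k$ is a null Lagrangian: its integral on $\T^n$ against a Hessian perturbation of any constant matrix equals $F_k(C_0)$, via the divergence-form expression of $\sigma_k$ through Newton tensors. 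Granted such $\psi$, the inequality chain
$$\int_{\T^n}\phi\,\rho_k^*(A)\,\dif x\;\leq\;\tfrac{1}{n}\int_{\T^n}\langle\tilde C,A\rangle\,\dif x\;=\;\tfrac{1}{n}\langle C_0,\bar A\rangle\;=\;\rho_k^*(\bar A)$$
closes, and supremizing over admissible $\phi$ gives the quasiconcavity inequality.

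For the sharpness I take $B=I$ and the rank-deficient direction $N=I_{n-1}\oplus 0\in\Lambda_\Div$. A Lagrange-multiplier computation in the definition of $\rho_k^*$, exploiting the $S_{n-1}$ symmetry of $I+sN$ (so that the optimal $C\in\Gamma_k$ is diagonal of the form $\operatorname{diag}(c,\dots,c,d)$), yields the explicit formula
$$\rho_k^*(I+sN)=\Bigl(1+\tfrac{k(n-1)}{n(k-1)}\,s\Bigr)^{(k-1)/k}.$$
Writing $\mu:=k(n-1)/(n(k-1))>0$, we obtain $(\rho_k^*)^\alpha(I+sN)=(1+\mu s)^{\alpha(k-1)/k}$, which is strictly convex in $s$ as soon as $\alpha(k-1)/k>1$, i.e.\ $\alpha>k/(k-1)$. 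This certifies the failure of $\Lambda_\Div$-concavity in that range.

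The main obstacle in this scheme is the PDE step above: producing a periodic $\psi$ that solves the $k$-Hessian equation with prescribed admissible zero-order term $C_0$ and right-hand side $\phi^k$. For $k=n$ this reduces to Cheng--Yau's theorem for the Monge--Amp\`ere equation on the torus, but for $2\leq k<n$ one must invoke (or adapt) existence and regularity results for $k$-admissible solutions of Hessian equations in the Caffarelli--Nirenberg--Spruck / Trudinger--Wang / Urbas tradition, combined with an approximation argument when $C_0$ lies on $\partial\Gamma_k$ or $\phi$ approaches zero.
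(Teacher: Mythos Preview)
Your approach is correct and is essentially the same as the paper's: both solve a periodic $k$-Hessian equation $\rho_k(C_0+\D^2\psi)=\phi$ to construct a $\Gamma_k$-valued test field, apply the duality pairing $\rho_k^*(A)\rho_k(\tilde C)\leq \tfrac{1}{n}\langle A,\tilde C\rangle$, and integrate by parts using $\Div A=0$; your $L^k$--$L^{k/(k-1)}$ duality framing simply repackages the paper's direct choice of the extremal $\phi=\rho_k^*(A)^{1/(k-1)}$ (up to normalization), and the approximation issues you flag ($C_0\in\partial\Gamma_k$, $\phi$ touching zero) are handled in the paper by replacing $A$ with $A+\varepsilon I$. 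For sharpness you compute $\rho_k^*$ directly on the line $I+sN$ via Lagrange multipliers, whereas the paper parametrizes the same line (up to permutation and scaling) as $\nabla F_k(\textup{diag}(1+t,1,\dots,1))$ and reads off $\rho_k^*$ via the identity $\rho_k^*(n\nabla\rho_k(A))=1$; both routes yield $(\text{affine in }s)^{(k-1)/k}$ and hence the claimed threshold $\alpha=\tfrac{k}{k-1}$.
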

Equality may be achieved in Theorem \ref{thm:main_qc}, see Section \ref{sec:qc} for a characterisation of this case. Moreover, Theorem \ref{thm:main_qc} trivially implies a similar result for $(\A,\mathcal K)=(\Div^2,\Gamma_k^*)$. 

Let us now turn to the case $(\A, \mathcal K)=(\Curl, \Gamma_k)$, where \eqref{eq:deffunction} holds for  $G=\rho_k$. The functions $F_k=\rho_k^k$ are \textit{null Lagrangians} \cite{Ball1981}, that is,
$$\int_{\T^n} F_k(A)\,\dif x=F_k\Big(\int_{\T^n}  A\,\dif x\Big)\quad \text{ for all }A\in C^\infty(\T^n,\textup{Sym}_n)\text{ with } \Curl A=0.$$
Equivalently, both $F_k$ and $-F_k$ are $\Curl$-quasiaffine. This gives a simple proof of the fact that $\alpha_{\max}(\Curl, \Gamma_k)=k$ and of the corresponding quasiconcavity of $F_k$. Thus \eqref{eq:a=q} also holds in this case.
A related result was obtained previously by Faraco--Zhong \cite{Faraco2003}, who proved that the integrands $1_{\Gamma_k} F_k$ are $\Curl$-quasiconvex, where $1_{\Gamma_k}$ denotes the characteristic function of $\Gamma_k$. Their result generalizes a previous result of \v Sver\'ak \cite{Sverak1992} for the case $k=n$, and  in fact it served as an early inspiration for considering the cones $\Gamma_k$.

The case $(\A,\mathcal K)=((\Div, \Curl\circ \cdot^\top),\{\tr\geq 0\})$ is even simpler. In this case, the wave cone is not spanning: $\textup{span}\, \Lambda_{\A}=\{\tr=0\}.$ In general, $\A$-free fields are constrained to $\textup{span}\,\Lambda_{\A}$ and thus, in this case, there are no $\A$-free fields taking values in $\mathcal K$. Thus $\tr^\alpha$ is trivially $\A$-concave on $\mathcal K$ for any $\alpha\geq 1$, and so $\alpha_{\max}=+\infty$, confirming  \eqref{eq:a=q}. 

We conclude this collection of examples by returning to the example discussed in \eqref{eq:strongdiv}, in which case $(\mathcal A, \mathcal K)=(\Div, Q_2^+(K))$. The Burkholder integrand ${B_K\colon\R^{2\times 2}\to \R}$ introduced in \cite{Burkholder1984}, see also \cite{Iwaniec2002}, and  defined  by $${B_K(A)=( K\det A -\|A\|^2)^{\frac{K-1}{2K}}\|A\|^{\frac 1 K}}$$ is 1-homogeneous and satisfies $Q_2^+(K) = \{B_K\geq 0\}$, but this cone is not convex. Nonetheless, it is well-known that $B_K^{\alpha}$ is $\Lambda_{\Curl}$-concave for $\alpha \leq \alpha_{\max}=q_{\max} = \frac{2K}{K-1}$, and it is a long-standing conjecture \cite{Iwaniec2002} that $B_K^{\alpha_{\max}}$ is $\Curl$-quasiconcave. Parallel to the other examples in this paper, it was recently shown in \cite{Astala2022} that the restriction of $B_K^{\alpha_{\max}}$ to $Q_2^+(K)$ is $\Curl$-quasiconcave, see also \cite{Astala1994,Astala2012,GuerraKristensen2021}.

We collect the key facts discussed in Sections \ref{sec:results} and \ref{sec:qcintro} for each of our examples:
\renewcommand{\arraystretch}{1.4}
\begin{table}[h]
\centering
 \begin{tabular}{|c |c |c| c| c|} 
 \hline 
 $\cala$ & $\Lambda_\cala$ & $\mathcal K=\{G\geq 0\}$ & $G$ & $q_{\max}$ \\ [0.5ex] 
 \hline\hline
 $\Div$ & $\{A\in \R^{2\times 2}: \textup{rank}\,A \leq 1\}$ & $Q_2^+(K)$ & $B_K$ & $\frac{2K}{K-1}$\\ \hline
 $\Div$ & $\{A \in \textup{Sym}_n:\textup{rank}\,A\leq n-1\}$ & $\Gamma_k^*$ & $\rho_k^*$ & $\frac{k}{k-1}$ \\ \hline
  $\Curl$ & $\{A \in \textup{Sym}_n:\textup{rank}\,A\leq 1\}$ &  $\Gamma_k$ & $\rho_k$& $k$ \\ \hline
 $\Div^2$ & ${\textup{Sym}_n\setminus\mathrm{int} (\textup{Sym}^+_n\cup\textup{Sym}^-_n)}$ & $\Gamma_k^*$ & $\rho_k^*$ & $\frac{k}{k-1}$ \\ \hline
 $(\Div, \Curl\circ\cdot^\top)$ & $\{A \in \R^{n\times n}:\textup{rank}A\leq 1, \tr(A)=0\}$ & $\{\tr\geq 0\}$ & $\tr$ & $\infty$\\ [1ex] 
 \hline
 \end{tabular}
\caption{ \label{table} A summary of the examples in Theorems \ref{thm:maindiv} to \ref{thm:main_qc}}
\end{table}

\subsection{Double cones and non-convex constraints}
With the exception of Theorem \ref{thm:det2dintro} and inequality \eqref{eq:strongdiv}, all of the results stated above concern smooth fields constrained to \textit{convex} cones. Using the convexity of the cones, the smoothness assumption may be relaxed by a simple mollification argument, see Remark \ref{rmk:roughfields} below. It thus appears natural to ask whether or not such estimates hold for ``double'', non-convex cones, such as $\textup{Sym}^+_n\cup\textup{Sym}^-_n$. While we do not give an answer to such a question here, we provide some evidence that the situation in this case is more complicated than the convex setting.

To fix ideas, consider Serre's inequality \eqref{eq:serre}. We look for an estimate
\begin{align*}
\|A\|_{L^{n/(n-1)}(\R^n)}\leq C\|\Div A\|_{\mathcal{M}(\R^n)}\quad\text{for all }A\in \textup{BV}^{\Div}(\R^n,\mathcal K_n),
\end{align*}
where $\mathcal K_n\subset \text{int}(\mathrm{Sym}^+_n\cup\,\mathrm{Sym}^-_n)\cup\{0\}$ and $\textup{BV}^{\Div}=\{A\in L^1(\R^n,\R^{n\times n}):\Div A\in\mathcal{M}(\R^n)\}$. However, we show in Proposition \ref{prop:nodensity} below the following:
\begin{bigprop}
The space $C^\infty([-1,1]^2,\mathcal K_2)$ is not strictly dense in $\textup{BV}^{\Div}([-1,1]^2,\mathcal K_2)$.
\end{bigprop}
Thus, in order to prove any estimate for functions in this last space, one \textit{cannot} hope to prove such an estimate for smooth functions and then pass to a limit (as we do in the case of convex constraint), and instead one must work directly with measures. 

In addition, the results of \v Sver\'ak \cite{Sverak1992} and more generally of Faraco--Sz\'ekelyhidi \cite{Faraco2008} show that if $A\in L^2(\B^2, \mathcal K_2)$ satisfies $\Div A=0$ (or $\Curl A=0$) and suitable boundary conditions, then $A$ must be valued solely in either $\textup{Sym}^+_2$ or $\text{Sym}_2^-$, see Proposition \ref{prop:FSz}. In other words, there is a separation between the sets $\textup{Sym}^+_2$ and $\textup{Sym}^-_2$ for such fields.

The combined effect of separation for exact constraints and lack of density of smooth fields significantly complicates any attempt to study the compensated integrability phenomenon even in the simplest non-convex case of double cones.

\subsection{Summary, ideas of proof, and open problems}
The outlook towards double cones in the previous subsection shows that there are many difficult questions that one can ask in this direction. For the present paper, we focus on convex cones. To summarize, our main results verify the postulated Conjecture~\ref{conj:big} in a number of fundamental cases, which are quite different in nature.  We show that our inequalities sometimes behave as in the linear case, sometimes as in Serre's inequality \eqref{eq:serre}, sometimes  capturing endpoint exponents, sometimes not. Some connections with geometric function theory and quasiconvexity are drawn. In particular, new nonlinear quantities $\rho_k^*$ are linked with both quasiconvexity and higher integrability. We see that the endpoint exponents for $\Lambda_{\Div}$-convexity of $(\rho_k^*)^\alpha$ are exactly the sharp higher integrability exponents, and we link maximal integrability and absence of concentration effects, through both Gehring's lemma and $L\log L$ bounds. This highlights the importance of finding \textit{maximal integrability} gains $q_{\max}$.

To aid the reader, we classify our estimates by method of proof here:
\begin{enumerate}
    \item Slicing: Theorem \ref{thm:det2dintro} (see also Theorem \ref{thm:slicing}, Proposition \ref{prop:curlinequality}). For these proofs, we were inspired by the original slicing argument to prove the classical Gagliardo--Nirenberg inequality. Interestingly, while this idea works quite well for both $\Div$ and $\Curl$ in 2D, it appears to adapt only for $\Curl$  in higher dimensions.
    \item\label{itm:nonlinear_dual} Nonlinear duality: Theorems \ref{thm:maindiv}, \ref{thm:div^2}, \ref{thm:main_qc},  and Propositions \ref{prop:curl} and \ref{prop:div^2L_2A} are similar in flavour to Serre's original argument from \cite{Serre2018a,Serre2019}. A crucial observation is that in Serre's case it holds that $\rho_n^*=\rho_n$ and $\Gamma_n^*=\Gamma_n$, so our idea of using convex duality is not visible there. This means that in Theorems \ref{thm:maindiv} and \ref{thm:div^2} we may bound $\rho_k^*(A)$ on the LHS, but not $\rho_k(A)$; see also Propositions \ref{prop:Div} and \ref{prop:Div^2}. 
    \item\label{itm:null_lag} Theorem \ref{thm:maincurl} is special: it follows from Theorem \ref{thm:maindiv} by use of our new characterization of null Lagrangians in Section \ref{sec:null-lag}, but it does \textit{not} follow from the kind of duality argument as in \ref{itm:nonlinear_dual} above, but rather a novel nonlinear duality.  We believe that the characterization of null Lagrangians in Corollary \ref{cor:null_lag} is, suprisingly, new.
    \item Theorem \ref{thm:degen} is also special: it follows from a simple argument and it stands to show that  estimates as in the linear case are possible in examples other than \cite{Arroyo-Rabasa2021a}. 
\end{enumerate}

We will conclude the introduction with a list of open problems that would help take this theory further.
The first  stems from the idea of interpolation following Theorem \ref{thm:maindiv}.
\begin{op}
    Let $s\in[1,n]$, $p^*>\tfrac{s}{s-1}$. Do there exist closed convex cones $\Gamma_s\subset\R^{n\times n}_{\sym}$ such that for closed cones $\mathcal K\subset\mathrm{int\,}\Gamma_s^*$ we have the estimate
    $$
    \|A\|_{L^{\frac{s}{s-1}}(\B^n)}\leq C\|\Div A\|_{L^p(\B^n)}\quad 
     \text{for all } A\in C_c^\infty(\B^n, \mathcal K)
    $$
    and we have optimality, $q_{\max}(\Gamma_s^*,\Div)=\tfrac{s}{s-1}$? When $s=k$, we retrieve the G\aa rding cones.
\end{op}
Each of the results listed in \ref{itm:nonlinear_dual} above is proved by bounding nonlinear quantities related to $\Gamma_k$ and $\Gamma_k^*$. There is one exception, coming from \ref{itm:null_lag} above:
\begin{op}
    Is it possible to bound $\rho_k(A)$ in Proposition \ref{prop:curlmain} as we bound $\rho_k^*(A)$ in Proposition \ref{prop:Div}?
\end{op}
We already highlighted this point in \ref{itm:null_lag} above. In the beginning of the section we also mentioned the concentration phenomena that are prevented by our estimates, see also Corollaries \ref{cor:divUI} and \ref{cor:curlUI}. In this respect we point out the very interesting recent paper \cite{DRT23}, where the concentration measures arising in connection with sequences bounded by \eqref{eq:serre} are analyzed. It would be interesting to understand this in connection with recent trace theorems in the linear case, see \cite{BDG,DieGm,GR17,GRVS1,GRVS2} and also the classic \cite{Adams2003}:
\begin{op}
    Can one prove trace inequalities
    $$
    \|v\|_{L^q(\dif \mu)}\leq C\|\A v\|_{L^p(\dif x)}\quad\text{for }v\in C_c^\infty(\B^n,\mathcal K),
    $$
    where $\mu\in\mathcal{M}^+$ satisfies a lower dimensional condition $\mu(B_r(x))\leq Cr^d$ for some $d<n$?
\end{op}
Finally, we remarked in Section \ref{sec:GNcurl2d} that the following is unclear:
\begin{op}
    Does Ornstein's non-inequality hold in the $\mathcal{K}$-constrained case?
\end{op}
We speculate that it does, but we were surprised that we could not find a proof using the numerous techniques in the literature; see Remark \ref{question:Ornstein} for details.

\subsection{Outline of the paper}
Section \ref{sec:notation} collects some notation that will be used throughout.

In Section \ref{sec:symmetric} we develop a duality theory for the key nonlinear objects that we will consider, which are the cones $\Gamma_k$ and their duals $\Gamma_k^*$. Although the cones $\Gamma_k$ are standard objects in the literature of degenerate nonlinear elliptic equations, see e.g.~\cite{Caffarelli1985,Wang2009}, their duals have been much less studied. In this section we also prove a new quantitative characterization of null Lagrangians, which is the second essential element in the nonlinear duality between Theorems~\ref{thm:maindiv} and \ref{thm:maincurl}. We also collect some relevant facts concerning weak solutions to $k$-Hessian equations which will be used throughout Sections \ref{sec:qc} to \ref{sec:otherexamples}.

In Section \ref{sec:GN} we state and prove Theorem \ref{thm:det2dintro}: the argument is based on the classical slicing method of Gagliardo and Nirenberg. Since the setting is particularly simple, we also take the opportunity to discuss a number of issues intrinsic to our setup, such as Ornstein's non-inequalities, the necessity of entry-wise constraints, and the non-density of smooth fields in the relevant function spaces when the cones are non-convex.

In Section \ref{sec:qc} we prove Theorem \ref{thm:main_qc}. The proof employs the duality theory developed in Section \ref{sec:symmetric} together with an auxiliary $k$-Hessian equation with periodic boundary condition. Besides being a result of independent interest, the proof of Theorem \ref{thm:main_qc} gives a simple instance of the strategy that we will employ in the following sections. 

In Section \ref{sec:div} we enrich the strategy of the previous section by employing estimates for auxiliary $k$-Hessian equations, and prove Theorems~\ref{thm:maindiv} and \ref{thm:div^2}. We also deduce higher integrability results for solenoidal fields under pointwise constraints.

In Section \ref{sec:curl} we prove Theorem~\ref{thm:maincurl}. 
We further obtain higher integrability statements for irrotational fields under pointwise constraints.

Finally, Section \ref{sec:otherexamples} addresses three more examples of operator-cone pairs for which we obtain elliptic estimates. The first example we consider yields Theorem \ref{thm:degen}. The other examples are given by
a second order operator related to the composition of $\Div$ and $\Curl$, 
and a weak sub-curl type operator, which only uses some components of $\Curl$.

\bigskip 

{\bf Acknowledgments.}
A.\,Guerra was supported by the Infosys Membership at the Institute for Advanced Study. M. Schrecker's  research is supported by the EPSRC Early Career Fellowship EP/S02218X/1. We thank Luc Nguyen for helpful discussions concerning $k$-Hessian equations and Denis Serre for his interesting comments. We also thank the  anonymous referees for the careful reading of the manuscript and comments, which greatly improved the quality of the paper.

\section{Notation}\label{sec:notation}

Throughout, we will work with the $L^p$ spaces, $1\leq p\leq\infty$ and will denote by $p'$ the H\"older conjugate of an exponent $p$. Given $p\in[1,\frac{n}{\ell})$, we write $p^{*\ell}=\frac{np}{n-\ell p}$ for its $\ell$-th order Sobolev exponent, which is denoted by $p^*$ or $p^{**}$ when $\ell=1$ or 2, respectively. We also define $(\frac{n}{\ell})^{*\ell}=\infty$. For $q\geq\frac{n}{n-1}$, we write 
$$
q_*=\frac{nq}{n+q}
$$
for the exponent such that $(q_*)^*=q$.

Given a matrix $A\in \R^{n\times n}$, we write $A=(a_{ij})_{i,j=1}^n$ and we denote by $A_i=(a_{ij})_{j=1}^n$ its rows. In particular, if $A\colon \R^n\to \R^{n\times n}$ is a matrix field, we have $\Div A=(\di A_i)_{i=1}^n$. 

We write $\textup{Diag}_n$ for the space of diagonal matrices in $\R^{n\times n}$,  $\textup{diag}(\la_1,\ldots,\la_n)$ for the diagonal matrix with entries $\la_1,\ldots,\la_n$,
$\textup{Sym}_n\equiv \{A\in \R^{n\times n}: A=A^\top\}$ for the space of symmetric matrices, and 
\begin{align}
\begin{split}
    & \textup{Sym}^+_n\equiv \{A\in \textup{Sym}_n: Av\cdot v\geq 0 \textup{ for all } v\in \R^n\},\\
    & \textup{Sym}^-_n\equiv \{A\in \textup{Sym}_n: Av\cdot v\leq 0 \textup{ for all } v\in \R^n\},
\end{split}
\label{eq:SPD}
\end{align}
for the cones of symmetric positive semi-definite and negative semi-definite matrices. We also use the notation
$$|A|\equiv \sqrt{\tr(AA^\top)}, 
\qquad \|A\|\equiv \max_{|v|=1} |Av|, \qquad $$
for the Euclidean and the operator norms of $A\in \R^{n\times n}.$ The Euclidean norm is induced from the inner product
$$\langle A,B\rangle \equiv \tr(AB^\top).$$
The cones of \textit{orientation-preserving} and \textit{orientation-reversing $K$-quasiconformal} matrices will also play an important role in this paper: given $K\geq 1$, we write
\begin{equation}
    \label{eq:quasiregmatrices}
    Q_n^\pm (K)\equiv \{A\in \R^{n\times n}: \|A\|^n \leq \pm K \det A\}.
\end{equation}
If $A\in Q_n^+(1)$ then we say that $A$ is \textit{conformal}, and if $A\in Q_n^-(1)$ it is said to be \textit{anti-conformal}.
When $n=2$, the cones $Q_2^\pm(1)$ are linear spaces and we may also write, see e.g.\ \cite[(16.33)]{Astala2009},
\begin{equation}
    \label{eq:auxquasiregular}
    Q_2^\pm(K)= \left\{A\in \R^{2\times 2}:|A|^2\leq \pm \Big(K+\frac 1 K\Big) \det A\right\}.
\end{equation}
Finally, for any function $F\colon \R^{n\times n}\to \R$, we will write $\nabla F(A)$ for the gradient of $F$ with respect to the matrix variables $A$, reserving the notation $\D$ for the gradient operator in the spatial variables on $\R^n$.

\section{Symmetric cones and $k$-Hessian equations}
\label{sec:symmetric}

In Section \ref{sec:cones} we develop a comprehensive duality theory for the symmetric cones. In particular, we establish some new duality results which go beyond the standard theory as described in \cite{Wang2009}. Section \ref{sec:null-lag} contains a new quantitative characterisation of null Lagrangians that will be essential for the nonlinear duality between $\Div$ and $\Curl$.  In Section \ref{sec:kHessian} we collect known results about existence and regularity of solutions to $k$-Hessian equations. The results of this section will be extensively used in Sections \ref{sec:qc}, \ref{sec:div} and \ref{sec:curl}.

\subsection{Symmetric polynomials and the cones $\Gamma_k$ and $\Gamma_k^*$}
\label{sec:cones}

We are interested in the cones
$$\Gamma_k\equiv \{\lambda\in \R^n:\sigma_j(\lambda)\geq 0 \text{ for } j=1,\dots, k\},$$
where 
\begin{equation}
    \label{eq:defsigmak}
    \sigma_k(\lambda)\equiv \sum_{i_1<\dots<i_k} \lambda_{i_1}\dots \lambda_{i_k}.
\end{equation}
For each $k=1,\dots,n$, the cone $\Gamma_k$ is clearly closed, convex and symmetric.
Note that $\Gamma_1=\{\lambda\in \R^n:\sum_i \lambda_i \geq 0\}$ is a half-space, while $\Gamma_n=\{\lambda \in \R^n: \lambda_i\geq 0 \text{ for all } i\}$ is the positive cone. In general, we have the following alternative characterizations of $\Gamma_k$, see for instance \cite{Ivochkina1983,Trudinger1999}:

\begin{lemma}[Characterization of $\Gamma_k$]
For $k=1,\dots, n$, we have
$$\Gamma_k=\{\lambda\in \R^n:0\leq \sigma_k(\lambda)\leq \sigma_k(\lambda+\eta) \text{ for all } \eta\in \Gamma_n\}.$$
Moreover, $\Gamma_k$ is the closure of the component of $\{\sigma_k>0\}$ which contains $\Gamma_n$.
\end{lemma}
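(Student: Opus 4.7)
I would prove the connected-component description first and then deduce the inequality characterisation from it; both rest on the classical fact, due to G{\aa}rding \cite{Garding1959}, that $\sigma_k$ is a hyperbolic polynomial with respect to $e := (1,\ldots,1)\in\Gamma_n$, meaning $t\mapsto\sigma_k(\mu+te)$ has only real roots for every $\mu\in\R^n$.

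Let $C$ denote the connected component of $\{\sigma_k>0\}$ containing $\mathrm{int}\,\Gamma_n$; by G{\aa}rding's theorem, $C$ is an open convex cone with the forward-invariance property $C+\overline C\subseteq C$, and $\sigma_k^{1/k}$ is concave on $\overline C$. For the inclusion $\overline C\subseteq \Gamma_k$: for $\lambda\in C$ the polynomial
\begin{equation*}
p(t):=\sigma_k(\lambda+te) = \sum_{j=0}^k t^{k-j}\binom{n-j}{k-j}\sigma_j(\lambda)
\end{equation*}
is strictly positive for all $t\geq 0$ (by forward invariance) and has only real roots (by hyperbolicity), so they are all negative. Writing $p(t) = \binom{n}{k}\prod_{j=1}^k(t+r_j)$ with $r_j>0$ and comparing coefficients gives $\sigma_j(\lambda)>0$ for $j=1,\ldots,k$; passing to the closure yields $\overline C\subseteq\Gamma_k$. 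Conversely, if $\lambda\in\Gamma_k$, the same expansion shows $p(t)>0$ for all $t>0$; the ray $\{\lambda+te:t>0\}$ therefore lies in $\{\sigma_k>0\}$ and enters $\mathrm{int}\,\Gamma_n\subseteq C$ for $t$ large, so it lies entirely in $C$ and $\lambda\in\overline C$.

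For the first characterisation, write $B := \{\lambda: 0\leq\sigma_k(\lambda)\leq\sigma_k(\lambda+\eta)\text{ for all }\eta\in\Gamma_n\}$. Since $\sigma_k^{1/k}$ is concave and positively $1$-homogeneous on $\overline C=\Gamma_k$ it is superadditive, and as $\Gamma_n\subseteq\Gamma_k$ we get $\sigma_k(\lambda+\eta)^{1/k}\geq \sigma_k(\lambda)^{1/k}+\sigma_k(\eta)^{1/k}\geq \sigma_k(\lambda)^{1/k}$, i.e.\ $\Gamma_k\subseteq B$. For the reverse inclusion I would take $\lambda\in B$, write $\eta = \sum_i t_i e_i\in\Gamma_n$ with $t_i\geq 0$, and expand
\begin{equation*}
\sigma_k(\lambda+\eta)-\sigma_k(\lambda) = \sum_{1\leq |J|\leq k}\Big(\prod_{j\in J}t_j\Big)\sigma_{k-|J|}(\lambda|_{J^c}),
\end{equation*}
where $\lambda|_{J^c}\in\R^{n-|J|}$ denotes the vector obtained by deleting the coordinates indexed by $J$. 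Testing with $t_i = s\,\mathbf{1}_{\{i\in J_0\}}$ for each fixed $J_0$ with $|J_0|\leq k$ and inspecting the dominant term in $s$ as $s\to\infty$ yields $\sigma_{k-|J_0|}(\lambda|_{J_0^c})\geq 0$. Summing over $J_0$ of size $l$ via the combinatorial identity
\begin{equation*}
\sum_{|J_0|=l}\sigma_{k-l}(\lambda|_{J_0^c}) = \binom{n-k+l}{l}\sigma_{k-l}(\lambda)
\end{equation*}
then gives $\sigma_{k-l}(\lambda)\geq 0$ for $l=1,\ldots,k-1$, which combined with $\sigma_k(\lambda)\geq 0$ yields $\lambda\in\Gamma_k$.

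The main obstacle is the invocation of G{\aa}rding's theorem, specifically the convexity and forward-invariance of $C$ and the concavity of $\sigma_k^{1/k}$; these are the only non-elementary inputs, and the remainder of the argument is polynomial bookkeeping plus the combinatorial identity displayed above.
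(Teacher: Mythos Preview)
The paper does not actually prove this lemma: it states the result and refers to \cite{Ivochkina1983,Trudinger1999} for proofs. Your proposal therefore supplies considerably more than the paper does, and the argument you outline is correct. The two halves---the G{\aa}rding-cone identification of $\Gamma_k$ with $\overline C$, and the polynomial-expansion test for the monotonicity characterisation---are both standard routes to these facts and match what one finds in the cited literature. The only point worth tightening is that your use of the concavity/superadditivity of $\sigma_k^{1/k}$ in the inclusion $\Gamma_k\subseteq B$ relies on $\Gamma_k=\overline C$, which you have indeed already established; it may be worth flagging this dependency explicitly so the order of the argument is clear.
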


Given a cone $\Gamma\subset \R^n$, its \textit{dual cone} is the closed, convex cone
$$\Gamma^*\equiv \{\mu\in \R^n:\lambda\cdot \mu\geq 0 \text{ for all }\lambda \in \Gamma\}.$$
For $l> k$ we have $\Gamma_l\subset\Gamma_k$ and thus $\Gamma_l^*\supset \Gamma_k^*$. It is easy to see that $\Gamma_1^*=\{t (1,\dots, 1):t\geq 0\}$ is a ray, while $\Gamma_n$ is self-dual, i.e.\ $\Gamma_n=\Gamma_n^*$. 

For our purposes, it is convenient to consider the functions $\rho_k$, defined for $\lambda \in \Gamma_k$ by
$$\rho_k(\lambda)\equiv \left(\frac{\sigma_k(\lambda)}{{n\choose k}}\right)^\frac 1 k$$
which are normalized so that $\rho_k(1,\dots,1)=1$. We set $\rho_k(\lambda)=-\infty$ if $\lambda\not \in \Gamma_k$.

We have defined the cones $\Gamma_k,\,\Gamma_k^*$ as subsets of $\R^n$. However, it is also natural to view them as subsets of $\text{Sym}_n$: we say that a symmetric matrix $A$ is in $\Gamma_k$ if $\lambda(A)\in \Gamma_k$, where $\lambda(A)$ is the vector of eigenvalues of $A$, arranged in descending order, and similarly for $\Gamma_k^*$. We also extend $\rho_k$ to $\text{Sym}_n$ by 
\beq\label{def:rhok}
\rho_k(A)\equiv \rho_k(\lambda(A)).
\eeq
Recalling \eqref{eq:defFk}, note that, for $A\in \Gamma_k$,
\beq\label{def:Fk}
F_k(A)= \rho_k(A)^k,
\eeq
where $F_k$ is, up to a multiplicative constant, the sum of the principal $k\times k$-minors of $A$. In particular, $F_k$ is a null Lagrangian (see Appendix \ref{app:LlogL}).

At this point it is reassuring to observe that the cones $\Gamma_k^*\subset \textup{Sym}_n$ are exactly what one would expect:

\begin{lemma}
\label{lemma:sanitycheck}
For $k=1,\dots, n,$ we have
$\Gamma_k^*=\{B\in  \textup{Sym}_n:\langle A,B\rangle \geq 0 \text{ for all } A\in \Gamma_k\}.$
\end{lemma}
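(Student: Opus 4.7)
The statement looks deceptively immediate, but the subtlety is that the matrix cone $\Gamma_k \subset \textup{Sym}_n$ is defined via the eigenvalue vector $\lambda(A) \in \R^n$, while the dual-cone identity concerns the Frobenius pairing $\langle A,B\rangle = \textup{tr}(AB)$, which is \emph{not} equal to $\lambda(A) \cdot \lambda(B)$ in general. The two ingredients I will use to bridge this gap are the orthogonal invariance of $\Gamma_k$ (both $\Gamma_k$ and $\Gamma_k^*$ in $\R^n$ are permutation-invariant, since $\sigma_j$ are symmetric polynomials) and the Schur--Horn theorem.

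For the inclusion $(\supseteq)$, suppose $B\in\textup{Sym}_n$ satisfies $\langle A,B\rangle\geq 0$ for every $A\in\Gamma_k$. Fix an arbitrary $\lambda\in\Gamma_k\subset\R^n$; I must verify $\lambda\cdot\lambda(B)\geq 0$. Diagonalize $B = U^\top \textup{diag}(\lambda(B)) U$ with $U\in O(n)$, and take the test matrix $A := U^\top\,\textup{diag}(\lambda)\, U$. Then the eigenvalues of $A$ are a permutation of $\lambda$; by the permutation-invariance of $\Gamma_k\subset\R^n$, this means $A\in\Gamma_k$ as a matrix. A direct computation gives
\[
\langle A,B\rangle = \textup{tr}\bigl(\textup{diag}(\lambda)\,\textup{diag}(\lambda(B))\bigr) = \lambda\cdot\lambda(B),
\]
which by hypothesis is nonnegative. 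Hence $\lambda(B)\in\Gamma_k^*\subset\R^n$, so $B\in\Gamma_k^*$ as a matrix.

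For the inclusion $(\subseteq)$, suppose $B\in\Gamma_k^*$ as a matrix, i.e.\ $\lambda(B)\in\Gamma_k^*\subset\R^n$, and let $A\in\Gamma_k$. Diagonalize $A = V^\top\,\textup{diag}(\lambda(A))\, V$, so that
\[
\langle A,B\rangle = \textup{tr}\bigl(\textup{diag}(\lambda(A))\cdot VBV^\top\bigr) = \sum_{i=1}^n \lambda_i(A)\,(VBV^\top)_{ii}.
\]
By the Schur--Horn theorem, the diagonal vector $d := \bigl((VBV^\top)_{ii}\bigr)_{i=1}^n$ lies in the convex hull of permutations of $\lambda(B)$. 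Since $\Gamma_k^*\subset\R^n$ is convex and permutation-invariant (as the dual of a permutation-invariant cone), every such permutation lies in $\Gamma_k^*$, and so does $d$. Then, using that $\lambda(A)\in\Gamma_k$ and the very definition of $\Gamma_k^*$ in $\R^n$,
\[
\langle A,B\rangle = \lambda(A)\cdot d \geq 0,
\]
which is what was required.

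The only nontrivial ingredient is Schur--Horn in the $(\subseteq)$ direction; this could equivalently be replaced by the lower bound in the von Neumann--Theobald trace inequality $\textup{tr}(AB)\geq \sum_i \lambda_i(A)\lambda_{n+1-i}(B)$, combined with the permutation-invariance of $\Gamma_k^*$ applied to the reversed eigenvalue vector of $B$. Neither step presents a genuine obstacle, but invoking Schur--Horn (or von Neumann) is essential precisely because $A$ and $B$ need not be simultaneously diagonalizable.
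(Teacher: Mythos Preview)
Your proof is correct. The paper's argument is organized a little differently: it observes that both the matrix cone $\Gamma_k^*$ and the Frobenius-dual set $\Sigma=\{B:\langle A,B\rangle\ge 0\text{ for all }A\in\Gamma_k\}$ are invariant under $B\mapsto QBQ^\top$ for $Q\in O(n)$, so it suffices to show they agree on diagonal matrices, which the paper then asserts without further justification. But unpacking that assertion for the direction $\Gamma_k^*\cap\textup{Diag}_n\subseteq\Sigma\cap\textup{Diag}_n$ lands exactly on your $(\subseteq)$ argument: for diagonal $B$ with entries $b\in\Gamma_k^*$ and an arbitrary (non-diagonal) $A\in\Gamma_k$, one has $\langle A,B\rangle=\sum_i A_{ii}b_i$, and this is nonnegative precisely because the diagonal of $A$ lies in the vector cone $\Gamma_k$ --- i.e.\ Schur's theorem together with the convexity and permutation-invariance of $\Gamma_k$. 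So the two proofs share the same essential ingredient; yours makes the Schur--Horn step explicit, while the paper buries it in the sentence ``$\Gamma_k^*\cap\textup{Diag}_n=\Sigma\cap\textup{Diag}_n$''. Your remark that the von~Neumann trace inequality furnishes an equivalent route is apt and is, in fact, the tool the paper invokes later (equation~\eqref{eq:coercivity}) for a closely related purpose.
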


\begin{proof}
By definition we have $\Gamma_k^*=\{B\in  \textup{Sym}_n: \lambda(A)\cdot\lambda(B) \geq 0 \text{ for all } A\in \Gamma_k\}$. Let $$\Sigma=\{B\in  \textup{Sym}_n:\langle A,B\rangle \geq 0 \text{ for all } A\in \Gamma_k\}.$$
For any $A\in \textup{Sym}_n$ we have $A \in \Gamma_k$ if and only if $QAQ^\top\in \Gamma_k$ for all $Q\in \textup{O}(n)$. It follows that the sets $\Gamma_k^*$ and $\Sigma$ have the same property. Since $\Gamma_k^*\cap\textup{Diag}_n=\Sigma\cap\textup{Diag}_n$, the conclusion follows.
\end{proof}

Associated to the dual cones $\Gamma_k^*\subset \R^n$, we have the dual functions 
\beq\label{def:rhokstar}
\rho_k^*(\mu)\equiv \inf\left\{\frac{\mu\cdot \lambda}{n}:\lambda\in \Gamma_k, \rho_k(\lambda)\geq 1\right\}
\eeq
which are defined for $\mu \in \Gamma_k^*$. The definition of $\rho_k^*$ can of course be extended to $\mu \not\in\R^n\backslash \Gamma_k^*$ but for such $\mu$ one always has $\rho_k^*(\mu)=-\infty$. As above, we also define $\rho_k^*(A)\equiv \rho_k^*(\lambda(A))$ for $A\in \textup{Sym}_n$.
The following classical result is a fundamental property of these functions and we refer the reader to \cite{Caffarelli1985} for a proof:
\begin{proposition}
\label{prop:concavity}
The function $\rho_k$ is concave in $\Gamma_k$, and so $\rho_k^*$ is concave in $\Gamma_k^*$.
\end{proposition}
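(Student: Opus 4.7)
The plan is to split the proposition into its two halves: the concavity of $\rho_k$ on $\Gamma_k$, which is the substantive part, and the concavity of $\rho_k^*$ on $\Gamma_k^*$, which should follow immediately from the variational formula \eqref{def:rhokstar}. For the first I would invoke G\r arding's theory of hyperbolic polynomials; for the second no extra work is needed.

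For the first part, I would rely on G\r arding's theorem (as recorded, e.g., in \cite{Caffarelli1985,Garding1959}): if $P$ is a homogeneous polynomial of degree $k$ on $\R^n$ that is \emph{hyperbolic with respect to} $e \in \R^n$, meaning $P(e)>0$ and for every $\lambda$ the polynomial $t \mapsto P(\lambda + te)$ has $k$ real roots, then $P^{1/k}$ is concave on the closure of the connected component of $\{P > 0\}$ containing $e$. I would apply this with $P = \sigma_k$ and $e = \mathbf 1 = (1,\dots,1)$.

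The main concrete step is then to verify that $\sigma_k$ is hyperbolic with respect to $\mathbf 1$. Direct expansion gives
$$\sigma_k(\lambda + t\mathbf 1) = \sum_{j=0}^{k} \binom{n-j}{k-j}\sigma_j(\lambda)\, t^{k-j},$$
which up to the substitution $x = -t$ and an overall sign is a nonzero scalar multiple of the $(n-k)$-th derivative of $\prod_{i=1}^n(x - \lambda_i)$. Since the latter polynomial has $n$ real roots, iterating Rolle's theorem produces $k$ real roots for its $(n-k)$-th derivative, and hence $k$ real roots for $t \mapsto \sigma_k(\lambda + t\mathbf 1)$. Identifying the component containing $\mathbf 1$ with $\textup{int}\,\Gamma_k$ is then immediate from the characterisation of $\Gamma_k$ as the closure of the component of $\{\sigma_k > 0\}$ containing $\Gamma_n \ni \mathbf 1$, recorded in the lemma preceding the proposition. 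G\r arding's theorem then yields concavity of $\sigma_k^{1/k}$, and hence of $\rho_k$, on $\Gamma_k$.

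For the second assertion, \eqref{def:rhokstar} presents $\rho_k^*(\mu)$ as the pointwise infimum over $\lambda$ in the fixed set $\{\lambda \in \Gamma_k : \rho_k(\lambda) \geq 1\}$ of the linear functionals $\mu \mapsto \mu \cdot \lambda /n$. Any pointwise infimum of linear functions is concave; finiteness on $\Gamma_k^*$ follows since $\mu \cdot \lambda \geq 0$ for all $\mu \in \Gamma_k^*$ and $\lambda \in \Gamma_k$. The hardest step conceptually is G\r arding's theorem itself, which I would import as a black box; the rest reduces to an algebraic identity and iterated Rolle.
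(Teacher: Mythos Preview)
Your proposal is correct and follows the standard route: the paper itself does not give a proof but simply refers to \cite{Caffarelli1985}, and the argument there is precisely the G\r arding hyperbolic-polynomial approach you sketch, together with the observation that $\rho_k^*$ is an infimum of linear functionals. One minor point to be aware of is that later in the paper (e.g.\ Lemma~\ref{lemma:attainment}) concavity is used for $\rho_k$ on $\textup{Sym}_n$ rather than on $\R^n$; this follows either by applying G\r arding's theorem directly to $F_k$ on $\textup{Sym}_n$ (hyperbolic with respect to $I_n$) or by the standard fact that a symmetric concave function of eigenvalues extends to a concave function on symmetric matrices.
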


Clearly $\rho_1^*(t \,\textup{diag}(1,\dots, 1))=t$, while $\rho_n^*=\rho_n$. The case $k=2$ also admits an explicit description as $\nabla F_2$ is linear: elementary calculations reveal that
\begin{align}
\label{eq:rho2*}
& \Gamma_2^*\equiv \bigg\{A\in \textup{Sym}_n:|A|\leq\frac{1}{\sqrt{n-1}}\tr A\bigg\},
& \rho_2^*(A)=\frac{1}{\sqrt n}\left((\tr A)^2 -(n-1)|A|^2\right)^\frac 1 2.
\end{align}

The next lemma shows that the families $\{\rho_k\}_{k=1}^n$ and $\{\rho_k^*\}_{k=1}^n$ are ordered: 

\begin{lemma}\label{lemma:rhokorder}
For all $1\leq l<k\leq n$, we have
$$\rho_k(A)\leq \rho_l(A)\text{ for }A\in\Gamma_k,\qquad \rho_l^*(A)\leq \rho_k^*(A)\text{ for }A\in\Gamma_l^*.$$
\end{lemma}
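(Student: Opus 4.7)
The plan is to prove the two inequalities separately, obtaining the second one from the first by a variational argument. For the first inequality $\rho_k\leq\rho_l$ on $\Gamma_k$, which is simply the Newton--Maclaurin inequality restricted to the G\r{a}rding cone, I would proceed via the normalized symmetric functions $p_j(\lambda)\equiv\sigma_j(\lambda)/{n\choose j}$. The starting point is Newton's inequality $p_j^2\geq p_{j-1}p_{j+1}$, which holds for every $\lambda\in\R^n$ (a classical consequence of Rolle's theorem applied to $\prod_i(t-\lambda_i)$, requiring no positivity). For $\lambda$ in the interior of $\Gamma_k$ all of $p_1,\ldots,p_k$ are strictly positive, so setting $r_j\equiv p_j/p_{j-1}$ with $p_0\equiv 1$, Newton's inequality rearranges to the monotonicity $r_{j+1}\leq r_j$. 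This in turn means that the partial geometric means $G_j\equiv (r_1\cdots r_j)^{1/j}=p_j^{1/j}$ are themselves decreasing in $j$: the identity $G_{j+1}^{j+1}=G_j^j\cdot r_{j+1}$ together with $r_{j+1}\leq r_j\leq G_j$ yields $G_{j+1}\leq G_j$. Iterating gives $\rho_k(\lambda)=p_k^{1/k}\leq p_l^{1/l}=\rho_l(\lambda)$, and the boundary of $\Gamma_k$ is handled by continuity after the perturbation $\lambda\mapsto\lambda+\varepsilon(1,\ldots,1)\in\textup{int}\,\Gamma_k$.

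For the dual inequality I would argue variationally from the definition \eqref{def:rhokstar}. Since $l<k$ implies $\Gamma_k\subset\Gamma_l$, the first inequality yields the inclusion
$$\{\lambda\in\Gamma_k:\rho_k(\lambda)\geq 1\}\subset\{\lambda\in\Gamma_l:\rho_l(\lambda)\geq 1\}.$$
For any $A\in\Gamma_l^*\subset\Gamma_k^*$, the infimum defining $\rho_l^*(A)$ is therefore taken over a set containing the one over which $\rho_k^*(A)$ is defined, and since a larger feasible set can only decrease an infimum this gives $\rho_l^*(A)\leq \rho_k^*(A)$, as required.

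The main subtlety is verifying Newton--Maclaurin on all of $\Gamma_k$ rather than just on the positive cone $\Gamma_n$, where it is a textbook AM--GM statement. This is dealt with by the perturbation-and-continuity step above, together with the standard fact that on $\textup{int}\,\Gamma_k$ one has $p_j>0$ for all $j\leq k$ (so that each ratio $r_j$ is positive and the geometric-mean manipulation is legitimate). Beyond this single technical point the argument is entirely formal: the first inequality is a purely algebraic fact about symmetric polynomials, and the passage to the dual inequality is a purely set-theoretic consequence of \eqref{def:rhokstar}.
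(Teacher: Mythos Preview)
Your proposal is correct and follows essentially the same route as the paper: the first inequality is Maclaurin's inequality (which the paper simply cites from \cite{Hardy1952}, while you supply the standard Newton--inequality proof), and your variational argument for the dual inequality via the inclusion $\{\rho_k\geq 1\}\subset\{\rho_l\geq 1\}$ is exactly the paper's argument.
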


\begin{proof}
The first inequality is exactly Maclaurin's inequality, see e.g.\ \cite{Hardy1952}, and the second inequality follows from the first. Indeed, if $B\in\Gamma_k\subset \Gamma_l$ is such that $\rho_k(B)\geq 1$, then also $\rho_l(B)\geq 1$, so that
$$\rho_l^*(A)\leq \frac{\la(A)\cdot\la(B)}{n}.$$
Taking the infimum over $B\in \Gamma_k$ such that $\rho_k(B)\geq 1$, we have
$$\rho_l^*(A)\leq \rho_k^*(A),$$
as required.
\end{proof}

We have the following alternative characterizations of the dual functions:

\begin{proposition}
\label{prop:simplifiedrhok*}
Fix $k\in\{1, \dots, n\}$ and $B\in \Gamma_k^*\backslash \p \Gamma_k$. We have:
\begin{align*}
    \rho_k^*(B)=\min\left\{\frac{1}{n}\langle A,B\rangle:A\in\Gamma_k,\:\rho_k(A)=1\right\}.
\end{align*}
\end{proposition}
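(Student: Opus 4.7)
The plan is to prove the equality in two steps: first, show that the infimum defining $\rho_k^*(B)$ can be restricted to $\{A\in\Gamma_k:\rho_k(A)=1\}$ without change; second, show the resulting infimum is attained.

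For the first step, given $A\in\Gamma_k$ with $\rho_k(A)>1$, I rescale to $A':=A/\rho_k(A)\in\Gamma_k$, which by positive $1$-homogeneity satisfies $\rho_k(A')=1$. Since $B\in\Gamma_k^*$ yields $\langle A,B\rangle\geq 0$, one has
\begin{equation*}
\frac{\langle A',B\rangle}{n}=\frac{\langle A,B\rangle}{n\,\rho_k(A)}\leq\frac{\langle A,B\rangle}{n},
\end{equation*}
so the infimum over $\{\rho_k\geq 1\}$ is no larger than the infimum over $\{\rho_k=1\}$; the reverse inequality is trivial by inclusion of constraint sets.

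For the second step, I take a minimizing sequence $\{A_m\}\subset\Gamma_k$ with $\rho_k(A_m)=1$. If $\{A_m\}$ is bounded, then closedness of $\Gamma_k$ together with continuity of $\rho_k$ and of $\langle\cdot,B\rangle$ produces a minimizer along a convergent subsequence. Otherwise $|A_m|\to\infty$, and up to subsequence $A_m/|A_m|\to\bar A\in\Gamma_k$ with $|\bar A|=1$ and
\begin{equation*}
\rho_k(\bar A)=\lim_{m\to\infty}\rho_k(A_m)/|A_m|=0,
\end{equation*}
so $\bar A\in\partial\Gamma_k\setminus\{0\}$. The hypothesis $B\in\Gamma_k^*\setminus\partial\Gamma_k$ should then force $\langle\bar A,B\rangle>0$, yielding $\langle A_m,B\rangle=|A_m|\langle A_m/|A_m|,B\rangle\to\infty$ and contradicting minimality.

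The main obstacle is verifying the strict positivity $\langle\bar A,B\rangle>0$ used above. This is where the assumption $B\in\mathrm{int}\,\Gamma_k$ should be essential, ideally combined with the standard duality fact that for a pointed closed convex cone $\Gamma$ with nonempty interior one has $\mathrm{int}\,\Gamma^*=\{B:\langle A,B\rangle>0 \text{ for all } A\in\Gamma\setminus\{0\}\}$. The degenerate case $k=1$, in which $\Gamma_1^*$ has empty topological interior in $\mathrm{Sym}_n$ and the above coercivity fails, should be handled separately by direct calculation: for $B=tI_n$ with $t>0$ one gets $\rho_1^*(B)=t$, attained at $A=I_n$.
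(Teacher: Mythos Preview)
Your argument handles the reduction from $\{\rho_k\geq 1\}$ to $\{\rho_k=1\}$ and the attainment via coercivity, but it overlooks the main content of the proposition. In the paper, $\rho_k^*$ is defined through eigenvalues (see \eqref{def:rhokstar}):
\[
\rho_k^*(B)=\rho_k^*(\lambda(B))=\inf\Big\{\tfrac{1}{n}\,\lambda(B)\cdot\lambda:\lambda\in\Gamma_k\subset\R^n,\ \rho_k(\lambda)\geq 1\Big\},
\]
i.e.\ via a dot product in $\R^n$. The proposition asserts that this equals the minimum of the \emph{Frobenius} pairing $\tfrac{1}{n}\langle A,B\rangle$ over matrices $A\in\Gamma_k\subset\textup{Sym}_n$ with $\rho_k(A)=1$. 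Since in general $\langle A,B\rangle\neq\lambda(A)\cdot\lambda(B)$, these two infima are not \emph{a priori} equal. Your proof tacitly treats $\rho_k^*(B)$ as if it were already defined through $\langle A,B\rangle$, so the central step is missing.

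The paper supplies this step as follows. After establishing (via Lemma~\ref{lemma:simplifiedrhok*}) that the eigenvalue-based infimum can be taken over $\{\rho_k=1\}$, one shows that the matrix infimum $\inf\{\langle A,B\rangle:\rho_k(A)=1\}$ is attained at some $A_0$, using the coercivity $\langle A,B\rangle\geq\lambda_{\min}(B)\,\|A\|$ (here $\lambda_{\min}(B)>0$ since $B\in\mathrm{int}\,\Gamma_k^*\subset\mathrm{int}\,\textup{Sym}^+_n$). The Lagrange multiplier condition at $A_0$ reads $B=c\,\nabla\rho_k(A_0)$, which forces $A_0$ and $B$ to be simultaneously diagonalizable, whence $\langle A_0,B\rangle=\lambda(A_0)\cdot\lambda(B)$. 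A symmetric argument using the orthogonal invariance of $\Gamma_k$ and $\rho_k$ then identifies the two infima. Your compactness argument is fine as far as it goes---and your handling of the degenerate case $k=1$ is correct---but without the simultaneous-diagonalization step you have only shown that $\min\{\tfrac{1}{n}\langle A,B\rangle:\rho_k(A)=1\}$ exists, not that it equals $\rho_k^*(B)$. (A minor additional point: your dual-cone characterization of $\mathrm{int}\,\Gamma_k^*$ requires $\Gamma_k$ to be pointed, which you should verify for $k\geq 2$; this follows from $\Gamma_k\subset\Gamma_2$ and $\sigma_2(A)=\tfrac{1}{2}((\tr A)^2-|A|^2)$.)
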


In order to prove the proposition, we first establish the following lemma:

\begin{lemma}\label{lemma:simplifiedrhok*}
For $k\in \{1,\dots, n\}$ and $B\in \Gamma_k^*$, we have
\begin{equation*}
\rho_k^*(B)=\inf\left\{\frac{\lambda(B) \cdot \lambda(A)}{n}:A\in \Gamma_k,\, \rho_k(A)= 1\right\}.
\end{equation*}
\end{lemma}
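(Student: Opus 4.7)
The plan is a short scaling argument based on the definition \eqref{def:rhokstar}. Writing $\mu := \lambda(B)$ and unpacking the extension convention $\rho_k^*(B) = \rho_k^*(\lambda(B))$, the definition reads
$$\rho_k^*(B) = \inf\left\{\frac{\mu\cdot\lambda}{n} : \lambda\in\Gamma_k,\ \rho_k(\lambda) \geq 1\right\}.$$
The task is (a) to tighten the constraint $\rho_k(\lambda)\geq 1$ to $\rho_k(\lambda)=1$, and (b) to recast the infimum in terms of eigenvalue vectors of matrices $A\in\Gamma_k\subset\textup{Sym}_n$.

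For step (a) I would record two easy facts. First, the objective is non-negative on the feasible set: since $B\in\Gamma_k^*\subset\textup{Sym}_n$, the vector $\mu = \lambda(B)$ lies in $\Gamma_k^*\subset\R^n$ by the extension convention, and therefore the defining inequality of the dual cone yields $\mu\cdot\lambda\geq 0$ for every $\lambda\in\Gamma_k$. Second, $\rho_k$ is positively $1$-homogeneous on $\Gamma_k$ (immediate from $\rho_k = (\sigma_k/\binom{n}{k})^{1/k}$ together with the $k$-homogeneity of $\sigma_k$). Combining these, for any $\lambda\in\Gamma_k$ with $\rho_k(\lambda) > 1$ the rescaled vector $\lambda' := \lambda/\rho_k(\lambda)$ still lies in $\Gamma_k$ (since $\Gamma_k$ is a cone), satisfies $\rho_k(\lambda') = 1$, and
$$\frac{\mu\cdot\lambda'}{n} = \frac{1}{\rho_k(\lambda)}\cdot\frac{\mu\cdot\lambda}{n} \leq \frac{\mu\cdot\lambda}{n},$$
where the inequality uses non-negativity together with $\rho_k(\lambda)\geq 1$. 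Coupled with the trivial inclusion $\{\rho_k = 1\}\subset\{\rho_k\geq 1\}$, this shows the two infima agree.

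Step (b) is then immediate: $\Gamma_k\subset\R^n$ is permutation-invariant, so for each admissible vector $\lambda$ the diagonal matrix $A := \textup{diag}(\lambda)\in\textup{Sym}_n$ belongs to $\Gamma_k$ with $\rho_k(A) = \rho_k(\lambda) = 1$ and eigenvalue vector agreeing with $\lambda$ up to a reordering; by the symmetry of the feasible set, such a reordering does not affect the value of the infimum. Putting (a) and (b) together gives the lemma.

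The whole proof is essentially one line, but the only step which requires any genuine thought is the rescaling in step (a), whose validity depends crucially on the non-negativity of $\mu\cdot\lambda$: without it, dividing by the factor $\rho_k(\lambda)\geq 1$ could \emph{increase} rather than decrease the objective, and one could not collapse $\{\rho_k\geq 1\}$ onto $\{\rho_k = 1\}$.
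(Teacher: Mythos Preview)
Your step (a) --- the scaling argument --- is correct and is a genuinely more elementary route than the paper's. The paper argues via convex geometry: it records that the super-level set $K=\{\lambda\in\Gamma_k:\rho_k(\lambda)\geq 1\}$ is convex (by the concavity of $\rho_k$, Proposition~\ref{prop:concavity}), that for $k\geq 2$ one has $K=\textup{conv}(\partial K)$, and that the infimum of the linear functional $\ell(\lambda)=\frac{1}{n}\lambda(B)\cdot\lambda$ over $\textup{conv}(\partial K)$ coincides with its infimum over $\partial K=\{\rho_k=1\}$. Your argument bypasses the concavity of $\rho_k$ entirely, using only the $1$-homogeneity of $\rho_k$, the cone property of $\Gamma_k$, and the non-negativity of the objective on the feasible set (which is precisely the defining inequality of the dual cone). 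A small bonus: your argument handles $k=1$ uniformly, whereas the paper singles it out.

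One caution on step (b): the assertion that ``reordering does not affect the value of the infimum'' is not valid as written, since the objective $\lambda\mapsto\mu\cdot\lambda$ with $\mu=\lambda(B)$ fixed and ordered is \emph{not} permutation-invariant in $\lambda$. For instance, with $n=k=2$ and $\mu=(2,1)$, the infimum of $\mu\cdot\lambda$ over $\{\lambda_1\lambda_2=1,\ \lambda_i>0\}$ is $2\sqrt{2}$, attained at the ascending vector $(1/\sqrt{2},\sqrt{2})$; restricting to descending $\lambda$ gives the strictly larger value $3$. Fortunately this is a non-issue: in the paper's own proof the set $K$ sits in $\R^n$ and $\ell$ is linear, so the variable $A$ in the lemma should be read as a vector and ``$\lambda(A)$'' as a notational slip. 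Under that reading step (b) is vacuous and step (a) alone proves the lemma.
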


\begin{proof}
The case $k=1$ can be verified directly, so fix $k\geq 2$. We use the fact that for a convex subset $K\subseteq \R^n$ we have
$$\textup{conv}(\textup{rel-}\p K)= K $$
if and only if $K$ is not an affine subspace or the intersection of an affine subspace with a closed halfspace, see for instance \cite[Lemma 1.4.1]{Schneider2013}. 
 This identity can be applied to the set $K=\{A\in \Gamma_k: \rho_k(A)\geq 1\}$ as it follows easily from the definition of $\rho_k$ that, for $k\geq 2$, $K$ is neither affine nor a half-space. Moreover, by Proposition \ref{prop:concavity}, $K$ is convex. By definition, $\rho_k^*(B)$ is the infimum of the concave function $\ell(A)=\frac 1 n \lambda(B) \cdot \lambda(A)$ over the set $K=\textup{conv}(\p K)$, and so
$\rho_k^*(B)=\inf_{\p K} \ell$. Since we have $\p K=\{A\in \Gamma_k:\rho_k(A)=1\}$, the conclusion follows.
\end{proof}

\begin{proof}[Proof of Proposition \ref{prop:simplifiedrhok*}]
We claim that the infimum of $\langle A, B\rangle$ over the set of $A\in \Gamma_k$ such that $\rho_k(A)=1$ is attained. Indeed, according to von Neumann's trace inequality, see e.g.\ \cite{Mirsky1959}, we have the coercivity inequality
\begin{equation}
    \label{eq:coercivity}
    \lambda_\textup{min} (B)\|A\|=\lambda_\textup{min} (B)\lambda_\textup{max} (A)\leq \langle A, B\rangle.
\end{equation}
Note that $\lambda_{\min}(B)>0$ since $B$, being in the interior of $\Gamma_k^*\subset \Gamma_n$, is a positive definite matrix. The claim now follows from \eqref{eq:coercivity} and a compactness argument. 

Let $A_0$ be the point where the infimum of $\langle A, B\rangle$ over the set $\{A\in \Gamma_k:\rho_k(A)=1\}$ is attained; we necessarily have
$B=c \nabla \rho_k(A_0)$ for some Lagrange multiplier $c\in \R$. 
It follows that $A_0$ and $B$ are simultaneously diagonalizable, so we may assume that both of them are diagonal matrices. 
Therefore
$$\inf_{\substack{A\in\Gamma_k \\ \rho_k(A)=1}}\langle A,B\rangle=\langle A_0,B\rangle=\la(A_0)\cdot \la(B)\geq \inf_{\substack{A\in\Gamma_k \\ \rho_k(A)=1}}\la(A)\cdot \la(B).$$
To prove the reverse inequality we note that, as above, the right-hand is in fact attained at a matrix $A_1\in \Gamma_k$ with $\rho_k(A_1)$, which we may assume to be diagonal. Thus, if $Q\in \textup{O}(n)$ is such that $Q^\top B Q$ is a diagonal matrix, we have
$$\la(A_1)\cdot\la(B)=\langle A_1,Q^\top BQ\rangle=\tr(A_1Q^\top BQ)=\tr(Q^\top A_1QB)\geq \inf_{\substack{ A\in\Gamma_k \\ \rho_k(A)=1}}\langle  A,B\rangle,$$
since both $\Gamma_k$ and $\rho_k$ are invariant under the operation $A\mapsto Q^\top A Q$. The conclusion now follows from Lemma \ref{lemma:simplifiedrhok*}.
\end{proof}

As an immediate consequence of Proposition \ref{prop:simplifiedrhok*}, we obtain the main result of this subsection, which will play a key role in sections \ref{sec:qc} and \ref{sec:div}.

\begin{proposition}[Duality pairing]
\label{prop:dualitypair}
For $k=1,\dots, n$ and any $A\in \Gamma_k$, $B\in \Gamma_k^*$, we have
$$\rho_k(A)\rho_k^*(B)\leq \frac 1 n \langle A,B\rangle.$$
Moreover, for each $B\in \Gamma_k^*$ there is some $A\in \Gamma_k$ such that equality is attained.
\end{proposition}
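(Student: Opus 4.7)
The plan is to deduce everything from the definition of $\rho_k^*$ as an infimum together with positive $1$-homogeneity, using Proposition~\ref{prop:simplifiedrhok*} only at the very last step to upgrade the infimum to a minimum.

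For the inequality, I would split according to whether $\rho_k(A)>0$ or not. When $\rho_k(A)>0$, the matrix $\widetilde A \equiv A/\rho_k(A)$ lies in $\Gamma_k$ with $\rho_k(\widetilde A)=1$, so is admissible in the defining infimum \eqref{def:rhokstar}; hence
\[
\rho_k^*(B)\leq \frac{1}{n}\langle \widetilde A,B\rangle=\frac{\langle A,B\rangle}{n\,\rho_k(A)},
\]
which rearranges to the claim. When $\rho_k(A)=0$ the left-hand side vanishes, while $\langle A,B\rangle\geq 0$ by Lemma~\ref{lemma:sanitycheck} (this is literally the defining property of $\Gamma_k^*$), so the inequality is trivial. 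This disposes of the inequality for all $A\in\Gamma_k$ and $B\in\Gamma_k^*$.

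For the equality assertion I would consider two cases. If $B\in\mathrm{int}\,\Gamma_k^*$, then Proposition~\ref{prop:simplifiedrhok*} promotes the infimum in the definition of $\rho_k^*(B)$ to a minimum: there exists $A\in\Gamma_k$ with $\rho_k(A)=1$ and $\rho_k^*(B)=\tfrac{1}{n}\langle A,B\rangle$, hence
\[
\rho_k(A)\,\rho_k^*(B)=\rho_k^*(B)=\frac{1}{n}\langle A,B\rangle,
\]
as required. If instead $B\in\partial\Gamma_k^*$, then $\rho_k^*(B)=0$ (using the identification $\mathrm{int}\,\Gamma_k^*=\{\rho_k^*>0\}$ recalled in Section~\ref{sec:qcintro}, which in turn follows from concavity and positive $1$-homogeneity together with continuity of $\rho_k^*$ on $\Gamma_k^*$); taking $A=0\in\Gamma_k$ then makes both sides of the identity vanish. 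The case $B=0$ is the same, or can be absorbed into either branch.

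There is essentially no obstacle here: the content of the statement is already packaged into the definition of $\rho_k^*$ and Proposition~\ref{prop:simplifiedrhok*}. The only point deserving a brief comment is the equality case at the boundary, where one cannot attain equality with any rescaling of a given $A$; the resolution is simply that both sides vanish, so $A=0$ suffices.
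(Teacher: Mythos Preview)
Your approach is essentially that of the paper, and you handle the equality case (including the boundary $B\in\partial\Gamma_k^*$, via $A=0$) with more explicit care than the paper's terse proof. There is, however, one genuine slip in the inequality part. You invoke the defining infimum \eqref{def:rhokstar} to obtain $\rho_k^*(B)\leq \tfrac{1}{n}\langle\widetilde A,B\rangle$, but \eqref{def:rhokstar} is the \emph{vector} definition: for a matrix $B$ one has $\rho_k^*(B)=\rho_k^*(\lambda(B))$, and plugging the eigenvalue vector $\lambda(\widetilde A)$ into \eqref{def:rhokstar} yields only
\[
\rho_k^*(B)\;\leq\;\tfrac{1}{n}\,\lambda(B)\cdot\lambda(\widetilde A),
\]
which in general \emph{exceeds} $\tfrac{1}{n}\langle\widetilde A,B\rangle$ (both eigenvalue vectors being in descending order, this is the maximal pairing). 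The passage from the eigenvalue pairing to the matrix inner product is precisely the content of Proposition~\ref{prop:simplifiedrhok*} (via Lemma~\ref{lemma:simplifiedrhok*} and the von Neumann trace inequality used in its proof), so you should cite that result here rather than \eqref{def:rhokstar} --- which is exactly what the paper does. Alternatively, you can repair the argument directly: since $\Gamma_k\subset\R^n$ is permutation-invariant, every permutation of $\lambda(\widetilde A)$ is admissible in \eqref{def:rhokstar}, and the minimum of $\lambda(B)\cdot\lambda_\pi(\widetilde A)$ over permutations $\pi$ is at most $\langle\widetilde A,B\rangle$ by the trace inequality.
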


\begin{proof}
 We may assume that $\rho_k(A), \rho_k^*(B)>0$ as otherwise, according to Lemma \ref{lemma:sanitycheck}, there is nothing to prove. In this case $B\in \Gamma_k^*\backslash\p \Gamma_k^*$ and, by homogeneity, we may also assume that $\rho_k(A)=1$.  The proposition now follows from Proposition \ref{prop:simplifiedrhok*}.
\end{proof}

The first part of Proposition \ref{prop:dualitypair} was  stated in \cite[Proposition 2.1]{Kuo2007}, although their proof omits a few details.

The following lemma establishes a useful relationship between $\rho_k$ and its dual function:

\begin{lemma}
\label{lemma:attainment}
For $A\in \Gamma_k\backslash \p\Gamma_k$ we have $\nabla \rho_k(A)\in \Gamma_k^*$ and 
$\rho_k^*(n \nabla \rho_k(A))=1.$
\end{lemma}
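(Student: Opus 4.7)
The plan is to derive everything from two basic features of $\rho_k$: it is positively $1$-homogeneous (by definition) and it is concave on $\Gamma_k$ (by Proposition~\ref{prop:concavity}). On $\Gamma_k\setminus\partial\Gamma_k$ we have $\sigma_j(A)>0$ for $j=1,\dots,k$, so $\rho_k=(\sigma_k/\binom{n}{k})^{1/k}$ is a smooth function of the matrix entries, and $\nabla\rho_k(A)$ is well defined.

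First I would record the subgradient inequality. Fix $A\in \Gamma_k\setminus\partial\Gamma_k$. Concavity of $\rho_k$ and differentiability at $A$ give
\[
\rho_k(B)\leq \rho_k(A)+\langle \nabla\rho_k(A),B-A\rangle \quad\text{for all }B\in\Gamma_k,
\]
while Euler's identity applied to the $1$-homogeneous function $\rho_k$ yields $\langle \nabla\rho_k(A),A\rangle=\rho_k(A)$. Combining the two,
\[
\rho_k(B)\leq \langle \nabla\rho_k(A),B\rangle \quad\text{for all }B\in\Gamma_k. \qquad(\star)
\]
Since $\rho_k(B)\geq 0$ on $\Gamma_k$, the right-hand side of $(\star)$ is nonnegative for every $B\in\Gamma_k$, and Lemma~\ref{lemma:sanitycheck} gives $\nabla\rho_k(A)\in\Gamma_k^*$. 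This disposes of the first assertion.

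For the second assertion I would appeal to Proposition~\ref{prop:simplifiedrhok*} (which applies since $\nabla\rho_k(A)$ is actually in $\mathrm{int}\,\Gamma_k^*$; this follows because $A\in\mathrm{int}\,\Gamma_k$ and $\nabla\rho_k$ is continuous up to a neighbourhood where the inequality $(\star)$ remains strict on $\Gamma_k\setminus\{0\}$). Setting $C=n\nabla\rho_k(A)$,
\[
\rho_k^*(C)=\min\Big\{\tfrac{1}{n}\langle B,C\rangle : B\in\Gamma_k,\ \rho_k(B)=1\Big\}
=\min\{\langle B,\nabla\rho_k(A)\rangle : B\in\Gamma_k,\ \rho_k(B)=1\}.
\]
The inequality $(\star)$ shows that every admissible $B$ in this minimization satisfies $\langle \nabla\rho_k(A),B\rangle\geq \rho_k(B)=1$, so $\rho_k^*(C)\geq 1$. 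For the reverse bound, I would choose the distinguished competitor $B_\ast=A/\rho_k(A)\in\Gamma_k$, for which $\rho_k(B_\ast)=1$ by $1$-homogeneity and $\langle \nabla\rho_k(A),B_\ast\rangle=\rho_k(A)/\rho_k(A)=1$ by Euler's identity. Hence the minimum equals $1$, giving $\rho_k^*(n\nabla\rho_k(A))=1$.

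I do not anticipate any real obstacle: the argument is entirely structural and relies only on homogeneity, concavity, and the duality formula already established in Proposition~\ref{prop:simplifiedrhok*}. The only mildly technical point is verifying that $\nabla\rho_k(A)$ lies in $\mathrm{int}\,\Gamma_k^*$ (as opposed to merely $\Gamma_k^*$) so that Proposition~\ref{prop:simplifiedrhok*} is directly applicable; this is a consequence of the fact that, for $A\in\mathrm{int}\,\Gamma_k$, the inequality $(\star)$ is strict whenever $B\neq tA$, so the supporting hyperplane to the convex set $\{\rho_k\geq 1\}$ at $A/\rho_k(A)$ is not tangent to any face of $\Gamma_k$. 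Alternatively, one can sidestep this by invoking Lemma~\ref{lemma:simplifiedrhok*} (which holds for all $B\in\Gamma_k^*$) together with the orthogonal-invariance argument used in the proof of Proposition~\ref{prop:simplifiedrhok*} to reduce to diagonal $A$.
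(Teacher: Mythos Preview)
Your proof is correct and follows essentially the same route as the paper's: both derive the key inequality $\rho_k(B)\leq\langle\nabla\rho_k(A),B\rangle$ from concavity plus Euler's identity, deduce $\nabla\rho_k(A)\in\Gamma_k^*$, then obtain $\rho_k^*(n\nabla\rho_k(A))\geq 1$ by taking the infimum over $\{\rho_k=1\}$ and $\leq 1$ by testing with $A/\rho_k(A)$. Your discussion of whether $\nabla\rho_k(A)\in\mathrm{int}\,\Gamma_k^*$ is a point the paper passes over silently; your suggested workaround via Lemma~\ref{lemma:simplifiedrhok*} (or simply the definition of $\rho_k^*$, which already gives the needed one-sided bounds without requiring attainment) is adequate.
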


\begin{proof}
 Let $A,\tilde A \in \Gamma_k$ be arbitrary. Using the concavity of $\rho_k$ provided by Proposition \ref{prop:concavity} we have
$$\rho_k(\tilde A)-\rho_k(A)\leq \langle \nabla \rho_k(A), \tilde A-A\rangle = \langle \nabla \rho_k(A), \tilde A\rangle - \rho_k(A)$$
where the last equality follows from Euler's identity for homogeneous functions; thus
\begin{equation}
    \label{eq:welldef}
    \rho_k( \tilde A)\leq \langle \nabla \rho_k(A), \tilde A\rangle.
\end{equation}
We first note that \eqref{eq:welldef}  shows that $0 \leq \langle \nabla \rho_k(A), \tilde A\rangle$ for all $\tilde A \in \Gamma_k$: thus $\nabla \rho_k(A)\in \Gamma_k^*$.

Let us now assume that $\rho_k(\tilde A)=1$. Taking the infimum over such $\tilde A$ in \eqref{eq:welldef} gives $1\leq \rho_k^*(n\nabla\rho_k(A))$, where we have used the characterization of $\rho_k^*$ given by Proposition \ref{prop:simplifiedrhok*}.
To prove the opposite inequality, assume that $\rho_k(A)=1$; this is justified by homogeneity, since $A\in \Gamma_k\backslash \p \Gamma_k$. Note that, by definition,
$
\rho_k^*(n\nabla\rho_k(A))\leq \frac{1}{n}\langle  n\nabla\rho_k(A),A\rangle=\rho_k(A)=1.
$
The proof is complete.
\end{proof}

By combining Proposition \ref{prop:dualitypair} with Lemma \ref{lemma:attainment}, we obtain:

\begin{proposition}\label{prop:diffeo}
The map $\nabla F_k\colon \mathrm{int}\,\Gamma_k\to \mathrm{int}\,\Gamma_k^*$ is a diffeomorphism.
\end{proposition}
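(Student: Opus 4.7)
The proof plan is to verify four things in sequence: $\nabla F_k$ is smooth on $\mathrm{int}\,\Gamma_k$, its image lies in $\mathrm{int}\,\Gamma_k^*$, it is a bijection onto $\mathrm{int}\,\Gamma_k^*$, and its inverse is smooth. Smoothness is immediate since $F_k$ is a polynomial in the entries of $A$. For the image, I would use the chain-rule identity
$$\nabla F_k(A) = k\,\rho_k(A)^{k-1}\nabla \rho_k(A),$$
together with Lemma~\ref{lemma:attainment} (which gives $\nabla\rho_k(A)\in \Gamma_k^*$) and the 1-homogeneity of $\rho_k^*$, to compute $\rho_k^*(\nabla F_k(A)) = \tfrac{k}{n}\rho_k(A)^{k-1}$; this is strictly positive on $\mathrm{int}\,\Gamma_k = \{\rho_k>0\}$, so $\nabla F_k(A)\in \mathrm{int}\,\Gamma_k^* = \{\rho_k^*>0\}$.

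For surjectivity, given $B\in \mathrm{int}\,\Gamma_k^*$, Proposition~\ref{prop:simplifiedrhok*} supplies a minimizer $A_*\in \Gamma_k$ with $\rho_k(A_*)=1$ of $A\mapsto \tfrac{1}{n}\langle A,B\rangle$ on the level set $\{\rho_k=1\}$. Lagrange multipliers at $A_*$ identify $B$ as a positive multiple $c\,\nabla\rho_k(A_*)$, with $c=n\rho_k^*(B)$ (obtained by pairing with $A_*$ and applying Euler's identity), and the rescaling $\lambda A_*$ with $\lambda=(n\rho_k^*(B)/k)^{1/(k-1)}$ then yields $\nabla F_k(\lambda A_*)=B$. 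For injectivity, if $\nabla F_k(A_1)=\nabla F_k(A_2)$, the identity $\rho_k^*(\nabla F_k(A))=\tfrac{k}{n}\rho_k(A)^{k-1}$ established above forces $\rho_k(A_1)=\rho_k(A_2)$, so the normalized points $A_i/\rho_k(A_i)$ are both minimizers of the same linear functional on $\{\rho_k=1\}$.

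The main obstacle is the uniqueness of this minimizer, which I would handle by invoking strict concavity of $\rho_k$ on $\mathrm{int}\,\Gamma_k$ along line segments not passing through the origin, a classical consequence of G\r{a}rding's theorem for the hyperbolic polynomial $\sigma_k$ (and an improvement of the concavity in Proposition~\ref{prop:concavity}). Concretely, if two distinct points on $\{\rho_k=1\}$ both minimize the same linear functional, the entire segment joining them consists of minimizers, $\rho_k$ is constant along it, and strict concavity then forces the segment to be radial, whence the two points coincide by 1-homogeneity. The same strict concavity ensures that $D^2 F_k(A)$ is non-singular throughout $\mathrm{int}\,\Gamma_k$—the radial direction cannot belong to any kernel since Euler's identity gives $D^2 F_k(A)\cdot A=(k-1)\nabla F_k(A)\neq 0$, while strict concavity gives non-degeneracy on the tangent space to the level set—so the inverse function theorem upgrades the bijection to a diffeomorphism.
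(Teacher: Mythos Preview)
Your proof is correct and follows the same architecture as the paper's: surjectivity via Lagrange multipliers on the level set $\{\rho_k=1\}$ (Propositions~\ref{prop:simplifiedrhok*} and~\ref{prop:dualitypair}), injectivity by reducing to uniqueness of that minimizer and invoking G\r{a}rding's strict convexity of $\{\rho_k\geq 1\}$, and the inverse function theorem for smoothness of the inverse. The one place the two arguments visibly differ is the local-diffeomorphism step. The paper computes $\det(\nabla F_k(A))>0$ via the chain $\rho_n^*(\nabla F_k(A))\geq\rho_k^*(\nabla F_k(A))=\tfrac{k}{n}\rho_k(A)^{k-1}$ and Lemma~\ref{lemma:attainment}, but this shows non-singularity of the \emph{value} $\nabla F_k(A)\in\mathrm{Sym}_n$, not of the \emph{differential} $\nabla^2 F_k(A)$ as a linear map on $\mathrm{Sym}_n$, which is what the inverse function theorem actually requires. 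Your radial--tangential decomposition targets the correct object; the only point to watch is that it leans on strict concavity of $\rho_k$ in the second-order sense (namely $D^2\rho_k(A)$ negative definite on the tangent space to $\{\rho_k=\mathrm{const}\}$), which is marginally stronger than the geometric strict convexity of level sets stated in G\r{a}rding's Theorem~5 but is nonetheless classical for complete hyperbolic polynomials, so your argument goes through.
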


\begin{proof}
Clearly $F_k$ is smooth and so, to prove that it is a local diffeomorphism, it is enough to show that $\nabla F_k(A)$ is non-singular for all $A\in \Gamma_k\backslash \p \Gamma_k$. By Lemmas \ref{lemma:rhokorder} and \ref{lemma:attainment}, recalling \eqref{def:Fk}, we estimate
\begin{align*}
\det(\nabla F_k(A))^{\frac{1}{n}} =\rho_n^*(\nabla F_k(A))& =\frac{k}{n}\rho_k^{k-1}(A) \rho_n^*(n\nabla \rho_k(A))\\ & \geq \frac{k}{n}\rho_k^{k-1}(A) \rho_k^*(n \nabla \rho_k(A))=\frac{k}{n}\rho_k^{k-1}(A)>0,
\end{align*}
and so by the Inverse Function Theorem $\nabla F_k$ is a local diffeomorphism. It remains to prove that this map is bijective.

Let us begin with surjectivity. Applying Proposition \ref{prop:dualitypair}, for each $B\in \mathrm{int}\,\Gamma_k^*$, let $A\in \Gamma_k$ be such that $\rho_k(A)\rho_k^*(B)=\frac 1 n \langle A,B\rangle$. By Lemma \ref{lemma:attainment} and the Lagrange multiplier argument in the proof of Proposition \ref{prop:simplifiedrhok*}, we have
$$B=n \rho_k^*(B) \nabla \rho_k(A) = \frac{n}{k} \frac{\rho_k^*(B)}{\rho_k(A)^{k-1}}\nabla F_k(A)=\nabla F_k\bigg(\Big(\frac{n\rho_k^*(B)}{k}\Big)^{\frac{1}{k-1}}\frac{A}{\rho_k(A)}\bigg).$$

To prove injectivity, we note that it is sufficient to show that the set $\{\rho_k\geq 1\}$ is strictly convex, as then $\nabla\rho_k$ is the normal to $\{\rho_k=1\}$ and hence it is injective. By Lemma \ref{lemma:attainment}, if $\nabla F_k(A)=\nabla F_k(\tilde A)$ for $A, \tilde A \in \mathrm{int}\, \Gamma_k$, then $\rho_k(A)=\rho_k(\tilde A)$ and so $\nabla \rho_k(A)=\nabla \rho_k(\tilde A)$ as well. Thus the injectivity claim follows.

The strict convexity of $\{\rho_k\geq 1\}$ follows from  \cite[Theorem 5]{Garding1959} (see also the comment at end of page 964 there): a positive level set of a hyperbolic polynomial $P$ bounds a strictly convex region, modulo the lineality space $L_P$. In our case, $P=F_k$ and we have $L_{F_k}=\{0\}$: in G\r{a}rding's terminology, $F_k$ is complete, as it depends on all variables.
\end{proof}

\subsection{A quantitative characterisation of null Lagrangians}\label{sec:null-lag}

For an arbitrary minor $M$, the next proposition establishes a form of nonlinear duality between the divergence of $\nabla M$ and the curl of the matrix field on which it acts:

\begin{proposition}\label{prop:divrhok*}
Let $M\colon \R^{n\times n}\to \R$ be a fixed minor. There is $C=C(n)$ such that, for all $A\in C^1(\R^n,\R^{n\times n})$,
$$|\Div \nabla M(A) |\leq C|\Curl A| |\nabla^2 M(A)|.$$
\end{proposition}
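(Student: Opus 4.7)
The goal is a pointwise bound of the form $|\Div \nabla M(A)| \leq C|\Curl A|\,|\nabla^2 M(A)|$, so the natural strategy is to expand $\Div\nabla M(A)$ by the chain rule and then hunt for cancellations. Writing $M''_{ij,kl}(A)=\partial^2 M/(\partial a_{ij}\,\partial a_{kl})(A)$, my starting point is the identity
\[
(\Div \nabla M(A))_i=\sum_{j} \partial_{x_j}\bigl[\partial_{a_{ij}}M(A)\bigr]=\sum_{j,k,l} M''_{ij,kl}(A)\,\partial_{x_j} a_{kl}.
\]
I will decompose $\partial_{x_j} a_{kl}$ into its symmetric and antisymmetric parts in the pair $(j,l)$. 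The antisymmetric part is precisely the component $(\Curl A)_{k,jl}=\partial_{x_j}a_{kl}-\partial_{x_l}a_{kj}$ of the curl of the $k$-th row of $A$, so the whole proof reduces to showing that the symmetric part contributes zero.

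The crux will be a short algebraic lemma about the Hessian of a minor: I will prove that
\[
M''_{ij,kl}(A)=-M''_{il,kj}(A),\qquad \text{and}\qquad M''_{ij,kl}=0 \text{ whenever } i=k \text{ or } j=l.
\]
The vanishing claim is immediate, since any minor is multilinear in the rows and in the columns of $A$, so two derivatives with respect to entries sharing a row or column index automatically give zero. The sign relation comes from the cofactor formula: if $M(A)=\det(A_{IJ})$ with $i,k\in I$ distinct and $j,l\in J$ distinct, then $M''_{ij,kl}(A)$ equals, up to a sign depending only on the positions of $i,k$ in $I$ and $j,l$ in $J$, the minor $\det(A_{I\setminus\{i,k\},\,J\setminus\{j,l\}})$; swapping $j\leftrightarrow l$ in the two derivative indices corresponds to swapping the two columns indexed by $j$ and $l$ in the associated $2\times 2$ cofactor sub-determinant, which flips the sign. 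An alternative and perhaps cleaner derivation is to differentiate the identity $M(A)+M(A[j\leftrightarrow l])=0$ (column swap) in the appropriate two entries.

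With this antisymmetry in hand, the proposition follows by symmetrising the $(j,l)$ indices. For each fixed $(i,k)$, the symmetric part $\frac12(\partial_{x_j}a_{kl}+\partial_{x_l}a_{kj})$ is invariant under $j\leftrightarrow l$, while $M''_{ij,kl}$ is antisymmetric under $j\leftrightarrow l$, so the contribution of the symmetric part vanishes after summation. What remains is
\[
(\Div \nabla M(A))_i=\tfrac12\sum_{j,k,l} M''_{ij,kl}(A)\,(\Curl A)_{k,jl},
\]
and the estimate follows from Cauchy--Schwarz with a combinatorial constant $C=C(n)$. The only genuine obstacle is notational hygiene: pinning down conventions for $\Curl A$ and for the ordering of the index pairs so that the sign in the antisymmetry relation lines up with the decomposition. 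Conceptually, the identity also recovers the null Lagrangian property of $M$ as a corollary, since $\Curl A\equiv 0$ forces $\Div\nabla M(A)\equiv 0$.
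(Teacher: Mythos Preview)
Your proof is correct and takes a genuinely different, more elementary route than the paper's. The paper works with differential forms: it identifies the rows $A_i$ with $1$-forms, so that $\Curl A$ becomes $(\dif A_i)_i$ and the cofactor row $(\textup{cof}\,A)_i$ becomes the $(n-1)$-form $A_1\wedge\cdots\wedge\widehat{A_i}\wedge\cdots\wedge A_n$; the Leibniz rule for $\dif$ on wedge products then produces exactly one $\dif A_j$ in each term, giving the estimate for $M=\det$, and the general minor is reduced to this case by a projection argument. Your approach bypasses the exterior calculus entirely: the chain-rule identity together with the antisymmetry $M''_{ij,kl}=-M''_{il,kj}$ (which is just the column-alternating property of minors, cleanly seen via the Leibniz formula for $\det$) yields the pointwise identity
\[
(\Div\nabla M(A))_i=\tfrac12\sum_{j,k,l}M''_{ij,kl}(A)\,(\Curl A)_{k,jl}
\]
in one stroke, for an arbitrary minor, with no reduction step. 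This is arguably shorter and gives an explicit dimension-free constant after Cauchy--Schwarz. The paper's route, on the other hand, makes the geometric content (null Lagrangians as closed forms) more visible and packages the combinatorics into the wedge product. One minor point: your ``alternative derivation'' via differentiating $M(A)+M(A[j\leftrightarrow l])=0$ needs a little care, since the column swap also permutes the entries at which the derivatives are evaluated; the Leibniz-formula argument or your cofactor-sign argument is the clean way to get $M''_{ij,kl}=-M''_{il,kj}$.
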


Recall that we use the notation $\nabla$ for derivatives with respect to the $A$ variables and reserve $\D$ for spatial derivatives.

\begin{proof}
The proposition is most easily proved through differential forms. Let $A_i\in \R^n$ be the rows of the matrix field $A\in \R^{n\times n}$;  we think of $A_i$ as a 1-form given by $A_i=\sum_{j=1}^n a_{ij} \dif x^j$ and so we may identify $\Curl A$ with $(\dif A_i)_{i=1}^n$, since 
$$\dif A_i=\sum_{1\leq j<k\leq n} \Big(\frac{\p a_{ik}}{\p x^j}- \frac{\p a_{ij}}{\p x^k} \Big) \dif x^j \wedge \dif x^k.$$

We first deal with the case where $k=n$ and $M=\det$. We consider the linear map $\Lambda^{n-1} A\colon \Lambda^{n-1}(\R^n)\to \Lambda^{n-1}(\R^n)$, defined as usual on decomposable elements through
\begin{equation}
    \label{eq:exteriorpower}
\Lambda^{n-1}(A)(e_1\wedge \dots \wedge \widehat{e_i}\wedge \dots \wedge e_n) = A_1\wedge \dots \wedge \widehat{A_i}\wedge \dots \wedge A_n,
\end{equation}
where $\hat{\cdot}$ means that the corresponding term is omitted. 
We may identify the above $(n-1)$-form with the $i$-th row of $\textup{cof}\,A$, which as expected we denote by $( \textup{cof}\, A)_i$: indeed, writing $(e_i)$ for the canonical basis of $\R^n$,
\begin{equation}
    \label{eq:cofactoridentification}
A_1\wedge \dots \wedge \widehat{A_i} \wedge \dots \wedge A_n = \sum_{j=1}^n \det A_{ij} e_1\wedge \dots \wedge \widehat{e_j} \wedge \dots \wedge e_n,
\end{equation}
where $A_{ij}$ is the $(n-1)\times (n-1)$ matrix obtained from $A$ by deleting the $i$-th row and the $j$-th column. 
Taking exterior derivatives in \eqref{eq:exteriorpower}, we compute
\begin{align*}
  &\dif\left(\Lambda^{n-1}(A)(e_1\wedge \dots \wedge \widehat{e_i}\wedge \dots \wedge e_n)\right)=\\ 
  & \qquad = \sum_{j< i} (-1)^{j-1}A_1 \wedge \dots \wedge A_{j-1} \wedge \dif A_j \wedge A_{j+1} \wedge \dots \wedge \widehat{A_i} \wedge \dots \wedge A_n   \\
  & \qquad +\sum_{j>i} (-1)^j A_1 \wedge \dots  \wedge \widehat{A_i} \wedge \dots 
 \wedge A_{j-1} \wedge \dif A_j \wedge A_{j+1}\wedge \dots \wedge A_n.
\end{align*}
Thus, applying the triangle inequality, we obtain
$$|\di( \textup{cof}\, A)_i|= \left|\dif\left(\Lambda^{n-1}(A)(e_1\wedge \dots \wedge \widehat{e_i}\wedge \dots \wedge e_n)\right)\right|\leq (n-1) |\Curl A| |\nabla^2 \det (A)|,$$
where we used the fact that, for any indices $i_1<\dots<i_{n-2},$
$$|A_{i_1} \wedge \dots \wedge A_{i_{n-2}}| \leq |\nabla^2 \det(A)|.$$
This last inequality can be deduced as follows: similarly to \eqref{eq:cofactoridentification}, the left-hand side is the norm of the vector $M_{i_1\dots i_{n-2}}(A)$ of those $(n-2)\times (n-2)$ minors of $A$ which only depend on the $i_1,\dots, i_{n-2}$ rows of $A$; the right-hand side is the norm of the vector of \textit{all} $(n-2)\times (n-2)$ minors of $A$.
Hence
\begin{equation}
    \label{eq:particularcase}
|\Div(\textup{cof}(A))|\leq C(n) |\Curl A||\nabla^2 \det A|
\end{equation}
and since $\nabla \det = \textup{cof}$, the proof of this case is complete.

We now prove the general case. Without loss of generality we assume that $M$ is the $k\times k$ minor which depends on the first $k$ rows and columns. The main observation is that
\begin{equation}
    \label{eq:dividentity}
\Div_{x_1,\dots,x_n} \nabla M(A)=\Div_{x_1,\dots, x_k} \nabla^{(k)} M(A),
\end{equation}
where the subscripts denote the variables with respect to which one takes the divergence and $\nabla^{(k)}$ is the gradient operator with respect only to the matrix variables $(a_{ij})_{i,j=1}^k$, thus $\nabla^{(k)}M \colon \R^{n\times n}\to \R^{k\times k}$. To prove \eqref{eq:dividentity}, we write  $\pi_k\colon \R^{n\times n}\to \R^{k\times k}$ for the linear operator such that $\pi_k(A)_{ij}=a_{ij}$ if $1\leq i,j\leq k$, and so
$$M(A)=\textup{det}_k(\pi_k(A)), \qquad \text{where } \textup{det}_k\colon \R^{k\times k}\to \R \text{ is  the determinant.}$$
We see that $\nabla M(A)\in \R^{n\times n}$ is such that $(\nabla M(A))_{ij}=0$ unless $1\leq i,j\leq k$, and thus \eqref{eq:dividentity} is proved.

It follows from \eqref{eq:dividentity} that, to estimate $\Div\nabla M(A)$ at a point $\bar x=(\bar x_1,\ldots,\bar x_n)$, we may consider the restriction of $A$ to the copy of $\R^k$ given by $\{(x_{k+1},\ldots,x_{n})=(\bar x_{k+1},\ldots,\bar x_n)\}$, and apply \eqref{eq:dividentity} and \eqref{eq:particularcase} to obtain
\begin{align*}
    |\Div_{x_1,\dots,x_n} \nabla M(A)|& =|\Div_{x_1,\dots, x_k} \nabla^{(k)} M(A)|
    \\ &= |\Div_{x_1,\dots, x_k} \nabla^{(k)} \textup{det}_k(\pi_k(A))| 
    \leq C |\Curl_{x_1,\ldots,x_k}\pi_k(A)||(\nabla^{(k)})^2 M(A)|,
\end{align*}
Observing that
$$|\Curl_{x_1,\ldots,x_k}\pi_k(A)|\leq |\Curl_{x_1,\dots,x_n} A|$$
and also, as in the proof of \eqref{eq:dividentity} above, 
$$|(\nabla^{(k)})^2M(A)|=|\nabla^2M(A)|,$$
we conclude the proof of the proposition.
\end{proof}

\begin{corollary}\label{cor:divrhok*}
Let $2\leq k \leq n$. For all $A\in C^1(\R^n,\R^{n\times n})$, 
$$\left|\Div \left(\rho_k^{k-1}(A)\nabla \rho_k(A) \right)\right|\leq C(n,k)|\Curl A| |A|^{k-2}.$$
\end{corollary}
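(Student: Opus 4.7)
The plan is to express $\rho_k^{k-1}\nabla\rho_k$ in terms of $\nabla F_k$, for which $F_k$ is a polynomial of degree $k$ on all of $\mathrm{Sym}_n$, and then apply Proposition~\ref{prop:divrhok*} minor by minor.

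First I would use $F_k=\rho_k^k$ together with the chain rule (equivalently, Euler's homogeneity identity on $\nabla\rho_k$) to obtain
\[
\rho_k^{k-1}(A)\nabla\rho_k(A)=\tfrac{1}{k}\,\nabla F_k(A)\qquad\text{on } \mathrm{int}\,\Gamma_k.
\]
Since $F_k$ is (up to a combinatorial constant) the sum of all principal $k\times k$ minors of $A$, the right-hand side extends smoothly to all of $\R^{n\times n}$, which is what allows the corollary to make sense at matrices $A$ not lying in $\Gamma_k$ without additional discussion.

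Next I would decompose $F_k$ as a finite linear combination of minors, $F_k=\tfrac{1}{\binom{n}{k}}\sum_{M}M$ where the sum is taken over principal $k\times k$ minors $M$. Applying Proposition~\ref{prop:divrhok*} to each such $M$ and using the triangle inequality yields
\[
|\Div\nabla F_k(A)|\leq C(n,k)\,|\Curl A|\,\max_{M}|\nabla^2 M(A)|.
\]
Then I would observe that, for each principal $k\times k$ minor $M$, the entries of $\nabla^2 M(A)$ are, up to sign, $(k-2)\times(k-2)$ minors of the principal $k\times k$ submatrix of $A$ associated to $M$. In particular, each entry of $\nabla^2 M(A)$ is a homogeneous polynomial of degree $k-2$ in the entries of $A$, bounded in absolute value by $C(n,k)|A|^{k-2}$ (and identically zero when $k=2$, in which case the bound is trivial with the convention $|A|^0=1$).

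Combining the three steps,
\[
\bigl|\Div(\rho_k^{k-1}(A)\nabla\rho_k(A))\bigr|=\tfrac{1}{k}|\Div\nabla F_k(A)|\leq C(n,k)\,|\Curl A|\,|A|^{k-2},
\]
which is the desired inequality. There is no real obstacle here beyond assembling the pieces correctly; the only point requiring a moment of care is the identification $\rho_k^{k-1}\nabla\rho_k=\tfrac{1}{k}\nabla F_k$, which both clarifies why the expression is globally well-defined and reduces the corollary to a single application of Proposition~\ref{prop:divrhok*} per minor.
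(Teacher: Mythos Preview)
Your proof is correct and follows essentially the same route as the paper: rewrite $\rho_k^{k-1}\nabla\rho_k=\tfrac{1}{k}\nabla F_k$, decompose $F_k$ into principal $k\times k$ minors, apply Proposition~\ref{prop:divrhok*} to each, and bound $|\nabla^2 M(A)|\leq C|A|^{k-2}$ (the paper invokes Hadamard's inequality for this last step, but your direct observation about the polynomial degree of the entries is equally valid). One small slip: when $k=2$, the entries of $\nabla^2 M(A)$ are constants in $\{-1,0,1\}$ rather than identically zero (the $0\times 0$ minor is $1$ by convention), but this does not affect the bound and your conclusion stands.
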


\begin{proof}
Up to a multiplicative constant, $\rho_k^k(A)$ is the sum of the $k\times k$ principal minors of $A$. Since $\rho_k^{k-1} \nabla \rho_k = \frac 1 k \nabla \rho_k^k$, the conclusion follows from  Proposition \ref{prop:divrhok*} by noting that, for a $k\times k$ minor $M$, we have $|\nabla^2 M(A)|\leq C |A|^{k-2}$ by Hadamard's inequality.
\end{proof}

To end this subsection, we note that Proposition \ref{prop:divrhok*} yields a quantitative version of the well-known fact (see e.g.\ \cite{Ball1981}) that a $C^1$ integrand $F\colon \R^{n\times n}\to \R$ is a \textit{null Lagrangian}, i.e.\ it satisfies 
$$\Div \nabla F(\D u)=0 \quad \text{for } u\in C^2(\R^n,\R^n),$$ if and only if $F$ can be written as a linear combination of minors. Indeed, we have:

\begin{corollary}[Characterization of homogeneous null Lagrangians]\label{cor:null_lag}
Let $F\in C^1(\R^{n\times n})$ be an integrand and $k\in [2,\infty)$. The following, are equivalent:
\begin{enumerate}
    \item $k\leq n$ is an integer and $F$ is a linear combination of $k\times k$ minors.
    \item\label{it:estimate} The estimate $|\Div \nabla F(A) |\leq C|\Curl A| |A|^{k-2}$ holds for all $A\in C^1(\R^n,\R^{n\times n})$.
\end{enumerate}
\end{corollary}
In particular, the estimate above can be viewed as a perturbed Euler--Lagrange equation for homogeneous null Lagrangians.
\begin{proof}
If we assume that $F$ is a homogeneous null Lagrangian, the estimate follows at once from Corollary \ref{cor:divrhok*}. Conversely, if the estimate holds, we can first plug in $A=\D u$ for $u\in C^2(\R^n,\R^n)$ to see that $F$ is a null Lagrangian. By the fact mentioned above, $F$ is therefore an affine combination of minors, of possibly different orders. We then replace $A$ with $tA$ for $t>0$ in the inequality of \ref{it:estimate} and notice that the right hand side is $(k-1)$-homogeneous, so the left hand side must be too, as it is a sum of homogeneous terms. It follows that $F$ must be $k$-homogeneous, which concludes the proof.
\end{proof}
\subsection{$k$-Hessian equations}
\label{sec:kHessian}
We will frequently make use of the existence and regularity theory for the so-called $k$-Hessian equation, and so we collect here a few results in this direction.

For $k=1,\dots, n$, the $k$-Hessian equation is the equation
\begin{equation}
    \label{eq:khessianprelims}
\rho_k(\D^2u)=f \quad \text{in } B,
\end{equation}
where $B\subset \R^n$ is a ball and $u$ is $k$-admissible. Note that this is a slight variant of the usual definition in the literature, where typically one uses $F_k$ instead of $\rho_k$. The equations are, of course, equivalent up to taking $k$-th powers and adjusting the integrability assumptions on $f$ accordingly. We say that a function $u\in C^2(B)\cap C^0(\overline B)$ is \textit{$k$-admissible} if
$$\lambda(\D^2 u)\in \Gamma_k.$$
When restricted to $k$-admissible solutions, equation \eqref{eq:khessianprelims}  is a fully nonlinear degenerate elliptic equation. 
Note that the $1$-Hessian equation is just the Poisson equation, and a function is $1$-admissible if and only if it is subharmonic. At the other endpoint, the $n$-Hessian equation is the Monge--Ampère equation and $n$-admissible functions are convex. 

In order to consider weak solutions of \eqref{eq:khessianprelims}, it is important to extend the notion of $k$-admissibility to solutions which are not necessarily $C^2$. For our purposes it will be enough to consider \textit{continuous} weak solutions.

\begin{definition}
A function $u\in C^0(B)$ is \textit{$k$-admissible} if, for any matrix $A\in \Gamma_k^*$,
$$\int_B u(x) \langle A, \D^2 \varphi(x) \rangle \dif x \geq 0, 
\qquad  \text{for any } 0\leq \varphi\in C^\infty_0(B).$$
\end{definition}

The definition of $k$-admissible functions coincides with the notion of the inequality $\rho_k(\D^2 u)\geq 0$ holding in the viscosity sense, see for instance \cite{Trudinger1999} or \cite[Definition 8.1]{Wang2009} for further details. 

It is natural to consider the Dirichlet problem for \eqref{eq:khessianprelims}. The classical existence theory, originally due to Caffarelli--Nirenberg--Spruck \cite{Caffarelli1985},
see also \cite{Ivochkina2005},
yields in particular the following result:
\begin{theorem}\label{thm:kHessExistence}
Let $f\in C^{1,1}(\overline{B})$ be non-negative. Then there exists a unique $k$-admissible solution $u\in C^{1,1}(\overline{B})$ to the problem
$$\begin{cases}
\rho_k(\D^2 u)=f & \text{ in }B,\\
u=0 & \text{ on }\d B.
\end{cases}
$$
\end{theorem}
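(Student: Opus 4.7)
The plan is to follow the classical approach of Caffarelli--Nirenberg--Spruck \cite{Caffarelli1985}, supplemented with the approximation techniques of Ivochkina \cite{Ivochkina2005} to handle the possibly degenerate case in which $f$ may vanish.

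First I would raise the equation to the $k$-th power, rewriting it as $F_k(\D^2 u) = g$ with $g = \binom{n}{k} f^k \in C^{1,1}(\overline B)$ non-negative. By Lemma \ref{lemma:attainment} and Proposition \ref{prop:concavity}, the map $\nabla F_k$ sends $\mathrm{int}\,\Gamma_k$ into $\mathrm{int}\,\Gamma_k^*\subset \textup{Sym}^+_n$, so its linearization about a strictly $k$-admissible function is a uniformly elliptic linear operator; moreover $\rho_k = F_k^{1/k}$ is concave on $\Gamma_k$, which supplies the structural condition required for the Evans--Krylov estimate.

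In the non-degenerate case $g>0$ on $\overline B$, I would apply the method of continuity along the family $F_k(\D^2 u_t) = tg + (1-t)$, $t\in[0,1]$, which at $t=0$ admits the explicit $k$-admissible solution $u_0(x) = \tfrac12(|x|^2-1)$. Openness of the solvable set of $t$'s follows from the Implicit Function Theorem via ellipticity of the linearization, while closedness rests on a priori $C^{2,\alpha}(\overline B)$ estimates assembled as follows: a $C^0$ bound by comparison with $u_0$; a $C^1$ bound via boundary barriers together with admissibility of $u_t$; the global $C^2$ bound, the technical core of the CNS argument, obtained by combining an interior second derivative bound with a boundary second derivative bound in which the strict convexity of $\p B$ is used to build sub-barriers controlling the tangential-tangential and mixed components, the double-normal derivative being recovered by inverting the equation; finally Evans--Krylov, applicable because $\rho_k$ is concave, upgrades $C^2$ to $C^{2,\alpha}$, after which Schauder bootstrapping yields a classical solution.

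To treat the degenerate case in which $g\geq 0$ may vanish, I would approximate by $g_\eps = g+\eps$ and solve the corresponding non-degenerate problem to obtain smooth $k$-admissible $u_\eps$. The essential ingredient is an $\eps$-independent $C^{1,1}(\overline B)$ bound on $u_\eps$; such a bound is available precisely because $g \in C^{1,1}$ and is the content of the degenerate estimates of Ivochkina and collaborators. Passing to a subsequence, $u_\eps\to u$ in $C^{1,\alpha}$ and weakly-$\ast$ in $W^{2,\infty}$; the limit $u \in C^{1,1}(\overline B)$ vanishes on $\p B$, satisfies the equation a.e., and stability of $k$-admissibility under such limits (in the distributional/viscosity sense) ensures that $u$ is $k$-admissible. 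Uniqueness follows from a standard comparison argument: concavity of $\rho_k$ on $\Gamma_k$ together with the admissibility of two hypothetical solutions $u_1,u_2$ produces a linear degenerate elliptic equation $\langle M(x),\D^2(u_1-u_2)\rangle = 0$ with $M(x) = \int_0^1 \nabla F_k\bigl((1-s)\D^2 u_2 + s\D^2 u_1\bigr)\dif s \in \Gamma_k^*$, and the maximum principle together with the zero boundary condition forces $u_1\equiv u_2$.

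The hard parts will be the two intertwined a priori estimates: the global boundary $C^2$ estimate in the non-degenerate step, which is the classical bottleneck of the CNS argument, and the $\eps$-uniform $C^{1,1}$ bound in the degenerate approximation, which requires the sharp Ivochkina-type estimates and uses the $C^{1,1}$ regularity of the right-hand side in an essential way.
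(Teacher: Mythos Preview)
The paper does not actually prove this theorem: it is stated as a consequence of the classical existence theory and is attributed to Caffarelli--Nirenberg--Spruck \cite{Caffarelli1985} and Ivochkina--Trudinger--Wang \cite{Ivochkina2005}, with a further pointer to \cite[Theorem~3.4]{Wang2009}. Your outline faithfully reproduces the standard argument carried out in those references---continuity method with CNS a priori $C^{2,\alpha}$ estimates in the nondegenerate case, followed by $\varepsilon$-regularization and the uniform $C^{1,1}$ bounds of \cite{Ivochkina2005} to pass to the degenerate limit, with uniqueness via comparison---so it is correct and aligned with what the paper invokes.
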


We also refer the reader to Theorem 3.4 in \cite{Wang2009} and the ensuing remarks.
The next result can be found in \cite[Lemma 2.1, Theorem 2.2]{Trudinger1997a}: 

\begin{lemma}\label{lemma:Lpest}
Take $p\geq k$ and $1\leq q\leq \infty$. Let $f\in L^p(B)$ be such that $f\geq 0$. For  a $k$-admissible solution $u\in C^{1,1}(B)\cap C^0(\overline B)$ of the Dirichlet problem $$
\begin{cases} \rho_k(\D^2 u)=f& \text{ in } B,\\
u=0& \text{ on }\d B, \end{cases}$$
we have the estimate
$\|u\|_{L^q(B)}\leq C\|f\|_{L^p(B)}$
whenever
\begin{enumerate}
    \item $q=p^{**}$ if $1<p<\frac{n}{2}$,
    \item any $q<\infty$ if $p=\frac{n}{2}$,
    \item $q=\infty$ if $p>\frac{n}{2}$.
\end{enumerate}
\end{lemma}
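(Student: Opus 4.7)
The plan is to establish the bound first in the critical case (c) via an Alexandrov--Bakelman--Pucci (ABP) estimate adapted to $k$-admissible functions, and then descend to the subcritical cases (a) and (b) via integration by parts and Sobolev embedding.

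For case (c), note that $k$-admissibility forces $\Delta u = \sigma_1(\D^2 u) \geq 0$, so $u$ is subharmonic; combined with $u|_{\partial B} = 0$, the maximum principle gives $u \leq 0$ on $B$. On the lower contact set $\Gamma^- = \{x\in B : u(y) \geq u(x) + \D u(x)\cdot(y-x) \text{ for all } y\in B\}$, the Hessian $\D^2 u$ is positive semidefinite and $k$-admissible, hence lies in $\Gamma_n$. By Maclaurin's inequality (Lemma \ref{lemma:rhokorder}), $\det(\D^2 u)^{1/n} = \rho_n(\D^2 u) \leq \rho_k(\D^2 u) = f$ pointwise on $\Gamma^-$. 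The classical ABP estimate, using that $\D u(\Gamma^-)$ covers a ball of radius $\|u\|_\infty/\diam(B)$, then yields
$$
\|u\|_{L^\infty(B)}^n \leq C\int_{\Gamma^-}\det(\D^2 u)\,dx \leq C\|f\|_{L^n(B)}^n,
$$
which is the desired bound at the single exponent $p = n$. To extend this to the full range $p > n/2$ (the scaling-critical threshold for $1/q = 1/p - 2/n$), I would invoke Trudinger's refined ABP estimate, which applies the classical ABP on a Calder\'on--Zygmund-type decomposition of $\Gamma^-$ into small sub-balls and sums the local estimates, effectively trading $n$ derivatives for $2$ and lowering the threshold from $p = n$ down to $p > n/2$.

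For the subcritical cases (a) and (b), I would exploit the divergence-free structure of the $(k-1)$-th Newton tensor $T_{k-1}(\D^2 u)$, which satisfies $\partial_i T_{k-1}(\D^2 u)_{ij} = 0$ and $T_{k-1}(\D^2 u)_{ij} u_{ij} = k F_k(\D^2 u) = k f^k$. Testing against $|u|^{s}\mathrm{sgn}(u)$ for a suitable $s > 0$ and integrating by parts using the divergence-free property produces a Sobolev--Gagliardo--Nirenberg-type identity, and combined with the Sobolev embedding this yields $\|u\|_{L^{p^{**}}} \leq C\|f\|_{L^p}$ for $1 < p < n/2$, which is case (a). The borderline case (b) then follows by choosing $\tilde p < n/2$ close enough to $n/2$ that $\tilde p^{**} \geq q$, applying (a) with exponent $\tilde p$, and absorbing the small loss $\|f\|_{L^{\tilde p}(B)} \leq C(|B|)\|f\|_{L^{n/2}(B)}$ via H\"older's inequality on the bounded domain.

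The main obstacle is the ABP refinement in the step from $p = n$ to arbitrary $p > n/2$: the Maclaurin reduction only produces the $n$-dimensional determinant bound, whereas the optimal threshold is dictated by the second-order scaling of $\rho_k$, and extracting it requires a delicate contact-set decomposition specific to $k$-admissible functions---this is the technical heart of Trudinger's original argument.
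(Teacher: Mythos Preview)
The paper does not actually prove this lemma: immediately before the statement it says ``The next result can be found in \cite[Lemma 2.1, Theorem 2.2]{Trudinger1997a}''. So there is no in-paper proof to compare against; the result is quoted from Trudinger's 1997 paper on weak solutions of Hessian equations.

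That said, a few comments on your sketch. Your argument for the basic ABP bound (giving $\|u\|_{L^\infty}\lesssim \|f\|_{L^n}$ via Maclaurin's inequality on the lower contact set) is correct and clean. The gap you yourself flag---pushing the exponent down from $p=n$ to the sharp threshold $p>n/2$---is real, and your description of the fix as ``ABP on a Calder\'on--Zygmund decomposition of $\Gamma^-$'' does not match any argument I know for $k$-Hessian equations; that technique belongs to Caffarelli's $W^{2,p}$ theory for uniformly elliptic equations, where the PDE gives pointwise control of $\D^2 u$ on the contact pieces, which is not available here. The natural route to the sharp exponent is instead via comparison with the radial solution (symmetrisation), or via the Wolff-potential bound
\[
|u(x)|\ \lesssim\ \int_0^R\Big(r^{2k-n}\!\int_{B_r(x)}f^k\Big)^{1/k}\frac{dr}{r},
\]
which after H\"older converges precisely when $p>n/2$. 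For part~(a), the Newton-tensor / Moser-iteration scheme you describe is indeed the standard mechanism and is essentially what Trudinger uses. Part~(b) follows from~(a) by H\"older on the bounded domain, as you say.
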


We will also use the following integral gradient estimate due to Trudinger--Wang \cite{Trudinger1999}, see also Corollary 7.1 and the remarks following Definition 8.1 in \cite{Wang2009}:
\begin{proposition}\label{prop:WS}
    Suppose $u\in C^{1,1}(B)\cap C^0(\overline B)$ is $k$-admissible and satisfies $u\leq 0$ in $B$. Then, for all $q\in[1,\frac{nk}{n-k})$ and all $\Omega\Subset B$, there exists $C>0$ such that
    $$\|\D u\|_{L^q(\Omega)}\leq C\int_{B}|u|.$$
\end{proposition}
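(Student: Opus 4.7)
Since this result is attributed to Trudinger--Wang, I merely outline the strategy one would employ; the estimate should be viewed as a Sobolev-type embedding adapted to the $k$-admissible class.

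First, by an approximation argument I would reduce to the case of a smooth $k$-admissible $u\leq 0$. Since $u\in C^{1,1}(B)\cap C^0(\overline B)$, a standard sup-convolution / mollification procedure (of the kind used, e.g., in \cite{Trudinger1999,Wang2009}) produces smooth $k$-admissible approximants $u_\varepsilon$ converging to $u$ in $C^0$ together with a uniform $C^{1,1}$ bound, so proving the estimate for smooth $k$-admissible functions with a constant depending only on $n,k,q,\Omega,B$ is enough.

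Second, I would exploit the divergence form of the $k$-Hessian operator. Writing $T_{k-1}$ for the $(k-1)$-th Newton tensor of $\D^2 u$, one has the two structural facts (valid for any smooth $k$-admissible $u$):
\begin{equation*}
\Div\bigl(T_{k-1}(\D^2 u)\bigr)=0 \qquad\text{and}\qquad T_{k-1}(\D^2 u)\in\Gamma_1^*,
\end{equation*}
together with the divergence identity
\begin{equation*}
k F_k(\D^2 u)=\langle T_{k-1}(\D^2 u),\D^2 u\rangle=\Div\bigl(T_{k-1}(\D^2 u)\D u\bigr).
\end{equation*}
The positive semi-definiteness of $T_{k-1}(\D^2u)$ on the $k$-admissible cone is classical (it follows from the hyperbolicity of $F_k$; cf.~Proposition~\ref{prop:concavity} and the duality theory in Section~\ref{sec:cones}), and is the crucial ingredient that makes the forthcoming test function computation monotone in the right direction.

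Third, I would derive a Caccioppoli-type integral inequality. Pick a cutoff $\eta\in C_c^\infty(B)$ with $\eta\equiv 1$ on $\Omega$, and test the divergence identity against $\eta^{s}(-u)^{\beta}$ for parameters $s,\beta>0$ to be fixed. Integrating by parts twice and using $u\le 0$, the positivity of $T_{k-1}$, and the divergence-free property, one obtains, after a Cauchy--Schwarz absorption, an estimate of the form
\begin{equation*}
\int_B \eta^{s}(-u)^{\beta-1}\bigl\langle T_{k-1}(\D^2u)\D u,\D u\bigr\rangle\,\dif x
\le C\int_B (-u)^{\beta+1}\bigl(|\D\eta|^{2}\eta^{s-2}+|\D^2\eta|\eta^{s-1}\bigr)\bigl|T_{k-1}(\D^2u)\bigr|\,\dif x.
\end{equation*}
Using Newton's inequalities (which control $|T_{k-1}|$ by lower Newton tensors $T_j$ for $j<k-1$) one can iterate this bound, successively trading derivatives of $u$ for powers of $(-u)$, until arriving at a pure integral bound of the form $\|\D u\|_{L^{q_0}(\Omega')}\le C\int_B|u|$ for some $q_0>1$ and $\Omega\Subset\Omega'\Subset B$. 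Iterating the Caccioppoli inequality in a Moser-type scheme and invoking the Sobolev embedding on each level, one upgrades the exponent to any $q<nk/(n-k)$, which is the scale-invariant Sobolev exponent associated with the second-order operator $F_k^{1/k}=\rho_k$.

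The main obstacle is the interaction between the nonlinear dependence $T_{k-1}(\D^2 u)$ on the Hessian and the absorption step: one needs Newton's inequalities to bound $|T_{k-1}|$ by a power of $F_k=\rho_k^k$ with a \emph{gain} of integrability that matches, through the Moser iteration, the target exponent $nk/(n-k)$. Getting this exponent sharp (rather than some smaller $q_0$) is the genuinely delicate point; everything else is a standard Caccioppoli/Sobolev iteration.
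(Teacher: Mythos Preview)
The paper does not give its own proof of this proposition: it is stated as a known result, with a citation to Trudinger--Wang \cite{Trudinger1999} and to Corollary~7.1 of \cite{Wang2009}. So there is nothing to compare your sketch against at the level of the paper itself; your outline is, in spirit, a sketch of the Trudinger--Wang argument rather than an alternative to anything in this paper.

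That said, a couple of points in your sketch deserve tightening. First, the claim $T_{k-1}(\D^2u)\in\Gamma_1^*$ is a slip in the paper's notation: $\Gamma_1^*$ is the ray $\{tI_n:t\ge0\}$, whereas what you want (and what is true for $k$-admissible $u$) is $T_{k-1}(\D^2u)\in\textup{Sym}^+_n=\Gamma_n$. Second, the iteration you describe is slightly out of order: in the Trudinger--Wang scheme one first establishes local $L^p$ estimates for $u$ itself (a weak Harnack inequality, via Moser iteration using the divergence structure and Newton's inequalities), and only then uses a Caccioppoli step to pass to $\D u$; the sharp exponent $\tfrac{nk}{n-k}$ comes from the combination of these two ingredients rather than from a single Caccioppoli--Sobolev loop on $\D u$ alone. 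Your identification of the structural inputs (divergence-free Newton tensor, positive semi-definiteness, Newton's inequalities) is correct, but as written the absorption step ``trading derivatives of $u$ for powers of $(-u)$'' does not by itself produce a bound purely in terms of $\int_B|u|$ without first controlling the lower-order Hessian integrals $\int F_j(\D^2u)$, which is exactly what the iteration in \cite{Trudinger1999} accomplishes.
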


Besides Theorem \ref{thm:kHessExistence}, we require the following existence result for $k$-Hessian equations on the torus:

	\begin{theorem}
	\label{thm:khessian}
	Let $0<f\in\hold^\infty(\mathbb T^n)$ satisfy $\rho_k(S)=\int_{\mathbb T^n} f \dif x$ for some $S\in \Gamma_k$. Then
	\begin{align}
	\label{eq:khessianper}
	    \rho_k(\D^2(\tfrac12 x^\top Sx+\varphi))=f
	\end{align}
	has a unique periodic solution $\varphi\in\hold^\infty(\mathbb T^n)$ such that $x\mapsto \frac 1 2 x^\top S x  + \varphi(x)$ is $k$-admissible and $\int_{\mathbb T^n} \varphi \dif x =0$.
	\end{theorem}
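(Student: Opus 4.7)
My plan is to apply the continuity method to the homotopy
\begin{align*}
\rho_k(S + \D^2 \varphi_t) = f_t, \qquad f_t = (1-t)\,\rho_k(S) + t f, \quad t \in [0,1].
\end{align*}
Each $f_t$ is smooth and strictly positive, and the compatibility hypothesis guarantees that $\int_{\mathbb{T}^n} f_t\,\dif x = \rho_k(S)$ for all $t$. At $t = 0$ the trivial choice $\varphi_0 \equiv 0$ yields the $k$-admissible map $u_0(x) = \tfrac{1}{2}x^\top S x$ solving the equation. Letting $\mathcal T$ denote the set of parameters $t \in [0,1]$ for which \eqref{eq:khessianper} admits a smooth periodic $k$-admissible solution with vanishing mean, the strategy reduces to showing that $\mathcal T$ is both open and closed in $[0,1]$, hence equal to $[0,1]$.

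For openness at $t_0 \in \mathcal T$, I would linearize the map $\varphi \mapsto \rho_k(S + \D^2 \varphi)$ around $\varphi_{t_0}$, obtaining
\begin{align*}
\mathcal L \psi = \bigl\langle \nabla \rho_k(S + \D^2 \varphi_{t_0}),\, \D^2 \psi\bigr\rangle.
\end{align*}
Since $S + \D^2 \varphi_{t_0} \in \mathrm{int}\,\Gamma_k$, Lemma~\ref{lemma:attainment} and Proposition~\ref{prop:diffeo} give $\nabla \rho_k(S + \D^2 \varphi_{t_0}) \in \mathrm{int}\,\Gamma_k^* \subset \mathrm{int}\,\mathrm{Sym}^+_n$, so $\mathcal L$ is a uniformly elliptic second-order operator with smooth periodic coefficients. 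Its kernel on $C^{2,\alpha}(\mathbb T^n)$ consists of constants by the strong maximum principle, so via Fredholm theory $\mathcal L$ restricts to an isomorphism between the mean-zero periodic subspaces of $C^{2,\alpha}$ and $C^{0,\alpha}$; compatibility of the right-hand side is preserved along the homotopy by construction. The implicit function theorem then produces smooth $k$-admissible solutions $\varphi_t$ for $t$ near $t_0$.

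Closedness is where I expect the main difficulty. For a sequence $t_j \to t_\ast$ with solutions $\varphi_{t_j}$, I would establish uniform higher-regularity bounds: $C^0$ bounds follow from the zero-mean normalization together with the $L^p$ estimates of Lemma~\ref{lemma:Lpest}, $C^1$ bounds from Proposition~\ref{prop:WS}, and $C^2$ bounds from a Pogorelov-type argument adapted to $\mathbb T^n$ (where the compactness of the torus plays the role of a boundary cutoff, following Caffarelli--Nirenberg--Spruck and Trudinger--Wang). Given a uniform $C^2$ bound, the Evans--Krylov theory applies since $\rho_k$ is concave on $\Gamma_k$, yielding $C^{2,\alpha}$ bounds, and Schauder bootstrapping then supplies $C^\infty$ convergence along a subsequence; the limit is a smooth $k$-admissible solution at $t_\ast$.

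For uniqueness, two solutions $\varphi^{(1)}, \varphi^{(2)}$ produce a difference $\psi = \varphi^{(1)} - \varphi^{(2)}$ satisfying the linear equation
\begin{align*}
\bigl\langle M(x),\, \D^2 \psi\bigr\rangle = 0, \qquad M(x) = \int_0^1 \nabla \rho_k\bigl(S + \D^2(s\varphi^{(1)} + (1-s)\varphi^{(2)})\bigr)\,\dif s,
\end{align*}
where $M(x)$ is uniformly positive definite by Lemma~\ref{lemma:attainment} together with the convexity of $\Gamma_k$. The strong maximum principle on $\mathbb T^n$ forces $\psi$ to be constant, and the zero-mean constraint then forces $\psi \equiv 0$.
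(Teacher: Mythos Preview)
Your strategy is essentially that of the paper: run the continuity method on a one-parameter family interpolating between the trivial equation and the given one, verify openness by linearizing (the linearized operator is uniformly elliptic because $\nabla\rho_k$ lands in $\mathrm{int}\,\Gamma_k^*\subset\mathrm{int}\,\mathrm{Sym}^+_n$), and verify closedness via a priori $C^{2,\alpha}$ estimates plus Evans--Krylov; uniqueness then follows from the comparison principle for the linearized equation. The paper follows Li \cite{Li1990} and works with $S_t(\varphi)=F_k(S+\D^2\varphi)-tf^k-(1-t)F_k(S)$, but this is the same homotopy up to taking $k$-th powers.

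There is, however, a genuine gap in your closedness step. Lemma~\ref{lemma:Lpest} and Proposition~\ref{prop:WS} are stated and proved for the \emph{Dirichlet problem on a ball} with zero boundary data; they do not apply directly to periodic solutions on $\mathbb T^n$, and in particular Lemma~\ref{lemma:Lpest} yields $L^q$ bounds rather than the $C^0$ bound you claim. The paper obtains the $C^0$ estimate differently: one extends the $k$-admissible function $\psi=\tfrac12 x^\top Sx+\varphi$ periodically (so it is defined on, say, $2\mathbb T^n$), subtracts its maximum so it is nonpositive, and applies the weak Harnack inequality for $k$-admissible functions (Theorem~9.1 in \cite{Wang2009}) to bound $\mathrm{osc}_{\mathbb T^n}\psi$ in terms of $\sup f$. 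The $C^1$ bound then comes from the interior gradient estimate (Theorem~4.1 in \cite{Wang2009}), and the $C^2$ bound from \cite{Guan2014}, after which Evans--Krylov applies as you describe. Your outline is correct in spirit, but you should replace the ball-Dirichlet estimates by these interior estimates, which are the ones that transfer to the periodic setting.
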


The case $k=n$  was proved by Li \cite{Li1990} and we note that the case $k=1$ is elementary, as then \eqref{eq:khessianper} is just the Poisson equation. To the best of our knowledge,  Theorem \ref{thm:khessian} has not been proved for general $k$ in the literature, although it follows from well-known arguments and results; the interested reader can find a proof in Appendix \ref{appendix}.

\section{Elliptic estimates in two dimensions}\label{sec:GN}

In this section, we prove our first elliptic estimates for the divergence and the curl operators under pointwise constraints. Here we will discuss only the case where $n=2$ and $p=1$: thus, the estimates we prove can be thought of as a version of the Gagliardo--Nirenberg inequality for certain non-elliptic operators. We also note that the methods we employ here are different from and more elementary than those used in later sections.

In Section \ref{sec:GNdiv2d} we will prove the main result of this section, Theorem \ref{thm:slicing}, which in particular contains Theorem \ref{thm:det2dintro}. In Section \ref{sec:GNcurl2d} we deduce from Theorem \ref{thm:slicing} a Gagliardo--Nirenberg inequality for the curl and show an ad-hoc Ornstein non-inequality demonstrating the non-equivalence of these estimates. We also discuss whether a general version of Ornstein's $L^1$ non-inequality \cite{Ornstein1962} may hold under pointwise constraints.  Section \ref{sec:optimality2d} collects examples which show that Theorem \ref{thm:slicing} is essentially optimal. In Section \ref{sec:density} we discuss in some detail the possibility of extending our results to non-convex cones.

\subsection{Elliptic estimates for the divergence}\label{sec:GNdiv2d}

Given a matrix $A\in \R^{n\times n}$, recall that we denote by $A_i$ its rows and $a_{ij}$ its entries. The following is the main result of this section, from which  Theorem \ref{thm:det2dintro} follows immediately:

\begin{theorem}
\label{thm:slicing}
Consider the cones $\mathcal K_1^+, \dots, \mathcal K_4^+\subset \R^{2\times 2}$ defined by
$$\{a_{11},a_{22}\geq 0\},\quad
\{a_{12}\geq 0 \geq a_{21}\},\quad
\{a_{11},a_{22}\leq 0\},\quad
\{a_{21}\geq 0 \geq a_{12}\},$$
respectively.
Likewise, we define the cones $\mathcal K_1^-, \dots, \mathcal K_4^-$ by
$$\{a_{11}\geq 0 \geq a_{22}\}, 
\quad \{a_{12}, a_{21}\leq 0\},
\quad \{a_{12}, a_{21}\geq 0\},
\quad \{a_{22}\geq 0 \geq a_{11}\},$$
respectively. For any $i=1, \dots, 4$, if $A\in C^\infty_c(\R^2,\mathcal K_i^\pm)$ then
\begin{equation}
    \label{eq:detinequality2d}
    \int_{\R^2} \pm \det A \dif x \leq 
\|\di A_1\|_{L^1}\|\di A_2\|_{L^1}.
\end{equation}
\end{theorem}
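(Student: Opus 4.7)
The plan is first to reduce the eight assertions to a single canonical case. The sign flip $A\mapsto -A$, the row swap $A \mapsto (A_2, A_1)^\top$, and left-multiplication by a rotation matrix all permute the collection of eight cones $\mathcal K_i^\pm$, either preserve or negate $\det A$, and either preserve or swap the two factors $\|\di A_i\|_{L^1}$. Composing these reductions suitably shows that every statement of the theorem follows from the canonical one, namely
\begin{equation*}
\int_{\R^2} \det A \, dx \leq \|\di A_1\|_{L^1}\|\di A_2\|_{L^1}, \qquad A \in C_c^\infty(\R^2, \mathcal K_1^+),
\end{equation*}
i.e.\ in the case $a_{11}, a_{22} \geq 0$.

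For this canonical case I would apply the slicing method of Gagliardo--Nirenberg combined with an integration-by-parts identity. Using compact support and the divergence relation $\p_1 a_{11} = \di A_1 - \p_2 a_{12}$, the fundamental theorem of calculus yields $a_{11} = B_1 - \p_2 P$, where the \emph{divergence potential} $B_1(x_1,x_2) := \int_{-\infty}^{x_1} \di A_1(t,x_2)\, dt$ obeys the pointwise bound $|B_1(x_1,x_2)| \leq \|\di A_1(\cdot,x_2)\|_{L^1_t}$, and $P(x_1,x_2) := \int_{-\infty}^{x_1} a_{12}(t,x_2)\, dt$ is an auxiliary potential of the off-diagonal entry. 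Writing the symmetric representation $a_{22} = \tilde B_2 - \p_1 \tilde Q$ in the $x_2$-direction and substituting into $\int a_{11}a_{22}\, dx$, a chain of integrations by parts (with all boundary terms vanishing by compact support) produces the cancellation of the cross term $\int a_{12}a_{21}\, dx$ against itself, leaving the key identity
\begin{equation*}
\int_{\R^2} \det A\, dx = \int B_1 \tilde B_2\, dx + \int \di A_1 \cdot \tilde Q\, dx + \int P \cdot \di A_2\, dx.
\end{equation*}

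The first integral is controlled by $\|\di A_1\|_{L^1}\|\di A_2\|_{L^1}$ via Fubini using the pointwise bounds on $B_1$ and $\tilde B_2$. For the remaining two terms the positivity of $a_{11}$ and $a_{22}$ becomes crucial: a Fubini swap followed by a further IBP converts each of them into $\int \det A\, dx - \int \di A_j \cdot S_i\, dx$, where $S_i$ is a non-negative tail integral of the respective diagonal entry, e.g.\ $S_2(x_1,x_2) := \int_{x_1}^\infty a_{22}(t,x_2)\, dt \geq 0$. The standard one-dimensional estimate for non-negative compactly supported functions (combined with $\int_\R \p_j a_{ij}\, dx_j = 0$, which exploits positivity together with compact support) yields $\|S_i\|_{L^\infty} \leq \tfrac{1}{2}\|\di A_{3-i}\|_{L^1}$, so each of the tail contributions is bounded by $\tfrac{1}{2}\|\di A_1\|_{L^1}\|\di A_2\|_{L^1}$. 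Resolving the resulting self-referential identity for $\int \det A\, dx$ and assembling the three pieces delivers the inequality. The principal obstacle will be extracting the \emph{sharp} constant $1$ rather than the weaker constant $2$ that the above estimates directly produce; I anticipate that a symmetric use of forward and backward FTC representations (equivalently, zero-mean centered potentials) supplies the extra cancellation needed to close the argument with the sharp constant.
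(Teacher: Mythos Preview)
Your reduction to the canonical case $\mathcal K_1^+$ is essentially correct and close to what the paper does. The main argument, however, has a genuine gap.

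The identity you derive,
\[
\int_{\R^2}\det A\,\dif x=\int B_1\tilde B_2\,\dif x+\int \di A_1\cdot\tilde Q\,\dif x+\int P\cdot\di A_2\,\dif x,
\]
is correct, and so is the further reduction of each of the last two terms to $\int\det A-\int a_{11}\tilde B_2$ and $\int\det A-\int B_1 a_{22}$ respectively (this is what your $\int\di A_j\cdot S_i$ amounts to after a Fubini swap). But the claimed bound $\|S_i\|_{L^\infty}\leq\tfrac12\|\di A_{3-i}\|_{L^1}$ is simply false: $S_2(x_1,x_2)=\int_{x_1}^\infty a_{22}(t,x_2)\,\dif t$ depends only on $a_{22}$, whereas $\di A_1=\p_1 a_{11}+\p_2 a_{12}$ involves neither $a_{22}$ nor any quantity that controls it. For a concrete counterexample take $A=\mathrm{diag}(0,\,f(x_1)g(x_2))$ with $f,g\geq 0$ smooth bumps; then $\di A_1\equiv 0$ while $S_2\not\equiv 0$. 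In fact your terms $\int a_{11}\tilde B_2$ and $\int B_1 a_{22}$ are not bounded by $\|\di A_1\|_{L^1}\|\di A_2\|_{L^1}$ at all, so the argument does not close even with a worse constant, and the anticipated ``symmetric forward/backward'' fix does not address this.

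The paper's proof uses almost the same raw ingredients but with one decisive twist. In your notation, rather than the \emph{identity} $a_{11}=B_1-\p_2 P$, it uses the \emph{inequality}
\[
0\leq a_{11}(x_1,x_2)\leq g_1(x_2)-\p_2 P(x_1,x_2),\qquad g_1(x_2):=\int_\R|\di A_1(t,x_2)|\,\dif t,
\]
and analogously $0\leq a_{22}\leq g_2(x_1)-\p_1\tilde Q$. The nonnegativity of $a_{11},a_{22}$ is used precisely here, to justify multiplying the two upper bounds. Because $g_1$ depends only on $x_2$ and $g_2$ only on $x_1$, the two cross terms $\int g_1\,\p_1\tilde Q$ and $\int g_2\,\p_2 P$ vanish \emph{exactly} upon integration, while the quartic term integrates by parts to $\int a_{12}a_{21}$. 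This yields $\int a_{11}a_{22}\leq\|\di A_1\|_{L^1}\|\di A_2\|_{L^1}+\int a_{12}a_{21}$, i.e.\ the sharp constant $1$ with no residual terms. Replacing $B_1$ (which depends on $x_1$) by its $x_1$-uniform majorant $g_1(x_2)$ \emph{before} multiplying is the missing idea in your plan.
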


In fact, it will be clear from the proof that there are
two continuous one-parameter families of admissible cones for which estimate \eqref{eq:detinequality2d} holds. The specific examples $\mathcal K_i^\pm$ stated above are the ones having the clearest characterisation.

By restricting the cones in Theorem \ref{thm:slicing}, we readily obtain a Gagliardo--Nirenberg-type inequality for the divergence:

\begin{corollary}
\label{cor:GN2ddiv}
Let $\mathcal K_i^\pm\subset \R^{2\times 2}$ be as in Theorem \ref{thm:slicing} and consider a cone $\mathcal K\subset Q_2^\pm(K)\cap\mathcal K_i^\pm$ for some $K\geq 1$ and $i=1,\dots, 4.$ There is a constant $C=C(K)>0$ such that, for any $A\in C^\infty_c(\R^2, \mathcal K)$,
$$\|A\|_{L^2(\R^2)}\leq C \|\Div A\|_{L^1(\R^2)}.$$
\end{corollary}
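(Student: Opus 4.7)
The corollary follows by combining the pointwise quasiconformality bound with the integrated determinant estimate of Theorem \ref{thm:slicing}. The plan has essentially no obstacles; it is a short deduction.

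First, I would exploit the pointwise constraint $A(x)\in Q_2^\pm(K)$. In two dimensions, this cone admits the equivalent description \eqref{eq:auxquasiregular}: for every matrix $M\in Q_2^\pm(K)$,
$$
|M|^2\leq \Bigl(K+\tfrac{1}{K}\Bigr)(\pm\det M).
$$
Applied pointwise to $A(x)$ and integrated over $\R^2$, this yields
$$
\|A\|_{L^2(\R^2)}^2=\int_{\R^2}|A|^2\dif x\leq \Bigl(K+\tfrac{1}{K}\Bigr)\int_{\R^2}\pm\det A\dif x.
$$

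Next, since $A\in C_c^\infty(\R^2,\mathcal K)$ with $\mathcal K\subset \mathcal K_i^\pm$, the hypothesis of Theorem \ref{thm:slicing} is met (with the sign matching that of $Q_2^\pm(K)$), and therefore
$$
\int_{\R^2}\pm\det A\dif x\leq \|\di A_1\|_{L^1(\R^2)}\,\|\di A_2\|_{L^1(\R^2)}.
$$
Finally, the pointwise inequality $|\di A_i|\leq |\Div A|$ for $i=1,2$ gives $\|\di A_i\|_{L^1}\leq \|\Div A\|_{L^1}$, and combining the three bounds produces
$$
\|A\|_{L^2(\R^2)}^2\leq \Bigl(K+\tfrac{1}{K}\Bigr)\|\Div A\|_{L^1(\R^2)}^2,
$$
so the claim holds with $C=\sqrt{K+1/K}$. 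I expect no real obstacle: the only point worth checking carefully is that the cone containment $\mathcal K\subset \mathcal K_i^\pm$ is consistent with the sign chosen in $Q_2^\pm(K)$ so that Theorem \ref{thm:slicing} applies with the correct sign on $\det A$, but this is precisely the way the hypothesis is stated.
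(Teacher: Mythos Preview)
Your proof is correct and is exactly the deduction the paper has in mind: use the pointwise bound \eqref{eq:auxquasiregular} to pass from $|A|^2$ to $\pm\det A$, apply Theorem \ref{thm:slicing}, and then bound $\|\di A_1\|_{L^1}\|\di A_2\|_{L^1}\leq \|\Div A\|_{L^1}^2$ via $|\di A_i|\leq |\Div A|$. This is precisely the route indicated in the paper (cf.\ Theorem \ref{thm:det2dintro} and the sentence following it).
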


Notice that, in the above results, we restrict ourselves to smooth matrix fields. We now remark on the possibility of extending these inequalities to larger classes of rough fields, a point which we will also discuss in further detail in Section \ref{sec:density} below.

\begin{remark}[Extension to rough fields]\label{rmk:roughfields}
Let $\mathcal K_i^\pm$ be the cones in Theorem \ref{thm:slicing}. Since these cones are convex, a standard mollification argument shows that estimate \eqref{eq:detinequality2d} holds for fields $A\in L^2(\R^2, \mathcal K_i^\pm)$ such that $\Div A \in \mathscr{M}(\R^2)$. 

In order to extend estimate \eqref{eq:detinequality2d} to fields which are only in $L^1(\R^2)$, we need to   constrain the target further. For instance,  if $A\in L^1(\R^2, \textup{Sym}^+_2)$ vanishes outside a compact set,
and if moreover $\Div A \in \mathscr M(\R^2)$, then estimate \eqref{eq:detinequality2d} still holds. Indeed, using the convexity of $\textup{Sym}^+_2$, a simple mollification shows that there is a sequence $A_\e\in C^\infty_c(\R^2,\textup{Sym}^+_2)$ such that $\Div A_\e\to \Div A$ in $\mathscr M(\R^2)$ in the {strict topology} and $A_\e\to A$ in $L^1(\R^2)$. In particular, $A_\e\to A$ a.e.\ in $\R^2$. Since $\textup{Sym}^+_2\subset\{\det \geq 0\}$ we may apply Fatou's lemma to deduce that
\begin{align*}
\int_{\R^2} \det A \dif x & \leq \liminf_{\e\to 0}\int_{\R^2} \det A_\e \dif x \\
& \leq 
\liminf_{\e\to 0}  \|\di (A_\e)_1\|_{L^1}\|\di (A_\e)_2\|_{L^1}
=  \|\di A_1\|_{L^1}\|\di A_2\|_{L^1}.
\end{align*}
More generally, the same conclusion holds if we replace $\textup{Sym}^+_2$ with any convex subcone of some $\mathcal K_i^\pm$ on which the determinant has a fixed sign.

Similarly, Corollary \ref{cor:GN2ddiv} also extends to compactly supported matrix fields which a priori are only in $L^1(\R^2,\mathcal K)$, as long as $\mathcal K\subset Q_2^\pm(K)\cap \mathcal K_i^\pm$ is assumed to be convex.
\end{remark}

\begin{remark}[Boundary conditions]\label{rmk:bdryconds}
The estimate in Corollary \eqref{cor:GN2ddiv} is equivalent to the more general estimate
$$\|A\|_{L^2(\Omega)}\leq C\left( \|\Div A\|_{\mathscr M(\Omega)}+\|A\nu\|_{\mathscr M(\p \Omega)}\right) \quad \text{for } A\in C^\infty(\overline \Omega,\mathcal K)$$
whenever $\Omega\subset \R^2$ is a bounded Lipschitz domain with unit normal $\nu$. Clearly this last estimate recovers the one in Corollary \ref{cor:GN2ddiv}, but it can also be deduced from it by a standard argument.
\end{remark}

We are now ready to proceed with the proof of Theorem \ref{thm:slicing}.

\begin{proof}[Proof of Theorem \ref{thm:slicing}]
We begin by observing that it suffices to establish \eqref{eq:detinequality2d} when $A$ takes values in $\mathcal K_1^+$. Indeed, this implies immediately an analogous inequality if $A$ takes values in $\mathcal K_1^-$, noting that $A\in \mathcal K_1^+$ if and only if $\hat A\in 
\mathcal K_1^-$, where the rows of $\hat A$ are defined as
$\hat A_1=A_1$ and $\hat A_2=-A_2.$ Thus it is enough to show that the inequality for fields in $\mathcal K_1^\pm$ implies the same inequality for fields in $\mathcal K_i^\pm$, $i=2,3,4$.

Given $A\in C^\infty_c(\R^2,\R^{2\times 2})$, consider a new matrix field $\tilde A(x)\equiv A(R_\theta x) R_\theta$, where $R_\theta$ is a rotation matrix by angle $\theta$. We have $\tilde A_j(x)=R_{-\theta} A_j(R_\theta x)$ and so
$$\Div \tilde A(x)=\Div A(R_\theta x), \qquad \det \tilde A(x)=\det A(R_\theta x).$$
In particular, we see that both sides of the inequality \eqref{eq:detinequality2d} are kept unchanged under the transformation $A\mapsto \tilde A$. Noting that
$$ \mathcal K_1^\pm =R_{\pi/2} \mathcal K_2^\pm= R_{\pi} \mathcal K_3^\pm = R_{3\pi/2}\mathcal K_4^\pm,$$
the conclusion follows.

Thus our task is to show \eqref{eq:detinequality2d} for $\mathcal K_1^+$.
We are going to work in conformal/anti-conformal coordinates, that is, we pick the following orthogonal basis of $\R^{2\times 2}$:
$$E_1\equiv \begin{pmatrix}
1 & 0\\
0 & 1
\end{pmatrix},\quad
E_2\equiv \begin{pmatrix}
0 & 1\\
1 & 0
\end{pmatrix},\quad
E_3\equiv \begin{pmatrix}
1 & 0\\
0 & -1
\end{pmatrix},\quad 
E_4\equiv\begin{pmatrix} 0 & 1 \\
-1 & 0\end{pmatrix}.$$
Note that $E_1,E_4$ span the space  of conformal matrices, while $E_2, E_3$ span the space of anti-conformal matrices. We will write
$$A=\sum_{j=1}^4 b_j(x_1,x_2)E_j$$
and we record the formulae
$$ \det A=b_1^2-b_2^2-b_3^2+b_4^2, \qquad
\Div A=
\begin{pmatrix}
\di A_1\\ \di A_2
\end{pmatrix}
=
\begin{pmatrix}
\partial_{x_1}(b_1+b_3)+\partial_{x_2}(b_2+b_4)\\
\partial_{x_1}(b_2-b_4)+\partial_{x_2}(b_1-b_3)
\end{pmatrix}.$$
Using the fundamental theorem of calculus, we get
\begin{equation*}
\begin{aligned}
    (b_1+b_3)(x_1,x_2)=&\,\int_{-\infty}^{x_1}\p_{x_1}(b_1+b_3)(y_1,x_2)\dif y_1\\
    =&\,\int_{-\infty}^{x_1}\Big(\di A_1(y_1,x_2)-\p_{x_2}(b_2+b_4)(y_1,x_2)\Big)\dif y_1\\
    \leq&\,\int_{-\infty}^{\infty}|\di A_1|(y_1,x_2)\dif y_1-\int_{-\infty}^{x_1}\p_{x_2}(b_2+b_4)(y_1,x_2)\dif y_1,
    \end{aligned}
\end{equation*}
and also
\begin{equation*}
\begin{aligned}
    (b_1-b_3)(x_1,x_2)=&\,\int_{-\infty}^{x_2}\p_{x_2}(b_1-b_3)(x_1,y_2)\dif y_2\\
    =&\,\int_{-\infty}^{x_1}\Big(\di A_2(x_1,y_2)-\p_{x_1}(b_2-b_4)(x_1,y_2)\Big)\dif y_2\\
    \leq&\,\int_{-\infty}^{\infty}|\di A_2|(x_1,y_2)\,\dif y_2-\int_{\-\infty}^{x_2}\p_{x_1}(b_2-b_4)(x_1,y_2)\dif y_2.
    \end{aligned}
\end{equation*}
Note that the assumption $A\in\mathcal K_1^+$ ensures that $b_1\geq |b_3|$; hence we can take the product of the last two inequalities and integrate over $\R^2$ to obtain
\begin{align*}
    \int_{\R^2}\big(b_1^2-b_3^2)\dif x\leq&\,\int_{\R^2}\Big(\int_{-\infty}^{\infty}|\di A_1|(y_1,x_2)\dif y_1-\int_{-\infty}^{x_1}\p_{x_2}(b_2+b_4)(y_1,x_2)\dif y_1\Big)\\
    &\quad\times \Big(\int_{-\infty}^{\infty}|\di A_2|(x_1,y_2)\dif y_2-\int_{-\infty}^{x_2}\p_{x_1}(b_2-b_4)(x_1,y_2)\dif y_2\Big)\dif x_1\dif x_2\\
    =&\, \mathrm{I}-\mathrm{II}-\mathrm{III}+\mathrm{IV},
\end{align*}
where we defined
\begin{align*}
\mathrm{I}\equiv &\,\int_{\R^2}\Big(\int_{-\infty}^{\infty}|\di A_1|(y_1,x_2)\dif y_1\Big)\Big(\int_{-\infty}^{\infty}|\di A_2|(x_1,y_2)\dif y_2\Big)\dif x_1\dif x_2,\\
\mathrm{II}\equiv    &\int_{\R^2}\Big(\int_{-`\infty}^{\infty}|\di A_1|(y_1,x_2)\dif y_1\Big)\Big(\int_{-\infty}^{x_2}\p_{x_1}(b_2-b_4)(x_1,y_2)\dif y_2\Big)\dif x_1\dif x_2,\\
\mathrm{III}\equiv    &\int_{\R^2}\Big(\int_{-\infty}^{\infty}|\di A_2|(x_1,y_2)\dif y_2\Big)\Big(\int_{-\infty}^{x_1}\p_{x_2}(b_2+b_4)(y_1,x_2)\dif y_1\Big)\dif x_1\dif x_2,\\
\mathrm{IV}\equiv &\int_{\R^2}\Big(\int_{-\infty}^{x_1}\p_{x_2}(b_2+b_4)(y_1,x_2)\dif y_1\Big)\Big(\int_{-\infty}^{x_2}\p_{x_1}(b_2-b_4)(x_1,y_2)\dif y_2\Big)\dif x_1\dif x_2.
\end{align*}    
First, we observe that 
\begin{equation*}
\begin{aligned}
    \mathrm{II}=&\,\int_{\R}\Big(\int_{-\infty}^{\infty}|\di A_1|(y_1,x_2)\dif y_1\Big)\Big(\int_{-\infty}^{x_2}\int_{\R}\p_{x_1}(b_2-b_4)(x_1,y_2)\dif x_1\dif y_2\Big)\dif x_2=0
\end{aligned}
\end{equation*}
as $b_2-b_4$ is smooth and compactly supported. 
A similar argument shows that $\mathrm{III}=0.$
Moreover, we check for $\mathrm{I}$ that
\begin{equation*}
    \begin{aligned}
    \mathrm{I}=&\,\int_{-\infty}^{\infty}\int_{-\infty}^\infty\Big(\int_{-\infty}^\infty |\di A_1|(y_1,x_2)\dif y_1\Big)\Big(\int_{-\infty}^\infty|\di A_2|(x_1,y_2)\dif y_2\Big)\dif x_1\dif x_2\\
    =&\,\|\di A_1\|_{L^1(\R^2)}\|\di A_2\|_{L^1(\R^2)}.
    \end{aligned}
\end{equation*}
Finally, as $b_2$ and $b_4$ have compact support, we integrate by parts twice in $\mathrm{IV}$ to obtain
\begin{equation*}
    \begin{aligned}
    \mathrm{IV}=&\,\int_{\R^2}(b_2-b_4)(b_2+b_4)(x_1,x_2)\dif x_1\dif x_2.
    \end{aligned}
\end{equation*}
Collecting everything, we find the estimate
\begin{equation*}
    \begin{aligned}
    \int_{\R^2}(b_1^2-b_3^2)(x_1,x_2)\dif x_1\dif x_2\leq&\,\|\di A_1\|_{L^1(\R^2)}\|\di A_2\|_{L^1(\R^2)}+\int_{\R^2}\big(b_2^2-b_4^2\big)(x_1,x_2)\dif x_1\dif x_2,
    \end{aligned}
\end{equation*}
which we may rearrange to get 
$$\int_{\R^2} \det A \dif x\\
\leq \|\di A_1\|_{L^1(\R^2)}\|\di A_2\|_{L^1(\R^2)}$$
as wished.
\end{proof}

We conclude this subsection with a version of Theorem \ref{thm:slicing} for $p>1$:

\begin{proposition}\label{prop:strongdiv}
Fix $K\geq 1$ and let $p\in (1,2)$ be such that $p^*< \frac{2K}{K-1}$. There is a constant such that, for all compactly supported fields $A\in L^1_{\locc}(\C,Q_2^+(K))$ with $\Div A \in L^p(\C)$,
\begin{equation*}
    \label{eq:strongdivtext}
\|A\|_{L^{p^*}(\C)} \leq C\|\Div A\|_{L^p(\C)}.
\end{equation*}
This estimate fails for $p=1$ (even if $K=1$) or $p^*=\frac{2K}{K-1}$. When $p=1$ we instead have
$$\|A\|_{\textup{weak-}L^{2}(\C)} \leq C \|\Div A\|_{L^1(\C)}.$$
\end{proposition}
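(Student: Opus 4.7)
The plan is to recast the problem as a Beltrami-type equation via complex analysis. Identifying $\R^2 \simeq \C$, I view each matrix $A \in \R^{2 \times 2}$ as a real-linear map $\C \to \C$ and write it in its canonical form $Av = \alpha v + \beta \bar v$ for complex coefficients $\alpha, \beta\colon \C \to \C$. From \eqref{eq:auxquasiregular} one verifies that $A \in Q_2^+(K)$ if and only if $|\beta| \leq k|\alpha|$ with $k \equiv \tfrac{K-1}{K+1} \in [0, 1)$, and that $|A|^2 = 2(|\alpha|^2 + |\beta|^2)$, so every $L^q$-bound for $A$ is equivalent to a simultaneous $L^q$-bound for $(\alpha, \beta)$. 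Combining the Wirtinger identities $2\partial_z = \partial_1 - i\partial_2$ and $2\partial_{\bar z} = \partial_1 + i\partial_2$ with the definition of $\Div A$ yields the fundamental identity
$$
\partial_z \alpha + \partial_{\bar z} \beta = \tfrac{1}{2}(\di A_1 + i \di A_2).
$$

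Next I apply the Cauchy transform $\mathcal{C} = (\partial_{\bar z})^{-1}$, which is bounded $L^p(\C) \to L^{p^*}(\C)$ for $1 < p < 2$ by the Hardy--Littlewood--Sobolev inequality, converting the above into the pointwise relation
$$
\mathcal{B}\alpha + \beta = \tfrac{1}{2}\mathcal{C}(\di A_1 + i \di A_2),
$$
where $\mathcal{B} \equiv \mathcal{C}\partial_z$ is the Beurling--Ahlfors transform. Taking $L^{p^*}$-norms of this identity and using the pointwise contraction $\|\beta\|_{L^{p^*}} \leq k \|\alpha\|_{L^{p^*}}$ together with the $L^{p^*}$-boundedness of $\mathcal{B}^{-1}$, I obtain
$$
\bigl(1 - k \|\mathcal{B}^{-1}\|_{L^{p^*} \to L^{p^*}}\bigr) \|\alpha\|_{L^{p^*}} \lesssim \|\Div A\|_{L^p}.
$$
The key step---and the main obstacle---is to verify that the prefactor is strictly positive exactly in the range $p^* < \tfrac{2K}{K-1}$; this is Astala's sharp $L^q$-range for the invertibility of Beltrami-type operators $I - \mu \mathcal{B}$ with $\|\mu\|_\infty \leq k$ (equivalently, the sharp upper bound on $\|\mathcal{B}\|_{L^q \to L^q}$ in this setting), and it cannot be circumvented by softer singular-integral arguments if one wants the sharp upper endpoint.

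The failure at $p = 1$ already occurs for $K = 1$, where $Q_2^+(1)$ forces $\beta \equiv 0$ and the sought estimate reduces to $\|\alpha\|_{L^2} \lesssim \|\partial_z \alpha\|_{L^1}$, i.e.\ to the Cauchy transform mapping $L^1 \to L^2$, which is ruled out by testing against a smoothed point mass. Sharpness at the upper endpoint $p^* = \tfrac{2K}{K-1}$ is witnessed by Iwaniec's extremal $K$-quasiconformal radial stretch $z \mapsto z|z|^{1/K - 1}$, whose complex derivative lies in $Q_2^+(K)$ but fails to be in $L^{2K/(K-1)}$ near the origin. Finally, the weak-$L^2$ substitute at $p = 1$ is obtained by rerunning the main argument in the Lorentz space $L^{2, \infty}$: the endpoint Hardy--Littlewood--Sobolev inequality yields $\|\mathcal{C}f\|_{L^{2, \infty}} \lesssim \|f\|_{L^1}$, and real interpolation of the $L^p$-boundedness of $\mathcal{B}$ around $p = 2$ (where $\mathcal{B}$ is an $L^2$-isometry) provides the control on $\mathcal{B}^{-1}$ in $L^{2, \infty}$ needed to close the estimate, albeit with a $K$-dependent constant.
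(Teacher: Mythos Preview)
Your overall strategy---complex-analytic reformulation via the Beurling--Ahlfors transform---is exactly the paper's approach, but your execution has a genuine gap at the crucial step. From $\mathcal{B}\alpha + \beta = g$ you pass, via the triangle inequality, to
\[
\bigl(1 - k\,\|\mathcal{B}^{-1}\|_{L^{p^*}\to L^{p^*}}\bigr)\|\alpha\|_{L^{p^*}} \lesssim \|g\|_{L^{p^*}},
\]
and then claim that positivity of the prefactor on the range $p^*<\tfrac{2K}{K-1}$ ``is'' Astala's theorem. It is not. Positivity of that prefactor is the statement $\|\mathcal{B}\|_{L^{p^*}\to L^{p^*}}<1/k$, which (for $p^*>2$) is precisely the Iwaniec conjecture $\|\mathcal{S}\|_{L^q\to L^q}=q-1$, still open. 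Astala's theorem asserts the \emph{invertibility} of $I-\mu\mathcal{S}$ on $L^q$ for $\|\mu\|_\infty\le k$ and $q\in\bigl(\tfrac{2K}{K+1},\tfrac{2K}{K-1}\bigr)$; this is strictly weaker than the norm bound and does not follow from a Neumann series. The paper avoids this trap by never isolating $\|\mathcal{S}\|_{L^q}$: it writes explicit representation formulae
\[
a_+ = \mathcal{S}(I-\mu\mathcal{S})^{-1}\mathcal{S}^{-1}(a_+ + \mathcal{S}a_-),\qquad
a_- = (I-\mu\mathcal{S})^{-1}\bigl[\mu(a_+ + \mathcal{S}a_-)\bigr],
\]
and applies Astala's invertibility directly. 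Your argument can be repaired the same way: from $\mathcal{B}\alpha+\beta=g$ and $\beta=\mu\alpha$ write $\alpha=\mathcal{B}^{-1}(I+\mu\mathcal{B}^{-1})^{-1}g$ and invoke invertibility of $I+\mu\mathcal{B}^{-1}$ on $L^{p^*}$, but you must not pass through the operator-norm bound.

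Two smaller points. First, your identity $\partial_z\alpha+\partial_{\bar z}\beta=\tfrac12(\di A_1+i\,\di A_2)$ has the Wirtinger operators swapped; the correct relation (as in the paper) is $\bar\partial\alpha+\partial\beta\propto\Div A$, so that applying the Cauchy transform produces $\alpha+\mathcal{S}\beta$ rather than $\mathcal{B}\alpha+\beta$. Second, for the upper-endpoint failure the paper uses a $\Div$-adapted example (a rotated cut-off of the gradient of a radial power map), essentially equivalent to your radial-stretch idea; for $p=1$ failure it points to an explicit conformal field built from the Green's function.
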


\begin{proof}
Let us identify $A\in \R^{2\times 2}$ with a pair $(a_+,a_-)\in \C^2$ according to the usual rule
$A z = a_+ z + a_-\bar z.$ The condition $A\in Q_2(K)$ is equivalent to the equation 
\begin{equation}
    \label{eq:beltrami}
a_-=-\mu a_+, \qquad |\mu|\leq k1_{B_R(0)}=\frac{K-1}{K+1}1_{B_R(0)},
\end{equation}
where $R$ is large enough so that $\supp A \subset B_R(0).$

Let us denote by $\p,\bar \p$ the usual Wirtinger derivatives acting on complex-valued functions and by $\mathcal S$ the Beurling--Ahlfors transform, which is a singular integral operator with the characteristic property
$$\mathcal S \circ \bar \p = \bar \p \circ \mathcal S = \p\quad \text{in } \mathscr D'(\C),$$
see e.g.\ \cite[§4]{Astala2009} for further information on this operator. We also note that $\mathcal S^{-1} = \overline{\mathcal S \,\bar{\cdot}}$ and that both $\mathcal S$ and $\mathcal S^{-1}$ are bounded in $L^q(\C)$ for $1<q<\infty$.

Under the obvious identifications, we compute
$$2\Div A = \bar \p a_+ +\p a_-= \bar \p (a_+ + \mathcal S a_-).$$
Thus, as $\bar \p$ is elliptic (but not canceling), if $p\in (1,2)$ we have
$$\|a_+ + \mathcal S a_-\|_{L^{p^*}(\C)}\leq C \|\Div A\|_{L^p(\R^2)}$$
while for $p=1$ we only get 
$$\|a_++\mathcal S a_-\|_{\text{weak-}L^{2}(\C)}\leq C \|\Div A\|_{L^1(\R^2)}.$$

Since both $a_+$ and $a_-$ are compactly supported, from \eqref{eq:beltrami} we easily deduce the representation formulas
\begin{align*}
a_+& = \mathcal S (I-\mu \mathcal S)^{-1} \mathcal S^{-1} (a_++\mathcal S a_-),\\
a_-& =(I-\mu \mathcal S)^{-1} [\mu (a_++\mathcal S a_-)].
\end{align*}
Here we note that the operator $I-\mu \mathcal S$ is invertible on $L^q(\C)$ if and only if $\frac{2K}{K+1}<q<\frac{2K}{K-1}$ \cite[§14]{Astala2009}. Thus, by our assumption, Since $2= 1^*<p^*<\frac{2K}{K-1}$, the conclusion follows.

The failure of the estimate at the endpoint $p=1$ will be proven in Lemma \ref{lemma:doublecones} below. For the failure at the endpoint $p^*=\frac{2K}{K-1}$, it suffices to consider the spherically symmetric map
\begin{equation*}
    u(x)=\rho(r) \frac{x}{r}, \quad \text{ where } \rho(r)=r^{1-2/p}.
\end{equation*}
Writing $\nu=\frac{x}{r}$, we have
$$\D u=\frac{\rho(r)}{r} \textup{Id} + \Big( \dot \rho(r)-\frac{\rho(r)}{r}\Big) \nu \otimes \nu,$$
and one easily deduces that
$$\D u\in Q_2^-\Big(\frac{p}{p-2}\Big), \qquad u \in W^{1,q}\backslash W^{1,p}(\B^2) \text{ for any }q<p.$$
Note that $\D u\cdot \nu^\bot = \nu^\bot$ on $\mathbb S^1$. Let $p=\frac{2K}{K-1}$, so that $\frac{p}{p-2}=K$. It suffices to take a cutoff $\chi\in C^\infty_c(\mathbb B^2)$ with $\chi=1$ near 0 and $A(x_1,x_2)=\chi(x_1,x_2) \D u(x_2,-x_1)$ to conclude.
\end{proof}

\subsection{Elliptic estimates for the curl}\label{sec:GNcurl2d}

We begin this subsection by rewriting Theorem \ref{thm:slicing} using the curl operator instead of the divergence.

\begin{corollary}
\label{cor:curlinequality2d}
Fix $i\in \{1,\dots, 4\}$ and let $\mathcal K_i^\pm$ be as in Corollary \ref{thm:slicing}. If $A\in C^\infty_c(\R^2,\mathcal K_i^\pm),$
$$\int_{\R^2} \pm \det A \dif x \leq \|\curl A_1\|_{L^1(\R^2)}\|\curl A_2\|_{L^1(\R^2)}$$
\end{corollary}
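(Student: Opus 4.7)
The idea is to reduce the curl-version to the already-proved divergence-version of Theorem \ref{thm:slicing} via a purely algebraic change of variables, taking advantage of the duality between $\di$ and $\curl$ that is specific to two dimensions. Concretely, I would set
$$B\equiv AJ,\qquad J\equiv \begin{pmatrix} 0 & -1\\ 1 & 0 \end{pmatrix},$$
so that $J$ is the counterclockwise rotation by $\pi/2$ in $\R^2$. This transformation is defined pointwise on the target of $A$ (multiplication from the right by a fixed matrix), so it does not involve any change of the spatial variables.

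The first step is to verify three elementary identities. Since $\det J=1$, multiplicativity of the determinant gives $\det B(x)=\det A(x)$ pointwise. Next, the rows of $B$ are $B_i=A_i J=(a_{i2},-a_{i1})$, so
$$\di B_i=\p_{x_1}a_{i2}-\p_{x_2}a_{i1}=\curl A_i.$$
Finally, writing $b_{ij}$ for the entries of $B$, the relations $b_{11}=a_{12}$, $b_{12}=-a_{11}$, $b_{21}=a_{22}$, $b_{22}=-a_{21}$ show that the map $A\mapsto AJ$ sends the family of cones $\{\mathcal K_i^{\pm}\}_{i=1}^4$ into itself: for instance, $\mathcal K_2^+=\{a_{12}\geq 0\geq a_{21}\}$ is sent to $\{b_{11},b_{22}\geq 0\}=\mathcal K_1^+$, and checking the remaining seven cones is a one-line case analysis in each case.

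Granted these three facts, the corollary follows immediately. Given $A\in C_c^\infty(\R^2,\mathcal K_i^\pm)$, the matrix field $B=AJ$ lies in $C_c^\infty(\R^2,\mathcal K_j^\pm)$ for some index $j$ with the same sign. Applying Theorem \ref{thm:slicing} to $B$ yields
$$\int_{\R^2}\pm\det A\dif x=\int_{\R^2}\pm\det B\dif x\leq \|\di B_1\|_{L^1(\R^2)}\|\di B_2\|_{L^1(\R^2)}=\|\curl A_1\|_{L^1(\R^2)}\|\curl A_2\|_{L^1(\R^2)},$$
which is the desired inequality. There is no real obstacle: the content of the corollary is entirely in Theorem \ref{thm:slicing}, and the only thing to check is that the algebraic substitution $A\mapsto AJ$ converts rows into their $\pi/2$-rotations in the target while leaving the determinant invariant, a phenomenon which is special to $n=2$ because the Hodge star on $1$-forms in $\R^2$ is given by multiplication by $J$.
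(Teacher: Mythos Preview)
Your proof is correct and follows essentially the same route as the paper's. The paper's one-line argument invokes the identity $\curl A_i=\di A_i^\bot$ together with the observation that the map $(A_1,A_2)\mapsto(A_1^\bot,A_2^\bot)$ leaves the family $\{\mathcal K_i^\pm\}_{i=1}^4$ invariant; your substitution $B=AJ$ is precisely this map written in matrix form, and you simply spell out the row computations and the permutation of the eight cones explicitly.
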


\begin{proof}
The corollary follows from the identity $\curl A_i = \di A_i^\bot$ and the fact that the family of cones $\{\mathcal K_i^\pm\}_{i=1}^4$ is closed under the map $A=(A_1,A_2)\mapsto (A_1^\bot, A_2^\bot)$.
\end{proof}

Although the previous estimate follows at once from Theorem \ref{thm:slicing}, it is nonetheless a genuinely different inequality. Indeed, we have the following Ornstein non-inequality (note that $\textup{Sym}^+_2\subset \mathcal K_1^+$):

\begin{lemma}\label{lemma:babyOrn}
There is no constant $C$ such that, for all $A\in C^\infty_c(\R^2,\textup{Sym}^+_2)$,
$$\|\Div A\|_{L^1(\R^2)} \leq C \|\Curl A\|_{L^1(\R^2)}.$$ Likewise, there is no constant $C$ such that, for all $A\in C^\infty_c(\R^2,\textup{Sym}^+_2)$,
$$\|\Curl A\|_{L^1(\R^2)} \leq C \|\Div A\|_{L^1(\R^2)}.$$
\end{lemma}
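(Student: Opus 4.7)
The plan is to exhibit, for each direction, an explicit sequence in $C_c^\infty(\R^2,\textup{Sym}^+_2)$ where one of the ratios $\|\Div A\|_{L^1}/\|\Curl A\|_{L^1}$ or its reciprocal blows up. The key observation is that restricting to rank-one PSD matrices pointing along a fixed direction decouples the divergence and the curl into two independent partial derivatives, so that \emph{any} anisotropic bump will asymmetrically favour one of them.

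Concretely, I would fix a non-negative bump $\phi\in C_c^\infty(\R)$ with $\phi\not\equiv 0$, and for $\eps>0$ consider the test field
\begin{equation*}
A_\eps(x_1,x_2)\equiv f_\eps(x_1,x_2)\,e_1\otimes e_1
=\begin{pmatrix} f_\eps(x_1,x_2) & 0 \\ 0 & 0 \end{pmatrix},
\end{equation*}
where $f_\eps\geq 0$ is to be chosen. Since $f_\eps\geq 0$, automatically $A_\eps\in C_c^\infty(\R^2,\textup{Sym}^+_2)$. A direct computation gives
\begin{equation*}
\Div A_\eps =(\partial_1 f_\eps,\,0)^\top,\qquad
\Curl A_\eps=(-\partial_2 f_\eps,\,0)^\top,
\end{equation*}
so that $\|\Div A_\eps\|_{L^1}=\|\partial_1 f_\eps\|_{L^1}$ and $\|\Curl A_\eps\|_{L^1}=\|\partial_2 f_\eps\|_{L^1}$.

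To rule out the first inequality, I would take $f_\eps(x_1,x_2)=\phi(x_1)\phi(\eps x_2)$, i.e.\ a bump stretched out in the $x_2$ direction. A change of variables gives
\begin{equation*}
\|\partial_1 f_\eps\|_{L^1}=\eps^{-1}\|\phi'\|_{L^1}\|\phi\|_{L^1},\qquad
\|\partial_2 f_\eps\|_{L^1}=\|\phi\|_{L^1}\|\phi'\|_{L^1},
\end{equation*}
so the ratio $\|\Div A_\eps\|_{L^1}/\|\Curl A_\eps\|_{L^1}=\eps^{-1}\to\infty$ as $\eps\to 0$. The reverse inequality is disposed of by the symmetric construction $f_\eps(x_1,x_2)=\phi(\eps x_1)\phi(x_2)$, which makes $\|\Curl A_\eps\|_{L^1}/\|\Div A_\eps\|_{L^1}\to\infty$.

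There is no real obstacle here: once one notices that the constraint set $\textup{Sym}^+_2$ contains the ray $\R_{\geq 0}\cdot e_1\otimes e_1$, the PSD constraint costs nothing and the problem reduces to the trivial scalar fact that $\|\partial_1 f\|_{L^1}$ and $\|\partial_2 f\|_{L^1}$ are not comparable in $C_c^\infty(\R^2)$. The only mild subtlety is confirming that this ray is indeed admissible, which is immediate from the definition \eqref{eq:SPD}.
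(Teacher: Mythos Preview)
Your proof is correct and takes a genuinely different, more elementary route than the paper. The paper constructs diagonal fields of the form $\textup{Id}+\tfrac14(\text{oscillatory perturbation})$, where fast sinusoidal oscillation in one variable makes $\|\Div A_\eps\|_{L^1}$ blow up while $\|\Curl A_\eps\|_{L^1}$ stays bounded, and then applies a cutoff at the end to achieve compact support. You instead take rank-one fields $f_\eps\,e_1\otimes e_1$ with an anisotropically stretched bump $f_\eps$; this reduces $\Div$ and $\Curl$ to the single partial derivatives $\partial_1 f_\eps$ and $\partial_2 f_\eps$, and a one-line change of variables finishes the argument. Your construction is cleaner in that it is already compactly supported and requires no oscillation or cutoff. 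The paper's fields, by contrast, take values in the \emph{interior} of $\textup{Sym}^+_2$, whereas yours lie on the boundary ray $\R_{\geq 0}\,e_1\otimes e_1$; but since the lemma is stated for the closed cone $\textup{Sym}^+_2$, this distinction is immaterial here.
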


\begin{proof}
Consider the matrix fields
\begin{align*}
    &A_\eps\equiv \begin{pmatrix}
1+\frac{1}{4}\sin(\frac{x_1}{\eps})\sin(x_2) & 0\\
0 & 1+\frac{1}{4}\sin(\frac{x_2}{\eps})\sin(x_1)
\end{pmatrix},\\
&B_\eps\equiv \begin{pmatrix}
1+\frac{1}{4}\sin(\frac{x_2}{\eps})\sin(x_1) & 0\\
0 & 1+\frac{1}{4}\sin(\frac{x_1}{\eps})\sin(x_2)
\end{pmatrix}.
\end{align*}
One easily checks that
\begin{align*}
    \Div A_\eps=\begin{pmatrix}
    \frac{1}{4\eps}\cos(\frac{x_1}{\eps})\sin(x_2)\\
    \frac{1}{4\eps}\cos(\frac{x_2}{\eps})\sin(x_1)
    \end{pmatrix}, \qquad 
    \Curl A_\eps=\begin{pmatrix}
    \frac{1}{4}\sin(\frac{x_1}{\eps})\cos(x_2)\\
    -\frac{1}{4}\sin(\frac{x_2}{\eps})\cos(x_1)
    \end{pmatrix}.
\end{align*}
Thus, on the box $[0,2\pi]^2$, 
$$\|\Div A_{\e}\|_{L^1}=\frac{4}{\e},\quad \|\Curl A_{\e}\|_{L^1}=4.$$
Similarly, we obtain 
$$\|\Div B_{\e}\|_{L^1}=4,\quad \|\Curl B_{\e}\|_{L^1}=\frac 4\e.$$
Note that $A=B=\textup{Id}$ on $\p ([0,2\pi]^2)$. In order to get a compactly supported example, it is enough to consider a smooth cutoff $\eta$ with support in $[0,2\pi]^2$  which is equal to 1 in $[\pi/2,3\pi/2]^2$. We may replace the fields $A_\e, B_\e$ by $\eta A_\e, \eta B_\e$: they still take values in $\textup{Sym}^+_2$ and their divergence and curl is affected only by a constant which is independent of $\e$.
\end{proof}

It would be interesting to establish an analogue of Ornstein's result \cite{Ornstein1962} for constrained fields. We propose a formulation of such a result in the next remark.

\begin{remark}[Ornstein's non-inequality with constraints]\label{question:Ornstein}
Given a finite-dimensional vector space $\mathbb V$, let  $\mathcal K\subset \mathbb V$ be a closed, convex cone which spans $\mathbb V$. Let $\mathcal P_1$, $\mathcal P_2$ be two $\ell$-th order, constant coefficient partial differential operators on $\R^n$ from $\mathbb V$ to $\mathbb W_1, \mathbb W_2$, respectively. 
If there is a constant $C$ such that
$$\| \mathcal P_1 \varphi \|_{L^1(\R^n)} \leq C\|\mathcal P_2 \varphi \|_{L^1(\R^n)} \quad \text{ for all } \varphi \in C^\infty_c(\R^n, \mathcal K),$$
is it the case that $|\mathcal P_1 
\varphi|\leq C'|\mathcal P_2\varphi|$ pointwise for some constant $C'$ and all $\varphi\in C^\infty_c(\R^n,\mathcal K)$?
An inspection of the standard proofs in the unconstrained case in \cite{Conti2005,Faraco2020,Kirchheim2016,Ornstein1962} shows that they do not extend to this setting in an obvious way.
\end{remark}

\subsection{Optimality}\label{sec:optimality2d}

In this subsection we collect several examples and remarks which show that Theorem \ref{thm:slicing} is essentially optimal.

The first point we address is whether one can hope for a gain of higher integrability if one assumes that the divergence of the matrix field is itself more integrable. This question was posed already by  Serre \cite{Serre2018a} and answered negatively by De Rosa--Tione \cite{DeRosa2019a}:

\begin{proposition}[Necessity of $p=1$]
\label{prop:p=1isnecessary}
Fix $p\in (1,n)$. Let $A\in L^p(\B^n,\textup{Sym}^+_n)$ be  such that $A=0$ near $\mathbb S^{n-1}$ and $\Div A\in L^p(\B^n)$. In general, $(\det A)^{ 1/ n}\not \in L^{p^*}(\B^n)$. In fact, for $1<p<1^*$, there is no $\e>0$ such that we always have $(\det A)^{ 1/ n}\in L^{1^*+\e}(\B^n)$.
\end{proposition}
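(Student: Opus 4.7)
The plan is to produce an explicit family of counterexamples using cofactors of radial Hessians. For $1 < \beta < 2$ to be chosen, let $\phi(x) = |x|^\beta$ and set $A_\beta \equiv \textup{cof}\,\nabla^2 \phi$, which is smooth on $\R^n \setminus \{0\}$. Since $\phi$ is convex, $\nabla^2 \phi \in \textup{Sym}_n^+$ and consequently $A_\beta \in \textup{Sym}_n^+$. In radial/tangential coordinates, $\nabla^2 \phi$ has eigenvalue $\beta(\beta-1)|x|^{\beta-2}$ in the radial direction and $\beta|x|^{\beta-2}$ with multiplicity $n-1$ in the tangential directions. Using $\det\textup{cof}\,M = (\det M)^{n-1}$, this gives
\[
|A_\beta(x)| \sim |x|^{(n-1)(\beta - 2)} \sim (\det A_\beta(x))^{1/n}.
\]
The key observation is that $|A_\beta|$ and $(\det A_\beta)^{1/n}$ share the \emph{same} power-law profile near the origin, so their $L^p$ versus $L^q$ integrability thresholds differ only through the exponent.

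By the Piola identity $\Div\textup{cof}\,\D u \equiv 0$, we have $\Div A_\beta = 0$ classically on $\R^n \setminus \{0\}$. To upgrade this to the full ball $\B^n$ in the distributional sense, we integrate by parts on $\B^n \setminus B_r(0)$ against a test function and estimate the boundary flux on $\p B_r(0)$, which has size $\sim r^{(n-1)(\beta-2)} \cdot r^{n-1} = r^{(n-1)(\beta-1)}$ and vanishes as $r \to 0^+$ precisely because $\beta > 1$. Hence $\Div A_\beta = 0$ on all of $\B^n$. To obtain a compactly supported field, multiply by a radial cutoff $\eta \in C^\infty_c(\B^n,[0,1])$ with $\eta \equiv 1$ on $B_{1/2}$ and set $\tilde A \equiv \eta A_\beta$. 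Since $\eta \geq 0$, the constraint $\tilde A \in \textup{Sym}_n^+$ is preserved; moreover $\Div \tilde A = A_\beta \D\eta$ is supported in $\{|x|\geq 1/2\}$ where $A_\beta$ is smooth, so $\Div \tilde A \in L^\infty(\B^n) \subset L^p(\B^n)$. The singular behavior of $\tilde A$ and $(\det \tilde A)^{1/n}$ near the origin is unchanged.

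A one-dimensional radial integration yields the biconditionals $\tilde A \in L^p(\B^n) \iff \beta > 2 - \frac{n}{(n-1)p}$ and $(\det \tilde A)^{1/n} \notin L^q(\B^n) \iff \beta \leq 2 - \frac{n}{(n-1)q}$. For the stronger second statement, fix any $\e > 0$ and $p \in (1,1^*)$: since $\frac{n}{(n-1)p} > 1 > \frac{n}{(n-1)(1^* + \e)}$, the interval
\[
\Big(\max\Big\{1,\, 2 - \tfrac{n}{(n-1)p}\Big\},\; 2 - \tfrac{n}{(n-1)(1^* + \e)}\Big)
\]
is nonempty; any $\beta$ therein gives the required counterexample. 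For the first statement with $p \in (1,n)$, one instead selects $\beta \in (\max\{1, 2 - \frac{n}{(n-1)p}\},\, 2 - \frac{n}{(n-1)p^*})$; this interval is nonempty because its upper endpoint exceeds 1 precisely when $p > 1$, while its lower endpoint is always strictly less than $p^*$'s threshold.

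The most delicate step is verifying that $\Div A_\beta = 0$ globally on $\B^n$ (not merely away from the origin); this relies crucially on the bound $\beta > 1$ to rule out singular Dirac-type distributional contributions concentrated at $\{0\}$. Every other ingredient reduces to direct radial computation, the convexity of $\phi$, and the Piola identity.
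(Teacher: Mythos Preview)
Your argument is correct. The construction $A_\beta=\textup{cof}\,\D^2(|x|^\beta)$ with $1<\beta<2$ yields a field in $\textup{Sym}_n^+$, and your computation of the common power-law profile $|A_\beta|\sim(\det A_\beta)^{1/n}\sim |x|^{(n-1)(\beta-2)}$ is accurate. The verification that the distributional divergence vanishes across the origin is the only nontrivial step, and you handle it correctly: the boundary flux on $\partial B_r$ is of order $r^{(n-1)(\beta-1)}$, which tends to zero precisely because $\beta>1$. The exponent bookkeeping that produces nonempty intervals for $\beta$ in both statements is fine (in the first statement the upper threshold $2-\tfrac{n}{(n-1)p^*}$ exceeds $1$ exactly when $p>1$, and it always exceeds the $L^p$ threshold since $p^*>p$).

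As for comparison with the paper: there is no proof to compare against. The paper does not prove Proposition~\ref{prop:p=1isnecessary} itself but attributes it to De~Rosa--Tione \cite{DeRosa2019a}. That said, your construction is essentially the $k=n$ instance of the optimality construction the paper does carry out later in Proposition~\ref{prop:optimal_div}, where one takes $A=\tfrac{1}{k}\nabla F_k(\D^2 u)$ for a radial power $u$; when $k=n$ this is exactly $\textup{cof}\,\D^2 u$. So your approach is fully in line with the techniques of the paper, even though the paper itself defers this particular proposition to the literature.
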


A different question, which was also briefly addressed by  Serre \cite[\S 4]{Serre2019}, is the extent to which the entry-wise constraints are truly necessary. One could imagine, for instance, that Proposition \ref{prop:p=1isnecessary} may still hold if we replace $\mathcal K_1^+$ with the (non-convex) cone of matrices with non-negative determinant; however, this is not the case:

\begin{lemma}[Necessity of entry-wise constraints]
\label{lemma:doublecones}
There is no constant $C$ such that, for all $A\in C^\infty_c(\R^2,\R^{2\times 2})$ with $\det A\geq 0$,
$$\int_{\R^2} \det A \dif x \leq C \|\Div A\|_{L^1(\R^2)}^2.$$
\end{lemma}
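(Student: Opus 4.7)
My plan is to exploit the complex-analytic structure of $2\times 2$ matrices. I will parametrise matrix fields of ``rotational'' type
\[
A = \begin{pmatrix} f & -g \\ g & f \end{pmatrix}, \qquad h = f + ig \in C_c^\infty(\R^2,\C),
\]
for which $\det A = f^2 + g^2 = |h|^2 \ge 0$ automatically, so that any such $A$ lies in the cone $\{\det\ge 0\}$ without any further restriction. A direct computation of the two row divergences shows $(\Div A)_1 + i(\Div A)_2 = \partial_1 h + i\partial_2 h = 2\bar\partial h$, where $\bar\partial = \tfrac12(\partial_1 + i\partial_2)$, so $|\Div A| = 2|\bar\partial h|$ pointwise. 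Within this class of matrix fields, the putative estimate would therefore reduce to the scalar inequality
\[
\|h\|_{L^2(\R^2)}^2 \le C\,\|\bar\partial h\|_{L^1(\R^2)}^2\quad\text{for all } h\in C_c^\infty(\R^2,\C).
\]
This is a well-known failure at the $L^1$ endpoint of the Sobolev embedding for the Cauchy--Riemann operator, reflecting that the fundamental solution $1/(\pi\bar z)$ of $\bar\partial$ belongs to weak-$L^2$ but not to $L^2$.

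To turn this observation into a concrete counterexample, I will take
\[
h_\e(z) = \chi(z)\,\frac{\bar z}{|z|^2 + \e^2},
\]
with $\chi\in C_c^\infty(\R^2)$ a fixed radial cutoff equal to $1$ near the origin; this is a smooth, compactly supported regularisation of the obstruction $1/z$. The key identity is $\bar\partial(\bar z/(|z|^2+\e^2)) = \e^2/(|z|^2+\e^2)^2$, which integrates to an $O(1)$ quantity uniformly in $\e$: indeed $\int_{\R^2}\e^2/(|z|^2+\e^2)^2\,dx = \pi$ by a direct polar computation. Combined with the trivially bounded boundary contribution from $\bar\partial\chi \cdot \bar z/(|z|^2+\e^2)$, this shows $\|\bar\partial h_\e\|_{L^1(\R^2)} = O(1)$ as $\e \downarrow 0$. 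Meanwhile, using $|h_\e|^2 = \chi^2 |z|^2/(|z|^2+\e^2)^2$ and polar coordinates, one finds $\int|h_\e|^2\,dx \sim 2\pi\log(1/\e)$ as $\e\to 0^+$.

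Consequently, setting $A_\e$ to be the rotational matrix field associated to $h_\e$, we obtain a sequence in $C_c^\infty(\R^2,\{\det\ge 0\})$ with $\|\Div A_\e\|_{L^1} = 2\|\bar\partial h_\e\|_{L^1} = O(1)$ and $\int_{\R^2}\det A_\e = \|h_\e\|_{L^2}^2 \to \infty$. Hence the ratio $\int\det A_\e / \|\Div A_\e\|_{L^1}^2 \to \infty$, ruling out any finite constant $C$. The main step to verify carefully is the uniform bound on $\|\bar\partial h_\e\|_{L^1}$, which depends on the specific cancellation encoded in the identity $\bar\partial(\bar z/(|z|^2+\e^2)) = \e^2/(|z|^2+\e^2)^2$; once this is in hand, the remaining estimates reduce to routine radial integrations.
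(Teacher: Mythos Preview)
Your proposal is correct and takes essentially the same approach as the paper: both use conformal matrix fields built from a regularisation of the Cauchy kernel $1/z$. In your notation the paper's field corresponds to $h(z)=c/z$ (arising from the gradient of the planar Green's function), which is then mollified by convolution and cut off, whereas you regularise explicitly via $\bar z/(|z|^2+\e^2)$; the underlying obstruction---that the fundamental solution of $\bar\partial$ lies in weak-$L^2$ but not $L^2$---is identical.
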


\begin{proof}
Let $G(x)=\frac{1}{2\pi}\log|x|$ be the Green's function for the planar Laplacian. We define, for $|x|<1,$ the field
\begin{equation}
    \label{eq:greenexample}
A(x)\equiv \begin{pmatrix}
G_{x_2} & -G_{x_1}\\
G_{x_1} & G_{x_2}
\end{pmatrix}=\frac{1}{\pi|x|^2}\begin{pmatrix}
x_2 & - x_1 \\ x_1 & x_2
\end{pmatrix}.
\end{equation}
Note that $A$ takes values in the two-dimensional linear space $Q_2^+(1)$ of conformal matrices, cf.\ \eqref{eq:quasiregmatrices}. We have
$A \in L^{2,\infty}\backslash L^2(\B^2)$ and moreover
$$\Div A=(0,\delta_0)\in\mathscr{M}(\B^2),$$
while 
$$0<\det A=\frac{1}{\pi|x|^2}\not\in L^1(\B^2).$$

Take $A_\e\equiv \eta (A*\rho_\e)$, where $\rho_\e$ is the standard mollifier and $\eta\in C^\infty_c(\B^2)$ is a non-negative cutoff with $\eta=1$ near $0$. Clearly $A_\e$ still respects the constraint and
$$\int_{\R^2} \det A_\e \dif x \to +\infty \text{ as }\e\to 0, \qquad \|\Div A_\e\|_{L^1(\R^2)}\leq C$$
for some constant $C$ independent of $\e$, as wished.
\end{proof}

\begin{remark}
An example related to the previous lemma can be obtained by swapping the two rows of $A$ in \eqref{eq:greenexample}: by doing so, we get a matrix field $\tilde A$ which shows that there is also no estimate for fields constrained to take values in $\textup{Sym}_2\cap \{\det \leq 0\}.$ Note that $\mathcal K_2^-\cap \textup{Sym}_2$ is a two-dimensional half-space, while $\tilde A$ takes values in the full linear space containing this half-space. Thus entrywise constraints are in some sense necessary for an estimate. 
\end{remark}

\subsection{Density of smooth functions and separation}\label{sec:density}
The previous two remarks provide examples of matrix fields $A$ which take values in $\textup{Sym}_2\cap \{\det \leq 0\}$ and which falsify the inequalities of this section. Notice also that, in both cases, the matrix field $A$ is \textit{constant} on the boundary of the unit ball. A question which the previous remarks leave open is the following:
\begin{question}
\label{question:symmetric}
Can the inequalities of this section hold if we constrain the fields to take values in the double cone $$\textup{Sym}_2\cap \{\det \geq 0\}= \textup{Sym}_2\cap\left(\mathcal K_1^+ \cup \mathcal K_3^+\right)=\textup{Sym}^+_2\cup \textup{Sym}^-_2?$$ 
\end{question}
In fact, for fields satisfying the exact constraint $\Curl A=0$, we have the following  rigidity result, which is a particular case of a theorem of \v Sver\'ak \cite{Sverak1992}, see also the remarkable generalisations due to Faraco--Sz\'ekelyhidi \cite{Faraco2008} as well as \cite{Sverak1993,Szekelyhidi2005,Szekelyhidi2006} for related results:

\begin{proposition}
\label{prop:FSz}
Let $A\in L^2(\B^2,\textup{Sym}_2)$ be such that $\Curl A=0$ and $A|_{\mathbb S^1}$ is constant. Then
$$\text{either } A\in \textup{Sym}^+_2 \text{ a.e.\ in }\Omega \qquad \text{ or } 
A\in \textup{Sym}^-_2 \text{ a.e.\ in }\Omega.$$
\end{proposition}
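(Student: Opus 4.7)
The plan is to reduce the statement to a rigidity theorem for compactly supported Hessian perturbations of a fixed quadratic. Since $A$ is symmetric, $\Curl A=0$ and $\mathbb{B}^2$ is simply connected, the Poincar\'e lemma yields $u\in W^{2,2}(\mathbb{B}^2)$ with $A=D^2u$. The condition that $A|_{\mathbb{S}^1}$ equals a constant symmetric matrix $M$ is best interpreted as saying that the extension of $A$ by $M$ across $\mathbb{S}^1$ remains curl-free on $\mathbb{R}^2$; equivalently, $v\equiv u-\tfrac12 x^\top M x\in W^{2,2}_0(\mathbb{B}^2)$ (modulo an affine function).

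I would next record the pointwise reformulation of the constraint: for $A\in\textup{Sym}_2$, membership in $\textup{Sym}^+_2\cup\textup{Sym}^-_2$ is equivalent to $\det A\geq 0$, since two real eigenvalues share a sign if and only if their product is non-negative. Combined with the null Lagrangian identity
\begin{equation*}
    \int_{\mathbb{B}^2}\det A\,\dif x=\det M\cdot|\mathbb{B}^2|,
\end{equation*}
obtained from the expansion $\det(M+D^2v)=\det M+\det D^2v+\langle M,\textup{cof}(D^2v)\rangle$ and the vanishing of $\int\det D^2v$ and of $\int\textup{cof}(D^2v)$ for $v\in W^{2,2}_0$ (each is a divergence after integration by parts), this forces $\det M\geq 0$; up to replacing $u$ by $-u$ we may therefore assume $M\in\textup{Sym}^+_2$.

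The core step is to upgrade $\det A\geq 0$ a.e.\ to the dichotomy $A\in\textup{Sym}^+_2$ a.e.\ on $\mathbb{B}^2$. Extending $u$ to $\tilde u\in W^{2,2}_{\textup{loc}}(\mathbb{R}^2)$ by setting $\tilde u=\tfrac12 x^\top M x$ outside $\mathbb{B}^2$ gives an extension which is (strictly) convex outside $\mathbb{B}^2$ and still satisfies $D^2\tilde u\in\textup{Sym}^+_2\cup\textup{Sym}^-_2$ a.e.\ on $\mathbb{R}^2$. I would then invoke \v Sver\'ak's rigidity \cite{Sverak1992}: although there are rank-one connections between $\textup{Sym}^+_2$ and $\textup{Sym}^-_2$, these occur only through rank-one matrices $\pm v\otimes v$ lying on the common boundary $\partial\textup{Sym}^{\pm}_2$, and the set $\textup{Sym}^+_2\cup\textup{Sym}^-_2$ contains no $T_N$-configuration mixing the two components. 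Hence any exact solution valued in this set with asymptotic trace $M\in\textup{int}\,\textup{Sym}^+_2$ must remain in the single component $\textup{Sym}^+_2$; applied to $\tilde A=D^2\tilde u$ this yields the claim.

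The main obstacle is precisely this last step. The Poincar\'e-lemma reduction and the null Lagrangian identity are soft; the genuine content is the exclusion of non-trivial two-phase laminates in $\textup{Sym}^+_2\cup\textup{Sym}^-_2$, a planar rigidity result due to \v Sver\'ak and considerably strengthened by Faraco--Sz\'ekelyhidi \cite{Faraco2008}.
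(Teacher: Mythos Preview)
The paper does not give its own proof of this proposition: it is stated as a particular case of \v Sver\'ak's rigidity theorem \cite{Sverak1992}, with a pointer to the generalisations of Faraco--Sz\'ekelyhidi \cite{Faraco2008}. Your proposal takes exactly the same route---writing $A=\D^2 u$ via the Poincar\'e lemma, interpreting the boundary condition as $u-\tfrac12 x^\top Mx\in W^{2,2}_0$, and then invoking \v Sver\'ak's separation result---so there is nothing substantive to compare. The auxiliary steps you spell out (the null Lagrangian identity for $\det\D^2 u$ and the sign determination of $M$) are correct and standard.

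One small point: your extension $\tilde u$ is only \emph{strictly} convex outside $\B^2$ when $M\in\mathrm{int}\,\textup{Sym}^+_2$. If $\det M=0$ the identity $\int_{\B^2}\det A=\det M\,|\B^2|$ forces $\det A=0$ a.e., and the conclusion follows from an easy direct argument (or from \v Sver\'ak's result applied after perturbing $M$). You should flag this boundary case rather than claim strict convexity in general. Also note that the hypothesis $A\in\textup{Sym}^+_2\cup\textup{Sym}^-_2$ a.e.\ (equivalently $\det A\geq 0$) is implicit from the surrounding discussion in the paper; you correctly assume it, but it is worth stating explicitly since the proposition as written omits it.
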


As before, one can convert the constraint $\Curl A=0$ into the constraint $\Div \tilde A=0$ through the map
$$A=\begin{pmatrix}
a & b \\ b & c 
\end{pmatrix} 
\mapsto 
\begin{pmatrix}
-c & b \\ b & - a
\end{pmatrix}=\tilde A,$$
which interchanges $\textup{Sym}^+_2$ and $\textup{Sym}^-_2$.

A question related to Question \ref{question:symmetric} is whether it is possible to approximate rough fields (with or without exact differential constraints) by smooth fields that retain the pointwise constraint. As mentioned before, when the target is a convex cone, this is easily achieved by a straightforward mollification argument. However, when convexity fails, the situation is far more complex, and Lemma \ref{lemma:doublecones} shows that in general it is not possible to do so. We now give an example which shows that the same phenomenon happens in the setting of Question \ref{question:symmetric}.

\begin{proposition}\label{prop:nodensity}
Let $\mathcal{K}^0$ be an open, convex cone in $\textup{Sym}^+_2$ and denote by $\mathcal{K}$ the double cone $\overline{\mathcal{K}^0}\cup\big(-\overline{\mathcal{K}^0}\big)$. Then there exists a map $A\in L^\infty([-1,1]^2,\mathcal{K})$ such that:
\begin{enumerate}
    \item  $\Div A\in \mathcal{M}([-1,1]^2,\R^2)$;
    \item there does not exist any sequence of smooth maps $(A_\eps)_{\eps>0}$ with $A_\eps\in C^\infty([-1,1]^2,\mathcal{K})$ such that 
    $$A_\eps\to A\text{ in } L^1,\qquad \|\Div A_\eps\|_{L^1}\to\|\Div A\|_{\mathcal{M}}.$$
In other words, there is no smooth sequence approximating $A$ in the strict topology in $\textup{BV}^{\Div}$ that preserves the pointwise constraint.
\end{enumerate} 
\end{proposition}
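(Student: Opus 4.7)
The plan is to construct a piecewise-constant field $A$ jumping between matrices $M_0 \in \overline{\mathcal{K}^0}$ and $-M_1 \in -\overline{\mathcal{K}^0}$ across $\{x_2 = 0\}$, chosen so that the column jump $(M_0 + M_1) e_2 \in \R^2$ has strictly smaller Euclidean norm than $|M_0 e_2| + |M_1 e_2|$. The topological obstruction driving the argument is that $\overline{\mathcal{K}^0} \cap (-\overline{\mathcal{K}^0}) = \{0\}$: any continuous path in $\mathcal{K}$ connecting $-M_1$ to $M_0$ must pass through the origin, producing excess divergence mass in smooth approximants that the measure $\Div A$ does not see.

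Concretely, I would choose $M_0, M_1 \in \mathcal{K}^0$ strictly positive definite with $M_0 e_2,\, M_1 e_2$ linearly independent in $\R^2$, which is possible since $\mathcal{K}^0$ is open in $\textup{Sym}_2$. Setting $A(x_1, x_2) := M_0\, \mathbf{1}_{\{x_2 > 0\}} - M_1\, \mathbf{1}_{\{x_2 < 0\}}$, a direct computation gives $\Div A = (M_0 + M_1) e_2 \, \mathcal{H}^1 \llcorner \{x_2 = 0\}$ and $\|\Div A\|_{\mathcal{M}} = 2|(M_0 + M_1) e_2| < 2(|M_0 e_2| + |M_1 e_2|)$ by the strict triangle inequality. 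Arguing by contradiction, suppose that $A_\eps \in C^\infty([-1,1]^2, \mathcal{K})$ satisfy $A_\eps \to A$ in $L^1$ and $\|\Div A_\eps\|_{L^1} \to \|\Div A\|_{\mathcal{M}}$. Up to a subsequence, for a.e.\ $x_1$ and small $\delta>0$ one can find $y_- \in (-\delta, 0)$ and $y_+ \in (0, \delta)$ with $A_\eps(x_1, y_+) \to M_0$ and $A_\eps(x_1, y_-) \to -M_1$. The continuous curve $t \mapsto A_\eps(x_1, t) \in \mathcal{K}$ joins $-\overline{\mathcal{K}^0}$ and $\overline{\mathcal{K}^0}$, so it must cross $0$ at some $t_\eps^\star \in (y_-, y_+)$. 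Projecting onto the second column $v_\eps(t) := A_\eps(x_1, t) e_2 \in \R^2$, the triangle inequality via the origin gives $\int_{y_-}^{y_+} |\p_t v_\eps(t)|\, dt \geq |M_0 e_2| + |M_1 e_2| - o(1)$; integrating over $x_1 \in [-1,1]$ yields
\[
\|\p_2(A_\eps e_2)\|_{L^1} \geq 2(|M_0 e_2| + |M_1 e_2|) - o(1).
\]

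The main obstacle is translating this into a lower bound on $\|\Div A_\eps\|_{L^1}$, since $\Div A_\eps = \p_1(A_\eps e_1) + \p_2(A_\eps e_2)$ and the $\p_1$ term could a priori cancel the $\p_2$ contribution. To close the argument, I would exploit the symmetry $A_\eps = A_\eps^\top$, which yields the identity $\Div A_\eps \cdot \eta = \di(A_\eps \eta)$ for every $\eta \in \R^2$. Choosing a unit vector $\eta$ perpendicular to $(M_0 + M_1) e_2$ and applying Reshetnyak's continuity theorem to the strict convergence $\Div A_\eps \to \Div A$ with the positively $1$-homogeneous integrand $v \mapsto |v \cdot \eta|$ gives $\|\di(A_\eps \eta)\|_{L^1} \to 0$. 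The slicing argument applied to the scalar $\eta \cdot (A_\eps e_2)$, whose endpoint limits $\pm\eta \cdot M_0 e_2$ are nonzero by linear independence of $M_0 e_2$ and $M_1 e_2$, then gives $\|\p_2(\eta \cdot A_\eps e_2)\|_{L^1} \geq 4|\eta \cdot M_0 e_2| - o(1)$. The crux, which I expect to be the main difficulty, is to combine this with the constraint $A_\eps \in \mathcal{K}$ — specifically, using the $L^\infty$-smallness of $A_\eps$ in the thin transition layer where it necessarily passes through $0$ — to rule out cancellation from $\p_1$-oscillations of $\eta \cdot (A_\eps e_1)$, thereby contradicting the Reshetnyak conclusion $\|\di(A_\eps \eta)\|_{L^1} \to 0$.
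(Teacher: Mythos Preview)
Your construction of $A$ and the key strict inequality $|(M_0+M_1)e_2|<|M_0 e_2|+|M_1 e_2|$ match the paper's exactly (up to swapping $x_1\leftrightarrow x_2$), and your topological observation that any continuous $\mathcal K$-valued path joining the two half-cones must pass through the origin is the correct starting point.

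The proof of (b), however, has a genuine gap that you yourself flag in the last sentence. Slicing along vertical lines controls $\|\partial_2(A_\eps e_2)\|_{L^1}$, not $\|\Div A_\eps\|_{L^1}=\|\partial_1(A_\eps e_1)+\partial_2(A_\eps e_2)\|_{L^1}$, and nothing rules out pointwise cancellation from $\partial_1$-oscillations. Your Reshetnyak step is correct --- strict convergence does give $\|\di(A_\eps\eta)\|_{L^1}\to 0$ --- but it merely reproduces the same problem for the scalar field $A_\eps\eta$: you again bound only $\|\partial_2(A_\eps\eta)_2\|_{L^1}$ via slicing, and the $\partial_1(A_\eps\eta)_1$ term can cancel it. The appeal to ``$L^\infty$-smallness of $A_\eps$ in the transition layer'' does not close this, since smallness of $|A_\eps|$ on a thin set says nothing about the $L^1$ norm of $|\partial_1 A_\eps|$ there. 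This is a missing idea, not a technicality.

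The paper avoids one-dimensional slicing altogether by integrating by parts on two-dimensional subdomains. With $f_\eps=\tr A_\eps$, the coarea formula (using only that $f_\eps$ is smooth on a compact set) furnishes, for each $\eps$, a regular value $0<b<\tfrac12|M_0|$ with $\mathcal H^1(f_\eps^{-1}(b))\leq \delta/b$. A thin rectangle $R_\delta$ straddling the jump is split into $\Omega_\pm=R_\delta\cap\{f_\eps\gtrless b\}$, and the divergence theorem gives $\int_{\Omega_\pm}|\Div A_\eps|\geq\big|\int_{\partial\Omega_\pm}A_\eps\nu\big|$. The boundary of each $\Omega_\pm$ has four pieces: the two sides of $R_\delta$ parallel to the jump, where $A_\eps$ is $L^1$-close to the constants $M_0,-M_1$; the two short sides of $R_\delta$, of total length $O(\delta)$; and the interior level set $f_\eps^{-1}(b)$, on which $|A_\eps|\leq|\tr A_\eps|=b$ (since $\mathcal K\subset\textup{Sym}^+_2\cup\textup{Sym}^-_2$), so that its contribution is at most $b\cdot\mathcal H^1(f_\eps^{-1}(b))\leq\delta$. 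Summing over $\Omega_+$ and $\Omega_-$ gives $\|\Div A_\eps\|_{L^1}\geq 2(|M_0 e_2|+|M_1 e_2|)-C(\delta+\lambda)$, which for $\delta,\lambda$ small contradicts $\|\Div A_\eps\|_{L^1}\to 2|(M_0+M_1)e_2|$. The idea you were missing is precisely this use of a level set of the trace as an interior boundary on which $A_\eps$ is \emph{uniformly} small and whose length is controlled.
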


\begin{proof}
Let $A_1\in \mathcal{K}^0$, $A_2\in-\mathcal{K}^0$ be such that the line segment $[A_1,A_2]$ does not pass through the origin. Define the function $A$ by
$$A(x_1,x_2)\equiv \begin{cases}
A_1 &\text{if }x_1<0,\\
A_2 &\text{if } x_1>0.
\end{cases}$$
One easily sees that $$\Div A=(A_2-A_1)e_1\mathcal{H}^1\big|_{x_1=0},$$
and hence
$$\|\Div A\|_{\mathcal{M}}=2|(A_2-A_1)e_1|.$$
Moreover, by construction of $A_2$ and $A_1$, 
\begin{equation}
    \label{eq:A1A2}
    |(A_2-A_1)e_1|<|A_2e_1|+|A_1e_1|.
\end{equation}
Suppose for a contradiction that there exists a smooth sequence $A_\eps$ as in the statement of the proposition. 
We fix small auxiliary parameters $\delta_0, \lambda>0$ which will be chosen later, as well as a number $\delta\in (\frac{\delta_0 }{2}, \delta_0)$. As $A_\eps\to A$ in $L^1([-1,1]^2)$, there is $\e_0>0$ such that 
\begin{equation}
    \label{eq:approximation}
\|A_\eps - A\|_{L^1([-1,1]^2)} <\lambda\quad \text{ for all }0<\e<\e_0.
\end{equation}
 Let us fix one such $\e>0$ and let $f_\eps\equiv \tr A_\eps$.  Consider the set $$\{b\in \R: f_\e^{-1}(b) \text{ is a smooth manifold}\}.$$ By Sard's Theorem, this set has full measure in $\R$. By the co-area formula,
\begin{equation}
    \label{eq:coarea}
\int_{[-1,1]^2} |\D f_\e(x)|\dif x = \int_\R \mathcal H^1(f_\e^{-1}(t))\dif t,
\end{equation}
 and so there is $0<b<|A_1|/2$ such that $f^{-1}_\e(b)$ is a smooth manifold and 
 \begin{equation}\label{ineq:levelset}
     \mathcal{H}^1(f_\eps^{-1}(b))\leq \frac{\de}{b},
 \end{equation}
for otherwise the integral in the right-hand side of \eqref{eq:coarea} would be infinite.

Let us write  $R_{\de}\equiv [-\de,\de]\times[-1+\de,1-\de]$ and consider the sets
 \begin{gather*}
E_1\equiv \{v\in \R^2: f_\e^{-1}(b) \text{ intersects } v+\p R_\delta\text{ transversely}\},\\
E_2\equiv \Big\{v\in \R^2: \int_{\p R_\delta} |A_\e-A| \dif \mathcal H^1 <\lambda
\Big\}.
 \end{gather*}
 According to the General Position Lemma the set $E_1$ has full measure \cite{Guillemin2010}, and by \eqref{eq:approximation} and Fubini's theorem the same holds for $E_2$, so we take $v=(v_1,v_2)\in E_1\cap E_2$ with $|v_1|+|v_2|\leq \frac{\delta_0}{4}$; in particular, $v+R_\delta\subset [-1,1]^2$. Strictly speaking, to apply the General Position Lemma, we would need the sets $R_\delta$ to be smooth, while they are just Lipschitz. This technicality can be dealt with by rounding off the corners of $R_\delta$ a little bit and dealing with the new terms generated by the round parts as in \eqref{eq:bdry3} below. We leave the details to the interested reader and instead we pretend that $R_\delta$ is a smooth manifold.
  More importantly for our purposes here, note that by construction of $E_1$ the relatively open sets
  $$\Omega_+\equiv \{x\in v+R_\de:f_\eps(x)>b\},\quad \Omega_-\equiv \{x\in v+R_\de:f_\eps(x)<b\}$$
  are Lipschitz.

 We now estimate the $L^1$ norm of $\Div A_\eps$ as follows. First, we break the region of integration into two pieces: $$\int_{v+R_\de}|\Div A_\eps|\dif x=\int_{\Omega_+}|\Div A_\eps|\dif x+\int_{\Omega_-}|\Div A_\eps|\dif x.$$
 Working on $\Omega_+$, we estimate
 \begin{align*}
    \int_{\Omega_+}|\Div A_\eps|\dif x\geq \Big|\int_{\Omega_+}\Div A_\eps \dif x\Big|=\Big|\int_{\d\Omega_+}A_\eps \nu\dif x\Big|,
 \end{align*}
 where we have used that $\d\Omega_+$ is Lipschitz and $A_\eps$ is smooth to perform the integration by parts; here $\nu$ denotes the unit normal to $\p \Omega_+$.
We divide the boundary,  $\d\Omega_+$, into four portions:
$$\d\Omega_+=\Sigma_1^+\cup\Sigma_2^+\cup\Sigma_3^+\cup\Sigma_4^+,$$
where
\begin{align*}
    \Sigma_1^+\equiv &\{x\in\d\Omega_+:x_1=-\de+v_1\},\\
    \Sigma_2^+\equiv&\{x\in\d\Omega_+:x_1=\de+v_1\},\\
    \Sigma_3^+\equiv&\{x\in\d\Omega_+:x_2=1-\de+v_2\}\cup\{x\in\d\Omega_+\,:\,x_2=\de+v_2\},\\
    \Sigma_4^+\equiv&\{x\in\d\Omega_+:f_\eps(x)=b\}.
\end{align*}
Note that, equivalently, we have 
\begin{align*}
&\Sigma_1^+=\{x\in v+R_\delta: x_1=-\delta+v_1 \text{ and } f_\e(x)>b\},\\ 
&\Sigma_2^+=\{x\in v+R_\delta: x_1=\delta + v_1 \text{ and } f_\e(x)>b\}.
\end{align*}
We now estimate the $\mathcal H^1$-measure of these two regions. Since $b>0$, whenever $f_\e>b$ we have $|A_2-A_\e|\geq \textup{dist}(A_2, \{\tr=0\})= |\tr (A_2)|\geq |A_2|$, that is, we have the inclusion $\{f_\e>b\}\subset \{|A_2-A_\e|\geq |A_2|\}$. Hence
\begin{equation}
    \label{eq:Sigma2size}
\mathcal H^1(\Sigma_2^+)\leq \frac{\lambda}{|A_2|}
\end{equation}
by Chebyshev's inequality and the definition of $E_2$; note that, as $|v_1|\leq\frac14 \delta_0$, we have $v_1+\delta>0$, so that $A|_{x_1=v_1+\delta}=A_2$. Similarly, since $b\leq |A_1|/2,$ we have the inclusion $\{f_\e\leq b\}\subset\{|A_1-A_\e|\geq |A_1|/2\}$ and so
\begin{equation}
    \label{eq:Sigma1size}
    \mathcal H^1(\Sigma_1^+)\geq 2\Big(1-\delta -\frac{\lambda}{|A_1|}\Big).
\end{equation}

We estimate the integral of $A_\e \nu$ on each of the components of $\p \Omega_+$ separately and we use throughout the fact that $v\in E_2$.
Starting with the first term, we have
\begin{equation}
    \label{eq:bdry1}
\Big|\int_{\Sigma_1^+}A_\eps\nu\Big|=\Big|-\int_{\Sigma_1^+}\big(A_1e_1+(A_\eps-A_1)e_1\big)\Big|\geq |A_1e_1|(2-2\de -C\la)-\la
\end{equation}
where we used \eqref{eq:Sigma1size}.
For the second term, by \eqref{eq:Sigma2size},
\begin{equation}
\Big|\int_{\Sigma_2^+}A_\eps\nu\Big|=\Big|\int_{\Sigma_2^+}\big(A_2e_1+(A_\eps-A_2)e_1\big)\Big|\leq C\la+\la.
\end{equation}
The third term is estimated using that $\mathcal{H}_1(\Sigma_3^+)\leq 4\de$ as 
\begin{equation}
\label{eq:bdry3}
\Big|\int_{\Sigma_3^+}A_\eps\nu\Big|=\Big|\int_{\Sigma_3^+}\big(Ae_2+(A_\eps-A)e_2\big)\Big|\leq 4\max\{|A_1|,|A_2|\}\de+\la.
\end{equation}
Finally, on $\Sigma_4^+$, we have that $f_\eps=b$, and hence, as $\mathcal{K}\subset\textup{Sym}^+_2\cup\textup{Sym}^-_2$, we have the simple estimate $|A_\eps|\leq |f_\eps|=b$. So using \eqref{ineq:levelset} we obtain
\begin{equation}
    \label{eq:bdry4}
\Big|\int_{\Sigma_4^+}A_\eps\nu\Big|\leq b\mathcal{H}_1(\Sigma_4^+)\leq\de.
\end{equation} Thus, combining estimates \eqref{eq:bdry1}--\eqref{eq:bdry4}, we obtain
$$\Big|\int_{\d\Omega_+}A_\eps \nu\dif x\Big|\geq \Big|\int_{\Sigma_1^+}A_\eps\nu\Big|-\sum_{k=2}^4\Big|\int_{\Sigma_k^+}A_\eps\nu\Big|\geq |A_1e_1|(2-2\de-C\la)-C(\de+\la).$$
Arguing similarly on $\Omega_-$, we obtain
\begin{align*}
    \int_{v+R_\de}|\Div A_\eps|\dif x&=\int_{\Omega_+}|\Div A_\eps|\dif x+\int_{\Omega_-}|\Div A_\eps|\dif x\\
    &\geq \big(|A_1e_1|+|A_2e_2|\big)(2-2\de-C\la)-C(\de+\la).
\end{align*} 
By \eqref{eq:A1A2}, if we choose $\de$ and $\la$ sufficiently small, we conclude that 
$$\|\Div A_\eps\|_{L^1}\not\to2|(A_2-A_1)e_1|=\|\Div A\|_{\mathcal{M}},$$
a contradiction to the strict convergence.
\end{proof}

\section{New examples of div-quasiconcave integrands}
\label{sec:qc}

The purpose of this section is to prove that appropriate powers of the integrands $\rho^*_k\colon \Gamma_k^*\to[0,+\infty)$, as defined in Section \ref{sec:cones}, are $\Div$-quasiconcave, see Appendix 
\ref{app:LlogL} for the precise definition. When $k=n$ we recover inequality \eqref{eq:serre_qc}, due to Serre \cite{Serre2018a,Serre2019}, and for $k\neq n$ we obtain new examples of \textit{compensated integrability} for divergence-free fields constrained to the cones $\Gamma_k^*$:

\begin{theorem}
\label{thm:quasiconcavity}
Let $A\in L^1(\mathbb T^n,\Gamma_k^*)$ be such that $\Div A=0$. Then
\begin{equation}
    \label{eq:quasiconcavityineq}
\int_{\mathbb T^n}\rho_k^*(A)^{\frac{k}{k-1}} \dif x\leq \rho_k^*\left(\int_{\mathbb T^n} A\dif x\right)^{\frac{k}{k-1}}.
\end{equation}
The inequality also holds for $\Div^2$-free fields.\\ Moreover, the inequality is sharp: in the class of smooth fields $A\in C^\infty(\T^n,\Gamma_k^*)$, equality is attained if and only if $A=\nabla F_k(S+\D^2 \varphi)$ for a periodic $\varphi\in C^\infty(\T^n)$ and $S\in\Gamma_k$ such that $S+\D^2\varphi\in\Gamma_k$.
\end{theorem}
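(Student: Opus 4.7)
My plan combines three ingredients: the pointwise duality pairing $\rho_k^*(A)\,\rho_k(B)\leq \tfrac{1}{n}\langle A,B\rangle$ on $\Gamma_k^*\times\Gamma_k$ from Proposition~\ref{prop:dualitypair}, the solvability of the periodic $k$-Hessian equation (Theorem~\ref{thm:khessian}), and integration by parts exploiting the linear constraint on $A$. I would first mollify inside the convex cone $\Gamma_k^*$, regularizing by $\e\,\mathrm{Id}\in\mathrm{int}\,\Gamma_k^*$ if necessary, to reduce the $L^1$ statement to $A\in C^\infty(\T^n,\mathrm{int}\,\Gamma_k^*)$; the general bound then follows from Fatou's lemma on the left and continuity of $\rho_k^*$ on the right. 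The $\Div^2$-free case is handled by the same reduction.

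Setting $f\equiv\rho_k^*(A)^{1/(k-1)}$, $\bar A\equiv\int_{\T^n}A\dif x$, and $X\equiv\int_{\T^n}\rho_k^*(A)^{k/(k-1)}\dif x$, the decisive step is to pick $S\in\Gamma_k$ simultaneously satisfying two conditions: $\bar A$ is parallel to $\nabla F_k(S)$, which makes Proposition~\ref{prop:dualitypair} tight for the pair $(\bar A,S)$, and the normalization $\rho_k(S)=X^{1/k}$, which is the compatibility needed to run the $k$-Hessian equation with right-hand side $f$. The direction of $S$ is determined by inverting the diffeomorphism $\nabla F_k$ of Proposition~\ref{prop:diffeo}, and $1$-homogeneity of $\rho_k$ then uniquely fixes the scalar. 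With this $S$, Theorem~\ref{thm:khessian} delivers a periodic $\varphi\in C^\infty(\T^n)$ such that $B\equiv S+\D^2\varphi\in\Gamma_k$ and $\rho_k(B)=f$. Applying Proposition~\ref{prop:dualitypair} pointwise yields
\[
\rho_k^*(A)^{k/(k-1)} = \rho_k^*(A)\,\rho_k(B)\leq \tfrac1n\langle A,B\rangle,
\]
and integrating over $\T^n$ the cross term $\int\langle A,\D^2\varphi\rangle$ vanishes after one integration by parts if $\Div A=0$, or two if $\Div^2 A=0$, using periodicity of $\varphi$. Combining with the duality equality $\tfrac1n\langle\bar A,S\rangle=\rho_k(S)\rho_k^*(\bar A)$ gives $X\leq\rho_k(S)\rho_k^*(\bar A)=X^{1/k}\rho_k^*(\bar A)$, which rearranges to $X\leq\rho_k^*(\bar A)^{k/(k-1)}$.

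For sharpness of the exponent, I would test $(\rho_k^*)^\alpha$ along the rank-$(n{-}1)$ direction $E=\sum_{i=1}^{n-1}e_i\otimes e_i\in\Lambda_{\Div}$ (since $Ee_n=0$) at $A_t\equiv I+tE=\mathrm{diag}(1+t,\dots,1+t,1)$. A Lagrange-multiplier argument on Proposition~\ref{prop:simplifiedrhok*} restricted to the symmetry-adapted ansatz $B=\mathrm{diag}(\beta,\dots,\beta,\gamma)$ gives the explicit formula $\rho_k^*(A_t)=\bigl(1+\tfrac{k(n-1)}{(k-1)n}t\bigr)^{(k-1)/k}$, so $(\rho_k^*)^\alpha(A_t)$ fails to be concave in $t$ whenever $\alpha(k-1)/k>1$. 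For the equality case, the $(\Leftarrow)$ direction is direct: if $A=\nabla F_k(B)$ with $B=S+\D^2\varphi\in\Gamma_k$, Lemma~\ref{lemma:attainment} gives $\rho_k^*(A)=\tfrac{k}{n}\rho_k(B)^{k-1}$, and the null-Lagrangian identities $\int F_k(B)=F_k(S)$ and $\int\nabla F_k(B)=\nabla F_k(S)$ (the latter obtained by polarizing the former against a constant matrix) reduce both sides of the inequality to $(k/n)^{k/(k-1)}\rho_k(S)^k$. For $(\Rightarrow)$, I would trace equality through the chain: pointwise duality forces $A(x)=s(x)\nabla F_k(B(x))$, and the constructive constraint $\rho_k(B)=\rho_k^*(A)^{1/(k-1)}$ pins down $s\equiv n/k$; a scalar renormalization of $B$ then exhibits $A$ in the claimed form. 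The hardest part of the argument is the simultaneous coordination of both conditions on $S$ using only the one scalar degree of freedom available, since it is precisely this coordination that delivers the sharp exponent $k/(k-1)$ with no H\"older-type loss at the end.
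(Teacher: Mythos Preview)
Your proof is correct and follows essentially the same strategy as the paper: solve a periodic $k$-Hessian equation with $S$ chosen so that the duality pairing is tight at $\bar A$ and normalized by $\rho_k(S)=X^{1/k}$, then integrate the pointwise inequality from Proposition~\ref{prop:dualitypair} and absorb. The only notable difference is in the sharpness argument, where you compute $\rho_k^*$ directly along the line $I+tE$ with $E=\sum_{i<n}e_i\otimes e_i\in\Lambda_{\Div}$, whereas the paper parametrizes a $\Lambda_{\Div}$-line in $\Gamma_k^*$ as the $\nabla F_k$-image of a line in $\Gamma_k$ and reads off $\rho_k^*$ via Lemma~\ref{lemma:attainment}; both routes give the critical exponent $\tfrac{k}{k-1}$.
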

The authors would like to thank D.~Serre for pointing out to us that such matrix fields achieve equality in Theorem \ref{thm:quasiconcavity}.

We recall that $\Div^2$ is an abbreviation for the operator $\di\circ\Div$. Note that, when $k=1$, fields in $\Gamma_1^*$ are scalar multiples of the identity,  so the condition of being $\Div$-free is equivalent to being constant. Hence the inequality is trivial in this case.

\begin{proof}
Since the cones $\Gamma_k^*$ are convex, it is enough to consider the case where $A\in C^\infty(\mathbb T^n,\Gamma_k^*)$ is $\Div$-free as else we may mollify. We write  $f\equiv \rho_k^*(A)^{{k}/{(k-1)}}.$ We may also assume that $A$ is uniformly inside $\Gamma_k^*$, i.e.\ that $f$ is uniformly positive, as otherwise we replace $A$ with $A_\e\equiv A+\e \,\textup{Id}$, apply the inequality to $A_\e$, and then send $\e\to 0$.

Since a $\Div$-free field is also $\Div^2$-free, it suffices to prove the inequality in the latter case.
Take an arbitrary $S\in \Gamma_k$ such that $$\rho_k(S)=\Big(\int_{\mathbb T^n} f \dif x\Big)^{1/ k}.$$ Let $\psi=\frac 1 2 Sx\cdot x+\varphi$ be the solution of the equation $\rho_k(\D^2 \psi)=f^{1 /k}$ on $\mathbb{T}^n$ provided by Theorem \ref{thm:khessian}; in particular, $\int_{\T^n}\varphi=0$. Using Proposition \ref{prop:dualitypair}, we then estimate
\begin{align*}
    \int_{\mathbb T^n} f \dif x =  \int_{\mathbb T^n} \rho_k(\D^2\psi)\rho_k^*(A)\dif x\leq \frac1 n\int_{\mathbb T^n} \langle S+\D^2\varphi,A\rangle \dif x= \frac1 n\langle S, \bar A\rangle,
\end{align*}
where $\bar A\equiv \int_{\mathbb T^n} A \dif x$ and we have used that $A$ is $\Div^2$-free to integrate by parts twice in the $\varphi$ term. Note that, by convexity of $\Gamma_k^*$, we have $\bar A\in \Gamma_k^*$.
We now choose $S$, according to Proposition \ref{prop:dualitypair}, so that $\frac1 n\langle S, \bar A\rangle=\rho_k^*(\bar A)\rho_k(S)$,  hence
\begin{equation*}
      \label{eq:choiceS}
\frac1 n\langle S,\bar A\rangle =  \rho_k(S) \rho_k^*(\bar A)= \left(\int_{\mathbb T^n} f\dif x\right)^{ 1/ k} \rho_k^*(\bar A).
\end{equation*}
It follows that
$$\left(\int_{\mathbb T^n} f \dif x\right)^{{k}/{(k-1)}}\leq \rho_k^*(\bar A).$$
Recalling that $f=\rho_k^*(A)^{{k}/{(k-1)}}$ and the definition of $\bar A$, the inequality follows by taking the $\frac{k}{k-1}$ power.

To show the equality case, we inspect the above proof. In order to have equality in \eqref{eq:quasiconcavityineq}, we must have that 
$$\rho_k(\D^2 \psi) \rho_k^*(A) = \frac 1 n\langle \D^2 \psi, A \rangle \quad \text{for all $x$ in } \T^n.$$
 By the proof of Proposition \ref{prop:simplifiedrhok*}, at any point $x_0\in\T^n$ where this equality is achieved, $\D^2\psi(x_0)$ is the (unique) minimiser of $B\mapsto \frac{1}{n}\langle B,A(x_0)\rangle$ in the set $\{B\in\Gamma_k:\rho_k(B)=\rho_k(\D^2\psi(x_0))\}.$ Moreover, by the same proof, this implies that 
 $$A(x_0)=c\nabla\rho_k(\D^2\psi(x_0)).$$
 Recalling now the definition of $\psi$, we have that 
 \beqas
 \rho_k(\D^2\psi(x_0))=f(x_0)^{1/k}=\rho_k^*(c\nabla\rho_k(\D^2\psi(x_0)))^{{1}/{(k-1)}}=\Big(\frac{c}{n}\Big)^{{1}/{(k-1)}}
 \eeqas
 by Lemma \ref{lemma:attainment}. Solving for $c$, we have obtained
 $$A=n\rho_k(\D^2\psi)^{k-1}\nabla\rho_k(\D^2\psi)=\frac{n}{k}\nabla F_k(S+\D^2\varphi).$$
Moreover, by the proof of the inequality above, any $A$ of this form achieves equality in \eqref{eq:quasiconcavityineq}, as wished.
\end{proof}

\begin{remark}[$k=2$]
When $k=2$ one can prove a stronger version of Theorem \ref{thm:quasiconcavity} using standard arguments, as we now show. We consider the quadratic form 
$$F_2(A)\equiv \frac{1}{\sqrt n} \left(\tr(A)^2-(n-1)|A|^2\right),$$
which extends the representation \eqref{eq:rho2*} of $(\rho_2^*)^2$ to any $A\in \textup{Sym}_n$: in particular, we have $F_2(A)=(\rho_2^*(A))^2$ whenever $A\in \Gamma_2^*.$
It turns out that $F_2$ is $\Div$-quasiconcave in $\textup{Sym}_n$, \textit{regardless of pointwise constraints}. To prove this, we recall a celebrated theorem due to Tartar \cite{Tartar1979} which asserts that a quadratic form is $\Div$-quasiconcave if and only if it is non-positive on $\Lambda_\textup{Div}\equiv\{A\in\textup{Sym}_n:\det A=0\}.$ To check this last condition, let $A\in \Lambda_{\Div}$ have eigenvalues $(\lambda_1(A),
\dots, \lambda_{n-1}(A),0)$. Then a straightforward calculation  shows that
$$F_2(A)=-\frac{1}{\sqrt n} \sum_{1\leq i<j\leq n-1} \left(\lambda_i(A) -\lambda_j(A)\right)^2\leq 0$$
as wished. It follows that
$$\int_{\mathbb T^n} F_2(A) \dif x\leq F_2\left(\int_{\mathbb T^n} A \dif x\right)$$
for any $A\in C^\infty(\mathbb T^n, \textup{Sym}_n)$ such that $\Div A=0.$
\end{remark}

We now show that the exponents obtained in Theorem \ref{thm:quasiconcavity} are optimal for the quasiconcavity of the integrands $\rho_k^*.$ Our argument is based on the classical fact \cite{Tartar1979} that if an integrand $F$ is $\Div$-quasiconcave then it is necessarily concave along any line parallel to a singular matrix, that is, for any $A\in \textup{Sym}_n,$
$$t\mapsto F(A+t X) \text{ is concave, whenever } \det X=0.$$
Although this fact is typically proved in the case where the matrix fields are unconstrained, the proof extends to the case where the integrand is $\Div$-quasiconcave only with respect to fields which take values in $\Gamma_k^*$, see e.g.\ \cite[\S1.2]{Serre2018a} for a similar argument.

We will prove the following result:

\begin{proposition}\label{prop:lambdaconcavity}
    For $k=2, \dots, n$, the integrands $(\rho_k^*)^\alpha\colon \Gamma_k^*\to [0,+\infty)$ are $\Lambda_{\Div}$-concave if and only if $\alpha\le \frac{k}{k-1}$.
\end{proposition}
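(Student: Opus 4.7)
The plan is to prove the two implications separately. For sufficiency, I would first invoke Theorem \ref{thm:quasiconcavity}, which makes $(\rho_k^*)^{k/(k-1)}$ $\Div$-quasiconcave on $\Gamma_k^*$, and then apply the implication ``$\Div$-quasiconcavity $\Rightarrow$ $\Lambda_{\Div}$-concavity'' recalled in the paragraph preceding the proposition; this gives $\Lambda_{\Div}$-concavity of $(\rho_k^*)^{k/(k-1)}$ on $\Gamma_k^*$. For any $0<\alpha\leq k/(k-1)$, I would then write
\[
(\rho_k^*)^\alpha=\bigl[(\rho_k^*)^{k/(k-1)}\bigr]^\gamma,\qquad \gamma:=\alpha(k-1)/k\in(0,1],
\]
and observe that $s\mapsto s^\gamma$ is non-decreasing and concave on $[0,\infty)$, so its composition with a non-negative concave function remains concave along each segment parallel to $\Lambda_{\Div}$ in $\Gamma_k^*$.

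For necessity, my plan is to exhibit an explicit singular direction along which the exponent $k/(k-1)$ is tight. I would take $A_0=\mathrm{Id}\in\mathrm{int}\,\Gamma_k^*$ together with $X=\mathrm{diag}(1,\dots,1,0)$, which satisfies $\det X=0$ and hence lies in $\Lambda_{\Div}$; the segment $A(t)=\mathrm{diag}(1+t,\dots,1+t,1)$ stays in $\Gamma_n\subseteq\Gamma_k^*$ for all $t>-1$. Using Proposition \ref{prop:simplifiedrhok*} together with a standard symmetrisation argument in the first $n-1$ coordinates (the objective and feasible set are invariant under $S_{n-1}$, so averaging a minimiser and rescaling by the $1$-homogeneity of $\rho_k$ produces a symmetric minimiser), the computation of $\rho_k^*(A(t))$ reduces to the two-variable problem
\[
\rho_k^*(A(t))=\min_{\substack{a,b>0\\ \binom{n-1}{k}a^k+\binom{n-1}{k-1}a^{k-1}b=\binom{n}{k}}}\frac{(1+t)(n-1)a+b}{n}.
\]
A Lagrange multiplier calculation, simplified with the identity $k\binom{n}{k}=n\binom{n-1}{k-1}$, yields $a=(1+\beta t)^{-1/k}$ with $\beta:=k(n-1)/[n(k-1)]>0$; substituting back then collapses into the clean formula
\[
\rho_k^*(A(t))=(1+\beta t)^{(k-1)/k},
\]
valid for $t$ in a neighbourhood of $0$ where the minimiser stays interior.

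Consequently $(\rho_k^*(A(t)))^\alpha=(1+\beta t)^{\alpha(k-1)/k}$, which is strictly convex in $t$ as soon as $\alpha(k-1)/k>1$, i.e.\ whenever $\alpha>k/(k-1)$. This falsifies concavity along the segment $A_0+tX$, hence rules out $\Lambda_{\Div}$-concavity of $(\rho_k^*)^\alpha$, completing the proof.

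The main obstacle I expect is the explicit computation of $\rho_k^*(A(t))$: one must verify that the minimiser is interior (so Lagrange multipliers apply without boundary terms) and symmetric, and then push the algebra through to reach the exponential form. If one wishes to avoid the closed-form solution, a more conceptual shortcut is to expand $f(t):=\rho_k^*(A(t))$ to second order at $t=0$ and check directly that $-f f''/(f')^2\big|_{t=0}=1/(k-1)$, so that the sharpest admissible exponent $1-f f''/(f')^2=k/(k-1)$ is saturated along this singular direction.
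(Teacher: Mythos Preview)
Your proof is correct in its overall strategy, but contains one factual slip: the inclusion $\Gamma_n\subseteq\Gamma_k^*$ that you invoke to keep the segment $A(t)$ inside the cone is backwards. In fact $\Gamma_k^*\subseteq\Gamma_n^*=\Gamma_n$ (see the discussion following the definition of the dual cones), and an easy example such as $\mathrm{diag}(1,\varepsilon,\varepsilon)$ for small $\varepsilon>0$ shows the reverse inclusion fails. This is harmless for the argument, because $A(0)=\mathrm{Id}\in\mathrm{int}\,\Gamma_k^*$ and hence $A(t)\in\mathrm{int}\,\Gamma_k^*$ for $t$ near $0$; your own caveat ``valid for $t$ in a neighbourhood of $0$'' already covers this, and strict convexity is a local condition. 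With that correction your Lagrange-multiplier computation is accurate and yields the sharp exponent.

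Your necessity argument is genuinely different from the paper's. The paper never computes $\rho_k^*$ along a segment directly; instead it exploits the duality map $\nabla F_k\colon\Gamma_k\to\Gamma_k^*$. Starting from the line $A_t=\mathrm{diag}(1+t,1,\dots,1)$ in $\Gamma_k$, it observes that $\nabla F_k(A_t)$ is an \emph{affine} curve in $\Gamma_k^*$ (since $F_k$ is linear in each entry) whose direction has vanishing first entry, hence is singular. Lemma~\ref{lemma:attainment} then gives $\rho_k^*(\nabla F_k(A_t))$ for free as a multiple of $\rho_k(A_t)^{k-1}$, and $\rho_k(A_t)^k=F_k(A_t)$ is visibly affine in $t$. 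This bypasses the constrained optimisation entirely. Your route, by contrast, is self-contained: it fixes a concrete singular direction in $\Gamma_k^*$ and solves the minimisation in Proposition~\ref{prop:simplifiedrhok*} explicitly via symmetry and Lagrange multipliers, arriving at the same closed form $(1+\beta t)^{(k-1)/k}$. The paper's approach is slicker and showcases the $\Gamma_k\leftrightarrow\Gamma_k^*$ duality already developed; yours is more elementary and would work even without Lemma~\ref{lemma:attainment} in hand.
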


\begin{proof}
It suffices to prove the `only if' part of proposition, as the `if' part is a consequence of Theorem \ref{thm:quasiconcavity}. It is enough to show that if $(\rho_k^*)^\alpha$ is $\Lambda_{\Div}$-concave on diagonal matrices then $\alpha\le\frac{k}{k-1}$. 

Let us consider, for $t\geq 0,$ the straight line $(A_t)_{t\geq 0}$, where $A_t=\textup{diag}(1+t,1,\dots, 1)\in \Gamma_n$. For any $A\in \Gamma_k$, we have
$$\rho_k(A)^{k-1} \nabla \rho_k(A) = \frac{ 1}{k} \nabla F_k(A).$$
It follows that the image of the line $(A_t)_{t\geq 0}$ under $\rho_k^{k-1}(\cdot)\nabla \rho_k(\cdot)$ is a straight line parallel to $\Lambda_{\Div}=\{A\in \textup{Sym}_n: \det A =0\}.$ Indeed,
$${n \choose k}\nabla F_k(A_t)=\textup{diag}\left(a_{n,k} , a_{n,k} + q_{n,k} t, \dots, a_{n,k} + q_{n,k} t\right)$$
where
$$a_{n,k}={n-1 \choose k-1}, \quad q_{n,k}={n-2 \choose k-2}.$$
According to Lemma \ref{lemma:attainment}, we have 
\begin{align*}
    n^\alpha \rho_k^*(\rho_k(A_t)^{k-1} \nabla \rho_k(A_t))^\alpha = \rho_k(A_t)^{(k-1)\alpha} & = c_{n,k}^{-(k-1)\alpha}\left({n-1\choose k-1} (1+t) + {n-1\choose k}\right)^{\frac{(k-1)\alpha}{k}}\\
    & = (b_{n,k}+m_{n,k} t)^{\frac{(k-1)\alpha}{k}}
\end{align*}
where  $c_{n,k}\equiv {n \choose k}^\frac 1 k$ and $b_{n,k}, m_{n,k}>0$ are appropriately chosen constants.   
By the previous paragraph, if $(\rho_k^*)^\alpha$ is $\Lambda_{\Div}$-concave then the function $t\mapsto \rho_k^*(\rho_k(\lambda_t)^{k-1} \nabla \rho_k(\lambda_t))^\alpha$ is concave, and so the function $t\mapsto (b_{n,k}+m_{n,k} t)^{(k-1)\alpha/k}$ must be sublinear. Thus we must have $\alpha \leq \frac{k}{k-1}$, as wished.
\end{proof}

\section{Elliptic estimates for the divergence}\label{sec:div}
In the previous section we saw that divergence-free fields constrained to take values in the cones $\Gamma_k^*$ have improved integrability. A natural question, especially in light of Serre's inequality \eqref{eq:serre}, see \cite{Serre2018a,Serre2019}, is whether a similar result holds for fields which do not satisfy the exact constraint $\Div A =0$, but instead have divergence bounded in some Lebesgue space. In other words, do we have elliptic estimates for
$$\mathcal K =\Gamma_k^*, \qquad \cala=\Div \text{ or } \cala=\Div^2?$$
More precisely, for $\A$ as above, do we have estimates
\begin{equation}
    \label{eq:goaldiv}
    \|\rho_k^*(A)\|_{L^q(\B^n)}\leq C \|\A A\|_{L^p(\B^n)} \quad \text{for } A\in C^\infty_c(\B^n,\Gamma_k^*)\end{equation}
for some set of exponents $1^*\leq q\leq q_{\max}$ and $p\geq 1$ sufficiently large?
As we will see below, the answer is positive. Through a nonlinear duality argument with  solutions of an appropriate $k$-Hessian equation, we will show that \eqref{eq:goaldiv} holds in the full range of exponents when $\A=\Div^2$ and a partial set of exponents (including both $q_{\max}$ and $1^*$) when $\A=\Div$, see Remark \ref{rmk:interpol}. Moreover, in the terminology of Conjecture \ref{conj:big}, we show that
\begin{equation}
    \label{eq:qmaxdiv}
q_{\max}(\Div,\Gamma_k^*)=q_{\max}(\Div^2,\Gamma_k^*)=\frac{k}{k-1}.
\end{equation}
The second equality in particular implies that Conjecture 1.9 in \cite{Arroyo-Rabasa2021a} is false. 

\subsection{Elliptic estimates}
Since $\Gamma_k^*\subset \Gamma_n=\textup{Sym}^+_n$, Serre's estimate \eqref{eq:serre} implies  
\begin{equation}
    \label{eq:SerreDiv}
\|\rho_k^*(A)\|_{L^{1^*}}\leq\|\rho_n(A)\|_{L^{1^*}}\leq C\|\Div A\|_{L^1} \quad \text{for } A\in C_c^\infty(\R^n,\Gamma_k^*),
\end{equation}
 as $\rho_k^*(A)\leq \rho_n^*(A)=\rho_n(A)$ by Lemma \ref{lemma:rhokorder}. Note that estimate \eqref{eq:SerreDiv} already yields \eqref{eq:goaldiv} when $q=1^*$.

We now establish \eqref{eq:goaldiv} at the other endpoint, i.e.\ when $q=\frac{k}{k-1}$:

\begin{proposition}\label{prop:Div}
     Let $2\leq k \leq n$ and $p>\frac{nk}{nk-(n-k)}=\big(\frac{k}{k-1}\big)_*$. Then there exists $C>0$ such that
    \beq\label{ineq:rhokstarend}
    \|\rho_k^*(A)\|_{L^{\frac{k}{k-1}}(\B^n)}\leq C\|\Div A\|_{L^p(\B^n)}\quad\text{for }A\in C_c^\infty(\B^n,\Gamma_k^*).
\eeq
Thus, if $\mathcal K\subset \mathrm{int}\,\Gamma_k^*\cup\{0\}$ is a closed convex cone, we have
 $$
    \|A\|_{L^{\frac{k}{k-1}}(\B^n)}\leq C\|\Div A\|_{L^p(\B^n)}\quad\text{for }A\in C_c^\infty(\B^n,\mathcal K).
$$
\end{proposition}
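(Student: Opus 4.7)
The plan is to mirror the proof of Theorem~\ref{thm:quasiconcavity}, replacing the periodic auxiliary $k$-Hessian equation by a Dirichlet problem on a ball $B_R$ slightly larger than $\B^n$, and coupling the pointwise duality of Proposition~\ref{prop:dualitypair} with the quantitative regularity of $k$-admissible solutions supplied by Lemma~\ref{lemma:Lpest} and Proposition~\ref{prop:WS}.

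Fix $R>1$ and let $A\in C_c^\infty(\B^n,\Gamma_k^*)$, extended by zero to $B_R$. Set $g\equiv \rho_k^*(A)^{1/(k-1)}$, which is continuous and supported in $\supp A$. Produce by mollification a sequence $g_\eps\in C^\infty(\overline{B_R})$ with $g_\eps>0$, $g_\eps\to g$ uniformly on $\supp A$, and $g_\eps\to g$ in $L^k(B_R)$. Theorem~\ref{thm:kHessExistence} then yields a $k$-admissible solution $u_\eps\in C^{1,1}(\overline{B_R})$ of $\rho_k(\D^2 u_\eps)=g_\eps$ in $B_R$ with $u_\eps=0$ on $\p B_R$; the maximum principle forces $u_\eps\le 0$. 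Since $A$ is compactly supported in $B_R$, integration by parts introduces no boundary contributions, and Proposition~\ref{prop:dualitypair} applied pointwise gives
\beqas
\int_{B_R} g_\eps\,\rho_k^*(A)\dif x = \int_{B_R}\rho_k(\D^2 u_\eps)\,\rho_k^*(A)\dif x \leq \frac{1}{n}\int_{B_R}\langle \D^2 u_\eps, A\rangle\dif x = -\frac{1}{n}\int_{\B^n} \D u_\eps\cdot \Div A\dif x.
\eeqas

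A short calculation shows the hypothesis $p>(k/(k-1))_*$ is equivalent to $p'<nk/(n-k)$, so Proposition~\ref{prop:WS} with $\Omega=\B^n\Subset B_R$ gives $\|\D u_\eps\|_{L^{p'}(\B^n)}\leq C\|u_\eps\|_{L^1(B_R)}$. In parallel, Lemma~\ref{lemma:Lpest} applied with exponent $k\geq 2$ on the right-hand side (picking the subcase by comparing $k$ with $n/2$) yields $\|u_\eps\|_{L^1(B_R)}\leq C\|g_\eps\|_{L^k(B_R)}$. Combining these bounds with H\"older's inequality and letting $\eps\to 0$,
\beqas
\int_{\B^n}\rho_k^*(A)^{k/(k-1)}\dif x \leq C\,\|g\|_{L^k(\B^n)}\,\|\Div A\|_{L^p(\B^n)} = C\Big(\int_{\B^n}\rho_k^*(A)^{k/(k-1)}\dif x\Big)^{\!1/k}\|\Div A\|_{L^p(\B^n)},
\eeqas
which rearranges to \eqref{ineq:rhokstarend}. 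The second estimate follows at once, since $\mathcal K\subset \mathrm{int}\,\Gamma_k^*\cup\{0\}$ implies, by $1$-homogeneity of $\rho_k^*$ and compactness of the unit sphere restricted to $\mathcal K$, that $|A|\leq c\,\rho_k^*(A)$ for all $A\in\mathcal K$.

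The main technical difficulty is that Proposition~\ref{prop:WS} is only an interior gradient estimate, whose constant degenerates as $\Omega$ approaches $\p B$; this forces us to work on $B_R\supsetneq \B^n$ rather than on $\B^n$ directly so as to obtain a constant independent of $A$. Apart from this, the argument is a direct nonlinear duality computation, and the lower bound on $p$ is tuned precisely so that $p'$ lies in the admissible range for the $k$-Hessian gradient bound.
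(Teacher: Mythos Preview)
Your proof is correct and follows essentially the same approach as the paper: extend $A$ to a larger ball, solve an auxiliary $k$-Hessian Dirichlet problem with right-hand side $\rho_k^*(A)^{1/(k-1)}$, combine the duality pairing of Proposition~\ref{prop:dualitypair} with one integration by parts, and close using Proposition~\ref{prop:WS} together with Lemma~\ref{lemma:Lpest}. The only noteworthy difference is that you regularize the right-hand side to $g_\eps\in C^\infty$ before invoking Theorem~\ref{thm:kHessExistence} and pass to the limit at the end, whereas the paper applies the existence theorem directly to $f=\rho_k^*(A)^{1/(k-1)}$; your extra step is harmless and, if anything, slightly more careful about the $C^{1,1}$ hypothesis in Theorem~\ref{thm:kHessExistence}.
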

Recall that $p_*=\frac{np}{n+p}$ is the exponent such that $(p_*)^*=p$. Moreover, we have that $(p')_*=(p^*)'$ whenever $1\leq p\leq n$.

\begin{proof}
 Let $A\in C_c^\infty(\B^n,\Gamma_k^*)$ and  extend $A$ by zero to the ball $2\B^n$. We let
$$f= \rho_k^*(A)^{\frac{1}{k-1}}
$$
and apply Theorem \ref{thm:kHessExistence} to obtain a $k$-admissible function
$\psi\in C^{1,1}(\overline{2\B^n})$  such that 
$$
\begin{cases}
\rho_k(\D^2\psi)=f & \text{ in }2\B^n,\\
\psi=0 & \text{ on }\d(2\B^n).
\end{cases}
$$
In particular, $\D^2\psi\in\Gamma_k$ a.e~ in $2\B^n$ and $\psi$ is subharmonic, cf.\ Lemma \ref{lemma:rhokorder}, and therefore non-positive.
By Proposition \ref{prop:WS}, as $p'<k^*=\frac{nk}{n-k}$ by assumption, we have
$$\|\D\psi\|_{L^{p'}(\B^n)}{\leq C} \int_{2\B^n}|\psi|\dif x.$$
By Lemma \ref{lemma:Lpest}, $\psi$ satisfies the estimate
$$
\int_{2\B^n}|\psi|\dif x{\leq C} \|f\|_{L^{k}(2\B^n)}=\|f\|_{L^{k}(\B^n)}= \left(\int_{\B^n}\rho_k^*(A)^{\frac{k}{k-1}}\dif x\right)^{1/k},
$$
where we have used that $f$ is compactly supported inside $\B^n$ in the first equality.

We then estimate using Proposition \ref{prop:dualitypair} and integration by parts:
\begin{align*}
    \int_{\B^n} \rho_k^*(A)^{\frac{k}{k-1}}&=\int_{\B^n} \rho_k^*(A)\rho_k(\D^2\psi){\leq C} \int_{\B^n} \langle A, \D^2\psi\rangle  =-\int_{\B^n} \Div(A)\cdot \D\psi\\
    &{\leq C} \|\D\psi\|_{L^{p'}(\B^n)}\|\Div(A)\|_{L^p(\B^n)}{\leq C} \left(\int_{\B^n}\rho_k^*(A)^{\frac{k}{k-1}}\right)^{1/k} \|\Div(A)\|_{L^p(\B^n)}.
\end{align*}
The first inequality in the statement follows.
The second inequality follows by observing that for $A\in\mathcal{K}$ we have $|A|{\leq C} \rho_k^*(A)$.
\end{proof}

In fact, the range of exponents in Proposition \ref{prop:Div} is optimal, as we show next.

\begin{proposition}\label{prop:endpointk<n}
Let $2\leq k< n$ and $p=\frac{nk}{nk-(n-k)}=\big(\frac{k}{k-1}\big)_*$. Then, for any $j\in\mathbb{N}$, there exists a matrix field $A_j\in C_c^\infty(\B^n;\Gamma_k^*)$ such that
$$ \|\rho_k^*(A_j)\|_{L^{\frac{k}{k-1}}(\B^n)}\geq j\|\Div A_j\|_{L^p(\B^n)}.$$
\end{proposition}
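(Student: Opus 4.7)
The strategy is to exhibit the failure of \eqref{ineq:rhokstarend} at the critical exponent by directly constructing concentrating Piola-type fields associated with a rescaled family of solutions of a uniformly elliptic $k$-Hessian equation. The concentration forces the $L^{k/(k-1)}$ norm of $\rho_k^*(A_j)$ to blow up, while the Piola identity combined with the uniform $C^2$ bound on the background solution keeps the divergence controlled in $L^p$; the two effects balance precisely at $p=(k/(k-1))_*$.

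Fix $R>2$ and a radial function $\eta\in C^\infty(\overline{B_R})$ with $\eta\geq 1$ everywhere and $\eta-1$ a non-trivial smooth bump supported in $B_{1/2}$. By Theorem~\ref{thm:kHessExistence} together with classical higher regularity for the $k$-Hessian equation (applicable since $\eta\geq 1>0$ makes the linearised operator uniformly elliptic on the $k$-admissible solution), there is a $k$-admissible $\Psi\in C^\infty(\overline{B_R})$ satisfying $\rho_k(\D^2\Psi)=\eta$ in $B_R$, $\Psi|_{\p B_R}=0$, with $\|\D^2\Psi\|_{L^\infty(B_R)}\leq C$. For each integer $j\geq R$, set
$$\Psi_j(x)=j^{n/k-2}\Psi(jx),\qquad x\in B_{R/j},$$
so $\Psi_j$ is $k$-admissible with $\rho_k(\D^2\Psi_j)(x)=j^n\eta(jx)\geq j^n$ and $|\D^2\Psi_j(x)|\leq Cj^{n/k}$ on $B_{R/j}$. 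Choose $\chi_j\in C_c^\infty(B_{R/j})$ with $0\leq\chi_j\leq 1$, $\chi_j\equiv 1$ on $B_{R/(2j)}$, and $|\D\chi_j|\leq Cj/R$, and define
$$A_j=\chi_j\,\nabla F_k(\D^2\Psi_j)\in C_c^\infty(\B^n;\Gamma_k^*),$$
where the cone constraint follows from Proposition~\ref{prop:diffeo} and the fact that $\Gamma_k^*$ is a cone.

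By Lemma~\ref{lemma:attainment} we have $\rho_k^*(A_j)=(k/n)\chi_j\,\rho_k(\D^2\Psi_j)^{k-1}\geq (k/n)\chi_j\,j^{n(k-1)}$, so integrating over $B_{R/(2j)}\subset\{\chi_j=1\}$ yields
$$\|\rho_k^*(A_j)\|_{L^{k/(k-1)}(\B^n)}^{k/(k-1)}\geq c\,R^n\,j^{n(k-1)}.$$
Since $\Curl(\D^2\Psi_j)=0$, Corollary~\ref{cor:divrhok*} gives the Piola identity $\Div\nabla F_k(\D^2\Psi_j)=0$, so $\Div A_j=\nabla F_k(\D^2\Psi_j)\cdot \D\chi_j$ is supported on the annulus $\{R/(2j)\leq|x|\leq R/j\}$ of volume $\sim R^n j^{-n}$. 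Using $|\nabla F_k(B)|\leq C|B|^{k-1}$ together with $|\D^2\Psi_j|\leq Cj^{n/k}$ and $|\D\chi_j|\leq Cj/R$, we obtain $|\Div A_j|\leq C\,j^{n(k-1)/k+1}/R$ on this annulus. The identity $p(n(k-1)/k+1)=n$, which is equivalent to the definition of $p=(k/(k-1))_*$, then yields
$$\|\Div A_j\|_{L^p(\B^n)}^p\leq C\,R^{n-p}$$
uniformly in $j$.

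Combining the two estimates,
$$\frac{\|\rho_k^*(A_j)\|_{L^{k/(k-1)}(\B^n)}}{\|\Div A_j\|_{L^p(\B^n)}}\geq c\,j^{n(k-1)^2/k},$$
which tends to infinity as $j\to\infty$. A simple re-indexing therefore yields, for each prescribed $j_0\in\mathbb N$, a field of the above form whose ratio exceeds $j_0$. The main technical ingredient is the uniform $C^2$-bound on $\Psi$ over $\overline{B_R}$, which follows from the Caffarelli--Nirenberg--Spruck estimates for the uniformly elliptic Hessian equation with smooth boundary data.
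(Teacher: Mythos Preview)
There is a genuine gap: a scaling error that makes the argument collapse. You claim $\rho_k(\D^2\Psi_j)(x)=j^{n}\eta(jx)$, but since $\D^2\Psi_j(x)=j^{n/k}(\D^2\Psi)(jx)$ and $\rho_k$ is $1$-homogeneous, the correct identity is
\[
\rho_k(\D^2\Psi_j)(x)=j^{n/k}\,\eta(jx).
\]
With this correction, $\rho_k^*(A_j)=(k/n)\chi_j\,j^{n(k-1)/k}\eta(jx)^{k-1}$, and hence
\[
\|\rho_k^*(A_j)\|_{L^{k/(k-1)}}^{k/(k-1)}\;\leq\;C\int_{\B^n}j^{n}\eta(jx)^{k}\,\dif x\;=\;C\int_{B_R}\eta^{k}\;\leq\;C,
\]
uniformly in $j$. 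Your bound $\|\Div A_j\|_{L^p}^p\leq CR^{n-p}$ is correct, so the ratio is in fact \emph{bounded}, not divergent.

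This is not an accident of the computation. The exponent $p=(k/(k-1))_*$ is exactly the one for which the putative inequality $\|\rho_k^*(A)\|_{L^{k/(k-1)}}\lesssim\|\Div A\|_{L^p}$ is scaling invariant: under $A\mapsto \mu\,A(\lambda\,\cdot)$ both sides pick up the same power of $\lambda$ and $\mu$. Your fields $A_j$ are precisely of this form (a fixed field $\nabla F_k(\D^2\Psi)$, dilated and cut off), so a pure rescaling construction can never disprove a scale-invariant estimate. The paper's proof breaks the scaling by inserting a slowly varying factor $\phi^\delta(x)\approx\log\log(1+1/|x|)$ in front of a Piola field associated with a $k$-Hessian solution whose right-hand side concentrates at the origin. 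The divergence then picks up a factor $|x|^{-1}|\log(1+1/|x|)|^{-1}$; combined with the homogeneity $|\widetilde{A^\e}|\lesssim r^{-n(k-1)/k}$ this produces an integrand $\sim r^{-1}|\log r|^{-p}$ in $\|\Div A\|_{L^p}^p$, which is integrable precisely because $k<n$ forces $p>1$. Meanwhile $\|\rho_k^*(A)\|_{L^{k/(k-1)}}^{k/(k-1)}$ acquires an extra $\log\log(1/\delta)$ that diverges. This logarithmic mechanism, analogous to the standard counterexample for $W^{1,n}\not\hookrightarrow L^\infty$, is the missing idea.
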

\begin{proof}
Let $\eta(x)\geq 0$ be a standard radial mollifier. Given $\eps>0$, consider the solution to the problem
$$\begin{cases}
F_k(\D^2 w^\eps_k)=\eta^\eps(x)+\eps=\frac{1}{\eps^n}\eta(\frac{x}{\eps}) +\eps & \text{ in }\B^n,\\
w_k=0 & \text{ on }\partial \B^n.
\end{cases}$$
which, according to \cite[Equations (3.2), (3.16)]{Trudinger1997b}, is given by
$$w_k^\e(x)=-\Big(\frac{n}{{n\choose k}}\Big)^{\frac{1}{k}}\int_{|x|}^1s^{1-\frac{n}{k}}\left(\int_0^s \Big (\frac{1}{\eps^n}\eta(t/\e)+\eps\Big)t^{n-1}\dif t\right)^{\frac{1}{k}}\dif s.$$
Thus $w_k^\e$ is smooth, radial, and $\D^2 w_k^\e(x)\in \textrm{int}\,\Gamma_k$ for all $x\in \B^n$. Moreover, writing $v^\eps_k(|x|)\equiv w_k^\eps(x)$, for $r\geq \e$ we have the estimates
\beqs
|(v^\eps_k)'(r)|{\leq C} r^{1-\frac{n}{k}},\qquad |(v^\eps_k)''(r)|{\leq C}
r^{-\frac{n}{k}},
\eeqs
where the constants depend on $n$ and $k$ but are independent of $\eps<1$. In particular, using the identity
$$\D^2 w_k^\eps = \frac{(v^\eps_k)'(r)}{r}I_n+\Big((v^
\eps_k)''(r)-\frac{(v^\eps_k)'(r)}{r}\Big)\frac{x}{|x|}\otimes\frac{x}{|x|},$$
we find that
\beq\label{eq:D2wkepsbd}
r=|x|\geq \e \quad \implies \quad |\D^2w_k^\eps(x)|{\leq C} r^{-\frac{n}{k}}.
\eeq

We define a field $$\widetilde{A^\eps}\equiv \nabla F_k(\D^2 w^\eps_k)\in C^\infty(\B^n,\Gamma_k^*)$$
which is well-defined as a map into the interior of $\Gamma_k^*$ by Lemma \ref{lemma:attainment}. For $\delta>0$, consider the radial, scalar function
\begin{equation}
    \label{eq:defphidelta}
\phi^\delta(x)\equiv \begin{cases}
\log\log(1+\frac{1}{|x|}) & |x|\geq\de,\\
\log\log(1+\frac{1}{\de}) & |x|<\de,
\end{cases} 
\end{equation}
and, without risk of confusion, let us also denote by $\phi^\de$ a smooth approximation to this function.
We now set
$$A^{\eps,\de}(x)\equiv \phi^\de(x)\widetilde{A^{\eps}}(x)$$
and observe that, when $|x|\geq \de$, for $i=1,\ldots,n$
$$(\Div A^{\eps,\de})_i=\D\phi^\de\cdot (\widetilde{A^{\eps}})_i+\phi^\de(\Div \widetilde {A^{\eps}})_i=-\frac{1}{r(1+r)\log(1+\frac{1}{r})}\frac{x}{r}\cdot(\widetilde{A^{\eps}})_i $$
as $\Div \widetilde{A^\eps}=0$ by Proposition \ref{prop:divrhok*}, while for $|x|<\de$, $\Div A^{\eps,\de}\equiv 0$. We assume in addition that $0<\eps<\de$. Using the bounds from \eqref{eq:D2wkepsbd} and the $k-1$ homogeneity of $\nabla F_k$, we estimate
\begin{align*}
\|\Div A^{\eps,\de}\|^p_{L^{p}(\B^n)}  {\leq C} \int_\de^1 \frac{1}{r^p|\log(1+\frac{1}{r})|^p}r^{-\frac{n(k-1)p}{k}}r^{n-1}\dif r
 =\int_0^1\frac{1}{r|\log(1+\frac{1}{r})|^p}\dif r\leq C,
\end{align*}
where we have used the definition of $p$ to simplify the exponent and the fact that $p>1$ to see convergence of the integral with a bound independent of $\eps>0$. Note that it is in this last step that we crucially use the assumption $k<n$: when $k=n$, we have $p=1$ and the integral diverges.

In addition we calculate
\begin{align*}
\rho_k^*(A^{\eps,\de})^{\frac{k}{k-1}} & =\big(\phi^\de(x)\big)^{\frac{k}{k-1}}\rho_k^*(\nabla F_k(\D^2 w_k^\eps))^{\frac{k}{k-1}}\\
& \approx_{k,n}\big(\phi^\de(x)\big)^{\frac{k}{k-1}}F_k(\D^2w_k^\eps)
=\big(\phi^\de(x)\big)^{\frac{k}{k-1}}(\eta^\eps(x)+\e),
\end{align*}
where we have used the homogeneity of $\rho_k^*$ and Lemma \ref{lemma:attainment}.
Therefore, taking a non-negative continuous test function $\varphi\in C_c^0(\B^n)$, and recalling that $0<\eps<\de$, we test 
\beqas
\int_{\B^n}\rho_k^*(A^{\eps,\de})^{\frac{k}{k-1}}\varphi =&\, \int_{\B^n}\big(\phi^\de(x)\big)^{\frac{k}{k-1}}(\eta^\eps(x)+\eps)\varphi(x)\dif x\\
=&\,\log\log\Big(1+\frac{1}{\de}\Big)\int_{B_\eps(0)}\eta^\eps(x)\varphi(x)\dif x+\eps\int_{\B^n}\phi^\de(x)\varphi(0)\dif x
\eeqas
where we have used that the support of $\eta^\eps$ is $B_\eps$, on which $\phi^\de$ is constant. Then, as $\eta^\eps$ is an approximation to the identity, by taking  $\eps=\e(\delta)$ sufficiently small (note $\phi^\de$ is uniformly integrable), we deduce that 
$$\int_{\B^n}\rho_k^*(A^{\eps,\de})^{\frac{k}{k-1}}\varphi \geq \frac12 \log\log\Big(1+\frac{1}{\de}\Big)\varphi(0)$$
and thus we conclude that 
$$\|\rho_k^*(A^{\eps(\delta),\de})^{\frac{k}{k-1}}\|_{L^{\frac{k}{k-1}}(\mathbb B^n)}\to\infty \text{ as }\de\to0$$
as wished.
\end{proof}

It is interesting to inspect formally the above proof when $k=1$, as it served as motivation for the general case $k<n$. We have that $\nabla F_1$ is constant and thus $A^{\e,\delta}$ can be identified with the scalar field $\phi^\delta$ defined in \eqref{eq:defphidelta}. Formally, when $k=1$ the relation $p^*=\frac{k}{k-1}$ specifies $p=n$, and thus the above argument yields the failure of the embedding $W^{1,n}(\mathbb B^n)\subset L^\infty(\mathbb B^n).$

On the other hand, in the other limiting case, when $k=n$, this counterexample no longer holds, as the constructed field will not have an $L^1$ divergence due to the failure of integrability of $r^{-1}\log(1+\frac1r)^{-1}$ on $(0,1)$. This reflects the fact that the Monge-Amp\`ere equation (the dual equation when $k=n$) enjoys an unusual gain of regularity: when written in $1$-homogeneous form,
$$\rho_n(\D^2 u)=f\geq0\in L^n,$$
with suitable boundary condition, the solution $u$ typically satisfies a Lipschitz bound that improves the usual failure of the embedding 
$$W^{2,n}\not\hookrightarrow W^{1,\infty}$$
that obstructs such an estimate in the linear case (and, indeed, for $k<n$).

\begin{remark}[Interpolation]\label{rmk:interpol}
Since the space $\{A\in L^p(\B^n,\Gamma_k^*): \Div A\in L^p(\B^n)\}$ is \textit{nonlinear}, we are unable to interpolate between the end-point estimates in Proposition \ref{prop:Div} and  Serre's inequality \eqref{eq:SerreDiv}, and so we do not obtain the conjectural estimates
\begin{equation}
    \label{eq:fullrange}
\|\rho_k^*(A)\|_{L^q(\B^n)}\leq C \|\Div A\|_{L^p(\B^n)}, \qquad \text{whenever } 1^*< q < \frac{k}{k-1},\, q_*<p.
\end{equation}
Nonetheless, the monotonicity $\rho_k^*\leq \rho_l^*$ for $k\leq l$ implies a partial result towards \eqref{eq:fullrange}: for each $k\leq l\leq n$, $l\in\mathbb N$, we have the estimate
$$\|\rho_k^*(A)\|_{L^{\frac{l}{l-1}}(\B^n)}\leq C\|\Div A\|_{L^p(\B^n)},$$
where $p>\big(\frac{l}{l-1}\big)_*$. In the case $l=n$, we may take the critical estimate with $p=1$.
\end{remark}

A key difficulty in proving \eqref{eq:fullrange} directly, i.e.\ by the method of proof used to treat the end-point cases, is that the needed gradient estimates for solutions of $k$-Hessian equations are not known. When $\A=\Div^2$, this can be remedied by integrating by parts twice: by doing so, we only require $L^p$ estimates on the solution of the $k$-Hessian equation, and not on its gradient.

\begin{proposition}\label{prop:Div^2}
     Let $2\leq k \leq n$. We have the estimate 
    $$
    \|\rho_k^*(A)\|_{L^{q}(\B^n)}\leq C\|\Div^2A\|_{L^p(\B^n)}\quad\text{for }A\in C_c^\infty(\B^n,\Gamma_k^*),
$$
whenever either of the following conditions hold:
\begin{enumerate}
    \item\label{itm:div2a} $p>1$, $k\leq \frac{n}{2}$, and $q=\min\{p^{**},\frac{k}{k-1}\}$;
    \item\label{itm:div2b} $p=1$, $q<1^{**}$, $q\leq \frac{k}{k-1}$.
\end{enumerate}
Thus, if $\mathcal K$ is a closed convex cone contained in $\mathrm{int}\,\Gamma_k^*\cup\{0\}$, then
$$
    \|A\|_{L^{q}(\B^n)}\leq C\|\Div^2A\|_{L^p(\B^n)}\quad\text{for }A\in C_c^\infty(\B^n,\mathcal K).
$$
\end{proposition}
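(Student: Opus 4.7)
The plan is to follow the template of the proof of Proposition~\ref{prop:Div}: dualise against an auxiliary $k$-Hessian equation and exploit the duality pairing of Proposition~\ref{prop:dualitypair}. The crucial new ingredient is that, since $\Div^2$ is second order, integration by parts can be performed \emph{twice}. This means only $L^{p'}$ bounds on the Hessian solution (rather than the gradient estimates of Proposition~\ref{prop:WS}) are required, which is what permits the full range $q\leq \min\{p^{**},k/(k-1)\}$.

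Concretely, given $A\in C_c^\infty(\B^n,\Gamma_k^*)$, extended by zero to $2\B^n$, and a non-negative $g\in C^{1,1}(\overline{2\B^n})$, the first step is to solve
\begin{equation*}
\rho_k(\D^2\psi)=g \text{ in } 2\B^n,\qquad \psi=0 \text{ on } \partial(2\B^n)
\end{equation*}
via Theorem~\ref{thm:kHessExistence}. Proposition~\ref{prop:dualitypair}, together with two integrations by parts on $2\B^n$ (legitimate because $A$ vanishes outside $\B^n$ and $\psi$ vanishes on $\partial(2\B^n)$), will then give
\begin{equation*}
\int_{\B^n}g\,\rho_k^*(A)\,\dif x \leq \frac{1}{n}\int_{2\B^n}\langle A,\D^2\psi\rangle\,\dif x = \frac{1}{n}\int_{\B^n}\Div^2 A\cdot\psi\,\dif x \leq \frac{1}{n}\|\Div^2 A\|_{L^p(\B^n)}\|\psi\|_{L^{p'}(2\B^n)}.
\end{equation*}
Taking the supremum over non-negative $g$ with $\|g\|_{L^{q'}}\leq 1$ (non-negative smooth $g$ being dense), the result will follow once the bound $\|\psi\|_{L^{p'}(2\B^n)}\leq C\|g\|_{L^{q'}(2\B^n)}$ is proved.

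This final estimate is exactly what Lemma~\ref{lemma:Lpest} provides, and the task reduces to matching the exponents in its trichotomy. Its requirement $q'\geq k$ is equivalent to $q\leq k/(k-1)$, accounting for one of the two hypotheses on $q$. In case~(a), with $p>1$ and $k\leq n/2$, one splits into two sub-regimes: when $q=k/(k-1)$, take $r=q'=k$ and note that the assumption $p^{**}\geq k/(k-1)$ (automatic in this sub-regime from the definition of the $\min$) translates to $p'\leq k^{**}$, exactly the Sobolev regime of the Lemma. When $q=p^{**}<k/(k-1)$, take $r=q'=(p^{**})'$; a direct calculation using $1/p^{**}=1/p-2/n$ then gives $(q')^{**}=p'$, again placing us in the Sobolev regime. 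In case~(b), with $p=1$, we have $p'=\infty$, so the $L^\infty$ endpoint of the Lemma is needed, which requires $r=q'>n/2$: this is exactly the hypothesis $q<1^{**}$. The estimate for $|A|$ in a closed convex sub-cone $\mathcal{K}\subset\mathrm{int}\,\Gamma_k^*\cup\{0\}$ is then immediate from the pointwise bound $|A|\lesssim_\mathcal{K}\rho_k^*(A)$, which holds by compactness of $\mathcal{K}\cap\mathbb{S}^{n^2-1}$ and homogeneity.

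The main obstacle is thus this exponent bookkeeping: the key insight is that the two apparently independent conditions $q\leq p^{**}$ and $q\leq k/(k-1)$ correspond precisely to the two regularity regimes of the $L^p$-theory for $k$-Hessian equations, and that the hypothesis $p>1$ in case~(a) is exactly what puts $q'$ below $n/2$ so that the Sobolev rather than $L^\infty$ endpoint of Lemma~\ref{lemma:Lpest} is the relevant one. A minor technical point is the $C^{1,1}$ regularity needed for $g$ in Theorem~\ref{thm:kHessExistence}, which is handled by approximating general $g\in L^{q'}$ by smoother non-negative test functions.
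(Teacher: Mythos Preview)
Your proposal is correct and follows essentially the same approach as the paper: solve an auxiliary $k$-Hessian equation, apply the duality pairing of Proposition~\ref{prop:dualitypair}, integrate by parts twice, and close using the $L^p$ estimates of Lemma~\ref{lemma:Lpest}. The only cosmetic difference is that the paper picks the extremal test function directly, setting $f=\rho_k^*(A)^{q-1}$ and solving $\rho_k(\D^2\psi)=f$ on $\B^n$, whereas you test against a general non-negative $g$ and take a supremum; your more explicit exponent bookkeeping (and the observation that the extension to $2\B^n$ is unnecessary here since no interior gradient estimate is used) is in fact cleaner than the paper's ``the assumed restrictions ensure that Lemma~\ref{lemma:Lpest} is applicable''.
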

It may seem that the restriction $k\leq \frac{n}{2}$ is arbitrary in \ref{itm:div2a}, but in fact, when $k>\frac{n}{2}$, we automatically have $\frac{k}{k-1}<1^{**}$, so there is no loss in restricting to the case $p=1$ and we fall in case \ref{itm:div2b}. We also note that the restriction $q\leq \frac{k}{k-1}$ is necessary, as can be seen from Proposition \ref{prop:optimal_div} below. 
\begin{proof}
We begin by setting 
$$f=  \rho_k(A)^{q-1}
$$
and  we apply Theorem \ref{thm:kHessExistence} to find a $k$-admissible function $\psi\in C^{1,1}(\overline{\B^n})$ solving
$$\begin{cases}
\rho_k(\D^2\psi)=f & \text{ in }\B^n,\\
\psi=0 & \text{ on }\d \B^n.
\end{cases}
$$
In particular,  $\D^2\psi\in\Gamma_k$ a.e.\ in $\B^n$. We now apply the $L^p$ estimates of Lemma \ref{lemma:Lpest}, noting $q'\geq k$ by assumption, to estimate 
$$
\|\psi\|_{L^{p'}(\B^n)}{\leq C} \|f\|_{L^{q'}(\B^n)}= \left(\int_{\B^n}\rho_k^*(A)^q\right)^{1/q'};
$$
the assumed restrictions on the exponents $p$ and $q$ ensure that Lemma \ref{lemma:Lpest} is applicable.
We then estimate using Proposition \ref{prop:dualitypair} and integration by parts twice:
\begin{align*}
    \int_{\B^n} \rho_k^*(A)^{q}=&\int_{\B^n} \rho_k^*(A)\rho_k(\D^2\psi){\leq C} \int_{\B^n} \langle A, \D^2\psi\rangle  =\int_{\B^n} \psi\Div^2(A)\\
    &\leq  \|\psi\|_{L^{p'}(\B^n)}\|\Div^2(A)\|_{L^p(\B^n)}{\leq C} \left(\int_{\B^n}\rho_k^*(A)^q\right)^{1/q'} \|\Div^2(A)\|_{L^p(\B^n)}.
\end{align*}
The first inequality now follows directly and the second is a consequence of the fact that if $\mathcal{K}$ is a closed convex cone in $\textup{int}\,\Gamma_k^*\cup\{0\}$, then $|A|{\leq C}\rho_k^*(A)$.
\end{proof}

\subsection{The case of exact constraints: optimality and higher integrability}
\label{sec:curloptimal}

The quasiconcavity result in Theorem \ref{thm:quasiconcavity} suggests that, in order to see that the exponent $\frac{k}{k-1}$ in both Propositions  \ref{prop:Div} and \ref{prop:Div^2} is optimal, it should be enough to consider $\Div$-free fields. This is indeed the case: 

\begin{proposition}\label{prop:optimal_div}
Let $2\leq k\leq n$ and $\e>0$. There is a closed convex cone $\mathcal K^\e\subset \textup{int\,} \Gamma_k^*\cup\{0\}$ and a matrix field $A_\e\in C^\infty(\B^n\backslash\{0\},\mathcal K^\e)$ such that
$$\Div A_\e =0, \qquad A_\e\in L^{\frac{k}{k-1}}\backslash L^{\frac{k}{k-1}+\e}(\B^n).$$
\end{proposition}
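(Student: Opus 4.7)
My approach is to exhibit an explicit radial field of the form $A_\delta:=\nabla F_k(\D^2 w_\delta)$, where $w_\delta(x):=|x|^{2-n/k+\delta}$. The divergence-free property comes for free from the null-Lagrangian character of $F_k$: since $\Curl(\D^2 w_\delta)=0$, Corollary~\ref{cor:divrhok*} (applied with $A=\D^2 w_\delta$) yields $\Div A_\delta=0$ on $\B^n\setminus\{0\}$. The exponent $2-n/k$ is chosen precisely so that $|A_\delta|$ has critical decay for $L^{k/(k-1)}$-integrability at the origin, and I would tune $\delta>0$ small depending on $\e$.

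I would first check $k$-admissibility of $w_\delta$. Setting $\alpha:=2-n/k+\delta$ and decomposing in the radial/tangential frame, $\D^2 w_\delta$ has eigenvalues $\nu=\alpha(\alpha-1)r^{\alpha-2}$ (radial, multiplicity 1) and $\mu=\alpha r^{\alpha-2}$ (tangential, multiplicity $n-1$). Using the closed form $\sigma_j(\lambda)=\binom{n-1}{j-1}\mu^{j-1}\nu+\binom{n-1}{j}\mu^j$ and the monotonicity of $j\mapsto 2-n/j$, positivity of $\sigma_1,\dots,\sigma_k$ collapses to the single threshold $\alpha>2-n/k$, which holds by construction. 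Hence $\D^2 w_\delta\in\textup{int}\,\Gamma_k$ on $\B^n\setminus\{0\}$, and Proposition~\ref{prop:diffeo} gives $A_\delta(x)\in\textup{int}\,\Gamma_k^*$ for every $x\in\B^n\setminus\{0\}$.

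For integrability, the $(k-1)$-homogeneity of $\nabla F_k$ gives $|A_\delta(x)|\approx r^{(\alpha-2)(k-1)}=r^{-(n/k-\delta)(k-1)}$, and a direct polar-coordinate integration shows $A_\delta\in L^q(\B^n)$ iff $q<\frac{nk}{(k-1)(n-\delta k)}$. This equals $\frac{k}{k-1}$ when $\delta=0$, and a short algebraic manipulation identifies the threshold $\delta_0(\e):=\frac{\e n(k-1)}{k\bigl(k+\e(k-1)\bigr)}$ such that for any $\delta\in(0,\delta_0(\e))$ one has $A_\delta\in L^{k/(k-1)}(\B^n)\setminus L^{k/(k-1)+\e}(\B^n)$; I would fix any such $\delta$.

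Finally, I would construct the cone. Radiality of $w_\delta$ combined with the $O(n)$-equivariance of $\D^2$ and $\nabla F_k$ yields $A_\delta(x)=c(|x|)\,Q_xA_0Q_x^\top$ for a fixed $A_0\in\textup{int}\,\Gamma_k^*$, some $Q_x\in O(n)$ and some $c(|x|)>0$. I would define $\mathcal K^\e:=\{tC:t\geq 0,\,C\in K\}$ with $K:=\textup{conv}\{QA_0Q^\top:Q\in O(n)\}$. Three observations close the argument: (i) the orbit $\{QA_0Q^\top\}$ is a compact subset of $\textup{int}\,\Gamma_k^*$ by $O(n)$-invariance of $\Gamma_k^*$, and convexity of the open set $\textup{int}\,\Gamma_k^*$ forces $K\subset\textup{int}\,\Gamma_k^*$; (ii) every element of $K$ has trace exactly $\tr(A_0)>0$ (the bipolar theorem applied to $I\in\textup{int}\,\Gamma_k$ gives positivity on $\Gamma_k^*\setminus\{0\}$), so $K$ is bounded away from $0$; (iii) these combine to make $\mathcal K^\e$ closed and contained in $\textup{int}\,\Gamma_k^*\cup\{0\}$. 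The main obstacle is step (iii): one must guarantee that the positive conical hull of $K$ stays off $\partial\Gamma_k^*$ after closure, and this is where the uniform trace lower bound on $K$ is essential, ruling out any sequence of elements converging to a non-zero boundary point.
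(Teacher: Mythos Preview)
Your approach matches the paper's: apply $\nabla F_k$ to the Hessian of a radial power function to obtain a divergence-free field in $\Gamma_k^*$, and tune the exponent for critical integrability. Your cone construction at the end is more explicit than the paper's and is fine.

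There is, however, a genuine gap in the $k$-admissibility step when $2\leq k<n/2$. Your claim that positivity of $\sigma_1,\dots,\sigma_k$ collapses to the single threshold $\alpha>2-n/k$ tacitly assumes $\alpha>0$. From your closed form one computes
\[
\sigma_j(\D^2 w_\delta)=\alpha^{j}\,r^{j(\alpha-2)}\binom{n-1}{j-1}\bigl[\alpha-(2-n/j)\bigr],
\]
and while the bracket is indeed positive for all $j\leq k$ (by the monotonicity you invoke), the prefactor $\alpha^{j}$ alternates in sign once $\alpha<0$. In particular $\sigma_1=\alpha(\alpha+n-2)r^{\alpha-2}<0$ whenever $2-n<\alpha<0$, and this is exactly where $\alpha=2-n/k+\delta$ lands for $k<n/2$ and small $\delta$. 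Hence $\D^2 w_\delta\notin\Gamma_1\supset\Gamma_k$, so Proposition~\ref{prop:diffeo} does not apply and you cannot place $A_\delta$ in $\Gamma_k^*$.

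The repair is immediate and is precisely the paper's case distinction: replace $w_\delta$ by $-w_\delta$ when $\alpha<0$, so that $\sigma_j(-\D^2 w_\delta)=(-\alpha)^{j}\cdot(\text{positive})>0$ for all $j\leq k$. Since $-\D^2 w_\delta$ has the same radial structure, the integrability computation and the cone argument go through unchanged with $A_\delta:=\nabla F_k(-\D^2 w_\delta)$.
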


\begin{proof}
With $r=|x|$, let $u_\e=u_\e(r)$ be a function to be determined that satisfies $\D^2u\in L^k(\B^n,\Gamma_k)$ and take
    $$A_\e(x)\equiv \frac 1 k (\nabla \rho_k^k)(\D^2 u_\e(x))= \rho_k^{k-1}(\D^2 u_\e(x)) \nabla \rho_k(\D^ 2 u_\e(x)).$$
Employing Lemma \ref{lemma:attainment} we see that $A_\e$ takes values in $\Gamma_k^*$. Since $\rho_k^k$ is, up to a multiplicative constant, the sum of the $k\times k$ principal minors, Proposition \ref{prop:divrhok*} shows that $A_\e$ is divergence-free, since $\D^2 u_\e$ is evidently $\Curl$-free.

Writing $\nu=\frac{x}{r}$, we have
$$\D^2 u_\e(x) = \frac{\dot u_\e(r)}{r} \textup{Id} + \Big(\ddot u_\e(r)-\frac{\dot u_\e(r)}{r}\Big) \nu \otimes \nu,$$
and so in particular the eigenvalues of $\D^2 u_\e(x)$ are $(\ddot u_\e(r), \dot u_\e(r)/r, \dots, \dot u_\e(r)/r)$.
Recalling \eqref{eq:defsigmak}, it follows that
$$\sigma_k(\D^2 u_\e)={n-1\choose k} \Big(\frac{\dot u_\e(r)}{r}\Big)^k + {n-1\choose k-1} \ddot u_\e(r) \Big(\frac{\dot u_\e(r)}{r}\Big)^{k-1};$$
here we interpret ${n-1 \choose n}=0$, in the case $k=n$.
We take 
$$\alpha \equiv \frac{n}{k+\e(k-1)}$$
and we split the analysis into two cases.

\textbf{Case 1: $k\geq \frac n 2$.} In this case we have $0< \alpha <2$ and we take $u_\e(r)\equiv r^{2-\alpha}$. Our claim is that $u_\e$ is strictly $k$-convex (i.e., $\sigma_j(\D^2 u_\e)>0$ for $1\leq j\leq k$). If $k=n$ then it is easy to see that in fact $0<\alpha<1$ for $\e>0$, so $u_\e$ is clearly convex. If $k<n$ note that for $j\leq k$, since ${n-1 \choose j-1} = \frac{j}{n-j} {n-1 \choose j}$, we can calculate
$$\sigma_j(\D^2 u_\e) = {n -1 \choose j} \Big(\frac{\dot u_\e(r)}{r}\Big)^{j-1} (2-\alpha) r^{-\alpha} \left(1-(\alpha-1)\frac{j}{n-j}\right).$$
We claim that $\sigma_j(\D^2 u_\e)>0$ if $j\leq k$. Indeed, since $\alpha<2$, it suffices to examine the last term in the above product, and as $\e>0$ we have
$$(\alpha-1)\frac{j}{n-j} = \frac{j}{n-j} \left(\frac{n-k-\e(k-1)}{k+\e(k-1)}\right) < \frac{j}{n-j} \frac{n-k}{k} \leq \frac{k}{n-k}\frac{n-k}{k} = 1,$$
as wished. It follows that $\D^2 u_\e \in \Gamma_k$ a.e.\ and so, since $u_\e\in W^{2,k}(\mathbb B^n)$, we conclude that $u_\e$ is $k$-convex for all $k\geq \frac n 2.$ In fact, we have shown that $\D^2u_\e$ lies in a closed cone strictly contained in $\textup{int}\,\Gamma_k\cup\{0\}$.

 Using Lemma \ref{lemma:attainment} we see that
$$\rho_k(A_\e(x))=\frac 1 n \rho_k^{k-1}(\D^2 u_\e(x))\approx_{k,n} \sigma_k^{\frac{k-1}{k}}(\D^2 u_\e)\approx_{k,n} r^{-\alpha(k-1)}.$$
In particular, using spherical coordinates we calculate
$$\int_{\B^n} |\rho_k(A_\e(x))|^{\frac{k}{k-1}+\e}  \dif x
\approx_{k,n} \int_0^1 r^{n-1} r^{-\alpha(k+\e(k-1))} \dif r = \int_0^1 r^{-1} \dif r$$
by the definition of $\alpha$. Since $\rho_k(A_\e)\leq C(n,k) |A_\e|$, the conclusion follows.

\textbf{Case 2:  $2\leq k <\frac n 2$.} In this case, we take $u_\e(r)\equiv -r^{2-\alpha}$. If $\e$ is chosen small enough, more precisely if $0<\e<\frac{n-2k}{2(k-1)}$, we have $\alpha>2$. For $j\leq k$, we calculate
$$\sigma_j(\D^2 u_\e) = {n-1 \choose j} \Big(\frac{\dot u_\e(r)}{r}\Big)^{j-1} (\alpha-2) r^{-\alpha} \left(1-(\alpha-1)\frac{j}{n-j}\right)$$
and the conclusion follows by repeating the arguments above.
\end{proof}

\begin{proof}[Proof of Theorems \ref{thm:maindiv} and \ref{thm:div^2}]
The proof of Theorem \ref{thm:maindiv}  follows almost directly from Propositions \ref{prop:Div} and \ref{prop:optimal_div}, while the last claim in the statement is the content of Proposition \ref{prop:endpointk<n}. To verify the first non-inequality, we observe that, given an $\eps>0$, we may multiply the function $A^\eps$ of Proposition \ref{prop:optimal_div} by a test function $\rho\in C_c^\infty(\mathbb{B}^n,[0,1])$ such that $\rho\equiv 1$ on $\frac 1 2 \B^n$. Then, as $A^\eps$ is smooth and bounded on the annulus $\B^n\setminus \frac 1 2 \B^n$,
$$\|\Div(\rho A^\eps)\|_{L^\infty(\B^n)}=\| A^\eps\D\rho\|_{L^\infty(\B^n\setminus \frac 1 2 \B^n)}\leq C,$$
but $\rho A^\eps\not\in L^{{k}/{(k-1)}+\eps}(\B^n)$.
Taking a smooth sequence of approximations to $A^\eps$, we conclude. In particular, \eqref{eq:qmaxdiv} holds.

The proof of Theorem \ref{thm:div^2} follows along very similar lines, using Proposition \ref{prop:Div^2} instead of Proposition \ref{prop:Div}.
\end{proof}

We conclude this section by noting that, despite Proposition \ref{prop:optimal_div},  solenoidal fields uniformly inside $\Gamma_k^*$ have higher integrability:

\begin{corollary}\label{cor:divUI}
Let $\mathcal K\subset\mathrm{int}\,\Gamma_k^*\cup\{0\}$ be a closed convex cone. Let $A_j\in C^\infty(\R^n,\mathcal K)$ be a sequence bounded in $L^1_{\locc}(\R^n)$ and such that $\Div A_j=0$. For a ball $B=B_R(x_0)$ and $\theta\in (0,1),$ we have
$$\left(\frac{1}{|\theta B|}\int_{\theta B}|A_j|^{\frac{k}{k-1}} \dif x\right)^{\frac{k-1}{k}}\leq C(\mathcal K, R,\theta) \frac{1}{|B|}\int_{B\setminus \theta B} |A_j| \dif x.$$
In particular, there exists $\delta=\delta(\mathcal K)$ such that $(A_j)$ is bounded in $L^{{k}/{(k-1)}+\delta}_{\locc}(\R^n)$, and so $(A_j)$ is locally $\frac{k}{k-1}$-uniformly integrable.
\end{corollary}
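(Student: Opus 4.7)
The statement has two parts: (i) the reverse Hölder-type inequality, and (ii) the resulting uniform higher integrability. My plan is to prove (i) via a single test-function argument, applying Proposition \ref{prop:Div} to a suitable power of a cutoff times $A_j$ and leveraging the exact constraint $\Div A_j = 0$; then (ii) follows from a standard application of Gehring's lemma.

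Concretely, I would set $r := k/(k-1)$ and fix $p$ with $(r)_* < p < r$, then choose $\alpha := p(r-1)/(r-p)$, which satisfies $\alpha > 1$ (using $p > (r)_* \geq 1$, valid since $k \leq n$). Take a nonnegative cutoff $\eta \in C_c^\infty(B)$ with $\eta \equiv 1$ on $\theta B$ and $|\D\eta| \lesssim 1/((1-\theta)R)$. Since $\mathcal K$ is a cone, $\eta^\alpha A_j \in C_c^\infty(B,\mathcal K)$, and the hypothesis $\Div A_j = 0$ yields rowwise
$$\Div(\eta^\alpha A_j) = \alpha \eta^{\alpha-1}(\D\eta)\cdot A_j,$$
supported in $B \setminus \theta B$. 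Plugging into the rescaled form of Proposition \ref{prop:Div} gives
$$\|\eta^\alpha A_j\|_{L^r(B)} \lesssim \|\D\eta\|_\infty \Big(\int_{B\setminus\theta B}\eta^{(\alpha-1)p}|A_j|^p\Big)^{1/p}.$$
The decisive step is then to apply Hölder's inequality on the right with conjugate exponents $a = \alpha r/((\alpha-1)p)$ and $b = \alpha/p$ (the identity $1/a+1/b=1$ is precisely equivalent to our definition of $\alpha$). This splits the integral into a weighted $L^r$ piece, matching $\|\eta^\alpha A_j\|_{L^r(B)}$ raised to the power $r/a = 1-1/\alpha < 1$, and an $L^1$ piece on $B \setminus \theta B$. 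The subunitary absorption exponent allows the first factor to be moved to the left-hand side; solving for $\|\eta^\alpha A_j\|_{L^r(B)}$, using $\eta \equiv 1$ on $\theta B$, and rescaling to averages delivers the stated reverse Hölder inequality.

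For the higher integrability claim, the reverse Hölder estimate (valid uniformly in $j$ for all concentric balls $\theta B \subset B$) matches the hypothesis of Gehring's lemma, producing some $\delta = \delta(\mathcal K) > 0$ such that $(A_j)$ is locally bounded in $L^{r+\delta}$. Local $r$-uniform integrability then follows immediately from Markov's inequality.

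The main obstacle is the algebraic choice $\alpha = p(r-1)/(r-p)$: this is the unique value that simultaneously enforces the Hölder exponents $a,b$ to be conjugate \emph{and} produces a subunitary absorption exponent $1-1/\alpha$. Without this precise balance, one would have to either invoke a separate Giaquinta--Modica style self-improvement lemma (to lower the right-hand side exponent from some $L^{p'}$ with $p' > 1$ down to $L^1$) or chain several applications of Proposition \ref{prop:Div} through intermediate cones $\Gamma_{k'}^*$ for $k \leq k' \leq n$; both alternatives work but introduce significant additional bookkeeping.
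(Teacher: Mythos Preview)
Your argument is correct and takes a genuinely different route from the paper's proof. The paper does \emph{not} use Proposition~\ref{prop:Div} at all here: instead it observes that, for $\Div A_j=0$ and a scalar cutoff $\rho$, one has $\Div^2(\rho A_j)=\langle \D^2\rho,A_j\rangle$, and then applies Proposition~\ref{prop:Div^2} (the second-order estimate) with $p=1$. This immediately yields an $L^q$ bound on $\theta B$ by the $L^1$ norm on the annulus, but only for $q<1^{**}$ with $q\le \frac{k}{k-1}$; when $\frac{k}{k-1}\ge 1^{**}$ (i.e.\ $k\le n/2$) the paper must \emph{iterate} the argument on nested balls, climbing through exponents $q\mapsto \min\{q^{**},\frac{k}{k-1}\}$ until reaching $\frac{k}{k-1}$. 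Your approach instead uses the first-order Proposition~\ref{prop:Div} together with the power-of-cutoff trick $\eta^\alpha$ and the precise choice $\alpha=p(r-1)/(r-p)$, which makes the H\"older absorption exactly subunitary and, as a bonus, renders the final inequality scale-invariant in a single step with no iteration. Both methods feed the same reverse H\"older inequality into Gehring's lemma for the final claim. Your route is arguably cleaner and more self-contained (it never invokes the $\Div^2$ theory), while the paper's route avoids the algebraic balancing act on $\alpha$ at the cost of the bootstrap when $k$ is small.
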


\begin{proof}
Fix a ball $B\subset \R^n$. If $\Div A=0$, we have, for a scalar field $\rho\in C_c^\infty(B)$,
$$
\Div^2(\rho A)=\di(\rho \Div A +A\D\rho)=\Div A\cdot \D\rho+\langle \D^2\rho, A\rangle=\langle\D^2\rho,A\rangle.
$$
We therefore have that $\Div^2(\rho A_j)\in C_c^\infty(B)$ is bounded uniformly in $L^1(B)$. Fix a constant $\tau \in (0,1)$ to be chosen later and choose $\rho$ non-negative  and such that $\rho=1$ in $\tau B$. Thus, letting $1<q<1^{**}$ be such that $q\leq \frac{k}{k-1}$,  Proposition \ref{prop:Div^2} implies
$$
\left(\frac{1}{|\tau B|}\int_{\tau B} |A_j|^q\dif x\right)^{1/q}\leq C\left(\frac{1}{|B|}\int_{B\setminus \tau B} |A_j|\dif x\right),
$$
with a constant that depends only on the radius of $B$ and $\tau$. If $\frac{k}{k-1}<1^{**}$ we are done. If $\frac{k}{k-1}\geq 1^{**}$, we take some $\tilde \tau \in (0,\tau)$ and $\tilde\rho\in C_c^\infty(\tau B)$ such that $\tilde \rho\equiv 1$ on $\tilde \tau B$. Repeating  the argument above we obtain, for $\tilde q=\min\{q^{**},\frac{k}{k-1}\}$,
$$
\left(\frac{1}{|\tilde \tau B|}\int_{\tilde \tau B} |A_j|^{\tilde q}\dif x\right)^{1/\tilde q}\leq C\left(\frac{1}{|B|}\int_{B\setminus \tilde \tau B} |A_j|\dif x\right),
$$
again with a constant depending only on the radius of $B$ and $\tau,\tilde \tau$.
Iterating this argument finitely many times until we arrive at the exponent $\frac{k}{k-1}$, and choosing $\tau,\tilde \tau, \dots,$ appropriately, we obtain the desired inequality.
The last claim in the statement of the corollary follows from Gehring's Lemma, see for instance \cite{Iwaniec2001}.
\end{proof}

\section{Elliptic estimates for the curl}
\label{sec:curl}

In this section we present nonlinear duality arguments which show that the elliptic estimates of Section \ref{sec:div} yield elliptic estimates for the pair
$$\A = \Curl, \qquad \mathcal K= \Gamma_k.$$
In fact, we will show that there are estimates
\begin{equation}
    \label{eq:goalcurl}
    \|A\|_{L^q(\B^n)}\leq C \|\Curl A \|_{L^p(\B^n)}\quad \text{for } A\in C^\infty_c(\B^n,\mathcal K),
\end{equation}
for some set of exponents $q$ such that $1^*\leq q\leq q_{\max}$, $p>q_*$, whenever $\mathcal K\subset \textup{int}\, \Gamma_k\cup\{0\}$ is a closed cone.
Moreover, in the terminology of Conjecture \ref{conj:big}, we will prove that
\begin{equation}
    \label{eq:qmaxcurl}
q_{\max}(\Curl,\Gamma_k)=k.
\end{equation}
In particular, combining the above results, we prove Theorem \ref{thm:maincurl}.

\subsection{Elliptic estimates}\label{subsec:curl}
We begin by establishing the critical version of \eqref{eq:goalcurl} at the endpoint $q=1^*$.

\begin{proposition}\label{prop:curlp=1}
For $2\leq k\leq n$, there is a constant $C$ such that
$$\|\rho_k(A)\|_{L^{1^*}(\B^n)}\leq C \|\Curl A \|_{L^1(\B^n)}, \quad \text{for }A\in C^\infty_c(\B^n,\Gamma_k).$$
\end{proposition}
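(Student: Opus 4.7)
The plan is to reduce the general statement to the case $k=2$, and to prove the case $k=2$ by applying Serre's inequality \eqref{eq:SerreDiv} to a suitable nonlinear transform of $A$.

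For the reduction, observe that $\Gamma_k\subseteq\Gamma_2$ for $k\geq 2$ (since both cone conditions require $F_1,F_2\geq 0$), so any $A\in C_c^\infty(\B^n,\Gamma_k)$ also lies in $C_c^\infty(\B^n,\Gamma_2)$; moreover Maclaurin's inequality (Lemma \ref{lemma:rhokorder}) gives $\rho_k(A)\leq\rho_2(A)$ pointwise on $\Gamma_k$. It therefore suffices to prove $\|\rho_2(A)\|_{L^{1^*}(\B^n)}\leq C\|\Curl A\|_{L^1(\B^n)}$ for $A\in C_c^\infty(\B^n,\Gamma_2)$.

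To handle the $k=2$ case, I will set
\[
B\equiv\tfrac{1}{2}\nabla F_2(A)=\rho_2(A)\nabla\rho_2(A)=\tfrac{1}{n(n-1)}\bigl(\tr(A)I_n-A\bigr)
\]
and verify three properties, after which Serre's inequality \eqref{eq:SerreDiv} closes the argument. First, $B\in C_c^\infty(\B^n,\textup{Sym}^+_n)$: smoothness is clear, $\nabla F_2(0)=0$ gives $\supp B\subseteq\supp A\Subset\B^n$, and the $\Gamma_2$ conditions $F_1(A),F_2(A)\geq 0$ force $\lambda_i(A)\leq\tr(A)$ for each $i$ (since $\sum_i\lambda_i(A)^2\leq(\tr A)^2$ and $\tr(A)\geq 0$), so $\tr(A)I_n-A\succeq 0$. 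Second, $\rho_n(B)\geq\rho_2(A)/n$: Lemma \ref{lemma:attainment} yields $\rho_2^*(B)=\rho_2(A)\,\rho_2^*(\nabla\rho_2(A))=\rho_2(A)/n$, and Lemma \ref{lemma:rhokorder} applied to the dual functions gives $\rho_n(B)=\rho_n^*(B)\geq\rho_2^*(B)$. Third, $|\Div B|\leq C|\Curl A|$: this is the $k=2$ case of Corollary \ref{cor:divrhok*}, in which the factor $|A|^{k-2}=1$ disappears; one can alternatively check directly that $(\Div B)_i=\tfrac{1}{n(n-1)}\sum_l(\Curl A_l)_{il}$ using the symmetry of $A$. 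Combining these ingredients with Serre's inequality applied to $B$ extended by zero to $\R^n$,
\[
\tfrac{1}{n}\|\rho_2(A)\|_{L^{1^*}(\B^n)}\leq\|\rho_n(B)\|_{L^{1^*}(\R^n)}\leq C\|\Div B\|_{L^1(\R^n)}\leq C\|\Curl A\|_{L^1(\B^n)}.
\]

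The hard step I expect is the algebraic observation that $A\mapsto\tr(A)I_n-A$ sends $\Gamma_2$ into $\textup{Sym}^+_n$, combined with the crucial cancellation coming from the quadratic nature of $F_2$ in Corollary \ref{cor:divrhok*}. For $k\geq 3$, the naive analogue with $B=\tfrac{1}{k}\nabla F_k(A)$ would instead leave a factor $|A|^{k-2}$ in the divergence estimate that is not controlled by $\rho_k(A)$ on $\Gamma_k$; the Maclaurin reduction finesses this by always passing through the quadratic case, where the problematic factor disappears.
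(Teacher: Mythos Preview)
Your proof is correct and follows essentially the same route as the paper: reduce to $k=2$ via Maclaurin, set $B=\rho_2(A)\nabla\rho_2(A)$, observe $B\in\Gamma_2^*\subset\textup{Sym}^+_n$, use the duality $\rho_n(B)=\rho_n^*(B)\geq\rho_2^*(B)=\rho_2(A)/n$, bound $|\Div B|\leq C|\Curl A|$ via the $k=2$ case of Corollary~\ref{cor:divrhok*}, and apply Serre's inequality. Your explicit formula $B=\tfrac{1}{n(n-1)}(\tr(A)I_n-A)$ and the direct verification that $\tr(A)I_n-A\succeq 0$ on $\Gamma_2$ are a minor expository bonus over the paper's abstract invocation of Lemma~\ref{lemma:attainment}, but the argument is otherwise identical.
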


\begin{proof}
By Lemma \ref{lemma:rhokorder} it suffices to prove the proposition when $k=2$, as $A\in\Gamma_k\subseteq\Gamma_2$ and we have $0\leq\rho_k(A)\leq\rho_2(A)$. We first note that, by Lemmas \ref{lemma:attainment} and \ref{lemma:rhokorder},
\begin{align*}
\rho_2(A)^{\frac{n}{n-1}} & =c_{n}\rho_2^*\big(\rho_2(A)\nabla\rho_2(A)\big)^{\frac{n}{n-1}}\\ 
& \leq c_{n}\rho_n^*\big(\rho_2(A)\nabla\rho_2(A)\big)^{\frac{n}{n-1}}=c_n\det\big(\rho_2(A)\nabla\rho_2(A)\big)^{\frac{1}{n-1}},
\end{align*}
where $c_n=n^{\frac{n}{n-1}}$.
We apply \eqref{eq:serre} and Proposition \ref{prop:divrhok*} to estimate
\beqa
\int_{\B^n}\rho_2(A)^{\frac{n}{n-1}}{\leq C}&\,\int_{\B^n}\det\big(\rho_2(A)\nabla\rho_2(A)\big)^{\frac{1}{n-1}}\\
{\leq C} &\,\bigg(\int_{\B^n}\Big|\Div\Big(\rho_2(A)\nabla\rho_2(A)\Big)\Big|\bigg)^{\frac{n}{n-1}}
{\leq C} \,\bigg(\int_{\B^n}|\Curl A|\bigg)^{\frac{n}{n-1}}.
\eeqa
Note that $\rho_2(A)\nabla \rho_2(A)\in \Gamma_2^*\subset \Gamma_n^*$ by Lemma \ref{lemma:attainment}, so \eqref{eq:serre} is indeed applicable.
\end{proof}

We next establish \eqref{eq:goalcurl} at the other endpoint, i.e.\ when $q=k$. Unfortunately, in this case we are unable to estimate the nonlinear quantity $\rho_k(A)$ and are therefore forced to work with fields which lie uniformly inside $\Gamma_k$.

\begin{proposition}\label{prop:curlmain}
     Let $2\leq k \leq n$ and $p>\frac{nk}{n+k}=k_*$ if $k\leq n-1$ or $p=\frac{n}{2}=n_*$ if $k=n$. For a closed convex cone $\mathcal K\subset \mathrm{int}\,\Gamma_k\cup\{0\}$, we have
the estimate
\begin{align}
    \label{ineq:curlk}
    \|A\|_{L^{k}(\B^n)}&\leq C\|\Curl A\|_{L^p(\B^n)}\quad\text{for }A\in C_c^\infty(\B^n,\mathcal K)
\end{align}
In fact, this improves to
\begin{equation}
    \label{eq:LlogLsym}
    \|A\|_{L^k\log L(\B^n)}\leq C\|\Curl A\|_{L^p(\B^n)}\quad\text{for }A\in C_c^\infty(\B^n,\mathcal K).
\end{equation}
\end{proposition}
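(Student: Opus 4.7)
The plan is to reduce the claim to the divergence estimate of Proposition \ref{prop:Div} via a nonlinear duality between the cones $\Gamma_k$ and $\Gamma_k^*$. Given $A \in C_c^\infty(\B^n,\mathcal K)$, we define the dual matrix field
\[
\tilde A \equiv \tfrac{1}{k}\nabla F_k(A) = \rho_k(A)^{k-1}\nabla\rho_k(A).
\]
By Lemma \ref{lemma:attainment}, $\tilde A \in \Gamma_k^*$ pointwise; since $F_k$ is $k$-homogeneous with $k\geq 2$ we have $\nabla F_k(0)=0$, and hence $\tilde A\in C_c^\infty(\B^n,\Gamma_k^*)$. Three facts will drive the argument: (i) by Corollary \ref{cor:divrhok*}, $|\Div\tilde A|\lesssim |\Curl A|\,|A|^{k-2}$; (ii) by Lemma \ref{lemma:attainment}, $\rho_k^*(\tilde A)=\tfrac{1}{n}\rho_k(A)^{k-1}$; and (iii) since $\mathcal K$ is a closed convex cone inside $\mathrm{int}\,\Gamma_k\cup\{0\}$, continuity and homogeneity give $\rho_k(A)\sim |A|$ uniformly on $\mathcal K$, whence $|\tilde A|\sim \rho_k^*(\tilde A)\sim |A|^{k-1}$.

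Armed with these, we apply Proposition \ref{prop:Div} to $\tilde A$ to obtain
\[
\|A\|_{L^k(\B^n)}^{k-1} \lesssim \|\rho_k^*(\tilde A)\|_{L^{k/(k-1)}(\B^n)} \lesssim \|\Div \tilde A\|_{L^q(\B^n)},
\]
valid for any $q>(k/(k-1))_*$. Hölder with conjugate exponents $p$ and $k/(k-2)$, chosen so that $1/q=1/p+(k-2)/k$, then yields
\[
\|\Div\tilde A\|_{L^q(\B^n)}\lesssim \|\Curl A\|_{L^p(\B^n)}\,\|A\|_{L^k(\B^n)}^{k-2}.
\]
Dividing by $\|A\|_{L^k}^{k-2}$ (finite, and which may be assumed positive, else the claim is trivial) produces \eqref{ineq:curlk}. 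A direct algebraic computation shows that the threshold $q>(k/(k-1))_*$ translates exactly to $p>k_*=nk/(n+k)$, matching the hypothesis; for $k=n$ the endpoint $p=n/2$ is permissible because Proposition \ref{prop:Div} holds there at $q=1$ via Serre's inequality \eqref{eq:serre}. For $k=2$ the Hölder step is vacuous, and the argument reduces to a direct application of Proposition \ref{prop:Div} to $\tilde A$, which is essentially linear in $A$.

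For the sharpened estimate \eqref{eq:LlogLsym}, the plan is to upgrade Proposition \ref{prop:Div} to an $L^{k/(k-1)}\log L$ bound on its left-hand side and to propagate that improvement through the same duality scheme. Such a refinement should follow from an $L\log L$-type sharpening of the Sobolev control of $\D\psi$, where $\psi$ is the $k$-admissible solution constructed in the proof of Proposition \ref{prop:Div}, analogous to the $L\log L$ refinement of Serre's inequality discussed in Appendix \ref{app:LlogL}. The main obstacle will be checking that the logarithmic gain survives the Hölder step involving $|A|^{k-2}$, so that after division by $\|A\|_{L^k}^{k-2}$ the sharp exponent $k$ on the left-hand side is retained together with the extra logarithm.
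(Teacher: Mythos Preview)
Your argument for \eqref{ineq:curlk} is correct and essentially identical to the paper's: define $\tilde A=\rho_k^{k-1}(A)\nabla\rho_k(A)$, apply Proposition~\ref{prop:Div} to $\tilde A\in\Gamma_k^*$, control $\Div\tilde A$ via Corollary~\ref{cor:divrhok*}, use H\"older with $1/q=1/p+(k-2)/k$, and absorb. The exponent bookkeeping and the $k=n$ endpoint via Serre's inequality are also handled the same way.

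For \eqref{eq:LlogLsym}, however, there is a genuine gap. You propose to first upgrade Proposition~\ref{prop:Div} to an $L^{k/(k-1)}\log L$ estimate and then push the logarithm through the duality and H\"older steps. But you do not establish this upgrade, and it is not clear how to obtain it: the $L\log L$ machinery of Appendix~\ref{app:LlogL} relies on an $\A$-quasiaffine integrand, and for the pair $(\Div,\Gamma_k^*)$ the relevant functional $(\rho_k^*)^{k/(k-1)}$ is only quasiconcave, not quasiaffine, so Theorem~\ref{thm:GRS} does not apply on that side. The paper takes a different and much shorter route: it observes that $F_k$ itself is a $\Curl$-null Lagrangian (a linear combination of $k\times k$ minors), so Corollary~\ref{cor:GRS} applies \emph{directly} to $A$ with $s=k$ and $r=p^*>k$, yielding
\[
\|A\|_{L^k\log L(\B^n)}\leq C\left(\|A\|_{L^k(\B^n)}+\|\Curl A\|_{\dot W^{-1,p^*}(\B^n)}\right),
\]
after which \eqref{ineq:curlk} and the embedding $L^p\hookrightarrow W^{-1,p^*}$ close the estimate. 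The key idea you are missing is that the logarithmic gain comes from the null Lagrangian structure of $F_k$ on the $\Curl$ side, not from any refinement on the $\Div$ side.
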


The exponent $k$ is optimal, as will be shown below in Proposition \ref{prop:Curl_optimal}. As in Proposition \ref{prop:curlp=1}, the proof of Proposition \ref{prop:curlmain} relies on Proposition \ref{prop:divrhok*}, which allows us to exploit a non-linear duality between the operator-cone pairs $(\Div,\Gamma_k^*)$ and $(\Curl,\Gamma_k)$.

\begin{proof}
Let $A\in C_c^\infty(\B^n,\mathcal K)$ and recall the identity $\rho_k^*(n\nabla\rho_k(A))=1$ from Lemma \ref{lemma:attainment}. By the positive 1-homogeneity of $\rho_k^*$, we find 
\begin{equation}
    \label{eq:auxrhokcurl}
\rho_k(A)^k=n^{\frac{k}{k-1}}\rho_k^*\big(\rho_k^{k-1}(A)\nabla\rho_k(A)\big)^{\frac{k}{k-1}}.
\end{equation}
Suppose first that $k\leq n-1$. We let $r$ be such that $$\frac{1}{p}+\frac{k-2}{k}=\frac{1}{r};$$ then $r>\big(\frac{k}{k-1}\big)_*$, since by assumption $p>k_*$. This ensures that Proposition \ref{prop:Div} is applicable, since in addition  we have $\rho_k^{k-1}(A)\nabla \rho_k(A)\in \Gamma_k^*$ by Lemma \ref{lemma:attainment}. Thus, applying in order \eqref{eq:auxrhokcurl}, Proposition \ref{prop:Div}, Corollary \ref{cor:divrhok*} and H\"older's inequality, we get
\beqa
\int_{\B^n}\rho_k(A)^k{\leq C}&\,\int_{\B^n}\rho_k^*\big(\rho_k^{k-1}(A)\nabla\rho_k(A)\big)^{\frac{k}{k-1}}\\
{\leq C} &\,\bigg(\int_{\B^n}\Big|\Div\Big(\rho_k^{k-1}(A)\nabla\rho_k(A)\Big)\Big|^r\bigg)^{\frac{k}{r(k-1)}}\\
{\leq C} &\,\Big(\int_{\B^n}|\Curl A|^r|A|^{r(k-2)}\Big)^{\frac{k}{r(k-1)}}\\
{\leq C} &\,\|\Curl A\|_{L^p}^{\frac{k}{k-1}}\|A\|_{L^k}^{\frac{k(k-2)}{k-1}}.
\eeqa
 The conclusion follows by taking the $\frac{k-1}{k}$-th power on both sides and recalling that, in $\mathcal K$, we have $|A|{\leq C} \rho_k(A)$ in order to absorb the $\|A\|_{L^k}$ term on the left-hand side.

In the case $k=n$, observe that inequality \eqref{eq:serre} holds for the critical exponents and not just in the sub-critical range. The same proof as above now allows us to conclude with exactly $p=\frac{n}{2}$ on the right.

We now establish the improvement in \eqref{eq:LlogLsym}. Recall that $F_k=\rho_k^k$ and note that $F_k$ is a linear combination of $k\times k$ minors. It follows that $F_k$ is $\Curl$-quasiaffine (or a null Lagrangian), in the terminology of Appendix \ref{app:LlogL}. Thus we may apply Corollary \ref{cor:GRS}, together with the estimate $|A|^k{\leq C} F_k(A)$, valid for $A\in \mathcal K$,  to obtain
\begin{align*}
    \|A\|_{L^k\log L(\B^n)}\leq C \left(\|A\|_{L^k(\B^n)}+\|\Curl A\|_{{\dot{W}}{^{-1,p^*}}(\B^n)}\right)\leq C\|\Curl A\|_{L^p(\B^n)},
\end{align*}
where the last estimate follows from \eqref{ineq:curlk} and the embedding $L^p(\B^n)\hookrightarrow W^{-1,p^*}(\B^n)$.
\end{proof}

\begin{remark}[Interpolation]\label{rmk:curlinterp}
Similarly to Remark \ref{rmk:interpol}, for $1<l<k$, $l\in \mathbb N$, we are able to deduce that
$$\|A\|_{L^l(B)}\leq \|\Curl A\|_{L^p(B)} \quad \text{for } A\in C^\infty_c(B,\Gamma_k),$$
for $p>l_*$. If we knew that \eqref{eq:fullrange} holds then, by repeating the arguments above, we would obtain a full range of elliptic estimates:
$$\|A\|_{L^{q}(B)}\leq C \|\Curl A \|_{L^p(B)}, \qquad \text{whenever } 1^*\leq q \leq k,\, q_*<p.$$
\end{remark}

In the special case when $k=2$, we can actually obtain a slightly sharper form of Proposition \ref{prop:curlmain} and in fact we manage to estimate the quantity $\rho_2(A)$ directly in $L^2$ in terms of a weaker operator than $\Curl$. To do so it is convenient to first fix some notation.

\begin{definition}\label{notation:A_2}
We introduce the notation $\mathcal Q\equiv \Div-\D \tr$, for which we have the relation $\mathcal Q A =L \Curl (A^\top)$ for a linear map $L\colon \V \to \R^n$, where $\V$ is an appropriate finite-dimensional vector space. We also have that 
$$\mathcal Q A=\Div(L_{\mathcal Q} A), \quad \text{where } L_{\mathcal Q}A=A-(\tr A)I_n \text{ for }A\in\R^{n\times n}.$$
\end{definition}
The linear map $L$ is easily computed by working in coordinates, see \eqref{eq:L2coordinates}. We also have the bilinear inequality 
\begin{align}\label{eq:bilinear}
    \rho_2(A)\rho_2(B)\leq C\langle A,L_{\mathcal Q}B\rangle\quad\text{for }A,\,B\in\Gamma_2,
\end{align}
see for instance \cite{Wang2009}. We next obtain the following refinement of Proposition \ref{prop:curlmain}.

\begin{proposition}\label{prop:curl}
Suppose that $p>2_*=\frac{2n}{n+2}$.
Then
    \begin{align}\label{eq:rho_curl_inequ1}
    \|\rho_2(A)\|_{L^{2}(\B^n)}\leq C\|{\mathcal Q A}\|_{L^p(\B^n)}\leq C\|\Curl A\|_{L^p(\B^n)}\quad\text{for }A\in C_c^\infty(\B^n,\Gamma_2).
    \end{align}
\end{proposition}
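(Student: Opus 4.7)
The second inequality is essentially tautological: by Definition \ref{notation:A_2}, $\mathcal{Q}A = L\Curl(A^\top)$ for a bounded linear map $L$, so $|\mathcal{Q}A|\leq C|\Curl A|$ pointwise and the $L^p$ bound follows. The substantive content is the first inequality, for which I would mimic the scheme of Proposition \ref{prop:Div} specialised to $k=2$, the decisive new ingredient being that the bilinear inequality \eqref{eq:bilinear} produces, after integration by parts, the operator $\mathcal{Q}$ (rather than $\Div$) on the right-hand side.

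Given $A\in C_c^\infty(\B^n,\Gamma_2)$, I would set $f\equiv \rho_2(A)\geq 0$ and appeal to Theorem~\ref{thm:kHessExistence} (after a standard regularisation $f\rightsquigarrow f\ast\eta_\e+\e$ to secure the $C^{1,1}$ regularity of the right-hand side, followed by a passage to the limit) to obtain a $2$-admissible solution $\psi\in C^{1,1}(\overline{\B^n})$ of
$$\rho_2(\D^2\psi)=f\quad\text{in }\B^n,\qquad \psi=0\quad\text{on }\partial\B^n.$$
Applying the bilinear inequality \eqref{eq:bilinear} pointwise with $B=\D^2\psi\in\Gamma_2$ and integrating,
$$\int_{\B^n}\rho_2(A)^2\,\dif x=\int_{\B^n}\rho_2(A)\rho_2(\D^2\psi)\,\dif x\leq C\int_{\B^n}\langle A,L_{\mathcal{Q}}\D^2\psi\rangle\,\dif x.$$
Expanding $L_{\mathcal{Q}}\D^2\psi=\D^2\psi-(\Delta\psi)I_n$ and using that $A\in C_c^\infty(\B^n)$, two integrations by parts convert the right-hand side into $-\int_{\B^n}(\Div A-\D\tr A)\cdot\D\psi\,\dif x=-\int_{\B^n}\mathcal{Q}A\cdot\D\psi\,\dif x$.

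By H\"older's inequality, the last expression is bounded by $\|\mathcal{Q}A\|_{L^p}\|\D\psi\|_{L^{p'}}$, and it remains to control $\|\D\psi\|_{L^{p'}}$ by $\|\rho_2(A)\|_{L^2}$. For this I would use the integral gradient estimate of Proposition~\ref{prop:WS}: since $p>\tfrac{2n}{n+2}=2_*$ is equivalent to $p'<\tfrac{2n}{n-2}=\tfrac{nk}{n-k}$ with $k=2$, one has $\|\D\psi\|_{L^{p'}(\B^n)}\lesssim \|\psi\|_{L^1(\B^n)}$. Chaining this with Lemma~\ref{lemma:Lpest} applied with exponent $p=k=2$ on the right-hand side (which delivers $\|\psi\|_{L^1(\B^n)}\lesssim \|f\|_{L^2(\B^n)}$ via the embedding $L^{2^{**}}(\B^n)\hookrightarrow L^1(\B^n)$ in the subcritical regime $n\geq 5$, and directly in lower dimensions), we obtain $\|\D\psi\|_{L^{p'}}\lesssim \|\rho_2(A)\|_{L^2}$. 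Substituting back yields $\|\rho_2(A)\|_{L^2}^2\leq C\|\mathcal{Q}A\|_{L^p}\|\rho_2(A)\|_{L^2}$, and dividing gives the desired estimate.

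The main obstacle is not really conceptual but the alignment of exponents: one must verify that the hypothesis $p>2_*$ dovetails exactly with the subcritical range in Proposition~\ref{prop:WS} and with the hypotheses of Lemma~\ref{lemma:Lpest}, which it does precisely because $k=2$ is the smallest value for which $\Gamma_k\subsetneq\Gamma_1$ still allows the bilinear pairing. The additional technical step of regularising $f$ so as to apply Theorem~\ref{thm:kHessExistence} is routine, as the estimate is stable under the approximation and the right-hand side of \eqref{eq:bilinear} depends only on linear quantities of $B=\D^2\psi$.
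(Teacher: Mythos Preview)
Your approach is essentially the paper's: the same bilinear inequality \eqref{eq:bilinear}, the same auxiliary $2$-Hessian Dirichlet problem, and the same chain Proposition~\ref{prop:WS} followed by Lemma~\ref{lemma:Lpest}. There is, however, one technical slip: you solve $\rho_2(\D^2\psi)=f$ on $\B^n$ itself, but Proposition~\ref{prop:WS} is an \emph{interior} estimate (it requires $\Omega\Subset B$), so as written it does not yield $\|\D\psi\|_{L^{p'}(\B^n)}$ with a constant independent of the support of $A$. The fix---which is precisely what the paper does, and what Proposition~\ref{prop:Div} (which you cite as your template) also does---is to extend $A$ by zero to $2\B^n$, solve the Dirichlet problem on $2\B^n$, and then apply Proposition~\ref{prop:WS} with $\Omega=\B^n\Subset 2\B^n$; since $\mathcal{Q}A$ is supported in $\B^n$ this suffices for the H\"older step. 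With this adjustment your argument is complete and coincides with the paper's (note also that only one integration by parts is needed, and that $\langle A, L_{\mathcal Q}B\rangle=\langle L_{\mathcal Q}A, B\rangle$ since $L_{\mathcal Q}$ is self-adjoint, so your pairing and the paper's are literally the same).
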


\begin{proof}
As in the proof of Proposition \ref{prop:Div}, extend $A\in C_c^\infty(\B^n,\Gamma_2)$ by zero to $2\B^n$ and define the function 
$$f=\rho_2(A).$$
Applying Theorem \ref{thm:kHessExistence}, let $\psi\in C^{1,1}(\overline{2\B^n})$ be the $k$-admissible solution of
$$\begin{cases}
\rho_2(\D^2\psi)=f & \text{ in }2\B^n,\\
\psi=0 & \text{ on }\d(2\B^n).
\end{cases}$$
By Proposition \ref{prop:WS}, as $p'<\frac{2n}{n+2}$,
$$\|\D\psi\|_{L^{p'}(\B^n)}{\leq C} \int_{2\B^n}|\psi|{\leq C} \|f\|_{L^2(2\B^n)}=\|f\|_{L^2(\B^n)},$$
where the second inequality follows from Lemma \ref{lemma:Lpest} and we have used compact support of $f$ in $\B^n$ in the last equality. As $0\leq f=\rho_2(A)$, we have obtained
$$
\|\D\psi\|_{L^{p'}(\B^n)}{\leq C} \|\rho_2(A)\|_{L^{2}(\B^n)}.
$$
We then estimate using the inequality of \eqref{eq:bilinear} and integration by parts:
\begin{align*}
    \int_{\B^n} \rho_2(A)^2&=
    \int_{\B^n} \rho_2(A)\rho_2(\D^2\psi){\leq C} \int_{\B^n} \langle L_{\mathcal Q}A, \D^2\psi\rangle  =-\int_{\B^n} \Div(L_{\mathcal Q}A)\cdot \D\psi\\
    &{\leq C} \|\D\psi\|_{L^{p'}(\B^n)}\|{\mathcal Q}A\|_{L^p(\B^n)}{\leq C} \left(\int_{\B^n}\rho_2(A)^2\right)^{1/2} \|{\mathcal Q}A\|_{L^p(\B^n)}.
\end{align*}
Rearranging, the conclusion follows.
\end{proof}

\subsection{The case of exact constraints: optimality and higher integrability}

As in Section \ref{sec:curloptimal}, by considering $\A$-free fields we will show that the exponent $k$ in Proposition \ref{prop:curlmain} is optimal.

\begin{proposition}\label{prop:Curl_optimal}
    Fix $2\leq k \leq n$ and $\e>0$. There is a closed convex cone $\mathcal K^\e\subset \textup{int\,} \Gamma_k$ and  a function $\phi_\e \in C^\infty(\B^n\backslash\{0\})$ such that
$$\D^2 \phi_\e\in\mathcal K^\e \text{ in }\B^n\backslash\{0\}, \qquad  \D^2\phi_\e\in L^k(\B^n)\setminus L^{k+\varepsilon}(\B^n).$$
\end{proposition}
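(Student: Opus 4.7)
The strategy is to exploit the fact that Hessians are automatically $\Curl$-free: taking $A = \D^2\phi_\varepsilon$ for a suitable radial $\phi_\varepsilon$ reduces the problem to a scalar construction, in direct analogy with the $\Div$-free example of Proposition \ref{prop:optimal_div}. I would fix an exponent $\alpha \in [n/(k+\varepsilon),\, n/k) \setminus \{2\}$, which is nonempty for every $\varepsilon>0$ since the interval has positive length, and set
\begin{equation*}
\phi_\varepsilon(x) = \begin{cases} |x|^{2-\alpha} & \text{if } \alpha<2, \\ -|x|^{2-\alpha} & \text{if } \alpha>2. \end{cases}
\end{equation*}
Clearly $\phi_\varepsilon\in C^\infty(\B^n\setminus\{0\})$, and the sign is chosen so that $\D^2\phi_\varepsilon$ carries a positive prefactor below.

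A direct computation with $r=|x|$ and $\nu=x/r$ yields
\begin{equation*}
\D^2\phi_\varepsilon(x)=|2-\alpha|\,r^{-\alpha}\,M(\nu),\qquad M(\nu) \equiv I_n-\alpha\,\nu\otimes\nu,
\end{equation*}
whose eigenvalues are $(1-\alpha,1,\ldots,1)$, with $1-\alpha$ in the direction $\nu$. A standard symmetric-polynomial calculation gives
\begin{equation*}
\sigma_j(1-\alpha,1,\ldots,1)=\binom{n-1}{j}+\binom{n-1}{j-1}(1-\alpha),
\end{equation*}
which is strictly positive for every $j\leq k$ precisely when $\alpha<n/k$. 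Thus $M(\nu)\in\mathrm{int}\,\Gamma_k$ for every $\nu\in\mathbb{S}^{n-1}$. I would then define $\mathcal K^\varepsilon$ to be the closed convex conic hull of the $\textup{O}(n)$-orbit of $M(e_1)$. Since $\mathrm{int}\,\Gamma_k$ is open, convex and $\textup{O}(n)$-invariant, it contains the whole orbit $\textup{O}(n)\cdot M(e_1)$ and hence its (compact) convex hull; as this convex hull is bounded away from $0$, its conic hull is already closed and contained in $\mathrm{int}\,\Gamma_k\cup\{0\}$. By construction $\D^2\phi_\varepsilon(x)\in\mathcal K^\varepsilon$ for every $x\neq 0$.

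Finally, the bound $|\D^2\phi_\varepsilon(x)|\approx r^{-\alpha}$ combined with spherical coordinates gives
\begin{equation*}
\int_{\B^n}|\D^2\phi_\varepsilon|^p\,\dif x\approx\int_0^1 r^{n-1-\alpha p}\,\dif r,
\end{equation*}
which is finite if and only if $\alpha p<n$. Our choice $\alpha\in[n/(k+\varepsilon),n/k)$ ensures this holds at $p=k$ and fails at $p=k+\varepsilon$, as required. The argument is mostly routine; the main point to track is the sign convention distinguishing the cases $\alpha<2$ and $\alpha>2$, which ensures that $\D^2\phi_\varepsilon$ has eigenvalue pattern proportional to $(1-\alpha,1,\ldots,1)$ in both regimes, so that a single cone $\mathcal K^\varepsilon$ accommodates every value of $\alpha$ uniformly. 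The verification that the closed conic hull of an $\textup{O}(n)$-orbit inside $\mathrm{int}\,\Gamma_k$ remains in $\mathrm{int}\,\Gamma_k\cup\{0\}$ is then immediate from convexity and $\textup{O}(n)$-invariance.
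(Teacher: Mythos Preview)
Your argument is correct and complete. The eigenvalue computation for $M(\nu)=I_n-\alpha\,\nu\otimes\nu$ is right, the condition $\alpha<n/k$ is exactly the threshold for $M(\nu)\in\mathrm{int}\,\Gamma_k$, and the integrability calculation pins down the window $[n/(k+\varepsilon),n/k)$ precisely. The construction of $\mathcal K^\varepsilon$ as the closed conic hull of the orbit $\{M(\nu):\nu\in\mathbb S^{n-1}\}$ is clean: the trace is constant and positive on the orbit, so the convex hull misses the origin and its conic hull is closed.

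The paper takes a different route. Rather than working with a radial function on all of $\R^n$, it first proves an auxiliary lemma (Lemma~\ref{lem:radial}) constructing a convex radial $u_\varepsilon$ on $\R^k$ with $\D^2 u_\varepsilon\in\textup{Sym}^+_k\cap Q_k^+(K)$ and $\D^2 u_\varepsilon\in L^k\setminus L^{k+\varepsilon}$, then sets $\phi_\varepsilon(x)=u_\varepsilon(x_1,\dots,x_k)$ and takes $\mathcal K^\varepsilon$ to be the cone of block-diagonal matrices $\mathrm{diag}(A_0,0)$ with $A_0\in\textup{Sym}^+_k\cap Q_k^+(K)$. So the paper's example is a Hessian depending only on $k$ coordinates, whereas yours is genuinely $n$-dimensional and $O(n)$-symmetric. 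Your approach is more direct and self-contained; the paper's has the virtue of isolating Lemma~\ref{lem:radial}, which is reused verbatim in the proof of Proposition~\ref{prop:optimality}. Both ultimately rest on the same scalar computation for powers of $|x|$, just organised differently.
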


The proof relies on the following:
\begin{lemma}\label{lem:radial}
For each $\varepsilon>0$ there exists $K=K(\e)>1$ and a function $u_\e\in C^\infty(\B^n\backslash\{0\})$ such that
$$\D^2 u_\e \in \textup{Sym}^+_n\cap Q_n^+(K) \text{ a.e.\ in } \B^n,\qquad 
\D^2u_\e\in L^n(\B^n)\setminus L^{n+\varepsilon}(\B^n).
$$
\end{lemma}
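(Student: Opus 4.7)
The plan is to look for $u_\e$ among radial power functions $u_\e(x) = |x|^{\alpha}$ with $\alpha = \alpha(\e) \in (1,2)$ chosen so that $\D^2 u_\e$ sits on the boundary of $L^{n+\e}$-integrability. Recall that if $u(x) = v(r)$ with $r = |x|$, then
$$\D^2 u(x) = \frac{\dot v(r)}{r}\bigl(I_n - \nu\otimes\nu\bigr) + \ddot v(r)\,\nu\otimes\nu,\qquad \nu = \tfrac{x}{r},$$
so the spectrum consists of $\ddot v(r)$ with multiplicity one (radial direction) and $\dot v(r)/r$ with multiplicity $n-1$ (tangential).

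For $v(r) = r^{\alpha}$ one has $\ddot v(r) = \alpha(\alpha-1)r^{\alpha-2}$ and $\dot v(r)/r = \alpha r^{\alpha-2}$. Taking any $\alpha \in (1,2)$, both eigenvalues are strictly positive on $\B^n\setminus\{0\}$, so $\D^2 u_\e \in \textup{int}\,\textup{Sym}^+_n$ there. Moreover $\|\D^2 u_\e\| = \alpha r^{\alpha-2}$ while $\det \D^2 u_\e = \alpha^n(\alpha-1) r^{n(\alpha-2)}$, so the $K$-quasiconformality condition $\|\D^2 u_\e\|^n \leq K \det \D^2 u_\e$ reduces to the single algebraic inequality
$$K \geq \frac{1}{\alpha-1},$$
which fixes the quasiconformality constant in terms of $\alpha$.

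Finally the integrability threshold is governed by the radial computation
$$\int_{\B^n} |\D^2 u_\e|^p\,\dif x \;\sim\; \int_0^1 r^{p(\alpha-2)+n-1}\,\dif r,$$
which is finite precisely when $p < \frac{n}{2-\alpha}$. The plan is therefore to set
$$\alpha = \alpha(\e) \equiv \frac{n+2\e}{n+\e}\in(1,2),\qquad K = K(\e) \equiv \frac{n+\e}{\e},$$
so that $\frac{n}{2-\alpha} = n+\e$; this places $\D^2 u_\e$ in $L^n(\B^n)$ but drives the integral defining the $L^{n+\e}$-norm to diverge logarithmically at the origin, as required. No step poses a real obstacle here: the only ``trick'' is picking the explicit critical exponent $\alpha(\e)$, after which all of positivity, quasiconformality, and sharp (non-)integrability follow from the radial formulae above.
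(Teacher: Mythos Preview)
Your proof is correct and essentially identical to the paper's own argument: the paper also takes $u_\e(r)=r^{\alpha}$ with $\alpha=\frac{\e}{n+\e}+1=\frac{n+2\e}{n+\e}$, computes the two eigenvalues $\ddot u_\e$ and $\dot u_\e/r$ of the radial Hessian, obtains the same quasiconformality constant $K=1+\frac{n}{\e}=\frac{n+\e}{\e}$, and verifies that $\int_0^1 r^{p(\alpha-2)+n-1}\,\dif r$ is finite exactly for $p<n+\e$.
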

\begin{proof}
Let $u_\e=u_\e(r)$ be radial. Then $\D u_\e(x) = \dot u_\e(r) \nu$ (where $\nu=\frac{x}{r}$) and
$$\D^2 u_\e(x) = \frac{\dot u_\e(r)}{r} \textup{Id} + \Big(\ddot u_\e(r)-\frac{\dot u_\e(r)}{r}\Big) \nu \otimes \nu.$$
It follows that $\det \D^2 u_\e (x) = \ddot u_\e(r) (\frac{\dot u_\e(r)}{r})^{n-1}$ and $\|\D^2 u_\e(x)\|= \max\{|\ddot u_\e|,|\frac{\dot u_\e}{r}|\}$. Fix $\e>0$ and take $u_\e(r)=r^{\frac{\e}{n+\e}+1}$; note that $u_\e$ is smooth away from the origin, as claimed. Substituting the definition of $u_\e$ in the above formulas we obtain
$$\frac{\|\D^2 u_\e(x)\|^n}{\det \D^2 u_\e(x)} = 1+ \frac{n}{\e}$$
where we used the fact that $\|\D^2 u_\e(x)\|=\frac{\dot u_\e(r)}{r}.$ In particular, $\D^2 u_\e \in Q_n^+(1+\frac n \e).$
Moreover, we compute
$$\int_{\B^n} \| \D^2 u_\e\|^p \dif x \approx_n
\int_0^1 r^{n-1} \left|\frac{\dot u_\e(r)}{r}\right|^p\dif r
= \Big(1+\frac{\e}{\e+n}\Big) \int_0^1 r^{\frac{n(\e+n-p)}{\e+n}-1} \dif r,
$$
and this integral is finite if and only if $p<n+\e.$
\end{proof}

\begin{proof}[Proof of Proposition \ref{prop:Curl_optimal}]
 Let $\phi_\e(x)=u_\e(x_1,\ldots,x_k)$, where $u_\e$ is given by Lemma \ref{lem:radial} for $n=k$. Then clearly $\D^2\phi_\e\in L^k\setminus L^{k+\varepsilon}(\B^n,\mathcal K^\e)$, where
$$
\mathcal K^\e\equiv \left\{
\left(\begin{matrix}
A_0 &0\\
0&0
\end{matrix}\right)\colon A_0\in \textup{Sym}^+_k \cap Q_k(K)
\right\}
$$
is a closed convex cone contained in $\mathrm{int\,}\Gamma_k$.
The proof of \eqref{eq:qmaxcurl} is similar to the proof of Proposition \ref{prop:optimal_div} and we omit the details.
\end{proof}

Despite Proposition \ref{prop:Curl_optimal}, irrotational fields uniformly inside $\Gamma_k$ have higher integrability:

\begin{corollary}\label{cor:curlUI}
Let $2\leq k \leq n$ and $\mathcal K\subset\mathrm{int}\,\Gamma_k\cup\{0\}$ be a closed convex cone. 
Consider a sequence $A_j\in C^\infty(\R^n,\mathcal K)$ which is bounded in $L^1_{\locc}(\R^n)$ and such that $\Curl A_j=0$, so $A_j=\D^2\phi_j$ for some $\phi_j\in C^\infty(\R^n)$.  For a ball $B=B_R(x_0)$ and $\theta\in (0,1)$, we have
$$\left(\frac{1}{|\theta B|}\int_{\theta B}|A_j|^{k} \dif x\right)^{\frac 1 k}\leq C(\mathcal K, R,\theta ) \frac{1}{|B|}\int_{B\setminus \theta B} |A_j| \dif x.$$
In particular, there is $\delta=\delta(\mathcal K)$ such that $(A_j)$ is bounded in $L^{k+\delta}_{\locc}(\R^n)$ and so $(A_j)$ is locally $k$-uniformly integrable.
\end{corollary}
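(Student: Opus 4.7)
The plan is to mirror the proof of Corollary~\ref{cor:divUI}, replacing $(\Div,\Div^2)$ by $\Curl$ itself, and substituting Propositions~\ref{prop:curlp=1} and~\ref{prop:curlmain} for Proposition~\ref{prop:Div^2}. The analogue of the identity $\Div^2(\rho A)=\langle \D^2\rho, A\rangle$ (which crucially exploits $\Div A=0$) is the product-rule bound
$$
|\Curl(\rho A_j)|\leq C|\D\rho||A_j|,
$$
valid whenever $\Curl A_j=0$ and $\rho\in C^\infty_c(B)$, because the $\rho$-terms combine to $\rho\,\Curl A_j=0$. Since $\mathcal K$ is a cone and $\rho\geq 0$, the truncated field $\rho A_j$ still takes values in $\mathcal K$, and the elliptic estimates of Section~\ref{subsec:curl} apply to it.

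I would then carry out a nested iteration. Choose radii $1=\tau_0>\tau_1>\cdots>\tau_N=\theta$ and cutoffs $\rho_i\in C^\infty_c(\tau_{i-1}B)$ with $\rho_i\equiv 1$ on $\tau_i B$ and $|\D\rho_i|\leq C((\tau_{i-1}-\tau_i)R)^{-1}$. At step $i=1$, starting from the assumed $L^1_\locc$-bound on $A_j$, applying Proposition~\ref{prop:curlp=1} to $\rho_1 A_j$ yields
$$
\Big(\tfrac{1}{|\tau_1 B|}\int_{\tau_1 B}|A_j|^{1^*}\Big)^{1/1^*}\leq \frac{C}{|B|}\int_{B\setminus \tau_1 B}|A_j|.
$$
At each subsequent step $i\geq 2$, given $A_j\in L^{r_{i-1}}(\tau_{i-1}B)$, I apply Proposition~\ref{prop:curlmain} to $\rho_i A_j$ with underlying cone $\mathrm{int}\,\Gamma_{k_i}\cup\{0\}$ for an integer $2\leq k_i\leq k$ chosen as large as possible subject to $(k_i)_*<r_{i-1}$; this is legitimate since $\Gamma_k\subset\Gamma_{k_i}$ implies $\mathcal K\subset\mathrm{int}\,\Gamma_{k_i}\cup\{0\}$. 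Setting $r_i=k_i$, each step strictly increases integrability $r_{i-1}\to r_i$, and after finitely many iterations we reach $r_N=k$. Chaining the estimates, and noting $B\setminus \tau_1 B\subset B\setminus \theta B$, produces the claimed reverse Hölder inequality with constant $C(\mathcal K,R,\theta)$.

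The main technical delicacy is handling boundary configurations where the Sobolev exponent $r_{i-1}^*$ coincides exactly with some $(k_i)_*$, so that the strict inequality $r_{i-1}>(k_i)_*$ required by Proposition~\ref{prop:curlmain} fails marginally (as happens e.g.\ when $1^*=k_*$ in certain dimensions). Such boundary cases can be absorbed either by choosing the target exponent $k_i-1$ instead of $k_i$ to create slack, or by invoking the sharper $L^k\log L$ refinement in Proposition~\ref{prop:curlmain}, exactly as in the analogous argument for Corollary~\ref{cor:divUI}. Finally, Gehring's lemma applied to the reverse Hölder inequality upgrades the bound to $L^{k+\delta}_\locc$-boundedness of $(A_j)$ for some $\delta=\delta(\mathcal K)>0$, from which local $k$-uniform integrability follows at once.
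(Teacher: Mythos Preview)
Your approach has a genuine gap: for $n\geq 4$ the iteration stalls after the first step. Proposition~\ref{prop:curlp=1} gives $A_j\in L^{1^*}$ on $\tau_1 B$, so $r_1=1^*=\tfrac{n}{n-1}$. To proceed you need some integer $k_2\geq 2$ with $(k_2)_*<r_1$, but $(2)_*=\tfrac{2n}{n+2}\geq \tfrac{n}{n-1}$ whenever $n\geq 4$ (with equality at $n=4$), so no valid $k_2$ exists. Neither fix you propose helps: taking ``$k_i-1$'' means $k_i=1$, which is out of range for Proposition~\ref{prop:curlmain}, and the $L^k\log L$ refinement there still requires the same strict inequality $p>k_*$. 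The analogy with Corollary~\ref{cor:divUI} breaks precisely here: Proposition~\ref{prop:Div^2} admits the endpoint $p=1$ (with subcritical $q<1^{**}$), whereas Proposition~\ref{prop:curlmain} has no such endpoint.

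The paper avoids this obstruction by \emph{not} iterating the $\Curl$ estimates directly. Instead it passes through the nonlinear duality: for each $2\leq l\leq k$ it sets $B_l\equiv\nabla F_l(A_j)$, which takes values in a closed cone inside $\mathrm{int}\,\Gamma_l^*\cup\{0\}$ by Proposition~\ref{prop:diffeo}, satisfies $|B_l|\approx |A_j|^{\,l-1}$ by homogeneity, and is $\Div$-free by Proposition~\ref{prop:divrhok*} since $\Curl A_j=0$. Applying Corollary~\ref{cor:divUI} to $B_l$ gives $\|A_j\|_{L^l(B)}\lesssim\|A_j\|_{L^{l-1}(2B\setminus B)}$; iterating over $l=2,\dots,k$ and finishing with Gehring yields the claim. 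The point you are missing is that the $\Div$-side estimates (via Proposition~\ref{prop:Div^2}) do cover the needed endpoints, so one should transfer the problem there rather than work with the $\Curl$ estimates of Section~\ref{subsec:curl}.
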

\begin{proof}
We begin by noting that, as $\Gamma_k\subset\Gamma_l$ for each $2\leq l\leq k$, we may define a field $B_l=\nabla F_l(A)$, which by Proposition \ref{prop:diffeo} takes values in a closed cone $\mathcal K^*\subset\textup{int}\,\Gamma_l^*\cup\{0\}$. In addition, Proposition \ref{prop:diffeo} and a homogeneity argument show that
$$c(\mathcal K, l) |A|^{l-1}\leq |\nabla F_l(A)|\leq C(n,l) |A|^{l-1}.$$
By Proposition \ref{prop:divrhok*}, $\Div B_l= 0$.  Thus, applying Corollary \ref{cor:divUI} with $\theta=\frac 1 2$, we have
$$\|A\|_{L^{l}(B)}{\leq C} \|B_l\|_{L^{\frac{l}{l-1}}(B)}{\leq C} \|B_l\|_{L^1(2B\setminus B)}{\leq C} \|A\|_{L^{l-1}(2B\setminus B)},$$
for any ball $B\subset \R^n$.
Iterating these estimates from $l=2$ to $l=k$, we obtain the desired inequality with $\theta=2^{-k}$, after relabeling the balls; the case of general $\theta$ follows similarly. The last claim follows from Gehring's Lemma.
\end{proof}
Corollary \ref{cor:curlUI} is a higher integrability result for sequences of Hessians of uniformly $k$-convex functions.
In the case $k=n$, the fields in Corollary \ref{cor:curlUI} are in fact the gradients of so-called \textit{$\delta$-monotone mappings} and in this case the improved integrability claimed above follows from the general theory of these mappings, as developed in \cite{Kovalev2007} and \cite[\S 3.11]{Astala2009}. 

We also deduce the reverse H\"older inequality:
\begin{corollary}
Let $\theta\in(0,1)$  and $\mathcal K\subset\mathrm{int}\,\Gamma_k\cup\{0\}$ be a closed convex cone. Then
$$
\|\D^2u\|_{L^{{k}}(\theta\B^n)}\leq C\|\D^2u\|_{L^1(\B^n\setminus(\theta\B^n))}\quad\text{for }u\in C^\infty(\bar\B^n)\text{ with }\D^2 u\in\mathcal{K}.
$$
\end{corollary}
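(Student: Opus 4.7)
The plan is to derive this reverse Hölder inequality as a direct specialisation of the preceding Corollary \ref{cor:curlUI}. Given $u \in C^\infty(\bar\B^n)$ with $\D^2 u \in \mathcal K$, I would set $A \equiv \D^2 u$, which is a smooth $\mathcal K$-valued field on $\bar\B^n$ automatically satisfying $\Curl A = 0$ and bounded, hence locally in $L^1$. This places us exactly in the framework of the previous corollary.

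Next, I would invoke Corollary \ref{cor:curlUI} with $B = \B^n$ (so $R = 1$) and the given $\theta$, which delivers the averaged estimate
$$\left(\frac{1}{|\theta\B^n|}\int_{\theta\B^n}|A|^{k}\dif x\right)^{1/k} \leq C(\mathcal K,\theta)\,\frac{1}{|\B^n|}\int_{\B^n\setminus\theta\B^n}|A|\dif x.$$
Multiplying both sides by $|\theta\B^n|^{1/k}$ and absorbing the resulting $n$-dependent constants into $C$ yields the stated inequality $\|\D^2 u\|_{L^{k}(\theta\B^n)}\leq C\|\D^2 u\|_{L^1(\B^n\setminus\theta\B^n)}$.

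I do not anticipate any genuine obstacle: the substance is entirely contained in Corollary \ref{cor:curlUI}, which itself rests on Corollary \ref{cor:divUI} together with the nonlinear duality encoded by Proposition \ref{prop:diffeo} and Corollary \ref{cor:divrhok*}. The only small point worth mentioning is that Corollary \ref{cor:curlUI} is phrased for a sequence $A_j$ of fields defined on all of $\R^n$; however, inspection of its proof shows that the argument is applied field by field and only uses cutoffs supported inside $\B^n$ via Proposition \ref{prop:Div^2}, so it transfers verbatim to the single field $A = \D^2 u \in C^\infty(\bar\B^n,\mathcal K)$.
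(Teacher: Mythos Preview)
Your proposal is correct and matches the paper's approach: the corollary is stated immediately after Corollary \ref{cor:curlUI} with the sentence ``We also deduce the reverse H\"older inequality'' and no separate proof, so it is indeed meant to follow by the specialisation $A=\D^2 u$, $B=\B^n$ that you describe. Your remark about the proof of Corollary \ref{cor:curlUI} being local and applying field by field is the only point requiring comment, and you have addressed it correctly.
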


Finally, we observe that a weaker version of Corollary \ref{cor:curlUI} holds outside the set of symmetric matrices. Indeed, extend the integrands $F_k$, defined  in \eqref{def:Fk} for matrices in $\textup{Sym}_n$, to $\R^{n \times n}$ through the formula
$$F_k(A)=\frac{1}{{n \choose k}}\sum_M M(A),$$
where the sum is over all $k\times k$ principal minors $M$. Applying Corollary \ref{cor:GRS}, we obtain:

\begin{corollary}\label{cor:LklogL}
Let $2\leq k \leq n$, $p>k_*$ and $\mathcal K \subset  \{A\in \R^{n\times n}: F_k(A)> 0\}$ be a closed cone. Then
$$
\|A\|_{L^k\log L(\B^n)}\leq C\left(\|A\|_{L^k(\B^n)}+\|\Curl A\|_{L^p(\B^n)}\right)\quad\text{for }A\in C_c^\infty(\B^n,\mathcal K).
$$
In particular, if a sequence $\D u_j\in C^\infty(\R^n,\mathcal K)$ is bounded in $L^k_{\locc}(\R^n)$ then it is locally $k$-uniformly integrable.  
\end{corollary}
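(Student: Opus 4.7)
My plan is to reduce the main inequality to an application of Corollary \ref{cor:GRS} with the $\Curl$-quasiaffine integrand $F_k$, and then to use a cutoff argument for the last assertion. Two preliminary observations will be needed. First, $F_k$ is $\Curl$-quasiaffine on all of $\R^{n\times n}$: it is a fixed linear combination of $k\times k$ principal minors of its matrix argument, and such minors are classical null Lagrangians (their integrals depend only on the boundary values of $u$ when one writes $A=\D u$). Second, since $\mathcal K$ is a closed cone contained in $\{F_k>0\}\cup\{0\}$ and $F_k$ is continuous and positively $k$-homogeneous, compactness on $\mathcal K\cap\mathbb S^{n^2-1}$ supplies a constant $c>0$ such that
\[
c|A|^k\leq F_k(A)\quad\text{for every }A\in\mathcal K.
\]

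With these ingredients, applying Corollary \ref{cor:GRS} to the quasiaffine integrand $F_k$ and any $A\in C_c^\infty(\B^n,\mathcal K)$ should, in complete analogy with the closing step of the proof of Proposition \ref{prop:curlmain}, yield
\[
\|A\|_{L^k\log L(\B^n)}\leq C\bigl(\|A\|_{L^k(\B^n)}+\|\Curl A\|_{\dot W^{-1,p^*}(\B^n)}\bigr).
\]
The restriction $p>k_*$ is precisely what makes the Sobolev bookkeeping $((p^*)')^*=p'$ legitimate, so that duality with the embedding $W^{1,(p^*)'}(\B^n)\hookrightarrow L^{p'}(\B^n)$ produces $L^p(\B^n)\hookrightarrow W^{-1,p^*}(\B^n)$, and the stated inequality follows.

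For the second assertion, I will argue by cutoff. Let $A_j=\D u_j\in C^\infty(\R^n,\mathcal K)$ be bounded in $L^k_{\locc}(\R^n)$; note that $\Curl A_j=0$ since $A_j$ is a gradient. Fix a compact set $K\subset\R^n$, a ball $B\supset K$, and $\chi\in C_c^\infty(B,[0,1])$ with $\chi\equiv 1$ on $K$. Since $\chi\geq 0$ and $\mathcal K$ is a positive cone, $\chi A_j\in C_c^\infty(B,\mathcal K)$; a rowwise computation using $\Curl A_j=0$ produces the pointwise bound $|\Curl(\chi A_j)|\leq|\D\chi|\,|A_j|$. Choosing any $p\in(k_*,k)$, which is possible since $k_*<k$, H\"older then delivers $\|\Curl(\chi A_j)\|_{L^p(B)}\leq C(\chi)\|A_j\|_{L^k(B)}$. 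Applying the main estimate (after translation and scaling of $B$ to $\B^n$) to $\chi A_j$ thus yields a uniform $L^k\log L(K)$ bound on $A_j$, and the de la Vall\'ee Poussin criterion then furnishes local $k$-uniform integrability of $(A_j)$.

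The main point to check will be that Corollary \ref{cor:GRS} applies verbatim in the non-symmetric setting here: the integrand $F_k$ was introduced on $\textup{Sym}_n$ in Section \ref{sec:symmetric}, but it has been extended to all of $\R^{n\times n}$ as a sum of principal minors, and principal minors of general matrices remain null Lagrangians, so this should fit directly into the framework of Appendix \ref{app:LlogL}. The remaining technical care lies in the exponent arithmetic for the Sobolev embedding, which has already been verified within the range $p>k_*$ in the proof of Proposition \ref{prop:curlmain}.
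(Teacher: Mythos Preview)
Your proof is correct and follows exactly the approach the paper intends: one simply applies Corollary \ref{cor:GRS} with the $\Curl$-quasiaffine integrand $F_k$ together with the embedding $L^p(\B^n)\hookrightarrow \dot W^{-1,p^*}(\B^n)$, and the cutoff argument you give for the second assertion is the standard way to localise. One small imprecision worth noting: the restriction $p>k_*$ is not what makes the Sobolev identity $((p^*)')^*=p'$ work (that holds for all $1<p<n$); rather, it is precisely the condition $p^*>k$, i.e.\ $r>s$, required in Theorem \ref{thm:GRS} and hence in Corollary \ref{cor:GRS}.
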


Clearly $\Gamma_k\subset \{A\in \R^{n\times n}: F_k(A)\geq 0\}$ and note that, when $k=n$, this last cone coincides with the closure of $\{\det>0\}$. 

\begin{proof}[Proof of Theorem \ref{thm:maincurl}]
 The proof of Theorem \ref{thm:maincurl} now follows similar lines to the proof of Theorem \ref{thm:maindiv}, where we now combine Propositions \ref{prop:curlmain} and \ref{prop:Curl_optimal}. In particular, \eqref{eq:qmaxcurl} holds.
\end{proof}

\section{Other examples}
\label{sec:otherexamples}
In this final section, we treat three  examples slightly different from those considered above. The first is an example in which we obtain the maximal gain of integrability exponent $q_{\max}=\infty$. This operator, while related to the pair $(\Div,\Curl)$, is not elliptic, but has a wave cone that is not spanning. 

The second example  is a second order variant of the $\Curl$, similar to the $\Div^2$ operator considered above. This is the composition of $\di$ with $\mathcal Q$, the sub-curl type operator defined above in Definition \ref{notation:A_2}, and we obtain the same $q_{\max}$ as in that case. 

The final example is a sub-curl operator strictly weaker than the $\Curl$, acting on functions constrained to $\textup{Sym}^+$.

\subsection{A degenerate example}
\label{sec:degenerate}
In this subsection we give a first order example. The cone we consider is very large, $\mathcal K=\{\tr\geq 0\}$, and the operator is almost elliptic. The example is somewhat degenerate in the sense that the wave cone of the operator is not spanning. However, in the notation of Conjecture \ref{conj:big}, we obtain $q_{\max}=\infty$.

\begin{proposition}
\label{prop:divcurl}Let $\mathcal K\equiv \{A\in \R^{n\times n}:\tr(A)\geq 0\}$.
\begin{enumerate}
    \item\label{it:p<n} Let $1\leq p<n$. We have that 
$$
\|\tr A\|_{\lebe^{p^*}(\R^n)}\leq C\left(\|\Div A\|_{\lebe^p(\R^n)}+\|\Curl (A^\top) \|_{\lebe^p(\R^n)}\right) \quad \text{for } A\in C^\infty_c(\R^n,\mathcal K).
$$
\item\label{it:p>n}  Let $p> n$. We have that
$$
\|\tr A\|_{\lebe^{\infty}(\B^n)}\leq C\left(\|\Div A\|_{\lebe^p(\B^n)}+\|\Curl (A^\top) \|_{\lebe^p(\B^n)}\right) \quad \text{for } A\in C^\infty_c(\B^n,\mathcal K).
$$
\end{enumerate}
\end{proposition}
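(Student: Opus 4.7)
The plan is to reduce both parts of the proposition to standard scalar Sobolev and Morrey inequalities by establishing a single algebraic-differential identity relating $\nabla(\tr A)$, $\Div A$, and $\Curl(A^\top)$. First I would note that for each pair of indices $i,k$, the definition of the curl of the $k$-th row of $A^\top$ (equivalently, the $k$-th column of $A$) gives the pointwise identity
\begin{equation*}
(\Curl(A^\top))_{k,ik}=\p_ia_{kk}-\p_ka_{ik}.
\end{equation*}
Solving for $\p_ia_{kk}$ and summing over $k=1,\ldots,n$ should yield
\begin{equation*}
\p_i(\tr A)=\sum_{k=1}^n\p_ka_{ik}+\sum_{k=1}^n(\Curl(A^\top))_{k,ik}=(\Div A)_i+(\mathcal T\Curl(A^\top))_i,
\end{equation*}
where $\mathcal T$ is the algebraic linear map $B\mapsto\bigl(\sum_k B_{k,ik}\bigr)_{i=1}^n$. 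In particular, this gives the pointwise bound $|\nabla(\tr A)|\leq|\Div A|+C|\Curl(A^\top)|$ and, after taking $L^p$ norms,
\begin{equation*}
\|\nabla(\tr A)\|_{L^p}\leq\|\Div A\|_{L^p}+C\|\Curl(A^\top)\|_{L^p}.
\end{equation*}

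Given this, the rest is immediate. For part \ref{it:p<n}, since $\tr A$ is a compactly supported scalar function on $\R^n$, the Gagliardo--Nirenberg--Sobolev inequality $\|\tr A\|_{L^{p^*}(\R^n)}\leq C\|\nabla(\tr A)\|_{L^p(\R^n)}$ is valid for the full range $1\leq p<n$, including the critical endpoint $p=1$, and the claim follows. For part \ref{it:p>n}, since $\tr A\in C_c^\infty(\B^n)$, the Morrey embedding $\sobo^{1,p}_0(\B^n)\hookrightarrow\lebe^\infty(\B^n)$, valid for $p>n$, delivers the desired $L^\infty$ estimate.

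The main (and essentially only) step is to verify the algebraic identity, which is a routine index manipulation. Notably neither the positivity constraint $\tr A\geq 0$ nor any compensated-compactness or Bourgain--Brezis--Van Schaftingen type machinery is required. This elementary behavior reflects the fact, noted in the introduction, that the wave cone $\Lambda_\cala$ of $\cala=(\Div,\Curl\circ\cdot^\top)$ is contained in $\{\tr=0\}$; the cone $\{\tr\geq 0\}$ is therefore transverse to $\Lambda_\cala$ in the trivial way, consistent with the claim $q_{\max}=\infty$.
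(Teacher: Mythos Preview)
Your proposal is correct and is essentially identical to the paper's own proof: both establish the pointwise identity $\D(\tr A)=\Div A + L(\Curl A^\top)$ for a linear map $L$ (the paper writes it as $(\Div A-\D\tr A)_i=\sum_j(\curl A^\top_j)_{ij}$, which is your identity rearranged), and then invoke the scalar Gagliardo--Nirenberg and Morrey inequalities. Your observation that the constraint $\tr A\geq 0$ plays no role is also implicit in the paper's argument.
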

We remark that in the case that $\widetilde{\mathcal K}\subset \mathrm{int}\,\mathcal K\cup\{0\}$ is a  closed convex cone, we have
the higher integrability estimates 
$$
\|A\|_{\lebe^{p^*}(\R^n)}\leq C\left(\|\Div A\|_{\lebe^p(\R^n)}+\|\Curl (A^\top) \|_{\lebe^p(\R^n)}\right) \quad \text{for } A\in C^\infty_c(\R^n,\widetilde{\mathcal K}).
$$
We note that the operator $\mathcal P A\equiv (\Div A, \Curl (A^\top))$ is both elliptic and canceling whenever $A\in \textup{Sym}_n$, in which case Proposition \ref{prop:divcurl} follows from the work of Van Schaftingen \cite{VanSchaftingen2011} and Stolyarov \cite{Stolyarov2020}. However, for non-symmetric matrices, this operator is not elliptic: it is easy to verify that $\mathcal P(\xi)( \xi \otimes \xi^\bot)=0$, where $\xi^\bot$ is any vector orthogonal to $\xi$.  Therefore Proposition \ref{prop:divcurl} is genuinely different from the previously mentioned results.

\begin{proof}
Consider first the case $1\leq p<n$. Let us take $A\in C^\infty_c(\R^n,\mathcal K)$. Writing $A=(a_{ij})$ and $A^\top=(a^\top_{ij})$, and $A_i, A_i^\top$ for their rows, we have
\begin{align}\label{eq:L2coordinates}
({\Div A - \D\tr(A)})_i
= \sum_j \p_j a_{ij} -\p_i a_{jj} 
= \sum_j \p_j a^\top_{ji} -\p_i a^\top_{jj}
= -\sum_j (\curl A^\top_j)_{ij}.
\end{align}
It then follows from the usual Gagliardo--Nirenberg inequality that
$$
\Vert \tr(A)\Vert_{L^{p^*}}
{\leq C} 
\Vert \D \tr(A)\Vert_{L^p}
\leq 
\Vert \Div A \Vert_{L^p}+\Vert \Curl A^\top \Vert_{L^p},
$$
as wished.
An  analogous argument proves part \ref{it:p>n}.
\end{proof}

We note that, by using the refined Gagliardo--Nirenberg--Sobolev inequality  we can actually deduce a stronger form of Proposition \ref{prop:divcurl}, where the $\lebe^{p^*}$-norm can be replaced by the Lorentz $\lebe^{p^*,p}$-norm \cite{Alvino1977,Oneil1963}.

\subsection{A second order example}
\label{c}
In this subsection we consider an example of a second order operators and a cone for which we have the full resolution of Conjecture \ref{conj:big}. The results are similar in spirit to Theorem \ref{thm:div^2}.
\begin{proposition}\label{prop:div^2L_2A}
    Let $p,\,q\geq 1$. 
We have that 
    $$
    \|\rho_2(A)\|_{L^q(\B^n)}\leq C\|\Div^2A-\Delta \tr A\|_{L^p(\B^n)}\quad\text{for }A\in C_c^\infty(\B^n,\Gamma_2),
    $$
whenever
\begin{enumerate}
   \item\label{itm:div2L2a} $p>1$, $n\geq 4$, and $q=\min\{p^{**},2\}$,
      \item $p=1$, $q<1^{**}$, $q\leq 2$.
\end{enumerate}
As before, if $\mathcal K\subset \mathrm{int}\,\Gamma_2\cup\{0\}$ is a closed convex cone then, for the same ranges of $p$ and $q$, we have
$$
    \|A\|_{L^q(\B^n)}\leq C\|\Div^2A-\Delta \tr A\|_{L^p(\B^n)}\quad\text{for }A\in C_c^\infty(\B^n,\mathcal K).
    $$
\end{proposition}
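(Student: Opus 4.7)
The plan is to mirror the proof of Proposition \ref{prop:Div^2}, but to replace the duality pairing of Proposition \ref{prop:dualitypair} by the sharper bilinear inequality \eqref{eq:bilinear}, which is available precisely for $k=2$ and is tailored to the operator $L_{\mathcal Q}$.

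First I would set $f \equiv \rho_2(A)^{q-1} \geq 0$ and apply Theorem \ref{thm:kHessExistence} to produce a $2$-admissible function $\psi \in C^{1,1}(\overline{\B^n})$ with $\rho_2(\D^2\psi) = f$ in $\B^n$ and $\psi = 0$ on $\partial \B^n$; a routine approximation of $f$ by $C^{1,1}$ data lets us apply the theorem, and the subsequent estimates pass to the limit. Since $\D^2\psi \in \Gamma_2$ a.e., I may use \eqref{eq:bilinear} with the pair $(A,\D^2\psi)$ and integrate by parts twice, using that $A$ is compactly supported in $\B^n$ and the symmetry $\langle A, L_{\mathcal Q}\D^2\psi\rangle = \langle L_{\mathcal Q} A, \D^2\psi\rangle$ together with the identity $\Div^2(L_{\mathcal Q} A) = \Div^2 A - \Delta \tr A$:
\begin{equation*}
\int_{\B^n} \rho_2(A)^q \;=\; \int_{\B^n} \rho_2(A)\rho_2(\D^2\psi) \;\lesssim\; \int_{\B^n} \langle L_{\mathcal Q} A, \D^2\psi\rangle \;=\; \int_{\B^n} \psi\bigl(\Div^2 A - \Delta \tr A\bigr).
\end{equation*}
H\"older's inequality then gives an upper bound of $C\|\psi\|_{L^{p'}(\B^n)}\,\|\Div^2 A - \Delta \tr A\|_{L^p(\B^n)}$.

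Next I would invoke Lemma \ref{lemma:Lpest} with $k=2$ to estimate $\|\psi\|_{L^{p'}(\B^n)} \lesssim \|f\|_{L^{q'}(\B^n)} = \|\rho_2(A)\|_{L^q(\B^n)}^{q-1}$. The hypothesis $q'\geq k=2$ is equivalent to $q\leq 2$, and the target exponent $p'$ is reached in each of the three subcases: (i) for $p>1$ and $q = p^{**} \leq 2$, the Sobolev duality identity $((p^{**})')^{**} = p'$ yields the required Sobolev embedding; (ii) for $p>1$ and $q = 2 < p^{**}$, we have $q' = 2$, and the assumption $n\geq 4$ places $q'$ in the regime $q' \leq n/2$ where Lemma \ref{lemma:Lpest} delivers $L^r$ with $r$ large enough to cover $p' < \infty$; (iii) for $p=1$, the constraint $q<1^{**}$ is exactly $q' > n/2$, so $\psi$ is bounded in $L^\infty$. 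Combining the two bounds yields $\|\rho_2(A)\|_{L^q(\B^n)}^q \lesssim \|\rho_2(A)\|_{L^q(\B^n)}^{q-1}\,\|\Div^2 A - \Delta\tr A\|_{L^p(\B^n)}$, and the first inequality follows after division. The second estimate on a closed convex subcone $\mathcal K \subset \mathrm{int}\,\Gamma_2 \cup \{0\}$ follows from the pointwise comparison $|A| \lesssim \rho_2(A)$ valid on $\mathcal K$.

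The only delicate step is tracking the exponents through Lemma \ref{lemma:Lpest}: the restriction $n\geq 4$ in (a) and the strict inequality $q<1^{**}$ in (b) are precisely what is needed for the Sobolev estimate on the auxiliary $2$-Hessian equation to reach the dual exponent $p'$. The boundary case $n=4$ with $q'=2=n/2$ is handled by the fact that we only require $p'<\infty$ in that situation, which is guaranteed by $p>1$.
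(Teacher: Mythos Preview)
Your proposal is correct and follows essentially the same approach as the paper: set $f=\rho_2(A)^{q-1}$, solve the $2$-Hessian Dirichlet problem, use the bilinear inequality \eqref{eq:bilinear} together with two integrations by parts and the identity $\Div^2(L_{\mathcal Q}A)=\Div^2 A-\Delta\tr A$, and close with Lemma \ref{lemma:Lpest}. Your explicit case split tracking why the exponent $p'$ is reachable (in particular the role of $n\geq 4$ when $q=2$ and of $q<1^{**}$ when $p=1$) is more detailed than the paper's proof, which simply asserts that ``the restrictions on $p$ and $q$ follow from those in the statement of the lemma''.
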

We recall that here $p^{**}=(p^*)^*=\frac{np}{n-2p}$ is the Sobolev exponent. Note that in the special cases $n=2,\,3$, we can take $p=1$, $q=2$.
 We also have 
\begin{equation}
    \label{eq:divQ}
\Div^2A-\Delta \tr A= \Div^2(L_{\mathcal Q} A)=\di\mathcal Q A,
\end{equation}
where we recall Definition \ref{notation:A_2}.

\begin{proof}
Let $A\in C_c^\infty(\B^n,\mathcal K)$ and let $\psi\in C^{1,1}(\overline{\B^n})$  be the $k$-admissible solution of
$$\begin{cases}
\rho_2(\D^2\psi)=f\equiv \rho_2(A)^{q-1} & \text{ in }\B^n,\\
\psi=0 &\text{ on }\d \B^n.
\end{cases}
$$
In particular, $\D^2\psi\in\Gamma_2$ a.e.\ in $\B^n$ and we have the estimate
$$
\|\psi\|_{L^{p'}(\B^n)}{\leq C} \|f\|_{L^{q'}(\B^n)}= \left(\int_{\B^n}\rho_2(A)^q\right)^{1/q'}
$$
by  Lemma \ref{lemma:Lpest}, where the restrictions on $p$ and $q$ follow from those in the statement of the lemma.
We then estimate using inequality \eqref{eq:bilinear} and integration by parts twice:
\begin{align*}
    \int_{\B^n} \rho_2(A)^{q}=&\int_{\B^n} \rho_2(A)\rho_2(\D^2\psi){\leq C} \int_{\B^n} \langle L_{\mathcal Q}A, \D^2\psi \rangle =\int_{\B^n} \psi\Div^2(L_{\mathcal Q}A)\\
    &{\leq C} \|\psi\|_{L^{p'}(\B^n)}\|\Div^2(L_{\mathcal Q}A)\|_{L^p(\B^n)}{\leq C} \left(\int_{\B^n}\rho_2(A)^q\right)^{1/q'} \|\Div^2(L_{\mathcal Q}A)\|_{L^p(\B^n)}.
\end{align*}
The first inequality follows from \eqref{eq:divQ} and the second is also proved after noting that for $A\in\mathcal{K}$ we have $|A|{\leq C} \rho_2(A)$. 
\end{proof}

The next result shows that the exponent $q=2$ in Proposition \ref{prop:div^2L_2A} is optimal.

\begin{proposition}\label{prop:optimality}
    Let $n\geq2$. For each $\varepsilon>0$ there is a closed convex cone $\mathcal K^\e\subset \mathrm{int}\,\Gamma_2\cup\{0\}$
such that the inequality
$$
\|v\|_{L^{2+\varepsilon}(\mathbb B^n)}\leq C\|\cala v\|_{L^\infty(\mathbb B^n)}\quad\text{for }v\in C_c^\infty(\mathbb B^n,\mathcal K^\e)
$$
fails, where
 $\A=\di\mathcal Q=\Div^2-\Delta\tr$.
\end{proposition}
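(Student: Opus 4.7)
The plan is to exploit the fundamental observation that the operator $\cala=\Div^2-\Delta\tr$ annihilates Hessians of scalar functions: for any $\varphi\in C^\infty(\R^n)$,
$$\cala(\D^2\varphi)=\Div^2(\D^2\varphi)-\Delta\tr(\D^2\varphi)=\Delta^2\varphi-\Delta^2\varphi=0.$$
Consequently, the construction of the desired counterexample reduces to exhibiting a scalar $\varphi_\eps$ whose Hessian takes values in a suitable closed convex subcone of $\mathrm{int}\,\Gamma_2\cup\{0\}$ but fails to be in $L^{2+\eps}$ near the origin.

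For this, we recycle the construction from Proposition~\ref{prop:Curl_optimal} specialised to $k=2$: let $u_\eps\in C^\infty(\B^2\setminus\{0\})$ be the radial function supplied by Lemma~\ref{lem:radial} applied with $n=2$, so $\D^2 u_\eps\in \textup{Sym}^+_2\cap Q_2^+(K)$ almost everywhere, and $\D^2 u_\eps\in L^2(\B^2)\setminus L^{2+\eps}(\B^2)$ for some $K=K(\eps)$. Set $\varphi_\eps(x)\equiv u_\eps(x_1,x_2)$, so that $\D^2\varphi_\eps$ is block-diagonal with $\D^2 u_\eps$ in the top-left $2\times 2$ block and zeros elsewhere. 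As in Proposition~\ref{prop:Curl_optimal}, its values lie in the closed convex cone
$$\mathcal K^\eps\equiv\left\{\begin{pmatrix}A_0 & 0\\ 0 & 0\end{pmatrix}\colon A_0\in \textup{Sym}^+_2\cap Q_2^+(K)\right\}\subset \mathrm{int}\,\Gamma_2\cup\{0\},$$
and $\D^2\varphi_\eps\notin L^{2+\eps}(\frac{1}{2}\B^n)$ since the singularity of $\D^2 u_\eps$ is concentrated at the origin.

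To produce compactly supported test fields, we localise by a cutoff $\chi\in C^\infty_c(\B^n,[0,1])$ with $\chi\equiv 1$ on $\frac{1}{2}\B^n$, setting $A_\eps\equiv \chi\,\D^2\varphi_\eps$; cone membership is preserved since $\mathcal K^\eps$ is a cone and $\chi\geq 0$. A short computation, distributing the Leibniz rule through $\Div^2$ and $\Delta\tr$ and using $\cala(\D^2\varphi_\eps)=0$, yields the cancellation identity
$$\cala(\chi\,\D^2\varphi_\eps)=\langle \D^2\chi-(\Delta\chi)\,I_n,\,\D^2\varphi_\eps\rangle,$$
which vanishes on $\frac{1}{2}\B^n$ (where $\chi$ is constant) and is uniformly bounded elsewhere, since $\D^2\varphi_\eps$ is smooth on $\B^n\setminus \frac{1}{2}\B^n$. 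Hence $\|\cala A_\eps\|_{L^\infty(\B^n)}\leq C$ while $\|A_\eps\|_{L^{2+\eps}(\B^n)}=+\infty$. Finally, since $A_\eps$ fails to be smooth at the origin, we pass to genuinely smooth test fields via standard mollification $A_{\eps,j}\equiv A_\eps\ast\rho_{1/j}$: convexity of $\mathcal K^\eps$ guarantees $A_{\eps,j}\in C^\infty_c(\B^n,\mathcal K^\eps)$ for $j$ large, and since $\cala$ has constant coefficients we have $\|\cala A_{\eps,j}\|_{L^\infty}\leq \|\cala A_\eps\|_{L^\infty}\leq C$, while $\|A_{\eps,j}\|_{L^{2+\eps}}\to \infty$ as $j\to\infty$. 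This refutes the inequality, exactly as in the smoothing arguments concluding Propositions~\ref{prop:optimal_div} and \ref{prop:Curl_optimal}; the only nontrivial ingredient is the construction of $u_\eps$, which is already supplied by Lemma~\ref{lem:radial}.
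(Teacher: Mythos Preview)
Your proof is correct and follows essentially the same approach as the paper: both use the Hessian of the function $u_\eps$ from Lemma~\ref{lem:radial} (with $n=2$) extended trivially to $\R^n$, exploit the identity $\cala(\D^2\varphi)=0$, and localise via a cutoff using the computation $\cala(\chi\,\D^2\varphi)=\langle L_{\mathcal Q}\D^2\chi,\D^2\varphi\rangle$. Your write-up is in fact somewhat more explicit than the paper's, spelling out the cancellation identity and the mollification step in detail.
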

 In particular, in Proposition \ref{prop:optimality}, we can take $n=2$.
In this case, the estimate predicted by \cite[Conjecture 1.9]{Arroyo-Rabasa2021a} should hold for any $L^q$, $q<\infty$, on  the right hand side, and so the proposition falsifies that conjecture.

\begin{proof}
Let $u(x)=u_\e(x_1,x_2)$, where $u_\e$ is given by Lemma \ref{lem:radial} for $n=2$. Then clearly $\D^2u\in L^2\setminus L^{2+\varepsilon}_{\locc}(\mathbb B^n,\mathcal K^\e)$, where
$$
\mathcal K^\e=\left\{
\left(\begin{matrix}
A_0 &0\\
0&0
\end{matrix}\right)\colon A_0\in \textup{Sym}_2\cap Q_2^+(K)
\right\}
$$
is contained in $\mathrm{int\,}\Gamma_2\cup\{0\}$ and $K=K(\e)$. By \eqref{eq:divQ} and Definition \ref{notation:A_2}, we have that $\A \D^2u=0$. The same cut-off argument as in the proof of Theorem \ref{thm:maindiv}, noting that $$
\Div^2L_{\mathcal Q}(\rho \D^2u)=\langle L_{\mathcal Q}\D^2\rho,\D^2u \rangle,
$$
allows us to conclude.
\end{proof}

The consequence pertaining to Hessians in Corollary \ref{cor:curlUI} for $k=2$ can be improved using Proposition \ref{prop:div^2L_2A} instead of Proposition \ref{prop:curlmain}. The improvement concerns the operator; rather than imposing that we have a sequence of exact Hessians, we instead require only that $\mathcal Q A_j=\Div L_{\mathcal Q}A_j=0$ for each $j$.
\begin{corollary}\label{cor:divL2UI}
 Let 
 $\mathcal K\subset \mathrm{int}\,\Gamma_2\cup\{0\}$ be a closed convex cone. Consider a sequence $A_j\in C^\infty(\R^n,\mathcal K)$ which is bounded in $L^1_{\locc}(\R^n)$ and such that $\mathcal Q A_j=0$ for all $j$. There is $\delta=\delta(\mathcal K)$ such that $A_j$ is bounded in $L^{2+\delta}_{\locc}(\R^n).$ In particular, $(A_j)$ is locally $2$-uniformly integrable.
\end{corollary}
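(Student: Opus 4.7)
The plan is to mirror the argument of Corollary \ref{cor:divUI}, substituting the pair $(\Div^2, \Gamma_k^*)$ by $(\di\mathcal Q, \Gamma_2)$ and invoking Proposition \ref{prop:div^2L_2A} in place of Proposition \ref{prop:Div^2}.

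First I would record the key commutator identity. For any scalar cutoff $\rho\in C_c^\infty(B)$ and any $A\in C^\infty(\R^n,\mathcal K)$, note that $L_{\mathcal Q}A$ is symmetric since $\mathcal K\subset\Gamma_2\subset\textup{Sym}_n$, and that $L_{\mathcal Q}(\rho A)=\rho L_{\mathcal Q}A$ by linearity. Using the product rule twice and cancelling the terms that involve $\Div L_{\mathcal Q}A=\mathcal Q A=0$, one obtains
$$\di\mathcal Q(\rho A)=\Div^2(\rho L_{\mathcal Q}A)=\langle L_{\mathcal Q}A,\D^2\rho\rangle,$$
and in particular $|\di\mathcal Q(\rho A)|\lesssim|A|\,|\D^2\rho|$ pointwise. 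The symmetry of $L_{\mathcal Q}A$ is used here to identify $\p_i(L_{\mathcal Q}A)_{ij}$ with $(\Div L_{\mathcal Q}A)_j=(\mathcal Q A)_j=0$.

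Next I would fix a ball $B=B_R(x_0)$ and $\theta\in(0,1)$ and establish the reverse Hölder inequality
$$\left(\frac{1}{|\theta B|}\int_{\theta B}|A_j|^2\dif x\right)^{1/2}\leq C(\mathcal K,R,\theta)\,\frac{1}{|B|}\int_{B\setminus\theta B}|A_j|\dif x.$$
When $n\in\{2,3\}$ this is immediate: pick $\rho\in C_c^\infty(B)$ non-negative with $\rho\equiv 1$ on $\theta B$, apply Proposition \ref{prop:div^2L_2A}\ref{itm:div2b} with $p=1$, $q=2$ (allowed since $1^{**}=\infty$ or $3$, both exceeding $2$), use the cone comparison $|A|\lesssim\rho_2(A)$ valid on $\mathcal K$, and invoke the identity from Step~1. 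For $n\geq 4$ one has $1^{**}\leq 2$ and a bootstrap is required. A first application of Proposition \ref{prop:div^2L_2A}\ref{itm:div2b} converts the $L^1_{\locc}$ bound into an $L^{q_0}_{\locc}$ bound for any $q_0<1^{**}$ close to $1^{**}$, on a slightly shrunken ball. One then iterates with Proposition \ref{prop:div^2L_2A}\ref{itm:div2a}: at each step, the exponent is promoted to $q_{k+1}=\min\{q_k^{**},2\}$, and the ball is shrunk slightly. Since $p\mapsto p^{**}$ is strictly increasing on $(0,n/2)$ and $p^{**}\geq 2$ whenever $p\geq 2n/(n+4)$, finitely many iterations suffice to reach the exponent $2$.

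Finally, Gehring's Lemma \cite{Iwaniec2001} applied to the reverse Hölder inequality produces some $\delta=\delta(\mathcal K)>0$ and local $L^{2+\delta}$ bounds for $(A_j)$, from which local $2$-uniform integrability follows automatically. The only real subtlety is Step~1, where the symmetry of $A$ (and thus of $L_{\mathcal Q}A$) is essential for the integration by parts to cancel cleanly; the rest of the argument is a routine Moser-type iteration strictly parallel to the proof of Corollary \ref{cor:divUI}.
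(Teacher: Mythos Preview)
Your proposal is correct and follows essentially the same approach as the paper: compute the commutator $\Div^2(L_{\mathcal Q}(\rho A))=\langle \D^2\rho,L_{\mathcal Q}A\rangle$ using $\mathcal Q A=0$, then iterate Proposition~\ref{prop:div^2L_2A} exactly as in Corollary~\ref{cor:divUI} and conclude via Gehring's Lemma. Your explicit remark that the symmetry of $A$ (hence of $L_{\mathcal Q}A$) is what makes the second integration by parts close up is a useful clarification that the paper leaves implicit.
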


\begin{proof}
For a scalar field $\rho\in C_c^\infty(2B)$ we have
\begin{align*}
\Div^2(L_{\mathcal Q}(\rho A))& =\di(\rho \Div(L_{\mathcal Q}A)+(L_{\mathcal Q}A)\D\rho)\\ 
& =\Div(L_{\mathcal Q}A)\cdot \D\rho+\langle \D^2\rho, L_{\mathcal Q}A\rangle =\langle \D^2\rho,L_{\mathcal Q}A\rangle.
\end{align*}
Applying Proposition \ref{prop:div^2L_2A} iteratively, the rest of the proof now follows the same lines as that of Corollary \ref{cor:divUI}.
\end{proof}

\subsection{A first order example}
It is natural to extend the slicing methods of Section \ref{sec:GN} to higher dimensions, and so we conclude this section with an inequality which generalizes Corollary \ref{cor:curlinequality2d} to dimensions $n\geq 3$. In particular, we prove the end-point estimate for $p=1$ in the setting of \eqref{eq:main_inequ}, for fields restricted to $\textup{Sym}^+_n$ and an operator which is related to but strictly weaker than $\Curl$. To fix the notation for this operator, we distinguish between the cases $n=2k$ is even and $n=2k+1$ is odd.

In even dimensions $n=2k$, we define the operator $\mathcal  P_n$ which takes a matrix field $A=(a_{ij})$ to the $2\times k$ matrix formed of curls acting in $\p_j$, $\p_{j+1}$ on principal sub-matrices $A_{j,j+1}$ with $j$ odd:
 $$\mathcal  P_n A=\begin{pmatrix}
 \p_2 a_{11}-\p_1 a_{12} & \cdots & \p_{n} a_{n-1,n-1}-\p_{n-1} a_{n-1,n}\\
 \p_2 a_{21}-\p_1 a_{22} & \cdots & \p_n a_{n,n-1}-\p_{n-1} a_{nn}
 \end{pmatrix}.$$
Note that this has exactly $2k=n$ entries, i.e.\ it has the same number of entries as  $\Div$. One easily checks that $\Lambda_{\mathcal  P_n}$ consists of matrices of the form
$$\begin{pmatrix}
a_1\otimes (\xi_1, \xi_2) & * & * \\
* & \ddots & *\\
* & * & a_k\otimes(\xi_{n-1}, \xi_n) 
\end{pmatrix}$$
where $a_1, \dots, a_k \in \R^2$ and $*$ denotes arbitrary entries.

When $n=2k+1$, we define $\mathcal P_n$ as the operator that takes a matrix field $A=(a_{ij})$ to the $2\times n$ matrix formed of curls acting in $\p_j$, $\p_{j+1}$ on principal sub-matrices $A_{j,j+1}$ with $j=1,\dots,n$, the last pair being taken modulo $n$:
 $$\mathcal  P_n A=\begin{pmatrix}
 \p_2 a_{11}-\p_1 a_{12} & \p_3 a_{22}-\p_2 a_{23} \cdots & \p_{1} a_{n,n}-\p_{n} a_{n,1}\\
 \p_2 a_{21}-\p_1 a_{22} & \p_3 a_{32}-\p_2 a_{33} \cdots & \p_1 a_{1,n}-\p_{n} a_{11}
 \end{pmatrix}.$$

\begin{proposition}
\label{prop:curlinequality}
Take $K\geq 1$ and let $A\in C_c^\infty(\R^n, \textup{Sym}^+_n\cap Q_n^+(K))$. Then
$$\|A\|_{L^{1^*}(\R^n)}\leq C_K \|\mathcal P_n A \|_{L^1(\R^n)}.$$
\end{proposition}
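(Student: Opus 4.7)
The strategy is to apply the two-dimensional Theorem~\ref{thm:slicing} to each principal $2 \times 2$ block of $A$ on every 2D coordinate slice, and combine the resulting estimates via Fischer's inequality and a multilinear (Loomis--Whitney/Finner-type) inequality adapted to 2D projections. I focus on the even case $n = 2k$, using the canonical partition $\{1, \ldots, n\} = \bigcup_{j=1}^{k} I_{j}$ into pairs $I_{j} = \{2j-1, 2j\}$; the odd case $n = 2k+1$ follows by the analogous argument using the cyclic covering by consecutive pairs that defines $\mathcal{P}_{n}$ in that setting. With this partition, the $j$-th column of $\mathcal{P}_{n} A$ is precisely the 2D curl, in the directions $x_{I_{j}}$, of the two rows of the principal block $A_{I_{j}}$.

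Two preliminary reductions are essential. Since $A \in \textup{Sym}^{+}_{n} \cap Q_{n}^{+}(K)$, all eigenvalues of $A$ are comparable up to a constant depending on $K$; combined with eigenvalue interlacing for principal submatrices, this shows that every 2D slice $A_{I_{j}}(\cdot, \hat{x}_{I_{j}})$ lies uniformly in $\textup{Sym}^{+}_{2} \cap Q_{2}^{+}(K')$ with $K' = K'(K,n)$, and that the pointwise comparison $|A(x)| \lesssim_{K} |A_{I_{j}}(x)|$ holds for each $j$. For every $j$ and every slice $\hat{x}_{I_{j}}$, Corollary~\ref{cor:curlinequality2d} then applies to the 2D block $A_{I_{j}}(\cdot, \hat{x}_{I_{j}}) \in \textup{Sym}^{+}_{2} \subset \mathcal{K}_{1}^{+}$ and yields
\[
\int_{\R^{2}} \det A_{I_{j}}(x_{I_{j}}, \hat{x}_{I_{j}}) \, dx_{I_{j}} \leq \prod_{r=1,2} \bigl\| (\mathcal{P}_{n} A)_{r,j}(\cdot, \hat{x}_{I_{j}}) \bigr\|_{L^{1}_{x_{I_{j}}}(\R^{2})}.
\]
Combining the $Q_{n}^{+}(K)$-bound $|A|^{n} \leq K \det A$ with Fischer's inequality $\det A \leq \prod_{j} \det A_{I_{j}}$ then gives the pointwise comparison $|A|^{n/(n-1)} \lesssim_{K} \prod_{j=1}^{k} (\det A_{I_{j}})^{1/(n-1)}$.

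It remains to integrate this pointwise bound over $\R^{n}$ and combine the $k$ sliced 2D estimates. The plan is to apply a Loomis--Whitney/Finner-type multilinear inequality for the 2D projections onto $\hat{x}_{I_{j}} \in \R^{n-2}$, with the goal of obtaining
\[
\int_{\R^{n}} |A|^{n/(n-1)}\, dx \leq C_{K} \|\mathcal{P}_{n} A\|_{L^{1}(\R^{n})}^{n/(n-1)}.
\]
This combination is the main technical obstacle: one must balance the Fischer exponent $1/(n-1)$ against the 2D Sobolev scaling per slice and the Finner exponent $1/(k-1)$ naturally associated with the pair partition (each coordinate index lies in exactly $k-1$ of the sets $\hat{I}_{j}$), and carry out the multilinear estimate carefully. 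When $k = 1$ (i.e.~$n = 2$), the combination is trivial and the conclusion reduces to Corollary~\ref{cor:curlinequality2d} via the $Q_{2}^{+}(K)$ bound; for $k \geq 2$, the relevant 2D-projection analogue of the Loomis--Whitney inequality is the crux of the proof.
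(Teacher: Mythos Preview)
Your proposal has the right architecture---slice, apply the 2D result (Corollary~\ref{cor:curlinequality2d}) to each principal $2\times 2$ block, then recombine multilinearly---and you correctly identify the eigenvalue comparison $|A|\lesssim_K |A_{I_j}|$ coming from $Q_n^+(K)$ and interlacing. But the Fischer/Finner route you choose for the recombination creates the very exponent mismatch you flag and do not resolve: the functions $\det A_{I_j}(x)$ depend on \emph{all} $n$ coordinates, so a Finner inequality for the $(n-2)$-dimensional projections $\hat x_{I_j}$ is not directly applicable to the integrand $\prod_j(\det A_{I_j})^{1/(n-1)}$, and the natural Finner exponent $1/(k-1)$ does not match the Fischer exponent $1/(n-1)$. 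As stated, the plan is incomplete precisely at this combination step.

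The paper avoids this complication entirely. Instead of invoking Fischer, it uses the single-block comparison $\det A_{I_j}\gtrsim_K |A|^2$ (which you already have) for each $j$ \emph{separately}: the 2D slice estimate then yields $\int_{\R^{n-2}}\big(\int_{\R^2}|A|^2\,dx_{I_j}\big)^{1/2}d\hat x_{I_j}\leq C\|\mathcal P_n A\|_{L^1}$, and Minkowski's inequality reorders this into a mixed-norm bound $\|A\|_{L^{\mathbf p_j}}\leq C\|\mathcal P_n A\|_{L^1}$, where $\mathbf p_j$ has entries $2$ at positions $I_j$ and $1$ elsewhere. Multiplying the $k$ such estimates and applying the generalised (mixed-norm) H\"older inequality gives $\||A|^k\|_{L^{\mathbf r}}\leq C^k\|\mathcal P_n A\|_{L^1}^k$ with $\mathbf r$ constant equal to $2/(n-1)$, which is exactly $\|A\|_{L^{n/(n-1)}}^k$. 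This is the Gagliardo--Nirenberg slicing argument, which plays the role of your Loomis--Whitney step but acts on $|A|$ itself rather than on functions of sub-coordinates, so no exponent balancing problem arises. The odd case uses the cyclic pairs as you indicate and proceeds identically.
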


Note that Proposition \ref{prop:curlinequality} yields an example of the situation posed in Conjecture \ref{conj:big} in which we may take $p=1$ and also have the critical Sobolev gain of integrability $q=1^*$. This result is therefore complementary to many of the other results of this paper, in which we typically obtain the sub-critical estimate $q=q_{\max}$ with $p>(q_{\max})_*$. However, Proposition \ref{prop:curlinequality} should be compared to Proposition \ref{prop:curlmain} in which we obtain the same estimate for a stronger operator in a larger family of cones.

The proof of Proposition \ref{prop:curlinequality} is based on a higher dimensional slicing argument.

\begin{proof}
Throughout, we will use the notation $A_{j,k}$ for the $2\times 2$ matrix obtained by deleting all rows and columns from $A$ except the $j$-th and $k$-th and $M_{j,k}$ for the determinant of this matrix.

We begin with two simple observations. Firstly, any matrix $A\in \textup{Sym}^+_n$ has positive diagonal entries; secondly, under our assumptions, given any $2\times 2$ principal minor $M$, there is a constant $c=c(K)$ such that
$M(A)\geq c |A|^2$.
The rest of the proof is split into two cases. 

\textit{Case 1: even dimension.} Assume $n=2k$. For each sub-matrix $A_{j,j+1}$ with $j$ odd, $j=1,3,...,n-1$, we may apply Corollary \ref{cor:curlinequality2d} to get
\begin{align*}
    \int_{\R^2}M_{j,j+1}(A)\dif  x_{j}\dif  x_{j+1}\leq \Big(\int_{\R^2}|\mathcal P_n A|(x)\dif  x_{j}\dif  x_{j+1}\Big)^2.
\end{align*}
Using the lower bound $M_{j,j+1}\geq c|A|^2$, we take square root of this inequality and integrate in the other variables to obtain
\begin{align*}
    &\int_{\R^{n-2}}\Big(\int_{\R^2}|A|^2(x)\dif x_{j}\dif x_{j+1}\Big)^{\frac12}\dif x_1\cdots\dif  x_{j-1}\dif  x_{j+2}\cdots\dif  x_n\leq C\|\mathcal P_n A\|_{L^1(\R^n)}.
\end{align*}
Applying Minkowski's integral inequality to re-order the integrals, this implies
\begin{align*}
\|A\|_{L^{\mathbf{p}_j}(\R^n)}& \equiv \,\int_{\R^{n-(j+1)}}\Big(\int_{\R^2}\Big(\int_{\R^{j-1}}|A|\dif  x_1...\dif  x_{j-1}\Big)^2\dif  x_j\dif  x_{j+1}\Big)^{\frac{1}{2}}\dif  x_{j+2}...\dif  x_n\\
& \leq C\|\mathcal P_n A\|_{L^1(\R^n)};
\end{align*}
here $\mathbf{p}_j\in \R^n$ has the $j$-th and $(j+1)$-th entries equal to 2 and all other entries equal to 1. We refer the reader to the book of Adams and Fournier for further details concerning these mixed-norm spaces \cite{Adams2003}.

We take the product of all these $k$ inequalities (for $j=1,3,5,...,n-1$) and apply the generalised H\"older's inequality, see e.g.\ \cite[Theorem 2.49]{Adams2003},
to obtain
$$\| |A|^k\|_{L^{\mathbf{r}}}\leq\prod_{\substack{j=1\\ j\text{ odd}}}^{n-1}\|A\|_{L^{\mathbf{p}_j}}\leq C^k\|\mathcal P_n A\|_{L^1}^k,$$
where $\mathbf{r}$ is computed component-wise  by 
$$\frac{1}{\mathbf{r}}=\sum_{\substack{j=1 \\ j\text{ odd}}}^{n-1}\frac{1}{\mathbf{p}_j}=(k-1+\frac{1}{2},\ldots,k-1+\frac{1}{2}).$$
Thus,
the norm on the left-hand side is simply the $L^r$-norm with $r=\frac{2}{2k-1}=\frac{2}{n-1}$, and so we may rewrite the previous inequality as
$$\Big(\int_{\R^n}|A|^\frac{n}{n-1}\dif  x\Big)^{\frac{n-1}{2}}\leq C\|\mathcal P_n A\|_{L^1(\R^n)}^k.$$
Taking the $k$-th root, the desired inequality follows.

\textit{Case 2: odd dimension.}  Assume $n=2k+1$. Arguing as above, i.e.\ by considering the $M_{j,j+1}$ minor, we easily find, for $j=1,2,3,\ldots,n-1$, the estimate
 $$\|A\|_{L^{\mathbf{p}_j}}\leq C\|\mathcal P_n A\|_{L^1},$$
where $\mathbf{p}_j$ is as before.
We now work with the sub-matrix $A_{1,n}$ on the $x_1$-$x_n$ plane to obtain
\begin{align*}
    &\int_{\R^{n-2}}\Big(\int_{\R^2}|A|^2(x)\dif x_{1}\dif x_{n}\Big)^{\frac12}\dif x_2\cdots\dif x_{n-1}\leq C\|\mathcal P_n A\|_{L^1},
\end{align*}
leading to 
$$\|A\|_{L^{\mathbf{p}_n}}\leq C\|\mathcal P_n A\|_{L^1},$$
where $\mathbf{p}_n\in \R^n$ has the first and last entries equal to 2 and all other entries equal to 1.
We note that each component of the sum of the terms $\frac{1}{\mathbf{p}_j}$ consists of $n-2+2\times \frac 1 2 $, so that
$$\sum_{j=1}^{n}\frac{1}{\mathbf{p}_j}=(n-1,\ldots,n-1).$$
Thus, applying the generalised H\"older's inequality,
\begin{align*}
    \||A|^n\|_{L^{\frac{1}{n-1}}(\R^n)}\leq\prod_{j=1}^n\|A\|_{L^{\mathbf{p}_j}(\R^n)}\leq C^n\|\mathcal P_n A\|_{L^1(\R^n)}^n.
\end{align*}
Taking the $n$-th root, we obtain the desired inequality.
\end{proof}

\appendix

\addtocontents{toc}{\protect\setcounter{tocdepth}{1}}
\section{Proof of Theorem \ref{thm:khessian}}
\label{appendix}

In this Appendix we include a proof of Theorem \ref{thm:khessian}. 
We begin by proving the uniqueness claim: we observe that if $u$ and $v$ are both solutions, then $u-v$ satisfies the uniformly elliptic equation
$$\rho_k(\D^2 u)-\rho_k(\D^2 v)=0,$$
to which the comparison principle applies. It follows that $u-v$ is constant.

To prove the existence claim we rely on argument similar to the one used by YanYan Li in \cite{Li1990}. The crucial point is to obtain an a priori estimate on the oscillation of $\psi$, where we write $\psi \equiv \frac 1 2 S x\cdot x + \varphi$. 

Let $x_0,x_1$ be such that $\psi(x_0)=\min_{\mathbb T^n} \psi$, $\psi(x_1)=\max_{\mathbb T^n}\psi$. Let $\tilde \psi = \psi-\max_{2 \mathbb T^n}\psi$, which is non-positive in $2\mathbb T^n$. With $R=|x_0-x_1|\leq 1$, and according to Theorem 9.1 in \cite{Wang2009}, 
$$-\tilde\psi(x_0){\leq C} \Lambda -\tilde\psi(x_1) \implies \tilde \psi(x_1)-\tilde \psi(x_0)=\textup{osc}_{\mathbb T^n}\psi {\leq C} \Lambda,$$
    where $\lambda\leq f \leq \Lambda$. 
Thus we deduce the estimate
$$\sup |\psi| {\leq C} \Lambda.$$
This implies a $C^1$ estimate on $\psi$ by \cite[Thm. 4.1]{Wang2009}, which in turn implies a $C^2$ estimate on $\psi$ (see for instance \cite{Guan2014}). From here we can apply the Evans--Krylov Theorem, see e.g.\ \cite{Caffarelli1995}, to deduce that
$$\|\psi \|_{C^{2,\alpha}}\leq C(\Lambda,\lambda) \implies \|\varphi\|_{C^{2,\alpha}}\leq C(\Lambda,\lambda)+C\|S\|. $$
We conclude the proof by applying the continuity method, as in \cite{Li1990}: we let
$$S_t(\varphi)=F_k(S+D^2 \varphi)-t f^k -(1-t)F_k(S)$$
which maps $C^{2,\alpha}_\sharp$ to  $C^\alpha_\sharp$; here the subscript denotes mean-free functions. The crucial point now is that, for $\varphi$ in $C^{2,\alpha}$, if $S+D^2\varphi\in \Gamma_k$, the linearized operator $S'_t$ is elliptic. It follows in particular from standard Schauder theory that $S_t'\colon C^{2,\alpha}_\sharp \to C^\alpha_\sharp$ is bijective, i.e.\ the linearized equation can be solved uniquely for a given datum. Thus the continuity method applies.

\section{$\A$-quasiaffine integrands and $L\log L$ estimates}\label{app:LlogL}
Given finite dimensional inner product spaces $\mathbb{V}$ and $\mathbb{W}$ and an $\ell$-th order homogeneous differential operator
$$\A=\sum_{|\al|=\ell}L_\al\d^\al\quad\text{ where }L_\al\in\textup{Lin}(\mathbb{V},\mathbb{W}),$$
we say that a locally bounded, Borel measurable integrand $F\colon \mathbb V\rightarrow\R$  is \textit{$\A$-quasiconvex} if
$$
\int_{\T^n} F(v(x))\dif x\geq F\left(\int_{\T^n}v(x)\dif x\right)\quad\text{for all } v\in C^\infty(\T^n,\V) \text{ with } \A v=0,
$$
and we say that $F$ is \textit{$\A$-quasiaffine} if both $\pm F$ are $\A$-quasiconvex. The latter is the class of weakly continuous functionals defined on $\A$-free sequences 
\cite{Fonseca1999,Guerra2019,Murat1981}, which benefits from surprising improvements in integrability \cite{Coifman1993,Guerra2019,GuerraRaitaSchrecker2020,Muller1990}; we will recall a general result below. To this end, recall that the differential operator $\mathcal{A}$, with symbol
$$
 \A(\xi)=\sum_{|\alpha|=\ell}\xi^\alpha L_\alpha \in\lin(\V,\W),
$$
is said to have \textit{constant rank} if $\mathrm{rank\,}\A(\xi)$ is independent of $\xi\in\R^n\setminus\{0\}$, and is said to have a \textit{spanning wave cone} if
$$
\mathrm{span\,}\Lambda_\A=\V,\quad\text{where }\Lambda_\A=\bigcup_{\xi\in\R^n\setminus\{0\}}\ker\A(\xi).
$$
Under the spanning wave cone condition it is classical that $\A$-quasiaffine integrands are polynomials. We then have:
\begin{theorem}[{\cite[Thm.\ C]{GuerraRaitaSchrecker2020}}]\label{thm:GRS}
Let  $\A$ be a constant rank operator with spanning wave cone, and let $F\colon \mathbb V\rightarrow\R$ be an $\A$-quasiaffine polynomial of degree $s\in\{2,\ldots,\min\{n,\dim \V\}\}$. Let $r>s$ be a real number. Then
$$
\|F(v)\|_{L\log L(\B^n)}\leq C\left(\|v\|_{L^s(\B^n)}+\|\A v\|_{{\dot{W}}{^{-\ell,r}}(\B^n)}\right)\quad\text{for }v\in C_c^\infty(\B^n,\V) \text{ with }F(v)\geq 0.
$$
\end{theorem}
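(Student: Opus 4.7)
Since $F$ is an $\A$-quasiaffine polynomial of degree $s$, by the classical structure theorem it is a finite linear combination of ``fundamental'' $\A$-quasiaffine polynomials arising as $s$-linear expressions in appropriate derivatives. Combined with the constant rank hypothesis, there is a homogeneous differential operator $\mathcal B$ of order $m$ (a ``potential'' for $\A$) such that $\A\mathcal Bu=0$ and, on the Fourier side, $\mathcal B(\xi)$ is a projection onto $\ker \A(\xi)$ up to symbols of the correct degree; consequently each fundamental quasiaffine monomial, when pulled back through $\mathcal B$, is a constant coefficient $s$-th order null Lagrangian in the potential $u$. This is the identity that unlocks Hardy-space estimates.

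The plan is then to decompose an arbitrary test field as $v=v_0+v_1$, where $v_0$ is $\A$-free and $v_1$ absorbs the error $\A v$. Concretely, the constant rank theorem provides an $L^r$-bounded Fourier multiplier $\Pi_\A$ (the projection onto $\A$-free fields) such that $v_0=\Pi_\A v$ is $\A$-free and
\[
\|v_1\|_{W^{\ell,r}(\R^n)}=\|v-\Pi_\A v\|_{W^{\ell,r}(\R^n)}\le C\|\A v\|_{\dot W^{-\ell,r}(\R^n)}.
\]
For the $\A$-free piece $v_0$, write $v_0=\mathcal B u$ for a suitable potential $u$, so that each fundamental quasiaffine monomial in $v_0$ takes the form of a classical null Lagrangian in $\D^m u$. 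By the Coifman--Lions--Meyer--Semmes compensated compactness theorem (adapted to $\A$-free fields, as in the author's previous work), each such term belongs to $\mathcal H^1(\R^n)$ with the multilinear estimate
\[
\|F(v_0)\|_{\mathcal H^1(\R^n)}\le C\|v_0\|_{L^s(\R^n)}^s\le C\|v\|_{L^s(\R^n)}^s.
\]
Since $F(v)\geq 0$ by hypothesis and the local embedding $\mathcal H^1\hookrightarrow L\log L$ holds for nonnegative Hardy functions (via atomic decomposition and the John--Nirenberg inequality), this yields the desired $L\log L$-control for the $\A$-free contribution.

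It remains to handle the multilinear remainder $F(v)-F(v_0)$. Since $F$ has degree $s$, the difference is a sum of mixed monomials of the form $M(v_0,\ldots,v_0,v_1,\ldots,v_1)$ with at least one slot filled by $v_1$. Applying the generalised H\"older inequality, each such term is bounded in $L^{q}$ for some $q>1$ by $\|v_0\|_{L^s}^{s-j}\|v_1\|_{L^{s'}}^j$ with $s'$ determined by the Sobolev embedding $W^{\ell,r}\hookrightarrow L^{s'}$, which is available since $r>s$ and the scaling forces $s'>s$. The resulting $L^q$-bound is stronger than $L\log L$ on the bounded set $\B^n$, and so these terms are absorbed into the right-hand side.

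The main obstacle is the step going from the multilinear Hardy space estimate on $F(v_0)$ to the $L\log L$ bound on $F(v)$: one must verify that the nonlinear decomposition $F(v)=F(v_0)+[F(v)-F(v_0)]$ is compatible with the pointwise nonnegativity required to invoke $\mathcal H^1\hookrightarrow L\log L$, since $F(v_0)$ need not be nonnegative even though $F(v)$ is. This is circumvented by writing the $L\log L$ Luxemburg norm through its duality with $\exp L$, so that the estimate reduces to a pairing $\int F(v)\,\phi$ with $\phi\in \exp L$; on this pairing the Hardy--BMO duality applies to $F(v_0)$ directly, while the remainder is handled by H\"older as above, without the nonnegativity being needed term-by-term.
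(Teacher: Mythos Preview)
The paper does not prove Theorem \ref{thm:GRS}; it is quoted verbatim from the authors' earlier work \cite{GuerraRaitaSchrecker2020} and used here only as a black box to derive Corollary \ref{cor:GRS}. There is therefore no proof in this paper against which to compare your sketch.

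That said, your outline follows the expected route for such a result: a Helmholtz-type splitting via the constant-rank structure, CLMS-type Hardy-space bounds on the $\A$-free part, and H\"older control of the multilinear remainder. Two technical points are imprecise. First, the decomposition only gives $\|v_1\|_{L^r}\lesssim\|\A v\|_{\dot W^{-\ell,r}}$, not a $W^{\ell,r}$ bound: the pseudoinverse $\A(\xi)^\dagger$ is a $(-\ell)$-homogeneous symbol, so it gains exactly $\ell$ derivatives, placing $v_1$ in $L^r$ and no better. This is harmless, since $r>s$ already suffices for the H\"older step on the mixed terms. Second, your duality workaround for the sign issue is shakier than it looks: the Orlicz dual of $L\log L(\B^n)$ is $\exp L$, whereas $\mathcal H^1$--BMO duality requires $\phi\in\textup{BMO}$, and the inclusion goes the wrong way ($\textup{BMO}\subset\exp L$ locally, not conversely). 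The clean argument avoids duality altogether via Stein's characterisation: for $g\geq 0$ on $\B^n$, $g\in L\log L$ if and only if $Mg\in L^1$. Writing $F(v)=F(v_0)+[F(v)-F(v_0)]$, the first summand lies in $\mathcal H^1$ so its maximal function is in $L^1$, and the remainder lies in $L^q$ for some $q>1$ so its maximal function is in $L^q\subset L^1(\B^n)$; hence $MF(v)\in L^1(\B^n)$ and the conclusion follows without ever needing $F(v_0)\geq 0$.
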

This result can be used to give inequalities in the spirit of \eqref{eq:main_inequ}:
\begin{corollary}\label{cor:GRS}
In the setting of Theorem \ref{thm:GRS}, let $\mathcal K=\{z\in\mathbb V\colon  c|z|^s\leq F(z)\}$, for some constant $c>0$. Then
$$
\|v\|_{L^s\log L(\B^n)}\leq C\left(\|v\|_{L^s(\B^n)}+\|\A v\|_{{\dot{W}}{^{-\ell,r}}(\B^n)}\right)\quad\text{for }v\in C_c^\infty(\B^n,\mathcal K).
$$
\end{corollary}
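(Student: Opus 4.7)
The plan is to deduce Corollary \ref{cor:GRS} directly from Theorem \ref{thm:GRS} by using the pointwise bound $c|v|^s\leq F(v)$, built into the definition of $\mathcal K$, to transfer integrability from the nonlinear quantity $F(v)$ back to $v$ itself. The proof has two moving parts: a rescaling to put Theorem \ref{thm:GRS} in $s$-homogeneous form, and an Orlicz-norm identification that converts $\||v|^s\|_{L\log L}$ into $\|v\|_{L^s\log L}^s$.

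For $v\in C_c^\infty(\B^n,\mathcal K)$ the defining inequality $c|v|^s\leq F(v)$ forces $F(v)\geq 0$, so Theorem \ref{thm:GRS} applies. To get the correct homogeneity, first I would apply it to the normalized field $\tilde v:=v/\lambda$, where $\lambda:=\|v\|_{L^s(\B^n)}+\|\A v\|_{\dot W^{-\ell,r}(\B^n)}$; note $\tilde v\in\mathcal K$ because $\mathcal K$ is a cone. Since $F$ is $s$-homogeneous and the Luxemburg norm of $L\log L$ is $1$-homogeneous, the resulting estimate $\|F(\tilde v)\|_{L\log L(\B^n)}\leq C$ rescales to
\[
\|F(v)\|_{L\log L(\B^n)}\leq C\bigl(\|v\|_{L^s(\B^n)}+\|\A v\|_{\dot W^{-\ell,r}(\B^n)}\bigr)^{s}.
\]
Combining this with $c|v|^s\leq F(v)$ and the monotonicity of the Luxemburg norm on non-negative functions then gives
\[
\||v|^s\|_{L\log L(\B^n)}\leq C\bigl(\|v\|_{L^s(\B^n)}+\|\A v\|_{\dot W^{-\ell,r}(\B^n)}\bigr)^{s}.
\]

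The final ingredient is the elementary Orlicz identification
\[
\||v|^s\|_{L\log L(\B^n)}\;\asymp\;\|v\|_{L^s\log L(\B^n)}^{\,s},
\]
which follows from the pointwise equivalence $t^s\log(e+t^s)\asymp s\cdot t^s\log(e+t)$ for $t\geq 0$ combined with the substitution $\lambda\mapsto\lambda^{1/s}$ in the defining Luxemburg norms. Substituting this into the previous bound and taking $s$-th roots produces exactly the inequality claimed in Corollary \ref{cor:GRS}. The only point requiring careful bookkeeping is this Orlicz equivalence and its $s$-dependent constants; every other step is a direct consequence of Theorem \ref{thm:GRS} together with the defining property of the cone $\mathcal K$.
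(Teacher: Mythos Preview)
Your argument is correct and is exactly the routine unpacking the paper leaves to the reader (Corollary~\ref{cor:GRS} is stated there without proof). Your two ingredients---the $s$-homogeneous rescaling to repair the apparent inhomogeneity of Theorem~\ref{thm:GRS}, and the Orlicz identification $\||v|^s\|_{L\log L}\asymp\|v\|_{L^s\log L}^s$---are precisely what is needed.
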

We therefore obtain sharp uniform integrability statements:
\begin{corollary}
In the setting of Corollary \ref{cor:GRS}, if a sequence $(v_j)\subset C_c^\infty(\B^n,\mathcal K)$ is bounded in $L^s(\B^n)$ and $(\A v_j)$ is bounded in ${\dot{W}}{^{-\ell,r}}(\B^n)$, we have that $(v_j)$ is $s$-uniformly integrable.
\end{corollary}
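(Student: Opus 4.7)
The plan is to apply Corollary \ref{cor:GRS} to each individual $v_j$ and then convert the resulting uniform $L^s\log L$ bound into uniform integrability of $\{|v_j|^s\}_{j}$ via a standard de la Vall\'ee Poussin type argument. The point is that the hard analytic content is already packaged in Corollary \ref{cor:GRS}; the task here is only to propagate its conclusion across a sequence.

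First I would fix $r>s$ and invoke Corollary \ref{cor:GRS} separately for each $v_j$, which gives
$$
\|v_j\|_{L^s\log L(\B^n)}\leq C\left(\|v_j\|_{L^s(\B^n)}+\|\A v_j\|_{{\dot{W}}{^{-\ell,r}}(\B^n)}\right),
$$
with a constant $C=C(s,r,n,\A,F)$ that is manifestly independent of $j$ from the way Corollary \ref{cor:GRS} is stated. The two standing hypotheses bound the right-hand side uniformly in $j$, so that $M\equiv \sup_j\|v_j\|_{L^s\log L(\B^n)}<\infty$. Unpacking the definition of the Zygmund norm, this is the same as the uniform bound
$$
\sup_j\int_{\B^n}|v_j|^s\log(e+|v_j|)\dif x\leq CM.
$$

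Next I would extract uniform integrability of $\{|v_j|^s\}_{j}$ from this bound in the standard way: for every $\Lambda>0$ one has the pointwise inequality
$$
|v_j|^s\,\mathbf{1}_{\{|v_j|>\Lambda\}}\leq \frac{|v_j|^s\log(e+|v_j|)}{\log(e+\Lambda)},
$$
so that, integrating over $\B^n$,
$$
\sup_j\int_{\{|v_j|>\Lambda\}}|v_j|^s\dif x\leq \frac{CM}{\log(e+\Lambda)}\longrightarrow 0\quad \text{as }\Lambda\to\infty.
$$
This is precisely the de la Vall\'ee Poussin characterisation of uniform integrability of the family $\{|v_j|^s\}_{j}$, i.e.\ of the $s$-uniform integrability of $(v_j)$, concluding the proof.

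I do not anticipate any genuine obstacle: Corollary \ref{cor:GRS} provides the essential analytic ingredient, and the only subtlety to double-check is that its constant is uniform over $v\in C_c^\infty(\B^n,\mathcal K)$ — which is how the corollary is phrased — so that passing to the sequence $(v_j)$ is immediate.
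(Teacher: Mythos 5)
Your proof is correct and is exactly the intended (and essentially only) argument: the paper states this corollary without proof precisely because it follows from Corollary \ref{cor:GRS} by the uniform $L^s\log L$ bound plus the standard tail estimate $\int_{\{|v_j|>\Lambda\}}|v_j|^s\dif x\leq \big(\log(e+\Lambda)\big)^{-1}\int|v_j|^s\log(e+|v_j|)\dif x$. The only point worth noting is that passing from the Luxemburg-type norm $\|v_j\|_{L^s\log L}\leq M$ to the integral bound $\int|v_j|^s\log(e+|v_j|)\dif x\leq C(M)$ uses the $\Delta_2$ property of $t\mapsto t^s\log(e+t)$, which is routine.
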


%%      ---------------------------------------------------------------------
%%      --------------------------- BIBLIOGRAPHY ----------------------------
%%      ---------------------------------------------------------------------
%% PUT HERE THE BIBLIOGRAPHY IN YOUR FAVOURITE FORMAT
%% Please check that the format of the bibliography is uniform and coherent
\addtocontents{toc}{\protect\setcounter{tocdepth}{-1}}

\bibliographystyle{siam}

\end{document}